\newlength{\mylength}
\theoremstyle{plain}
\newtheorem{thm}{\bf Theorem}[section]
\newtheorem{df}[thm]{\bf Definition}
\newtheorem{prop}[thm]{\bf Proposition}
\newtheorem{cor}[thm]{\bf Corollary}
\newtheorem{lem}[thm]{\bf Lemma}
\newtheorem{conj}[thm]{\bf Conjecture}
\theoremstyle{definition}
\newtheorem{ex}[thm]{\bf Example}
\newtheorem{rem}[thm]{\bf Remark}
\newcommand{\nc}{\newcommand}
\nc{\Prop}{\begin{prop}}
\nc{\enprop}{\end{prop}}
\nc{\Lemma}{\begin{lem}}
\nc{\enlemma}{\end{lem}}
\nc{\Exam}{\begin{ex}}
\nc{\enexam}{\end{ex}}
\nc{\Th}{\begin{thm}}
\nc{\enth}{\end{thm}}
\newenvironment{red}
{\relax\color{red}}
{\hspace*{.5ex}\relax}
\newcommand{\ber}{\begin{red}}
\newcommand{\er}{\end{red}}
\newcommand{\berE}{\begin{red} {}\marginnote{\fbox{\scshape\lowercase{E}}} }
\newcommand{\berm}{\begin{red} {}\marginnote{\fbox{\scshape\lowercase{M}}} }
\newcommand{\C}{{\mathbb C}}
\newcommand{\Q}{\mathbb {Q}}
\newcommand{\Z}{{\mathbb Z}}
\newcommand{\R}{{\mathbb R}}
\newcommand{\one}{{\bf{1}}}
\newcommand{\seteq}{\mathbin{:=}}
\newcommand{\To}[1][{\hs{0.8ex}}]{\xrightarrow{\ms{7mu}{#1}\ms{7mu}}}
\newcommand{\Sp}{\mathrm{span}_{\R_{\ge0}}}  	% span
\newcommand{\SpR}{\mathrm{span}_{\R}}  	% span_R
\newcommand{\g}{\mathfrak{g}}
\newcommand{\n}{\mathfrak{n}}
\newcommand{\Uq}[1][{\mathfrak{g}}]{{U_q(#1)}}
\newcommand{\Hom}{\operatorname{Hom}}
\newcommand{\HOM}{\mathrm{H{\scriptstyle OM}}}
\newcommand{\END}{\mathrm{E\scriptstyle ND}}
\newcommand{\End}{\operatorname{End}}
\newcommand{\isoto}[1][]{\mathop{\xrightarrow%
[{\raisebox{.3ex}[0ex][.3ex]{$\scriptstyle{#1}$}}]%
{{\raisebox{-.6ex}[0ex][-.6ex]{$\mspace{2mu}\sim\mspace{2mu}$}}}}}
\newcommand{\Mod}{\text{-}\mathrm{Mod}}
\newcommand{\gmod}{\text{-}\mathrm{gmod}}
\newcommand{\proj}{\text{-}\mathrm{proj}}
\newcommand{\conv}{{\mathbin{\scalebox{1.1}{$\mspace{1.5mu}\circ\mspace{1.5mu}$}}}}
\newcommand{\hconv}{\mathbin{\scalebox{.9}{$\nabla$}}}
\newcommand{\sconv}{\mathbin{\scalebox{.9}{$\Delta$}}}
\renewcommand{\Im}{\on{Im}}
\newcommand{\de}{\on{\textfrak{d}}}
\newcommand{\K}{{J}}
\newcommand{\cmA}{\mathsf{A}}  % Cartan matrix
\newcommand{\wlP}{\mathsf{P}}   % weight lattice
\newcommand{\rlQ}{\mathsf{Q}}   % root lattice
\newcommand{\weyl}{\mathsf{W}}  % Weyl group
\newcommand{\prD}{\Delta_+}            % positive roots
\newcommand{\nrD}{\Delta_-}            % negative roots
\newcommand{\sg}{\mathfrak{S}}   % symmetric group
\newcommand{\Po}{\wlP}
\newcommand{\qQ}{\mathcal{Q}}
\newcommand{\bQ}{\overline{\qQ}}
\newcommand\Aq[1][{\g^+}]{A_q(#1)}
\newcommand\Aqq[1][{\g^+}]{A_{q}(#1)_{\Z[q,q^{-1}]}}
\newcommand{\Zq}{{\Z[q,q^{-1}]}}  		% \Z[q,q^{-1}]
\newcommand{\wt}{\mathrm{wt}} 		% weight
\newcommand{\bR}{\mathbf{k}} 		% base ring
\nc{\corp}{\bR}
\newcommand{\catC}{ \mathscr{C}}  	% category C
\newcommand{\tcatC}{ \widetilde{\mathscr{C}}}  	% tilde category C
\newcommand{\catT}{ \mathcal{T}}  	% category T
\newcommand{\lT}{ \widetilde{\mathcal{T}}}  	% localization of  T
\newcommand{\catTc}{ \mathcal{T}_{\mathrm{br}}}  	% category Tc
\newcommand{\lRg}[1][w]{ \tcatC_{#1} }  	% localization of  R-gmod/
\newcommand{\dM}{ \mathsf{M }}              % determinantial module M
\newcommand{\dC}{ \mathsf{C }}              % central object C
\newcommand{\dS}{ \mathsf{S }}              % cuspifal module S
\newcommand{\mD}{ \mathsf{D}}              % unipotent minor
\newcommand{\gW}{\mathsf{W}}
\newcommand{\sgW}{\mathsf{W}^*}
\newcommand{\tF}{\widetilde{F}}  		% crystal operators
\newcommand{\tE}{\widetilde{E}}  		% crystal operators
\newcommand{\ep}{\varepsilon}  		% epsilon_i
\newcommand{\ph}{\varphi}  		% phi_i
\newcommand{\trivialM}{\mathbf{1}} 	% trivial R(0) module
\newcommand{\eC}{\mathcal{C}} 		% equivalence class
\newcommand{\Ht}{\mathrm{ht}} 		% height
\newcommand{\cd}{\mathfrak{d}} 		% cuspidal decomposition
\newcommand{\Rr}{\mathbf{r}} 			% R-matrix
\newcommand{\nR}{\mathrm{R}^{\mathrm{norm}}} 		% normalized R-matrix
\newcommand{\RR}{\mathrm{R}} 				% intertwiner
\newcommand{\coR}{R} 				% R for central objects
\newcommand{\La}{\Lambda} 			% Lambda for R-matrix
\newcommand{\tLa}{\widetilde{\Lambda}} 			% Lambda
\newcommand{\Dd}{\text{ \textfrak{d}}} 			% Lambda
\newcommand{\Ma}{{\ms{1.5mu}\widehat{\mathsf{M}}}}
\newcommand{\Na}{{\widehat{\mathsf{N}}}}
\newcommand{\z}{{z_\Ma}}
\newcommand{\gr}{\mathrm{gr}}
\newcommand{\triv}{{\mathbf{1}}}   				% trivial or unit
\newcommand{\id}{\ms{1mu}{\mathsf{id}}\ms{1mu}}   				% id map
\newcommand{\Ds}{{\mathcal{D}}}   				% directed set
\newcommand{\Hm}{{\mathrm{H}}}   				% hom
\newcommand{\gHm}{\mathrm{H}^{\gr}}   				% hom
\newcommand{\dphi}{{\phi}}   				%  phi for graded case
\newcommand{\gH}{\mathrm{H}}   				%  H for graded case
\newcommand{\gL}{\mathrm{L}}   				%  L for graded case
\newcommand{\gzeta}{\zeta^\gr}   				%  zeta for graded case
\newcommand{\gPsi}{\Psi^\gr}   				%  Psi for graded case
\newcommand{\gtPsi}{\widetilde{\Psi}^\gr}   				%  tilde Psi for graded case
\newcommand{\gT}{T^\gr}   				%  T for graded case
\newcommand{\gtT}{\widetilde{T}^\gr}   				%  tilde T for graded case
\newcommand{\lc}{\chi}   				% chi for lift
\newcommand{\lep}{\mathfrak{e}}   				%  lift of epsilon
\newcommand{\aW}{ \mathrm{W}}              % affinization of L(i^n)
\newcommand{\lpq}{\mathsf{q}}   				% q for lift
\newcommand{\rD}{{\mathscr{D}}}    				% right dual
\newcommand{\lD}{{\mathscr{D}^{-1}}}    				% left  dual
\newcommand{\lG}{\Gamma}   					% Lattice Gamma
\nc{\be}{\begin{enumerate}}
\newcommand{\bnum}{\be[{\rm(i)}]}
\newcommand{\bna}{\be[{\rm(a)}]}
\newcommand{\rtl}{\rlQ}
\newcommand{\etens}{\boxtimes}
\newcommand{\rmat}[1]{\ms{1mu}{\mathbf{r}}_%
{\mspace{-2mu}\raisebox{-.6ex}{${\scriptstyle{#1}}$}}}
\newcommand{\shc}{\mathcal{C}}
\newcommand{\tC}{\widetilde{C}}
\newcommand{\Ob}{\on{Ob}}
\nc{\ms}{\mspace}
\nc{\cl}{\colon}
\nc{\ro}{{\rm (}}
\nc{\rf}{{\rm )}}
\nc{\noi}{\noindent}
\nc{\bl}{\bigl(}
\nc{\br}{\bigr)}
\newenvironment{myequation}
{\relax\setlength{\arraycolsep}{1pt}\begin{eqnarray}}
{\end{eqnarray}}
\newenvironment{myequationn}
{\relax\setlength{\arraycolsep}{1pt}\begin{eqnarray*}}
{\end{eqnarray*}}
\nc{\eq}{\begin{myequation}}
\nc{\eneq}{\end{myequation}}
\nc{\eqn}{\begin{myequationn}}
\nc{\eneqn}{\end{myequationn}}
\newenvironment{myarray}[1]{\relax\setlength{\arraycolsep}{1pt}
%{1.5}
\begin{array}{#1}}{\end{array}\relax}
\newcommand{\ba}{\begin{myarray}}
\newcommand{\ea}{\end{myarray}}
\nc{\hs}{\hspace*}
\nc{\set}[2]{\left\{{#1}\mid{#2}\right\}}
\nc{\snoi}{\smallskip\noi}
\nc{\al}{\alpha}
\nc{\rmz}{\setminus\{0\}}
\nc{\tens}{\otimes}%[1][]{\mathbin{\otimes_{\raise1.5ex\hbox to-.1em{}#1}}}
\nc{\vphi}{\varphi}
\nc{\ee}{\end{enumerate}}
\nc{\la}{\lambda}
\nc{\bc}{\begin{cases}}
\nc{\ec}{\end{cases}}
\nc{\qtq}[1][and]{\quad\text{#1}\quad}
\nc{\qt}[1]{\quad\text{#1}}
\nc{\dual}{{\displaystyle{\ms{1mu}\star}}}
\nc{\wle}{\preceq}
\nc{\epito}{\twoheadrightarrow}
\nc{\epiTo}[1][]{\xymatrix@C=4ex{{}\ar@{->>}[r]^-{#1}&{}}}
\nc{\Proof}{\begin{proof}}
\nc{\lan}{\langle}
\nc{\ran}{\rangle}
\nc{\ang}[1]{\lan{#1}\ran}
\nc{\QED}{\end{proof}}
\nc{\soplus}{\scalebox{.65}{\raisebox{.2ex}{$\displaystyle\bigoplus$}}}
\nc{\eps}{\varepsilon}
\nc{\on}{\operatorname}
\nc{\supp}{\on{supp}}
\nc{\sct}{strongly commute\xspace}
\nc{\scts}{strongly commutes\xspace}
\nc{\bce}{\eta}			% the constant appeared in braiders
\nc{\height}[1]{\Ht({#1})}
\nc{\braid}{{\ms{1mu}\mathrm{br}}}
\nc{\gp}{\mathfrak{p}}
\nc{\wtl}{\wlP}
\nc{\ra}{real and admits an affinization}
\nc{\ras}{real and admit affinizations}
\nc{\Cor}{\begin{cor}}
\nc{\encor}{\end{cor}}
\nc{\shf}{\mathcal{F}}
\nc{\Cw}{\catC_w}
\nc{\tCw}{\lRg}
\nc{\akew}[1][2ex]{\rule[-1ex]{#1}{0ex}}
\nc{\ake}[1][2ex]{\rule[-1ex]{0ex}{#1}}
\nc{\tRm}{(R\gmod)\widetilde{\mbox{$\ake[2.5ex]\akew[.9ex]$}}}
\nc{\monoTo}[1][]{\xymatrix{\ar@{>->}[r]^-{{#1}}&}}
\nc{\monoto}[1][]{\rightarrowtail}
\nc{\tX}{\widetilde{X}}
\nc{\corps}{\corp}
\nc{\tL}{\widetilde{L}}
\nc{\prtl}{\rtl_+}
\nc{\tK}{\widetilde{K}}
\nc{\tep}{\widetilde\ep}
\nc{\teps}{\widetilde\ep}
\nc{\teta}{\widetilde\eta}
\nc{\ga}{\mathfrak{a}}
\nc{\scbul}{{\,\raise1pt\hbox{$\scriptscriptstyle\bullet$}\,}}
\nc{\bwr}{\mbox{\large$\wr$}}
\nc{\tR}{\widetilde{\mathscr{R}}}
\nc{\lS}{\mathsf{S}}
\nc{\lZ}{\mathcal{Z}}
\nc{\prolim}{\mathop{\varprojlim}\limits}
\nc{\sym}{\sg}
\newcounter{myc}
\nc{\txi}{\tilde{\xi}}
\nc{\rl}{\rlQ}
\numberwithin{equation}{section}
\title[Localizations for quiver Hecke algebras]
{Localizations for quiver Hecke algebras}
\author[Masaki Kashiwara]{Masaki Kashiwara}
\thanks{The research of M.\ Ka.\
was supported by Grant-in-Aid for Scientific Research (B)
15H03608, Japan Society for the Promotion of Science.}
\address[Masaki Kashiwara]{
Kyoto University Institute for Advanced Study, Research Institute for
Mathematical Sciences, Kyoto University, Kyoto 606-8502, Japan \& 
Korea Institute for Advanced Study, Seoul 02455, Korea
}
\email[Masaki Kashiwara]{masaki@kurims.kyoto-u.ac.jp}
\author[Myungho Kim]{Myungho Kim}
\address[Myungho Kim]{Department of Mathematics, Kyung Hee University, Seoul 02447, Korea}
\email[Myungho Kim]{mkim@khu.ac.kr}
\thanks{The research of M.\ Kim was supported by the National Research Foundation of
Korea(NRF) Grant funded by the Korea government(MSIP) (NRF-2017R1C1B2007824).}
\author[Se-jin Oh]{Se-jin Oh}
\thanks{ The research of S.-j.\ Oh was supported by 
the Ministry of Education of the Republic of Korea and the
National Research Foundation of Korea (NRF-2019R1A2C4069647).}
\address[Se-jin Oh]{Department of Mathematics, Ewha Womans University, Seoul 120-750, Korea}
\email[Se-jin Oh]{sejin092@gmail.com}
\author[Euiyong Park]{Euiyong Park}
\thanks{The research of E.\ P.\ was supported by the National Research Foundation of Korea(NRF) Grant funded by the Korea Government(MSIP)(NRF-2017R1A1A1A05001058).}
\address[Euiyong Park]{Department of Mathematics, University of Seoul, Seoul 02504, Korea}
\email[Euiyong Park]{epark@uos.ac.kr}
\keywords{Categorification, Localization, Monoidal category, Quantum unipotent coordinate ring, Quiver Hecke algebra}
\subjclass[2010]{18D10, 16D90,  81R10}
\date{December 28, 2020}
\begin{document}

\maketitle

\begin{center}
{\it In Memory of Professor Bertram Kostant}
\end{center}

\begin{abstract}
In this paper, we provide a generalization of the localization procedure 
 for monoidal categories developed  in \cite{KKK18} by Kang-Kashiwara-Kim 
by introducing the notions of braiders and a real commuting family of braiders. 
Let $R$ be a quiver Hecke algebra of arbitrary symmetrizable type and $R\gmod$ the category of finite-dimensional graded $R$-modules.  
For an element $w$ of the Weyl group, $\catC_w$ is the subcategory of $R\gmod$ which categorifies the quantum unipotent coordinate algebra $\Aq[\n(w)]$.
We construct the localization $\lRg$ of $\catC_w$ 
by adding the inverses of
simple modules $\dM(w\La_i, \La_i)$
which correspond to the frozen variables in the quantum cluster algebra 
$\Aq[\n(w)]$. 
The localization $\lRg$ is left rigid and 
it is conjectured that $\lRg$ is rigid. 
\end{abstract}

\tableofcontents

\section*{Introduction}

 Khovanov-Lauda and Rouquier independently 
introduced a family  of $\Z$-graded associative algebras $R(\beta)$ over a commutative ring $\bR$, called the \emph{quiver Hecke algebras},
in order to categorify the half of a quantum group $U_q(\g)$ (\cite{KL09, R08}).
For an $R(\beta)$-module  $M$ and an $R(\gamma)$-module $N$, one can
  form an $R(\beta+\gamma)$-module $M\conv N$  from the $R(\beta)\tens R(\gamma)$-module $M\tens N$, called the \emph{convolution product},
which yields monoidal category structures on the direct sum $R\proj$ of the categories of finitely generated projective graded $R(\beta)$-modules and the direct sum $R\gmod$ of the categories of finite dimensional graded $R(\beta)$-modules.
It turns out that the Grothendieck rings $K(R\proj)$ and $K(R\gmod)$ are isomorphic to the half of quantum group $U_q^-(\g)$ and to its dual $U_q^-(\g)^*$, respectively.
Moreover, when $\g$ is symmetric and $\bR$ is a field of characteristic zero,  the set of classes of indecomposable projective graded modules and the set of classes of simple graded modules correspond to the lower and upper global basis, respectively (\cite{R11, VV09}).

The monoidal category $R\gmod$ has an interesting connection with quantum affine algebras via \emph{KLR-type quantum Schur-Weyl duality.} 
In  \cite{KKK18}, a family of functors  from $R\gmod$ to the categories of finite-dimensional modules over  quantum affine algebras was introduced. 
Briefly speaking, for a given family of simple modules over the quantum affine algebra that we are interested in, we have a procedure to obtain a quiver $\Gamma$ from the family 
 and a functor $\mathcal F$ from $R^\Gamma\gmod$ to the category of modules over the quantum affine algebra, where $R^\Gamma$ is the quiver Hecke algebra corresponding to the symmetric Kac-Moody algebra associated with the underlying graph of the  quiver $\Gamma$.
Among those functors, the case when  the quantum affine algebra is of type $A^{(1)}_{N-1}$ and the family is given as all the successive $q^{\pm 2}$-shifts of the first fundamental representation $V(\varpi_1)$,  was studied  in detail. 
In this case, the quiver Hecke algebra is of type $A_\infty$ which is denoted by $R^{A_{\infty}}$.   
Set $\mathcal A:=R^{A_{\infty}}\gmod$.  
Then the functor  $\mathcal F$  factors through the quotient category $\mathcal A/ \mathcal S_N$, where $\mathcal S_N$ denotes the kernel of the functor $\mathcal F$.
But still there are rather big differences between the category  $\mathcal A/ \mathcal S_N$ and  
the full subcategory consisting of images of $\mathcal F$, denoted by $\mathcal C^0_{A^{(1)}_{N-1}}$
: there is an infinite family of simples in $\mathcal A/ \mathcal S_N$  which are isomorphic to the unit object  (trivial representation) of $\mathcal C^0_{A^{(1)}_{N-1}}$ under $\mathcal F$, and 
the latter  is a rigid monoidal category, i.e., every object has left and right duals, whereas  the former  is not.
In an effort to obtain a more similar category to $\mathcal C^0_{A^{(1)}_{N-1}}$,
a localization of the category $\mathcal A/ S_N$ by a \emph{commuting family of central objects} was introduced in \cite{KKK18}. 
More precisely, a monoidal category $\mathcal T_N$ and a functor $\Phi :  \mathcal A/\mathcal S_N \to \mathcal T_N$ were constructed such that the functor $\mathcal F$ factors through $\Phi$ and
the simple objects in $\mathcal A/ \mathcal S_N$  which correspond to the trivial representation under $\mathcal F$ become isomorphic to the unit object in $\mathcal T_N$ under $\Phi$. 
Moreover it turned out that the category $\mathcal T_N$ is a rigid monoidal category.
One of key observations was that the family considered above forms a commuting family of central objects and this condition is enough to construct a larger category on which all the objects in the family can be inverted. 
Note that the construction in \cite[Appendix A]{KKK18} was done in an abstract setting: i.e., for an  arbitrary  monoidal category $\mathcal T$ with a commuting family of central objects, one obtains a localization of $\mathcal T$ with  desired properties.

Since the application of the localization procedure to the category $\mathcal A /\mathcal S_N$ was quite successful, it is natural to pursue further along this line  with other monoidal categories coming from $R\gmod$. 
For example, for each element $w$ in the Weyl group of $\g$, there is an interesting monoidal subcategory $\catC_w$ of $R\gmod$ whose Grothendieck ring is isomorphic to the \emph{ quantum unipotent coordinate ring} $\Aq[\n(w)]$.  Moreover the category $\catC_w$ reflects the quantum cluster algebra structure on $\Aq[\n(w)]$ in an essential way: i.e., all the cluster monomials belong to the set of  classes of simple modules  in $\catC_w$ (\cite{KKKO18}).
Note that in the quantum cluster algebra $\Aq[\n(w)]$, the frozen cluster variables are not invertible, whereas there are several works considering the version of unipotent quantum coordinate ring with invertible frozen variables (for example, see \cite{KO17, Qin17}).
Hence it is desirable to have a localization of the category $\catC_w$ on which the modules corresponding to the frozen variables are invertible. 
But the procedure in \cite[Appendix A]{KKK18} is not directly applicable to this case 
because the simple modules of $\catC_w$ corresponding to the frozen variables may not form a commuting family of central objects. 

\medskip
The first goal of this paper is to provide a generalization of the localization procedure developed  in \cite{KKK18}, which is applicable to more general cases.
We introduce a \emph{braider} and a \emph{real commuting family of braiders} which generalize a central object and a commuting family of central objects in a broad sense.
Let $\catT$ be a $\bR$-linear monoidal category with the tensor product $\tens$ and a unit object $\one$.
A pair $(C, R_C)$ of an object $C$ and a natural transformation $R_C: (C \tens -) \rightarrow (-\tens C)$  in $\catT$  is called a \emph{ (left) braider} of $\catT$, when $R_C$ is compatible with the tensor product and the unit object (see \eqref{Eq: central obj}).  Note that we do not require that $R_C(X)$ are isomorphisms which means that $C$ may not be central. 
We call a family $\{(C_i,R_{C_i})\}_{i\in I}$ of braiders in $\catT$ a \emph{real commuting family} if  $R_{C_i}(C_i) \in \bR^\times \id_{C_i\tens C_i}$ and $R_{C_j}(C_i) \circ R_{C_i}(C_j) \in \bR^\times  \id_{C_i \tens C_j}$,
which is a key difference from a commuting family of central objects.
In the case of a commuting family of central objects, the morphisms $R_{C_i}(C_i)$ and $R_{C_j}(C_i) \circ R_{C_i}(C_j)$ should be the identities. This difference seems to be small but important and essential (see Example \ref{Ex: constants}).
One of our main theorems is the existence and the uniqueness of localization of the monoidal category $\catT$ via the real commuting family $\{(C_i,R_{C_i})\}_{i\in I}$ of braiders (Theorem \ref{Thm: localization}):  
there exists a monoidal category $\lT$ and a monoidal functor $\Upsilon: \catT \rightarrow \lT$ 
such that the object $\Upsilon(C_i)$ is invertible  in $\lT$   and 
the morphisms $\Upsilon(R_{C_i}(X))\cl
\Upsilon(C_i\tens X)\to\Upsilon(X\tens C_i)$ are isomorphisms in $\lT$ for any $i\in I$ and $X\in\catT$. Moreover, the pair $(\lT,\Upsilon)$ is universal (an initial object) among those satisfying the above two conditions.
A  part of this paper is devoted to define  the compositions and the tensor products of morphisms in $\lT$   
so that $\lT$ becomes a monoidal category.
Since our main motivation is to construct  localizations of monoidal categories arising from $R\gmod$, 
we also developed the localization for \emph{graded cases}: assume that $\catT$ is a $\bR$-linear monoidal category with a decomposition
$ \catT = \bigoplus_{\lambda \in \Lambda} \catT_\lambda $ for some abelian group $\Lambda$ such that $\triv \in \catT_0$ and $\otimes$ induces a bifunctor $\catT_{\lambda} \times \catT_{\mu} \rightarrow \catT_{\lambda+\mu}$ for any $\lambda, \mu \in \Lambda$. We further assume that there is an invertible central object $q$ in $\catT_0$. 
Then we introduce a notion of \emph{graded braiders} $(C,R_C,\phi)$, which roughly means that $(C,R_C)$ is a braider with 
an abelian group homomorphism $\phi:\Lambda\to \Z$ such that $R_C(X): C\tens X \rightarrow q^{\phi(\la)} \tens X \tens C$  for $X \in \catT_\la$. 
These are modelled on 
simple modules in $R\gmod$
and homogeneous homomorphisms between convolution products. 
Then by a careful study on the degrees of morphisms, we obtain a localization of $\catT$ with analogous properties as ungraded cases.

Our second goal is to apply the localization procedure we have developed to subcategories of $R\gmod$. Assume that $R$ is a quiver Hecke algebra of \emph{arbitrary symmetrizable} type. 
A distinguished feature of the monoidal category $R\gmod$ is that every simple object  $M$ admits a  structure of  a  \emph{non-degenerate} graded braider $(M,R_M,\phi)$, which is unique in some sense (Lemma \ref{lem: c braider}). 
Thus one can consider various localizations of subcategories of $R\gmod$.
We are more interested in the  cases that  
 the real commuting family of graded braiders in a subcategory $\mathscr C$  of $R\gmod$  is originated from a family of  real  simple modules admitting \emph{affinizations}, because such a case enjoys more pleasant properties out of \emph{R-matrices}. 
For a quiver Hecke algebra of arbitrary type, the notion of affinization was developed in \cite{KP18} in order to define the \emph{R-matrices}, 
which is the distinguished nonzero homomorphism $\rmat{M,N}: q^{\La(M,N)} M\conv N \to N \conv M$ for simple $R$-modules $M$ and $N$ such that one of them admits an affinization.
Let $M$ be a real simple $R$-module with an affinization. Then we investigate the non-degenerate graded braider structure $(M,R_M,\phi)$, which  is closely related to the R-matrices. 
 Proposition \ref{prop:effB} tells that the morphism $R_M(N)$  for a simple module $N$ is either zero or equal to $\rmat{M,N}$ up to a constant multiple, and 
 Lemma \ref{lem:realbrai} gives a criterion to check when given non-degenerate braiders $ (S_a, R_{S_a}, \phi) $ form a real commuting family of graded braiders.   
It is shown in Proposition \ref{prop:str} that if $(C_a,R_{C_a},\phi_a)_{a\in A}$ is a real commuting family of non-degenerate graded braiders in $\catC$ such that every $C_a$ admits an affinization, then the localization functor $\Phi :\catC \to \tcatC$ sends a simple $S$ to a simple or zero.

Thus 
 it is important  to determine whether a given simple module admits an affinizations or not.
 Note that if the quiver Hecke algebra $R$ is of symmetric type, then there is a natural affinization for any module in $R\gmod$ (\cite{KKK18}). But in general, finding affinizations of a given $R$-module is a nontrivial problem.
 In this paper, we prove that a special family of simple modules, called the \emph{determinantial modules}, admit affinizations (Theorem \ref{Thm: aff for dM}). 
The determinantial module $\dM(\la,\mu)$ associated with a pair of weights $\la, \mu$ in the Weyl group orbit of a dominant integral weight of $\g$  is a  simple $R$-module
  corresponding to  the \emph{unipotent quantum minors} $D(\la,\mu)$, which is a certain element in the upper global basis (\cite{KKOP18}). 
To prove existence of affinizations of determinantial modules, we introduce and investigate the polynomials $\chi_i({M})$, $\lep_i({M})$ and $\lep^*_i({M})$ associated with an $R$-module ${M}$ (Definition \ref{Def: lc lep}). 
Note that $\lep_i(\widehat{L})$ and $\lep^*_i(\widehat{L})$ 
can be viewed as lifts of $\ep_i(L)$ and $\ep_i^*(L)$ when $\widehat{L}$ is an affinization of a simple module $L$.
Then we consider the determinantial module $\dM(\la,\mu; a_\Lambda)$ using the cyclotomic quiver Hecke algebra $R_A^{a_\Lambda}$ over the polynomial ring $A = \bR[z]$, and prove that 
$\dM(\la,\mu; a_\Lambda)$ is an affinization of $\dM(\la,\mu)$ by studying the inductive formula for $\lep_i( \dM(\la,\La; a_\Lambda))$ given in Proposition \ref{prop: lep for dM}.
Recall that the ring $\Aq[\mathfrak n(w)]$ has a set of  generators consisting of some unipotent quantum minors.
 In particular, the determinantial modules
$\{\dC_i:=\dM(w \Lambda_i, \Lambda_i) \}_{i \in I}$ belong to the category $\catC_w$ and they correspond to the frozen variables of the quantum cluster algebra $\Aq[\mathfrak n(w)]$.
Proposition \ref{Prop: canonical braiders} tells that $\{(\dC_i, R_{\dC_i}, \phi_i) \}_{i \in I}$ is a real commuting family of non-degenerate graded braiders in the category $\catC_w$,
which actually are central in $\catC_w$ (Theorem \ref{Thm: R Ci iso}).
Thus there exists a localization  $\tcatC_w$ of $\catC_w$ by the family $\{(\dC_i, R_{\dC_i}, \phi_i) \}_{i \in I}$ 
 with the canonical functor $\Phi\cl \catC_w \to \lRg$, which was one of our primary goals. 
It follows that the Grothendieck ring $K(\lRg)$ is a localization of the quantum cluster algebra $\Aq[\n(w)]$ at the set of frozen variables.

In the remaining part of the paper, we  investigate rigidity for the localization $\lRg$.
Surprisingly the localization $\lRg$ is left rigid, i.e., every object of $\lRg$ has a left dual, and every simple objects of $\lRg$ has a right dual. 
We conjecture that $\lRg$ is rigid, i.e., every object of $\lRg$ has left and right dual (Conjecture \ref{Conj: rigid tCw}).
We first show that every simple object in $\lRg$ has a right dual (Theorem \ref{thm:rightdual}) by using the  \emph{$T$-system}  appeared in \cite[Proposition 3.2]{GLS13} crucially.
Then, we use the localization $\widetilde{\mathscr{R}}$ of the whole category $R\gmod$ via the same family $\{(\dC_i, R_{\dC_i}, \phi_i) \}_{i \in I}$.
 Note that the generalization from the central objects to the braiders is essential  for the localization $\widetilde{\mathscr{R}}$,  since the braiders $(\dC_i, R_{\dC_i})$ are not central in $\widetilde{\mathscr{R}}$ anymore.
There are some advantage of working on this auxiliary category:   every finite-dimensional quotient of the affinization of $L(i)$  has a left dual in the category $\widetilde{\mathscr{R}}$ (Theorem \ref{th:gendual}). 
Since any nonzero module in $R\gmod$ is a cokernel of a homomorphism between some direct sums of convolution products of such quotients, it follows that every object in $\widetilde{\mathscr{R}}$ has a left dual.
Moreover, those left duals belong to the category $\lRg$ (Theorem \ref{th: left dual}). 
We next  prove that the functor $\lRg \to \widetilde{\mathscr{R}}$ induced from the embedding $\catC_w \to R\gmod$ is actually an equivalence of categories (Theorem \ref{Thm: eqiv R C}), so that the left rigidity of $\lRg$ follows. 
When $\g$ is of finite type and $w$ is the longest element $w_0$, 
we prove that Conjecture \ref{Conj: rigid tCw} is true, i.e., $\lRg[w_0]$ is rigid  (Theorem \ref{th:fin}).
 Moreover, the functor $\lD$ given by taking the left dual is 6-periodic on the set of simple objects  in this case (Proposition \ref{prop:periodic}).

Finally we remark that the rigidity of the localization $\lRg$
is closely related with the \emph{twist automorphism} on the coordinate ring  $\C[N^w]$ 
of the unipotent cells
and its quantum analogues (see \cite{BFZ96,  BD15, BZ97, GY16, GLS12, KO17}).
The twist automorphism was introduced in order to give formulas for the inverses  of parameterizations of Lusztig's totally positive parts of unipotent subgroups and Schubert varieties (\cite{BFZ96, BZ97}). 
When $\g$ is of symmetric type, 
the additive categorification of the twist automorphism  was given in \cite{GLS12} using 
the Sykes's functor $\Omega_w$ on the Frobenius subcategory associated with $w$ 
of the category of representations of a preprojective algebra.  It is well-known that  the functor $\Omega_w$ is 6-periodic when $\g$ is of finite type and $w$ is the longest element (\cite{AR96, ES98}). Thus the twist automorphism also has a periodicity in those cases. 
In \cite{KO17},  Kimura-Oya   constructed 
the quantum twist automorphism 
on the algebra $A_q[N_-^w]$ which is isomorphic to $\Q(q)\tens_{\Z[q^{\pm1}]}K(\tCw)$.
They showed that the  specialization at $q=1$ recovers the twist automorphism on $\C[N^w]$  and 
it is $6$-periodic when $\g$ is of finite type and $w$ is the longest element. 
Moreover,   it preserves the upper global basis.
Based on these properties, they
 asked whether the quantum twist automorphism
can be categorified by using finite-dimensional modules over quiver Hecke algebra or not (\cite[Remark 7.27]{KO17}).
The answer is affirmative, because  the quantum twist automorphism corresponds to the map induced from the left dual functor $\lD$ at the Grothendieck group level, up to the antiautomorphism which fixes classes of simple objects (see the formula in \cite[Theorem 6.1]{KO17}).
In this sense, we may say that the duality functor on $\tCw$ is a \emph{monoidal} categorification of the quantum twist automorphism in \cite{KO17}.

\medskip

This paper is organized as follows:
In Section 1, we recall quantum groups, quiver Hecke algebras, the category $\catC_w$ and some generalities on monoidal categories.
In Section 2, we develop the localization of monoidal categories via a real commuting family of  braiders.  
In Section 3, we recall the affinizations and $R$-matrices of quiver Hecke algebras and prove that the determinantial modules admit affinizations.
In Section 4, we study the braiders in $R\gmod$ and the localization of subcategories of $R\gmod$ in detail. 
In Section 5, we show that  the localization $\lRg$ of the category $\catC_w$ is left rigid and every simple objects in it has a right dual. In finite type $\g$ with the longest element $w_0$ case, we showed that $\lRg[w_0]$ is rigid and the duality functors are 6-periodic.

\medskip
{\bf Acknowledgments}

We thank Yoshiyuki Kimura for informing us his result with Hironori Oya and for fruitful communications, 
and the first author thank Christof Gei\ss\ and Bernard Leclerc for 
fruitful discussion.
The second, third and fourth authors gratefully acknowledge for the hospitality of RIMS (Kyoto University) during their visit in 2018.
The authors also would like to thank the anonymous reviewer for valuable comments.

\vskip 2em

\section{Preliminaries} \label{Sec: Preliminaries}

\subsection{Quantum groups}\

Let $I$ be an index set.
A {\it Cartan datum} $ (\cmA,\wlP,\Pi,\Pi^\vee,(\cdot,\cdot) ) $
consists of
\begin{enumerate}[{\rm (i)}]
\item a free abelian group $\wlP$, called the {\em weight lattice},
\item $\Pi = \{ \alpha_i \mid i\in I \} \subset \wlP$,
called the set of {\em simple roots},
\item $\Pi^{\vee} = \{ h_i \mid i\in I \} \subset \wlP^{\vee}\seteq
\Hom( \wlP, \Z )$, called the set of {\em simple coroots},
\item a $\Q$-valued symmetric bilinear form $(\cdot,\cdot)$ on $\wlP$,
\end{enumerate}
which satisfy
\begin{enumerate} [{\rm (a)}]
\item  $(\alpha_i,\alpha_i)\in 2\Z_{>0}$ for $i\in I$,
\item $\langle h_i, \lambda \rangle =\dfrac{2(\alpha_i,\lambda)}{(\alpha_i,\alpha_i)}$ for $i\in I$ and $\lambda \in \Po$,
\item $\cmA \seteq (\langle h_i,\alpha_j\rangle)_{i,j\in I}$ is
a {\em generalized Cartan matrix}, i.e.,
$\langle h_i,\alpha_i\rangle=2$ for any $i\in I$ and
$\langle h_i,\alpha_j\rangle \in\Z_{\le0}$ if $i\not=j$,
\item $\Pi$ is a linearly independent set,
\item for each $i\in I$, there exists $\Lambda_i \in \wlP$
such that $\langle h_j, \Lambda_i \rangle = \delta_{ij}$ for any $j\in I$.
\end{enumerate}
Let $\Delta$ (resp.\ $\prD$, $\nrD$) be the set of roots  (resp.\ positive roots, negative roots).
We set $\wlP_+ \seteq  \{ \lambda \in \wlP \mid  \langle h_i, \lambda\rangle \ge 0  \text{ for }i\in I\}$,
$ \rlQ = \bigoplus_{i \in I} \Z \alpha_i$, and $ \rlQ_+ = \sum_{i\in I} \Z_{\ge 0} \alpha_i$,
and write $\Ht (\beta)=\sum_{i \in I} k_i$  for $\beta=\sum_{i \in I} k_i \alpha_i \in \rlQ_+$.
For $i\in I$, we define
$$s_i(\lambda)=\lambda-\langle h_i, \lambda\rangle\alpha_i \qquad
\text{for $\lambda\in \wlP$},$$
and $\weyl$ is the subgroup of $\mathrm{Aut}(\wlP)$ generated by
$\{s_i\}_{i\in I}$.
For $w,v \in \weyl$, we write $w \ge v$
if there exists a reduced expression of $v$ which
appears in a subexpression of a reduced expression of $w$.

 For $\la$, $\mu\in\wlP$, we write
$\la\wle\mu$ if there exists a sequence of
real positive roots $\beta_k$ ($1\le k\le \ell$)
such that $\la=s_{\beta_\ell}\cdots s_{\beta_1}\mu$ and
$(\beta_k,s_{\beta_{k-1}}\cdots s_{\beta_1}\mu)>0$ for $1\le k\le\ell$.
Note that, for $\La\in\wlP_+$ and $\la$, $\mu\in \weyl\La$,
$\la\wle\mu$ holds if and only if there exist
$w$, $v\in\weyl$ such that
$\la=w\La$, $\mu=v\La$ and $v\le w$.

Let $U_q(\g)$ be the quantum group associated with $ (\cmA,\wlP,\Pi,\Pi^\vee,(\cdot,\cdot) ) $, which is an associative algebra over $\Q(q)$ generated by $e_i$, $f_i$ $(i\in I)$ and $q^h$
$(h\in \wlP^\vee)$ with certain defining relations (see \cite[Chap.\ 3]{HK02} for details).
Let $*$ be the $\Q(q)$-antiautomorphism of $U_q(\g)$ defined by
\[
e_i^* = e_i,\qquad f_i^* = f_i, \qquad {(q^h)}^* = q^{-h}.
\]
We denote by $U_q^+(\g)$ (resp.\ $U_q^-(\g)$) the subalgebra of $U_q(\g)$ generated by $e_i $ (resp.\ $f_i $) for $i\in I$.
For $n\in \Z_{\ge 0}$ and $i \in I$, we set $e_i^{(n)} \seteq e_i^n / [n]_i!$ and $f_i^{(n)} \seteq f_i^n / [n]_i!$,
where
\begin{align*}
q_i = q^{ (\alpha_i, \alpha_i)/2 }, \qquad
  [n]_i =\frac{ q^n_{i} - q^{-n}_{i} }{ q_{i} - q^{-1}_{i} } \quad  \text{ and } \quad
 [n]_i! = \prod^{n}_{k=1} [k]_i.
 \end{align*}
Let $U_\Zq^+(\g)$ (resp.\ $U_\Zq^-(\g)$) be the $\Zq$-subalgebra of $U_q^-(\g)$ generated by $e_i^{(n)}$ (resp.\ $f_i^{(n)}$) for $i\in I$ and $n \in \Z_{\ge0}$.
The \emph{unipotent quantum coordinate ring} is defined by
\[
\Aq[\n] = \bigoplus_{\beta \in \rlQ_-} \Aq[\n]_\beta, \qquad \text{ where $\Aq[\n]_\beta\seteq \Hom_{\Q(q)}(U_q^+(\g)_{-\beta}, \Q(q))$,}
\]
which is isomorphic to $U_q^-(\g)$ as a $\Q(q)$-algebra (\cite[Lemma 8.2.2]{KKKO18}).

We define its $\Z[q,q^{-1}]$-form $\Aq[\n]_{\Z[q,q^{-1}]}$ by
$$\Aq[\n]_{\Z[q,q^{-1}]}=\set{a\in \Aq[\n]}{\ang{a, U_\Zq^+(\g)}\subset\Z[q,q^{-1}]}.$$

\subsection{Quiver Hecke algebras}\

Let $\bR$ be a field.
 For $i,j\in I$, we choose polynomials
$\qQ_{i,j}(u,v) \in \bR[u,v]$ such that
\bna

\item $\qQ_{i,j}(u,v) = \qQ_{j,i}(v,u)$,

\item it is of the form
\begin{align*}
\qQ_{i,j}(u,v) =\bc
                   \sum\limits
_{p(\alpha_i , \alpha_i) + q(\alpha_j , \alpha_j) = -2(\alpha_i , \alpha_j) } t_{i,j;p,q} u^pv^q &
\text{if $i \ne j$,}\\[1.5ex]
0 & \text{if $i=j$,}
\ec
\end{align*}
where  $t_{i,j;-a_{ij},0} \in  \bR^{\times}$.
\end{enumerate}
For $\beta\in \rlQ_+$ with $ \Ht(\beta)=n$, we set
$$
I^\beta\seteq  \Bigl\{\nu=(\nu_1, \ldots, \nu_n ) \in I^n \;\big\vert\; \sum_{k=1}^n\alpha_{\nu_k} = \beta \Bigr\},
$$
on which the symmetric group $\mathfrak{S}_n = \langle s_k \mid k=1, \ldots, n-1 \rangle$ acts  by place permutations.

\begin{df}
\ For $\beta\in\rlQ_+$,
the {\em quiver Hecke algebra} $R(\beta)$ associated with $\cmA$ and $(\qQ_{i,j}(u,v))_{i,j\in I}$
is the $\bR$-algebra generated by
$$
\{e(\nu) \mid \nu \in I^\beta \}, \; \{x_k \mid 1 \le k \le n \},
 \; \{\tau_l \mid 1 \le l \le n-1 \}
$$
satisfying the following defining relations:
\begin{align*}
& e(\nu) e(\nu') = \delta_{\nu,\nu'} e(\nu),\ \sum_{\nu \in I^{\beta}} e(\nu)=1,\
x_k e(\nu) =  e(\nu) x_k, \  x_k x_l = x_l x_k,\\
& \tau_l e(\nu) = e(s_l(\nu)) \tau_l,\  \tau_k \tau_l = \tau_l \tau_k \text{ if } |k - l| > 1, \\[5pt]
&  \tau_k^2 e(\nu) = \qQ_{\nu_k, \nu_{k+1}}(x_k, x_{k+1}) e(\nu), \\[5pt]
&  (\tau_k x_l - x_{s_k(l)} \tau_k ) e(\nu) = \left\{
                                                           \begin{array}{ll}
                                                             -  e(\nu) & \hbox{if } l=k \text{ and } \nu_k = \nu_{k+1}, \\
                                                               e(\nu) & \hbox{if } l = k+1 \text{ and } \nu_k = \nu_{k+1},  \\
                                                             0 & \hbox{otherwise,}
                                                           \end{array}
                                                         \right. \\[5pt]
&( \tau_{k+1} \tau_{k} \tau_{k+1} - \tau_{k} \tau_{k+1} \tau_{k} )  e(\nu) \\[4pt]
&\qquad \qquad \qquad = \left\{
                                                                                   \begin{array}{ll}
\bQ_{\,\nu_k,\nu_{k+1}}(x_k,x_{k+1},x_{k+2}) e(\nu) & \hbox{if } \nu_k = \nu_{k+2}, \\
0 & \hbox{otherwise}, \end{array}
\right.\\[5pt]
\end{align*}
\end{df}
where
\begin{align*}
\bQ_{i,j}(u,v,w)=\dfrac{ \qQ_{i,j}(u,v)- \qQ_{i,j}(w,v)}{u-w}\in \bR[u,v,w].
\end{align*}
The algebra $R(\beta)$ has the $\Z$-grading defined by
\begin{align*}
\deg(e(\nu))=0, \quad \deg(x_k e(\nu))= ( \alpha_{\nu_k} ,\alpha_{\nu_k}), \quad  \deg(\tau_l e(\nu))= -(\alpha_{\nu_{l}} , \alpha_{\nu_{l+1}}).
\end{align*}

Note that  we have
\eq
\tau_ka(x)e(\nu)=
s_k\bl a(x) \br\tau_k e(\nu)+\delta(\nu_k=\nu_{k+1})\bl\partial_k a(x)\br e(\nu)
\label{eq:taupol}
\eneq
for $a(x)\in\corp[x_1,\ldots,x_n]$.
Here,  $\delta(P) =1$ if $P$ is true and $ \delta(P) = 0$ otherwise,  and  $\partial_k$ is the difference operator
\eqn
\partial_k a(x)=\dfrac{s_k a(x)-a(x)}{x_k-x_{k+1}}.
\eneqn

We denote by $R(\beta) \Mod$  the category of graded $R(\beta)$-modules with degree preserving homomorphisms.
We set $R(\beta)\gmod$ to be the full subcategory of $R(\beta)\Mod$ consisting of the modules which are  finite-dimensional over $\bR $, and set
 $R(\beta)\proj$ to be the full subcategory of $R(\beta)\Mod$ consisting of finitely generated  projective graded $R(\beta)$-modules.
We set $R\Mod \seteq \bigoplus_{\beta \in \rlQ_+} R(\beta)\Mod$, $R\proj \seteq \bigoplus_{\beta \in \rlQ_+} R(\beta)\proj$, and  $R\gmod \seteq \bigoplus_{\beta \in \rlQ_+} R(\beta)\gmod$.
The trivial $R(0)$-module of degree 0 is denoted by $\trivialM$.
For simplicity, we write ``a module" instead of ``a graded module''.
We define the grading shift functor $q$
by $(qM)_k = M_{k-1}$ for a graded module $M = \bigoplus_{k \in \Z} M_k $.
For $M, N \in R(\beta)\Mod $, $\Hom_{R(\beta)}(M,N)$ denotes the space of degree preserving module homomorphisms.
We define
\[
\HOM_{R(\beta)}( M,N ) \seteq \bigoplus_{k \in \Z} \Hom_{R(\beta)}(q^{k}M, N),
\]
and set $ \deg(f) \seteq k$ for $f \in \Hom_{R(\beta)}(q^{k}M, N)$.
When $M=N$, we write $\END_{R(\beta)}( M ) = \HOM_{R(\beta)}( M,M)$.
We sometimes write $R$ for $R(\beta)$ in $\HOM_{R(\beta)}( M,N )$ for simplicity.

For $M \in R(\beta)\gmod$, we set $M^\star \seteq  \HOM_{\bR}(M, \bR)$ with the $R(\beta)$-action given by
$$
(r \cdot f) (u) \seteq  f(\psi(r)u), \quad \text{for  $f\in M^\star$, $r \in R(\beta)$ and $u\in M$,}
$$
where $\psi$ is the antiautomorphism of $R(\beta)$ which fixes the generators.
We say that $M$ is \emph{self-dual} if $M \simeq M^\star$.

Let
$
e(\beta, \beta') \seteq \sum_{\nu \in I^\beta, \nu' \in I^{\beta'}} e(\nu, \nu'),
$
where $e(\nu, \nu')$ is the idempotent corresponding to the concatenation
$\nu\ast\nu'$ of
$\nu$ and $\nu'$.
Then there is an injective ring homomorphism
$$R(\beta)\tens R(\beta')\to e(\beta,\beta')R(\beta+\beta')e(\beta,\beta')$$
given by
$e(\nu)\tens e(\nu')\mapsto e(\nu,\nu')$,
$x_ke(\beta)\tens 1\mapsto x_ke(\beta,\beta')$,
$1\tens x_ke(\beta')\mapsto x_{k+\Ht(\beta)}e(\beta,\beta')$,
$\tau_ke(\beta)\tens 1\mapsto \tau_ke(\beta,\beta')$ and
$1\tens \tau_ke(\beta')\mapsto \tau_{k+\Ht(\beta)}e(\beta,\beta')$.
For $a\in R(\beta)$ and $a'\in R(\beta')$, the image of $a\tens a'$ is sometimes denoted by $a\etens a'$.

For $M \in R(\beta)\Mod$ and $N \in R(\beta')\Mod$, we set
$$
M \conv N \seteq R(\beta+\beta') e(\beta, \beta') \otimes_{R(\beta) \otimes R(\beta')} (M \otimes N).
$$
For $u\in M$ and $v\in N$, the image of
$u\tens v$ by the map $M\tens N\to M\conv N$ is sometimes denoted by
$u\etens v$.

We denote by $M \hconv N$ the head of $M \conv N$ and by $M \sconv N$ the socle of $M \conv N$.
We say that simple $R$-modules $M$ and $N$ \emph{strongly commute} if $M \conv N$ is simple.  A simple $R$-module
$L$ is \emph{real} if $L$ strongly commutes with itself. Note that if $M$ and $N$ strongly commute, then $M\conv N \simeq N \conv M$ up to a grading shift.

For $i\in I$ and the functors $E_i$ and $F_i$ are defined by
$$
E_i(M) = e(\alpha_i, \beta-\al_i) M \quad \text{ and }\quad F_i(M) = R(\alpha_i) \conv M \qt{for an $R(\beta)$-module $M$.} $$
For $i\in I $ and $n\in \Z_{>0}$, let $L(i)$ be the simple $R(\alpha_i)$-module concentrated on  degree 0 and
 $P(i^{n})$  the indecomposable  projective $R(n \alpha_i)$-module
whose head is isomorphic to $L(i^n) \seteq q_i^{\frac{n(n-1)}{2}} L(i)^{\conv n}$. 
For a simple module $M$, $\tF_i(M)$ (resp.\ $\tE_i(M)$) is the self-dual simple $R$-module being isomorphic to $L(i) \hconv M$ (resp.\ $\mathrm{soc}(E_i M)$). 
For $M\in  R(\beta) \Mod $, we define
\[
F_i^{(n)} M \seteq  P(i^{n}) \conv M, \qquad
E_i^{(n)} M \seteq \HOM_{R(n\alpha_i)} ( P(i^{n}), e(n\alpha_i, \beta - n\alpha_i) M).
\]
For $i \in I$ and $M \in R(\beta)\Mod$, we define
\begin{align*}
\wt(M) = - \beta, \ \   \ep_i(M) = \max \{ k \ge 0 \mid E_i^k M \ne 0 \}, \ \  \ph_i(M) = \ep_i(M) + \langle h_i, \wt(M) \rangle.
\end{align*}
We also can define $E_i^*$, $F_i^*$, $\ep^*_i$, etc in the same manner as above if we replace the roles of $e(\alpha_i, \beta-\al_i)$ and $R(\alpha_i)\conv -$ with the ones of
$e(\beta-\al_i, \alpha_i)$ and $- \conv R(\alpha_i)$.

We denote by $K(R\proj)$ and $K(R\gmod)$ the Grothendieck groups of $R\proj$ and $R\gmod$ respectively.

\begin{thm} [{\cite{KL09, KL11, R11}}] \label{Thm: categorification}
There exist isomorphisms of $\Zq$-bialgebras
\[
K(R\proj) \simeq  U_\Zq^-(\g) \quad \text{and} \quad K(R\gmod) \simeq  \Aqq[\n] .
\]
\end{thm}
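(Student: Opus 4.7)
The plan is to construct the first isomorphism via the map $\Psi\cl U_\Zq^-(\g) \to K(R\proj)$ sending the divided power $f_i^{(n)}$ to the class of the indecomposable projective $[P(i^n)]$, and then deduce the second isomorphism by duality. First I would equip $K(R\proj)$ with the structure of a $\Zq$-bialgebra: the product is given by the convolution $[P]\cdot[Q] = [P\conv Q]$ (equivalently, induction from $R(\beta)\tens R(\gamma)$ to $R(\beta+\gamma)$ via the idempotent $e(\beta,\gamma)$), while the coproduct is given by restriction $\Res^{\beta+\gamma}_{\beta,\gamma}(P) = e(\beta,\gamma)P$, with the appropriate $q$-twist so that the result lies in $K(R\proj)\tens K(R\proj)$ as a coalgebra in Lusztig's sense. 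The grading of $R(\beta)$ induces the $\Zq$-module structure via the grading shift $q$.

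The core of the argument consists of verifying that $\Psi$ is a well-defined homomorphism of $\Zq$-algebras by checking that the quantum Serre relations
\[
\sum_{k=0}^{1-\langle h_i,\alpha_j\rangle}(-1)^k f_i^{(k)}f_jf_i^{(1-\langle h_i,\alpha_j\rangle-k)}=0\qquad(i\ne j)
\]
are lifted to isomorphisms (or acyclic complexes) of projective modules built from the $P(i^k)\conv P(j)\conv P(i^{1-\langle h_i,\alpha_j\rangle-k})$. These categorified Serre relations follow from the explicit description of $\qQ_{i,j}(u,v)$ together with the key computation of the endomorphism ring $\END_{R(n\alpha_i)}(P(i^n))\simeq \bR[x_1,\dots,x_n]^{\sg_n}$, which implements the divided power. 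Compatibility with the coproduct reduces to a Mackey-type filtration of $e(\beta',\gamma')\bl R(\beta+\gamma)e(\beta,\gamma)\br$ indexed by shuffles, matching the coproduct formula on $U_q^-(\g)$.

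Once $\Psi$ is shown to be a bialgebra morphism, surjectivity follows because $K(R\proj)$ is generated as an algebra by the classes $[P(i)]$: every indecomposable projective $P$ appears as a direct summand of an iterated convolution $P(i_1)\conv\cdots\conv P(i_n)$, since $R(\beta)$ itself is such an induced module up to idempotents. For injectivity I would exploit the non-degenerate $\Zq$-bilinear pairing
\[
\bl\cdot,\cdot\br\cl K(R\proj)\times K(R\gmod)\To \Zq,\qquad ([P],[M])\mapsto \gdim\HOM_R(P,M),
\]
and compare it with Lusztig's pairing on $U_\Zq^-(\g)\times \Aqq[\n]$; matching the two pairings on the generators and using non-degeneracy rules out any kernel of $\Psi$.

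The second isomorphism $K(R\gmod)\simeq\Aqq[\n]$ is then obtained for free by dualising: since $\Aqq[\n]$ is by definition the graded dual of $U_\Zq^-(\g)$ with respect to Lusztig's pairing, and $K(R\gmod)$ is the graded dual of $K(R\proj)$ with respect to $\bl\cdot,\cdot\br$, the transpose of $\Psi$ yields the desired $\Zq$-bialgebra isomorphism. The main obstacle, as in \cite{KL09,KL11,R08}, is the precise categorification of the Serre relations; all other steps are relatively formal once the combinatorics of the idempotents $e(\nu)$ and the Mackey filtration are in place.
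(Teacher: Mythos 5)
The paper does not prove Theorem 1.4 itself; it is cited from \cite{KL09, KL11, R11}, and your proposal is a faithful outline of the argument in those sources: define $\Psi$ on divided powers, twist the restriction functor to make $K(R\proj)$ a twisted bialgebra, categorify the Serre relations, match the Mackey filtration to the coproduct, and then use the perfect pairing $\gdim\HOM_R(-,-)$ to deduce injectivity and to transpose the result to $K(R\gmod)\simeq\Aqq[\n]$.

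One point to tighten: you say the categorified Serre relations ``follow from the explicit description of $\qQ_{i,j}(u,v)$ together with the computation $\END_{R(n\alpha_i)}(P(i^n))\simeq \bR[x_1,\dots,x_n]^{\sg_n}$.'' The endomorphism-ring computation only handles the single-index block $R(n\alpha_i)$ (the nilHecke algebra) and yields the divided-power relation $[P(i)]^n=[n]_i!\,[P(i^n)]$; the Serre relation lives in $R\bigl((1-\langle h_i,\alpha_j\rangle)\alpha_i+\alpha_j\bigr)$ and requires the separate analysis of the idempotent decomposition of that algebra (using $\qQ_{i,j}$ and the $\bQ_{i,j}$ braid relation), not just the rank-one computation. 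Similarly, the surjectivity step deserves a word of care: showing that every indecomposable projective is a summand of some $P(\nu_1)\conv\cdots\conv P(\nu_n)$ gives $\Q(q)$-spanning, but to get $\Zq$-surjectivity one argues via the dual side (equivalently, via the unitriangular change of basis between monomials in the $[P(i^n)]$ and the indecomposable projectives), exactly as in the cited sources. With these caveats made explicit, your plan is the standard and correct one.
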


For an $R(\beta)$-module $M$, we write
\eqn
e(i_1^{m_1}, \ldots, i_t^{m_t}, *) M &&\seteq  e( m_1 \alpha_{ i_1}, \ldots, m_t \alpha_{ i_t},  \beta - \sum_{k=1}^t  m_k \alpha_{  i_k } )M,\\[-1ex]
e(*,i_1^{m_1}, \ldots, i_t^{m_t}) M &&\seteq  e( \beta - \sum_{k=1}^t  m_k \alpha_{  i_k }, m_1 \alpha_{ i_1}, \ldots, m_t \alpha_{ i_t} )M.\\
\eneqn

For a sequence $(i_1,\ldots, i_m)$ in $I$ such that $(\al_{i_k},\al_{i_{k+1}})<0$
($1\le k\le m-1$) and $\al_{i_k}\not=\al_{i_{k+2}}$ ($1\le k\le m-2$), we denote by
$L(i_1,\ldots, i_m)$ the one-dimensional $R(\beta)$-module
($\beta=\sum_{k=1}^m\al_{i_k}$) with a basis $u$ on which $R(\beta)$ acts by
$e(i_1,\ldots ,i_m)u=u$, $x_ku=0$, $\tau_ku=0$.

\subsection{Cyclotomic quiver Hecke algebras} \label{Sec: cyclotomic} \

Let $A = \bigoplus_{n \in \Z}A_n$ be a commutative graded $\bR$-algebra such that $A_n = 0 $ for $n < 0$. For $\beta \in \rlQ_+$, we define the graded algebra
$R_A(\beta)$ by
$$
R_A(\beta) \seteq  A \otimes_\bR R(\beta).
$$
Let $\Lambda \in \wlP_+$ and $t_i$  an indeterminate of degree $(\alpha_i, \alpha_i )$ for $i\in I$.
A polynomial $f(t_i) = \sum_{ k=0}^{n} c_{k} t_i^k \in A[t_i]$
is \emph{homogeneous} of degree $d$ if 
$c_k\in A_{d-(\al_i,\al_i)k}$. 
We say that $f(t_i)$ is {\em quasi-monic} if $c_n\in A_0^\times$.
In this case, we write
$$
\deg f(t_i) = d \qtq\deg_{t_i} f(t_i)=n.
$$

We choose a family of polynomials
$a_{\Lambda}\seteq\{a_{\Lambda,i}(t_i)\}_{i\in I}$, where
$a_{\Lambda,i}(t_i)\in A[t_i]$ is a quasi-monic homogeneous polynomial with $ \deg_{t_i} a_{\Lambda, i} (t_i) =  \lan h_i,\Lambda\ran$.
Note that $\deg (  a_{\Lambda, i} (t_i)  ) = 2(\alpha_i, \Lambda)$.
Let $\lambda\in\Lambda-\rlQ_+$, and write $ \beta \seteq  \Lambda - \lambda$ and $n\seteq \Ht(\beta)$.
We define the \emph{cyclotomic quiver Hecke algebra}
\eq
R_A^{a_\Lambda}(\lambda)\seteq\dfrac{R_A(\beta)}{ \sum_{i\in I} R_A(\beta)a_{\Lambda, i}(x_n e(\beta-\alpha_i, \alpha_i))R_A(\beta)}.
\label{eq:cyclo}
\eneq
The algebra $R_A^{a_\Lambda}(\lambda)$ is a finitely generated projective module over $A$ by \cite[Remark 4.20]{KK11}.

Let $R_A^{a_\Lambda}(\lambda) \Mod$ be the category of graded $R_A^{a_\Lambda}(\lambda)$-modules.
We denote by $R_A^{a_\Lambda}(\lambda)\allowbreak\proj$ and  
 $R_A^{a_\Lambda}(\lambda)\gmod$
 the category of finitely generated  projective graded $R_A^{a_\Lambda}(\lambda)$-modules and
the category of graded $R_A^{a_\Lambda}(\lambda)$-modules which are finite-dimensional over $\bR$, respectively.
Their morphisms are homogeneous homomorphisms of degree zero.
Set $R_A^{a_\Lambda}\proj \seteq \bigoplus_{\beta \in \rlQ_+} R_A^{a_\Lambda}(\Lambda - \beta)\proj$, $R_A^{a_\Lambda}\gmod \seteq \bigoplus_{\beta \in \rlQ_+} R_A^{a_\Lambda}(\Lambda - \beta)\gmod$, etc.
We define the functors
\begin{align*}
F_i^{a_\Lambda} &:  R_A^{a_\Lambda}(\lambda)\Mod \rightarrow  R_A^{a_\Lambda}(\lambda-\alpha_i)\Mod , \\
E_i^{a_\Lambda} &:  R_A^{a_\Lambda}(\lambda)\Mod \rightarrow R_A^{a_\Lambda}(\lambda+\alpha_i)\Mod
\end{align*}
by
$
F_i^{a_\Lambda}M = R_A^{a_\Lambda}(\lambda-\alpha_i)e(\alpha_i,\beta)\otimes_{R_A^{a_\Lambda}(\lambda)}M $ and $
E_i^{a_\Lambda}M = e(\alpha_i,\beta-\alpha_i)M
$
for $M\in R_A^{a_\Lambda} (\lambda)\Mod$.
They give a categorification of  
the simple $\Uq$-module $V_q(\Lambda)$ with highest weight $\La$. 

When $A = \bR$ and $a_{\La, i}(t_i) = t_i^{\langle h_i, \Lambda \rangle}$ for $i\in I$, we simply write $R^\Lambda(\lambda)$, $F_i^\Lambda$, $E_i^\Lambda$ instead of $R_A^{a_\Lambda}(\lambda)$, $F_i^{a_\Lambda}$, $E_i^{a_\Lambda}$ respectively.

\begin{thm}[{\cite[Theorem 6.2]{KK11}}]
For $\La\in \wlP^+$, there exist $U_\Zq (\g)$-module isomorphisms
\[
K(R^{\Lambda}\proj) \simeq V_\Zq (\Lambda), \quad K(R^\Lambda\gmod) \simeq V_\Zq (\Lambda)^\vee.
\]
\end{thm}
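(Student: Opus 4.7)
The plan is to categorify the action of $U_q(\g)$ on $V_\Zq(\Lambda)$ by establishing that the functors $E_i^\Lambda$, $F_i^\Lambda$ satisfy quantum Serre-type relations after passing to the Grothendieck group, and then identify the resulting module with $V_\Zq(\Lambda)$ by a highest-weight-plus-dimension argument. Throughout, I will work in the case $A=\bR$ and $a_{\Lambda,i}(t_i)=t_i^{\lan h_i,\Lambda\ran}$, since this is what the theorem asserts. Because $R^\Lambda(\lambda)$ is finite-dimensional (it is the cyclotomic quotient in the classical sense), $R^\Lambda\proj$ and $R^\Lambda\gmod$ are Koszul-dual in an obvious way and it will be enough to prove the first isomorphism; the second follows by the standard $\Hom_\bR(-,\bR)$-duality pairing between projectives and finite-dimensional modules, which intertwines the grading shift $q$ with $q^{-1}$.

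First I would verify that $F_i^\Lambda$ carries $R^\Lambda(\lambda)\proj$ into $R^\Lambda(\lambda-\al_i)\proj$ and that $E_i^\Lambda$ is well-defined on $R^\Lambda(\lambda)\gmod$ with image in $R^\Lambda(\lambda+\al_i)\gmod$ (so that both descend to endomorphisms of $K(R^\Lambda\proj)$ after tensoring with $\Q(q)$). Next, the key structural input is that $(E_i^\Lambda,F_i^\Lambda)$ form a graded biadjoint pair up to shifts: there are natural isomorphisms
\[
F_i^\Lambda E_i^\Lambda M \;\cong\; q_i^{-2}\,E_i^\Lambda F_i^\Lambda M \,\oplus\, \text{(a correction term)}
\]
whose precise form at the level of grading shifts is dictated by the cyclotomic relation \eqref{eq:cyclo}. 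I would prove this by carefully unwinding the induction and restriction between $R^\Lambda(\lambda)$ and $R^\Lambda(\lambda\pm\al_i)$: the cyclotomic relation $a_{\Lambda,i}(x_n e(\beta-\al_i,\al_i))=0$ forces a nilpotency condition on $x_n$ acting on $E_i^\Lambda M$, and combined with the affine-Hecke-like relations satisfied by $\tau_k$ and $x_k$ this produces the desired filtration.

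The heart of the argument is the following short exact sequence of functors, which categorifies the $\mathfrak{sl}_2$-relation $e_if_j-f_je_i=\delta_{ij}[\lan h_i,\la\ran]_i$: for $\la = \Lambda-\beta$ and $M\in R^\Lambda(\la)\gmod$,
\[
0 \to q_i^2\,F_i^\Lambda E_i^\Lambda M \to E_i^\Lambda F_i^\Lambda M \to \bigoplus_{k=0}^{\lan h_i,\la\ran-1} q_i^{2k}\,M \to 0 \qquad (\lan h_i,\la\ran\ge 0),
\]
and the analogous sequence with the roles reversed when $\lan h_i,\la\ran<0$. To prove this I would construct an explicit filtration of $R^\Lambda(\la-\al_i)e(\al_i,\beta)e(\beta,\al_i)\otimes M$ by powers of $x_{n+1}-x_n$ and use the cyclotomic relation to truncate the filtration at length $\lan h_i,\la\ran$. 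The quasi-monicity of $a_{\Lambda,i}$ is what makes the top quotient into the stated direct sum of shifted copies of $M$. I expect this filtration argument to be the main technical obstacle, because bookkeeping the grading shifts through the $\tau_k$-relations and identifying each subquotient as (a shift of) $F_iE_iM$ or $M$ requires delicate use of \eqref{eq:taupol}.

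Given these relations, the operators $[E_i^\Lambda]$ and $[F_i^\Lambda]$ on $K\seteq \Q(q)\tens_\Zq K(R^\Lambda\proj)$ satisfy the defining relations of $U_q(\g)$, together with the highest weight relation $E_i^\Lambda\cdot[R^\Lambda(\Lambda)]=0$ (since $e(\al_i,0)\cdot R^\Lambda(\Lambda)=0$ for weight reasons) and $q^h\cdot[R^\Lambda(\Lambda)]=q^{\lan h,\Lambda\ran}[R^\Lambda(\Lambda)]$. Thus $[R^\Lambda(\Lambda)]$ generates $K$ under the $F_i^\Lambda$-action and defines a surjective $U_\Zq(\g)$-module homomorphism $\pi\cl V_\Zq(\Lambda)\to K(R^\Lambda\proj)$. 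To conclude that $\pi$ is an isomorphism I would perform a weight-space dimension count: on each weight space $V_\Zq(\Lambda)_\la$, the rank as a $\Zq$-module equals the number of paths $\Lambda\to\la$ in the crystal $B(\Lambda)$, while the rank of $K(R^\Lambda(\la)\proj)$ equals the number of indecomposable projectives, which by the cellular/standard-module theory of $R^\Lambda(\la)$ is bounded above by the same number (the simple modules are indexed by strict partitions / crystal paths). Since $\pi$ is already surjective, the two ranks must agree and $\pi$ is an isomorphism, yielding the first assertion; dualizing via $M\mapsto M^\star$ yields the second.
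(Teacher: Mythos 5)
Your outline follows the same general framework as the cited proof in \cite{KK11}: construct the cyclotomic functors $E_i^\Lambda,F_i^\Lambda$, establish the categorified $\mathfrak{sl}_2$-commutator relation (the short exact / split sequence relating $E_i^\Lambda F_i^\Lambda$ and $F_i^\Lambda E_i^\Lambda$, with a direct sum of $\ang{h_i,\lambda}$ shifted copies of the identity as the correction term), and deduce a $U_q(\g)$-module structure on $K(R^\Lambda\proj)$ with highest weight vector $[R^\Lambda(\Lambda)]$. Up to that point you and \cite{KK11} are doing the same thing, and you correctly identify the biadjunction and the filtration argument as the hard technical core.

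The divergence, and the genuine gap, is in the final identification step. You propose a weight-space rank count, asserting that the number of indecomposable projectives of $R^\Lambda(\lambda)$ is ``bounded above by the number of crystal paths'' by ``cellular/standard-module theory.'' No such independent bound is available at this stage: the parametrization of simple $R^\Lambda(\lambda)$-modules by $B(\Lambda)_\lambda$ is itself a \emph{consequence} of the categorification theorem you are trying to prove (it is precisely the statement that the Grothendieck group has the right weight-space dimensions), so invoking it makes the argument circular. Nor is there a graded cellular structure on $R^\Lambda$ for arbitrary symmetrizable type that predates this theorem. The route actually taken in \cite{KK11} avoids any counting: once one has the categorified $\mathfrak{sl}_2$-relation for each $i$, the theory of $\mathfrak{sl}_2$-categorifications (Rouquier, Chuang--Rouquier) guarantees that $E_i$ and $F_i$ act locally nilpotently on $K_0\otimes\Q(q)$, so $K(R^\Lambda\proj)\otimes\Q(q)$ is an \emph{integrable} highest-weight $U_q(\g)$-module; such a module is automatically irreducible, hence isomorphic to $V_q(\Lambda)$, and one then descends to the $\Zq$-form. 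You should replace the dimension count by this irreducibility argument, or at least make the integrability and local nilpotence of $F_i$ explicit.

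Two smaller corrections. The passage from $R^\Lambda\proj$ to $R^\Lambda\gmod$ is not Koszul duality; it is the ordinary perfect pairing $([P],[M])\mapsto \dim_q\HOM_{R^\Lambda}(P,M)$ between finitely generated graded projectives and finite-dimensional graded modules, which identifies $K(R^\Lambda\gmod)$ with the graded dual of $K(R^\Lambda\proj)$ as you then correctly say. And the simple modules are not indexed by ``strict partitions'' --- that is a queer-superalgebra feature unrelated to the present setting; in type $A^{(1)}$ one would have Kleshchev multipartitions, and in general symmetrizable type the only uniform combinatorial model is the crystal $B(\Lambda)$ itself, which is precisely why the dimension-count route is not independent of the theorem.
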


For $n\in\Z_{\ge0}$, we define
\eqn
F_i^{a_\La\,(n)}\,M&=&\HOM_{R_A(n\al_i)}\bl P_A(i^{  n  }),(F_i^{a_\La})^n\,M\br,\\
E_i^{a_\La\,(n)}\,M&=&\HOM_{R_A(n\al_i)}\bl P_A(i^{  n  }),(E_i^{a_\La})^n\,M\br,
\eneqn
which give exact functors:
\eq
&&\ba{rcl}
F_i^{a_\Lambda\,(n)} &:&  R_A^{a_\Lambda}(\lambda)\Mod \rightarrow  R_A^{a_\Lambda}(\lambda-n\alpha_i)\Mod , \\
E_i^{a_\Lambda\,(n)} &:&  R_A^{a_\Lambda}(\lambda)\Mod \rightarrow R_A^{a_\Lambda}(\lambda+n\alpha_i)\Mod.
\ea\eneq
Then the following lemma is an easy consequence of the theory of
$\mathfrak{sl}_2$-categorification due to Rouquier (\cite{R08}).
\Prop\label{prop:htlt}
Let $\Lambda \in \wlP^+$ and $\la\in\weyl\La$ such that $n\seteq\ang{h_i,\la}\ge0$.
Then we have category equivalences, quasi-inverse to each other:
$$\xymatrix@C=9ex{
R_A^{a_\Lambda}(\lambda)\Mod\ar@<.7ex>[r]^{F_i^{a_\Lambda\,(n)}}&
R_A^{a_\Lambda}(s_i\lambda)\Mod\,.\ar@<.7ex>[l]^{E_i^{a_\Lambda\,(n)}}
}$$
In particular,
we have
$Z\bl R_A^{a_\Lambda}(\lambda)\br\simeq
Z\bl R_A^{a_\Lambda}( s_i \lambda)\br$.
Here $Z(R)$ denotes the center of an algebra $R$.
\enprop

Note that the last assertion follows from
$Z(R)\simeq\End(\id_{R\Mod})$.

\subsection{The categories $\catC_w$} \label{Sec: catC} \

In this subsection, we review briefly convex orders and the categories $\catC_w$ \cite{KKOP18,TW16}.
\begin{df} \
\begin{enumerate} [\rm(i)]
\item A \emph{preorder} $\preceq$ on a set $X$ is a binary relation on $X$ satisfying
\bna
\item $x \preceq x$ for any $x\in X$,
\item if $x \preceq y$ and $y \preceq z$ for $x,y,z \in X$, then $x \preceq z$.
\end{enumerate}
\item We say that a preorder $\preceq$ on $X$ is {\em total}
if we have either
$x\preceq y$ or $y\preceq x$ for any $x,y\in X$.
\item For a preorder $\preceq$, we say that $x$ and $y$ are \emph{$\preceq$-equivalent} if $x \preceq y$ and $y \preceq x$.
The equivalence class for $\preceq$ is called $\preceq$-equivalence class.
\item For subsets $A$ and $B$, we write $A \preceq B$ if $a \preceq b$ for any $a \in A$ and $b \in B$.
\end{enumerate}
\end{df}

\begin{df} \label{Def: face}
A \emph{face} is a decomposition of a subset $X$ of an $\R$-vector space  into three disjoint subsets $ X = A_- \sqcup A_0 \sqcup A_+ $ such that
\begin{align*}
( \Sp  A_+ + \SpR A_0) \cap \Sp A_- &= \{ 0\}, \\
( \Sp  A_- + \SpR A_0) \cap \Sp A_+ &= \{ 0\},
\end{align*}
where $\SpR S $ is the $\R$-vector space spanned by $S$ and
$\Sp S $ is the subset of $\SpR S $ whose elements are linear combinations of $S$ with non-negative coefficients.
\end{df}
We set $\SpR \emptyset = \Sp \emptyset = \{ 0 \}$.

\begin{df} \label{Def: convex}
Let $V$ be an $\R$-vector space and let $X$ be a subset of $V \setminus \{0\}$.
\begin{enumerate}[\rm(i)]
\item A \emph{convex} preorder $\preceq$ on $X$  is a total preorder on $X$ such that, for any $\preceq$-equivalence class $\eC$,
the triple
$(  \{ x \in X \mid x \prec \eC \}, \eC,  \{ x \in X \mid x \succ \eC \} )$
is a face.
\item A convex preorder $\preceq$ on $X$ is called a \emph{convex order} if every $\preceq$-equivalence class is of the form $X \cap l$
for some line $l$ in $V$ through the origin.
\end{enumerate}
\end{df}

For each reduced expression of  $w \in \weyl $, one can define a corresponding convex order on $\prD$ as follows.

\begin{prop}[{\cite[Proposition 1.24]{KKOP18}}] \label{Prop: convex preorder for w}
Let $\underline{w} = s_{i_1}s_{i_2}\cdots s_{i_l}$ be a reduced expression of $w \in \weyl$.
We set $\beta_k \seteq s_{i_1}\cdots s_{i_{k-1}}(\alpha_{i_k}) $ so that $\prD \cap w\nrD = \{ \beta_1, \ldots, \beta_l \}$.
Then there is a convex order $\preceq$ on $\prD$ such that
$$
\beta_1 \prec \beta_2 \prec \cdots \prec \beta_l \prec \gamma
$$
for any $\gamma \in \prD \cap w\prD$.
\end{prop}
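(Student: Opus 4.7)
The plan is to realize $\preceq$ as a convex order on $\prD$ whose initial $l$ elements, in order, are $\beta_1,\ldots,\beta_l$; once this is constructed, the required inequality $\beta_l \prec \gamma$ for every $\gamma \in \prD \cap w\prD$ follows automatically because $\{\beta_1,\ldots,\beta_l\} = \prD \cap w\nrD$ exhausts the inversion set, so its complement in $\prD$ is exactly $\prD \cap w\prD$. I would first record, for later use, the standard identities $\{\beta_1,\ldots,\beta_k\} = \prD \cap w_k\nrD$ for each partial product $w_k \seteq s_{i_1}\cdots s_{i_k}$, which imply that $w_{k-1}^{-1}$ sends $\beta_1,\ldots,\beta_{k-1}$ into $\nrD$, sends $\beta_k$ to the simple root $\alpha_{i_k}$, and sends every $\beta_j$ with $j>k$ as well as every $\gamma \in \prD \cap w\prD$ into $\prD$ (this last inclusion using $\prD \cap w_{k-1}\nrD \subset \prD \cap w\nrD$).

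In finite type the construction is immediate: I would complete $\underline{w}$ to a reduced expression $\underline{w_0} = s_{i_1}\cdots s_{i_{l+m}}$ of the longest element $w_0$, and take $\preceq$ to be the total order determined by the root sequence $\beta_1,\ldots,\beta_{l+m}$, which enumerates $\prD$. Convexity of such a sequence is classical: at each singleton class $\{\beta_k\}$, applying $w_{k-1}^{-1}$ transports any putative face-breaking identity $v_+ + c\beta_k = u$ with $v_+ \in \Sp A_+$, $c \in \R$, $u = \sum_{j<k}b_j\beta_j \in \Sp A_-$ into a relation $w_{k-1}^{-1}v_+ + c\alpha_{i_k} + \sum_{j<k}b_j\delta_j = 0$, where $\delta_j \seteq -w_{k-1}^{-1}\beta_j$ is a positive root and $w_{k-1}^{-1}v_+$ is a non-negative combination of positive roots distinct from $\alpha_{i_k}$. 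Since a non-negative combination of positive roots sums to zero only trivially, and since the simple root $\alpha_{i_k}$ cannot be expressed as a non-negative combination of other positive roots, both cases $c \ge 0$ and $c < 0$ force $u = 0$; the symmetric condition is argued identically. The face condition at classes inside $\prD \cap w\prD$ is then inherited from the global convexity automatically.

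In general symmetrizable Kac-Moody type, $w_0$ may not exist and $\prD$ is typically infinite, but the same scheme applies once the finite biclosed subset $\{\beta_1,\ldots,\beta_l\} = \prD \cap w\nrD$ is extended to a maximal chain of biclosed subsets of $\prD$; equivalently, $\underline{w}$ is embedded as the initial segment of an infinite reduced sequence whose associated root sequence enumerates $\prD$ convexly. The main obstacle is producing this extension: a naive attempt—placing $\beta_1 \prec \cdots \prec \beta_l$ and then attaching an arbitrary convex order on $\prD \cap w\prD$—fails in general, since convexity forces sums $\alpha_i + \alpha_j$ to be interleaved strictly between their summands (as one sees already in $\g = \mathfrak{sl}_3$ with $w = s_1$, where the only compatible completion is $\alpha_1 \prec \alpha_1+\alpha_2 \prec \alpha_2$ rather than the naive $\alpha_1 \prec \alpha_2 \prec \alpha_1+\alpha_2$). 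The existence of the extension follows from the general theory of biclosed subsets of $\prD$, either by Zorn's lemma applied to the poset of biclosed subsets containing $\prD \cap w\nrD$, or by an explicit inductive construction via a generic linear functional on the Cartan subalgebra that refines the combinatorial data of $\underline{w}$.
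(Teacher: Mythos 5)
The paper itself does not prove this statement; it imports it as \cite[Proposition 1.24]{KKOP18}, so there is no internal proof against which to compare your argument. Evaluated on its own, your proof settles the finite-type case correctly but leaves a genuine gap in the general symmetrizable Kac--Moody case, which is the case the paper actually uses.

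Your finite-type argument is sound. Completing $\underline{w}$ to a reduced word for $w_0$ yields a total order on $\prD$, and at each singleton class $\{\beta_k\}$ conjugating a putative face violation by $w_{k-1}^{-1}$ turns it into a vanishing relation consisting of a nonnegative combination of positive roots distinct from $\alpha_{i_k}$ together with $c\,\alpha_{i_k}$, which forces all coefficients to vanish for either sign of $c$; the symmetric inclusion is handled the same way. Your $\mathfrak{sl}_3$ example is also a correct and useful illustration of why one cannot simply prepend $\beta_1\prec\cdots\prec\beta_l$ to an arbitrary convex order on $\prD\cap w\prD$.

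The general case is where the content of the proposition lies, and there your proof stops at a sketch that does not hold up. First, the phrase ``an infinite reduced sequence whose associated root sequence enumerates $\prD$'' is not available outside finite type: reduced words produce only \emph{real} positive roots, whereas for a non-finite symmetrizable Kac--Moody algebra $\prD$ also contains imaginary roots. The notion of convex \emph{order} used here further requires every equivalence class to be of the form $\prD\cap l$ for a line $l$ through the origin, so proportional imaginary roots (for instance $\delta, 2\delta,\ldots$ in affine type) must form a single class; the required object is therefore a genuine preorder with possibly infinite equivalence classes, not a total order arising from any enumeration, and the reduced-word formalism cannot produce it. Second, ``Zorn's lemma applied to the poset of biclosed subsets containing $\prD\cap w\nrD$'' does not repair this: as stated it merely yields the trivial maximal element $\prD$, and even a maximal chain of biclosed subsets does not directly give a convex preorder in the present sense without a further argument that you do not supply. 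What is actually needed --- and what a complete proof must construct, for example via a generic linear functional or charge that assigns slopes so that $\beta_1,\ldots,\beta_l$ come first while collinear imaginary roots are identified in one class --- is exactly the step your sketch gestures at without carrying out.
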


We fix a convex order $\preceq$ on $\prD$ given in Proposition \ref{Prop: convex preorder for w}.
Note that it can be extended uniquely to the convex order on $\Z_{ >0 }\prD\seteq  \{ n \gamma \mid n \in \Z_{  >0  },\ \gamma \in \prD \}$, which means that
we write $n\beta \preceq n'\beta'$
for $\beta,\beta'\in\prD$ if
$\beta\preceq \beta'$.

For $M \in  R(\beta)\Mod$, we define
\begin{align*}
\gW(M) & \seteq  \{  \gamma \in  \rlQ_+ \cap (\beta - \rlQ_+)  \mid  e(\gamma, \beta-\gamma) M \ne 0  \}, \\
 \sgW(M) & \seteq  \{  \gamma \in  \rlQ_+ \cap (\beta - \rlQ_+)  \mid  e( \beta-\gamma, \gamma) M \ne 0  \}. 
\end{align*}
Let $L$ be a simple $R(\beta)$-module for $\beta \in \rlQ_+$.
We say that $L$ is \emph{$\preceq$-cuspidal} if
\bna
\item $\beta \in \Z_{>0} \prD $,
\item$\gW(L) \subset \Sp  \{ \gamma \in \prD \mid \gamma \preceq \beta \} $.
\end{enumerate}

\begin{thm}[{\cite[Theorem 2.8]{KKOP18}}]\label{Thm: cuspidal decomposition}
For  a simple $R$-module $L$, there exists a unique sequence $(L_1,L_2, \ldots, L_h)$ of $\preceq$-cuspidal modules \ro up to isomorphisms\rf\ such that
\bnum
\item $-\wt(L_k) \succ -\wt(L_{k+1}) $ for $k=1, \ldots, h-1$,
\item $L$ is isomorphic to the head of $L_1 \conv L_2 \conv \cdots \conv L_h$.
\end{enumerate}
\end{thm}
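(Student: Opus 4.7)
The plan is to establish both existence and uniqueness by induction on the height $n=\Ht(\beta)$, where $\beta=-\wt(L)\in\rlQ_+$. The base case $n=0$ is vacuous (take $h=0$), and if $L$ itself is $\preceq$-cuspidal we set $h=1$. Otherwise I would extract a canonical ``leading'' cuspidal factor $L_1$ from the convex-order top of the support $\gW(L)$, and apply induction to what remains.

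For the extraction step, I would use crucially that $\preceq$ is a convex total preorder on $\prD$ (extended to $\Z_{>0}\prD$). First, identify the $\preceq$-maximum element $\mu_0\in\prD$ such that some $\gamma\in\gW(L)$ has a nonzero contribution in the cone $\Sp\{\nu\in\prD\mid\nu\succeq\mu_0\}$. By the face property of Definition~\ref{Def: face}, every $\gamma\in\gW(L)$ admits a unique decomposition $\gamma=\gamma'+\delta'$ with $\gamma'\in\Sp\{\nu\in\prD\mid\nu\succeq\mu_0\}$ and $\delta'\in\Sp\{\nu\in\prD\mid\nu\prec\mu_0\}$. Let $\gamma_0$ be the $\preceq$-maximum of these upper parts $\gamma'$ as $\gamma$ ranges over $\gW(L)$. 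By maximality, $e(\gamma_0,\beta-\gamma_0)L\ne 0$, and any simple quotient of this module as an $R(\gamma_0)\tens R(\beta-\gamma_0)$-module has the form $L_1\etens N$, with $L_1$ forced to be $\preceq$-cuspidal of weight $-\gamma_0$ and $\gW(N)\subset\Sp\{\nu\in\prD\mid\nu\prec\mu_0\}$. Frobenius reciprocity yields a surjection $L_1\conv N\twoheadrightarrow L$, hence $L\simeq L_1\hconv N$. Applying the inductive hypothesis to $N$ furnishes cuspidals $(L_2,\ldots,L_h)$ with $-\wt(L_2)\succ\cdots\succ-\wt(L_h)$, and convexity of $\preceq$ ensures $-\wt(L_1)=\gamma_0\succ-\wt(L_2)$.

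For uniqueness, suppose $L\simeq\hd(L_1\conv\cdots\conv L_h)\simeq\hd(L_1'\conv\cdots\conv L_{h'}')$ are two such decompositions. I would argue that both $\gamma_0=-\wt(L_1)$ and $L_1$ itself are intrinsically determined by $L$ via the extraction recipe above: convexity makes $\gamma_0$ coincide with the $\preceq$-maximum upper part in $\gW(L)$, a datum manifestly independent of any decomposition. Then $e(\gamma_0,\beta-\gamma_0)L$ has a unique simple quotient, which recovers $L_1\etens(L_2\hconv\cdots\hconv L_h)$ from one decomposition and $L_1'\etens(L_2'\hconv\cdots\hconv L_{h'}')$ from the other. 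This forces $L_1\simeq L_1'$ and $\hd(L_2\conv\cdots\conv L_h)\simeq\hd(L_2'\conv\cdots\conv L_{h'}')$, whereupon induction closes the argument.

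The main obstacle will be verifying that the extracted factor is genuinely a simple $\preceq$-cuspidal module, and that $L\simeq L_1\hconv N$ rather than merely appearing as a composition factor. This is where the face condition of Definition~\ref{Def: face} is indispensable: the unique splitting $\gamma=\gamma'+\delta'$ forbids any ``mixing'' between the upper and lower cones, so the multiplicative containment $\gW(L_1\conv N)\subset\gW(L_1)+\gW(N)$, combined with maximality of $\gamma_0$, forces both the simplicity of $L_1$ and the identification $L\simeq L_1\hconv N$. The same face uniqueness underlies the intrinsic nature of $\gamma_0$, making it the technical heart of both existence and uniqueness.
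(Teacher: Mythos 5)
The paper does not prove this statement; it cites it verbatim from \cite[Theorem 2.8]{KKOP18}, so there is no in-paper proof to compare against. That said, the extraction-and-induction strategy you outline is the right shape of argument, but several steps as written have genuine gaps.

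The most serious is the claim that the face property of Definition~\ref{Def: face} yields a \emph{unique} decomposition $\gamma=\gamma'+\delta'$ with $\gamma'\in\Sp\{\nu\succeq\mu_0\}$ and $\delta'\in\Sp\{\nu\prec\mu_0\}$. The face condition is a \emph{separation} property $(\Sp A_+ +\SpR A_0)\cap\Sp A_-=\{0\}$, not a direct-sum property, and uniqueness fails already in type $A_2$: for the convex order $\alpha_1\succ\alpha_1+\alpha_2\succ\alpha_2$ and $\mu_0=\alpha_1+\alpha_2$, the weight $\alpha_1+\alpha_2$ has two valid decompositions, $(\alpha_1+\alpha_2)+0$ and $\alpha_1+\alpha_2$, with upper parts $\alpha_1+\alpha_2$ and $\alpha_1$ respectively. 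This collapses the ``maximum upper part'' $\gamma_0$ as you define it. Compounding this, $\gamma_0$ is produced as an element of a cone, but you take its $\preceq$-maximum as if it were a root; $\preceq$ is a preorder on $\prD$ (extended only to $\Z_{>0}\prD$), not on the spanned cone, so the comparison is ill-defined. Separately, even granting a well-defined top weight $\gamma_0$, nothing in your argument forces $\gamma_0\in\Z_{>0}\prD$ --- a positive multiple of a single root --- which is half of the definition of $\preceq$-cuspidal; and the other half, that $\gW(L_1)\subset\Sp\{\gamma\in\prD\mid\gamma\preceq -\wt(L_1)\}$, is asserted (``forced to be cuspidal'') but never argued. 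The uniqueness step inherits all of these problems, since it rests on $\gamma_0$ and the top quotient being ``intrinsically determined'' by $L$ via the flawed recipe. The idea of isolating a $\preceq$-top cuspidal factor by a maximality argument on $\gW(L)$ and then inducting is exactly what a correct proof does, but the genuine work lies precisely in the points your sketch treats as automatic: well-definedness of the top face, why the top weight is a multiple of a single root, and why the extracted factor satisfies the cuspidal support condition.
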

We set $\cd_{\preceq}(L) \seteq (L_1, \ldots, L_h)$  in Theorem \ref{Thm: cuspidal decomposition}, which is called  the \emph{$\preceq$-cuspidal decomposition} of $L$.
We write $\cd$ instead of $\cd_{\preceq}$ for simplicity when there is no afraid of confusion.

For $w\in \weyl$, we denote by $\catC_{w}$ the  full subcategory of $R\gmod$ whose objects $M$ satisfy
\begin{align*}
\gW(M) \subset \Sp( \prD \cap w \nrD ).
\end{align*}

\begin{prop} [{\cite[Proposition 2.18]{KKOP18}}] \label{Prop: membership}
Let $\underline{w} = s_{i_1}s_{i_2} \cdots s_{i_\ell}$ be a reduced expression of $w\in \weyl$, and
$\beta_\ell = s_{i_1} \cdots s_{i_{\ell-1}}(\alpha_{i_\ell})$.
We take a simple $R$-module $L$ and set
\[
\cd(L) \seteq (L_1, L_2, \ldots, L_h),\quad \gamma_k \seteq -\wt(L_k)\quad \text{for }k=1, \ldots, h.
\]
Then we have that $L \in \catC_{w}$ if and only if  $\beta_\ell \succeq \gamma_1$.
\end{prop}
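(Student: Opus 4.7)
The plan is to leverage the fact that, for the convex order fixed in Proposition~\ref{Prop: convex preorder for w}, the set $\prD\cap w\nrD=\{\beta_1,\ldots,\beta_\ell\}$ coincides with $\{\beta\in\prD\mid\beta\preceq\beta_\ell\}$, since every element of $\prD\cap w\prD$ is strictly greater than $\beta_\ell$. Hence the defining condition $L\in\catC_w$ is equivalent to
\[
\gW(L)\subset\Sp\{\beta\in\prD\mid\beta\preceq\beta_\ell\}.
\]

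For the direction ($\Rightarrow$), I would first show that $\gamma_1\in\gW(L)$. Set $M\seteq L_1\conv L_2\conv\cdots\conv L_h$. The canonical inclusion $L_1\etens\cdots\etens L_h\hookrightarrow e(\gamma_1,\gamma_2,\ldots,\gamma_h)\,M$ lands in $e(\gamma_1,-\wt(L)-\gamma_1)\,M$, and its image generates $M$ as an $R$-module; hence its image in the simple head $L$ is nonzero, which forces $e(\gamma_1,-\wt(L)-\gamma_1)\,L\ne 0$. Writing $\gamma_1=n_1\alpha$ with $\alpha\in\prD$ and $n_1\in\Z_{>0}$, the assumption $L\in\catC_w$ gives $\alpha\in\Sp\{\beta\in\prD\mid\beta\preceq\beta_\ell\}$. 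Supposing for contradiction that $\alpha\succ\beta_\ell$, and letting $\eC$ denote the $\preceq$-equivalence class of $\beta_\ell$, the element $\alpha$ would lie both in $\Sp\{\beta\in\prD\mid\beta\prec\eC\}+\SpR(\eC)$ and in $\Sp\{\beta\in\prD\mid\beta\succ\eC\}$, contradicting the face condition in Definition~\ref{Def: face} (the intersection is forced to be $\{0\}$, whereas $\alpha\ne 0$). Hence $\alpha\preceq\beta_\ell$, which by the definition of the extended order on $\Z_{>0}\prD$ is the same as $\gamma_1\preceq\beta_\ell$.

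For the direction ($\Leftarrow$), the key is the shuffle-type inclusion
\[
\gW(L_1\conv L_2\conv\cdots\conv L_h)\subset \gW(L_1)+\gW(L_2)+\cdots+\gW(L_h).
\]
To establish it, note that $L_1\conv L_2$ decomposes, as a vector space, into images $\tau_\sigma(L_1\etens L_2)$ with $\sigma$ running over the minimal $(\Ht(-\wt L_1),\Ht(-\wt L_2))$-shuffles; each such $\sigma$ sends a prefix of the shuffled sequence to the concatenation of a prefix of the $L_1$-factor with a prefix of the $L_2$-factor. Thus any $\gamma\in\gW(L_1\conv L_2)$ decomposes as $\gamma=\gamma^{(1)}+\gamma^{(2)}$ with $\gamma^{(j)}\in\gW(L_j)$, and iteration yields the displayed inclusion. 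The $\preceq$-cuspidality of each $L_k$ gives $\gW(L_k)\subset\Sp\{\gamma\in\prD\mid\gamma\preceq\gamma_k\}$; since $\gamma_1\succeq\gamma_k$ for all $k$, this forces $\gW(M)\subset\Sp\{\gamma\in\prD\mid\gamma\preceq\gamma_1\}$. Combined with the hypothesis $\beta_\ell\succeq\gamma_1$, one obtains $\gW(L)\subset\gW(M)\subset\Sp\{\beta\in\prD\mid\beta\preceq\beta_\ell\}=\Sp(\prD\cap w\nrD)$.

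The main obstacle I anticipate is making the shuffle inclusion $\gW(L_1\conv L_2)\subset\gW(L_1)+\gW(L_2)$ fully rigorous: while intuitive, it requires a careful verification that the action of the Mackey-type shuffles on idempotents $e(\mu,\nu)$ respects the prefix decomposition. The remainder is a direct deployment of the convex-preorder formalism and the face axiom.
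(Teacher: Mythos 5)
Your proof is correct, and the overall strategy — translate membership in $\catC_w$ into the condition $\gW(L)\subset\Sp\{\beta\in\prD\mid\beta\preceq\beta_\ell\}$, then attack the two directions via (a) $\gamma_1\in\gW(L)$ plus the face axiom, and (b) the shuffle inclusion for $\gW$ plus cuspidality — is the natural one and matches the reasoning in \cite[Proposition 2.18]{KKOP18}, which the paper cites without reproducing a proof.

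All the individual steps check out. The identification $\{\beta\in\prD\mid\beta\preceq\beta_\ell\}=\prD\cap w\nrD$ follows immediately from Proposition~\ref{Prop: convex preorder for w}, since every $\gamma\in\prD\cap w\prD$ satisfies $\gamma\succ\beta_\ell$ and $\prD$ is partitioned by $w\nrD$ and $w\prD$. Your argument that $\gamma_1\in\gW(L)$ is sound: the subspace $L_1\etens\cdots\etens L_h$ generates $M\seteq L_1\conv\cdots\conv L_h$, so its image under $M\epito L$ is nonzero, and it sits inside $e(\gamma_1,-\wt(L)-\gamma_1)L$. The face-axiom step is correct: writing $\gamma_1=n_1\alpha$ with $\alpha\in\prD$, if $\alpha\succ\beta_\ell$ then $\alpha\in\Sp A_+\cap(\Sp A_-+\SpR A_0)=\{0\}$ for the face attached to the class of $\beta_\ell$, a contradiction; hence $\alpha\preceq\beta_\ell$, which is $\gamma_1\preceq\beta_\ell$ by the extension of $\preceq$ to $\Z_{>0}\prD$. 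And the shuffle inclusion $\gW(M_1\conv M_2)\subset\gW(M_1)+\gW(M_2)$, which you flag as the potential obstacle, is not actually problematic: a basis of $M_1\conv M_2$ consists of $\tau_\sigma(u\etens v)$ for minimal $(m_1,m_2)$-shuffles $\sigma$, and for any such $\sigma$ the preimage $\sigma^{-1}(\{1,\ldots,k\})$ is a union of a prefix of $\{1,\ldots,m_1\}$ and a prefix of $\{m_1+1,\ldots,m_1+m_2\}$ because $\sigma$ is increasing on each block; this gives exactly the prefix decomposition you need, and the fact is in any case recorded as \cite[Lemma 2.2]{KKOP18}. Iterating, $\gW(M)\subset\Sp\{\gamma\preceq\gamma_1\}$ using cuspidality and $\gamma_1\succ\cdots\succ\gamma_h$, and $\gW(L)\subset\gW(M)$ since $L$ is a quotient of $M$; combined with $\gamma_1\preceq\beta_\ell$ this finishes the converse.
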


By the construction and  Proposition \ref{Prop: membership},
$\catC_w$ is the smallest monoidal abelian full subcategory of $R\gmod$ such that
\begin{enumerate}[(a)]
\item $\catC_w$ is stable under the subquotients, extension, and grading shifts,
\item $\catC_w$ contains all the $\preceq$-cuspidal modules corresponding to $\beta_k = s_{i_1} \cdots s_{i_{k-1}}(\alpha_{i_k})$ for $k=1, \ldots, \ell$.
\end{enumerate}
It was proved in \cite{KKKO18} that $\catC_w$ gives a monoidal categorification of the quantum unipotent coordinate algebra $A_q(\n(w^{-1}))$ when it is of symmetric type.

\subsection{Monoidal categories}

We shall review monoidal categories and related notions.  We refer the reader to \cite[Appendix A]{KKK18} and \cite{KS06} for more details.

A \emph{monoidal category} (or \emph{tensor category}) is a datum consisting of
\begin{enumerate}[\rm (a)]
\item a category $\catT$,
\item  a bifunctor $\cdot \otimes \cdot : \catT \times \catT \rightarrow \catT$,
\item an isomorphism $a(X,Y,Z): (X \otimes Y) \otimes Z \buildrel \sim \over \longrightarrow X \otimes (Y \otimes Z)$ which is functorial in $X,Y,Z \in \catT$,
\item an object $\triv$ endowed with
an isomorphism  $\epsilon\cl
\triv \otimes \triv \isoto\triv$
\end{enumerate}
such that
\be[{(1)}]
\item
the diagram below commutes for all $X,Y,Z,W \in \catT$:
$$
\xymatrix{
( ( X \otimes Y ) \otimes Z ) \otimes W  \ar[d]_{ a( X,Y,Z) \otimes W  }  \ar[rr]^{ a(X \otimes Y,Z,W) } &&  \ar[dd]^{ a( X,Y,Z \otimes W)  }    ( X \otimes Y ) \otimes (Z  \otimes W)   \\
 (  X \otimes (Y  \otimes Z) ) \otimes W  \ar[d]_{ a( X,Y\otimes Z, W)  }   &&    \\
 X \otimes ( (Y \otimes Z ) \otimes W ) \ar[rr]_ {X \otimes a( Y,Z,W)} &&    X \otimes ( Y \otimes (Z \otimes W )) .
}
$$
\item the functors $\catT\ni X\mapsto \one \tens X\in \catT$
and  $\catT\ni X\mapsto X\tens \one\in\catT$ are fully faithful.
\ee

We call $\triv$ a \emph{unit object} of $\catT$.
We have canonical isomorphisms $\triv\tens X\simeq X\tens \triv\simeq X$
for any $X\in\catT$.
For $n \in \Z_{> 0}$ and $X\in\catT$, we set $X^{\otimes n} = \underbrace{X \otimes \cdots \otimes X}_{n \text{ times}}$, and $X^{\otimes 0} = \triv$.

For monoidal categories $\catT$ and $\catT'$, a functor $ F \cl  \catT \rightarrow \catT'$ is called a \emph{monoidal functor}
if it is endowed with an isomorphism $\xi_F\cl F(X \otimes Y)  \buildrel \sim \over \longrightarrow F(X) \otimes F(Y)$ which is functorial in $X,Y \in \catT$ such that
the diagram
$$
\xymatrix{
F( ( X \otimes Y ) \otimes Z )   \ar[d]_{ \xi_F(X \otimes Y, Z)  }  \ar[rrr]^{ F( a(X , Y,Z)) } &&&  \ar[d]^{ \xi_F(X , Y \otimes Z)  }   F ( X \otimes (Y   \otimes  Z) )   \\
 F(  X \otimes Y)   \otimes F(Z)   \ar[d]_{ \xi_F(X , Y)\otimes F( Z)  }   &&& \ar[d]^{ F(X)\otimes \xi_F( Y \otimes Z)  }   F ( X )\otimes F(Y   \otimes  Z) )   \\
 ( F(X) \otimes F (Y) ) \otimes F(Z ) \ar[rrr]_ { a(F(X) , F(Y),F(Z))  } &&&     F ( X ) \otimes (F(Y)   \otimes  F(Z) )  .
}
$$
commutes for all $X,Y,Z \in \catT$. We omit to write $\xi_F$ for simplicity.
A monoidal functor $F$ is  called \emph{unital} if $(F(\triv), F(\epsilon))$ is a unit object.
In this paper, we simply write
a ``monoidal functor'' for a unital monoidal functor.

We say that a monoidal category $\catT$ is an \emph{additive} (resp.\ \emph{abelian}) monoidal category if $\catT$ is additive (resp.\ abelian) and the bifunctor $\cdot \otimes \cdot$ is bi-additive.
Similarly, for a commutative ring $\bR$, a monoidal category $\catT$ is \emph{$\bR$-linear} if $\catT$ is $\bR$-linear and the bifunctor $\cdot \otimes \cdot$ is $\bR$-bilinear.

An object $X \in \catT$ is \emph{invertible} if the functors $\catT \rightarrow \catT$ given by $Z \mapsto Z \otimes X$ and $Z \mapsto X \otimes Z$ are equivalence of categories.
If $X$ is invertible, then one can find an object $Y$ and isomorphisms $f\cl X \otimes Y \buildrel \sim\over \rightarrow \triv$ and
$g\cl Y \otimes X \buildrel \sim\over \rightarrow \triv$ such that the diagrams below commute:
$$
\xymatrix{
X \otimes Y \otimes X  \ar[rr]^{ f \otimes X } \ar[d]_{ X \otimes g  } &&  \ar[d]  \triv \otimes X   \\
 X \otimes \triv \ar[rr]_ {  } &&   X,
}
\quad
\xymatrix{
Y \otimes X \otimes Y  \ar[rr]^{ g \otimes Y } \ar[d]_{ Y \otimes f  } &&  \ar[d]  \triv \otimes Y   \\
 Y \otimes \triv \ar[rr]_ {  } &&   Y.
}
$$
The triple $(Y,f,g)$ is unique up to a unique isomorphism. We write $Y = X^{\otimes -1}$.

\subsubsection*{Adjunctions}

\begin{df}
Let $X$ and $ Y $ be objects in a monoidal category $ \catT$, and $\ep\cl X \otimes  Y \rightarrow \triv $ and $ \eta\cl \triv \rightarrow Y \otimes X$ morphisms in $\catT$.
\begin{enumerate} [\rm(i)]
\item
We say that $(\ep, \eta)$ is an \emph{adjunction} if the following are satisfied:
\bna
\item the composition
$X \simeq X \otimes \triv \To[{X \otimes \eta}] X \otimes Y \otimes X
\To[{\ep \otimes X}] \triv \otimes X \simeq X$
is the identity of $X$,
\item the composition $Y \simeq \triv \otimes Y \To[{\eta \otimes Y}]
 Y \otimes X \otimes Y \To[{Y \otimes \ep}]Y \otimes \triv \simeq Y$
is the identity of $Y$.
\end{enumerate}
\item
The pair $(\ep, \eta)$ is a \emph{quasi-adjunction} if
 the compositions $ X \otimes \triv \To[{X \otimes \eta}]X \otimes Y \otimes X \To [{\ep \otimes X}] \triv \otimes X $
and $ \triv \otimes Y \To[{\eta \otimes Y} ] Y \otimes X \otimes Y
\To[Y \otimes \ep]  Y \otimes \triv$
are isomorphisms.
\end{enumerate}

\end{df}
In the case when $(\ep, \eta)$ is an adjunction, we say that
the pair $(X,Y)$ is called a  \emph{dual pair}, or $X$ is a \emph{left dual} to $Y$ and $Y$ is a \emph{right dual} to $X$.
Note that a left dual (resp.\ right dual) of an object is unique up to a unique isomorphism if it exists.

\begin{df}
A monoidal category $\catT$ is \emph{rigid} if every object in $\catT$ has left and right duals.
\end{df}

\begin{lem}
 Let $(X,Y)$ be a dual pair in a monoidal category $\catT$. Then for $Z,W \in \catT$, there are isomorphisms
$$
\Hom_\catT( Z, W \otimes X ) \simeq \Hom_\catT( Z \otimes Y,W ), \quad
\Hom_\catT( X \otimes Z, W ) \simeq \Hom_\catT( Z ,Y \otimes W ).
$$
\end{lem}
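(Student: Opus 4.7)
The plan is to construct the isomorphisms explicitly using the unit $\eta\cl \triv \to Y\otimes X$ and counit $\ep\cl X\otimes Y\to \triv$ of the adjunction, and verify that they are mutually inverse by invoking the two triangle identities (a) and (b) in the definition of an adjunction.

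For the first isomorphism, I would define
\[
\Phi\cl \Hom_\catT(Z, W\otimes X) \to \Hom_\catT(Z\otimes Y, W), \quad f\mapsto (W\otimes\ep)\circ(f\otimes Y),
\]
and in the reverse direction
\[
\Psi\cl \Hom_\catT(Z\otimes Y, W) \to \Hom_\catT(Z, W\otimes X), \quad g\mapsto (g\otimes X)\circ (Z\otimes \eta),
\]
where I am suppressing the canonical isomorphisms with $\triv$ and the associators. Then $\Phi(\Psi(g))$ unfolds to the composition
\[
Z\otimes Y \To[{Z\otimes\eta\otimes Y}] Z\otimes Y\otimes X\otimes Y \To[{g\otimes X\otimes Y}] W\otimes X\otimes Y \To[{W\otimes \ep}] W,
\]
which, by naturality of the associator (or by MacLane's coherence theorem to treat the product as strict), equals $g\circ(Z\otimes((Y\otimes\ep)\circ(\eta\otimes Y)))$. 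The triangle identity (b) for the adjunction $(\ep,\eta)$ collapses the bracketed composite to the identity of $Y$, giving $\Phi\circ\Psi=\id$. A symmetric computation using triangle identity (a) shows $\Psi\circ\Phi=\id$.

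For the second isomorphism, I would symmetrically define
\[
\Hom_\catT(X\otimes Z, W)\to \Hom_\catT(Z, Y\otimes W),\quad f\mapsto (Y\otimes f)\circ(\eta\otimes Z),
\]
with inverse
\[
\Hom_\catT(Z, Y\otimes W)\to \Hom_\catT(X\otimes Z, W),\quad g\mapsto (\ep\otimes W)\circ(X\otimes g),
\]
and the verification that these are mutually inverse is again a direct application of the two triangle identities.

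The computations are purely diagrammatic, so there is no real obstacle beyond careful bookkeeping; the only mildly delicate point is that in a non-strict monoidal category one must insert associator isomorphisms and appeal to MacLane's coherence theorem to justify omitting them, but this is standard. No property beyond the two adjunction axioms is used, so the argument works verbatim in any monoidal category.
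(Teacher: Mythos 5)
The paper states this lemma without proof (it is a standard fact about dual pairs). Your argument — constructing the two maps via the counit $\ep$ and unit $\eta$ and verifying they are mutually inverse by the two triangle identities, with coherence handling the associators — is exactly the standard proof and is correct.
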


\begin{lem}[{\cite[Lemma A.2]{KKK18}}] \label{Lem: quasi-adj}
Let $\catT$ be a monoidal category and $\ep\cl X \otimes Y \rightarrow \triv$ be a morphism in $\catT$. Then the following are equivalent.
\begin{enumerate}[\rm (i)]
\item There exists a morphism $\eta\cl \triv \rightarrow Y \otimes X$ such that $(\ep, \eta)$ is an adjunction.
\item There exists a morphism $\eta\cl \triv \rightarrow Y \otimes X$ such that $(\ep, \eta)$ is a quasi-adjunction.
\item the map $\Hom_\catT( Z, W \otimes X ) \to \Hom_\catT( Z \otimes Y,W )
$ that associates $f\cl Z\to W\otimes X$ the morphism
$Z\tens Y\To[f\otimes Y]W\tens X\tens Y\To[W\otimes\eps]W$
is bijective for any $Z,W\in\catT$.
\item the map $\Hom_\catT( Z, Y\tens W ) \to \Hom_\catT(X\tens Z,W )
$ that associates $f\cl Z\to Y\otimes W$ the morphism
$X\otimes Z\To[X\otimes f]X\otimes Y\otimes W\To[\eps\otimes W]W$
is bijective for any $Z,W\in\catT$.
\end{enumerate}
Moreover if these equivalent conditions are satisfied, the morphism $\eta$ in {\rm(i)}
is unique.
\end{lem}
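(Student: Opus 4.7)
The proof plan follows the standard strategy for adjunction--hom-set equivalences, but has to account for the weakening to \emph{quasi}-adjunctions.

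First I would establish the trivial implication (i) $\Rightarrow$ (ii), since identities are isomorphisms. Then I would focus on (ii) $\Rightarrow$ (iii) by explicit construction. Given a quasi-adjunction $(\ep,\eta)$, let
\[
\alpha\seteq(\ep\otimes X)\circ(X\otimes\eta)\cl X\to X,\qquad
\beta\seteq(Y\otimes\ep)\circ(\eta\otimes Y)\cl Y\to Y,
\]
which are isomorphisms by hypothesis. Define
\[
\Phi(f)\seteq (W\otimes\ep)\circ(f\otimes Y)\cl Z\otimes Y\to W,\qquad
\Psi(g)\seteq (g\otimes X)\circ(Z\otimes\eta)\cl Z\to W\otimes X.
\]
A routine computation with the interchange law (of the type $(\ep\otimes W)\circ(X\otimes Y\otimes g)=g\circ (\ep\otimes Z\otimes Y)$, etc.) shows
\[
\Phi\circ\Psi(g)=g\circ(Z\otimes\beta),\qquad \Psi\circ\Phi(f)=(W\otimes\alpha)\circ f.
\]
Since $\alpha$ and $\beta$ are isomorphisms, both $\Phi\circ\Psi$ and $\Psi\circ\Phi$ are bijections, whence both $\Phi$ and $\Psi$ are bijections. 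The implication (ii) $\Rightarrow$ (iv) is symmetric.

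For (iii) $\Rightarrow$ (i), set $Z=\triv$ and $W=Y$. The bijection
$\Phi\cl\Hom_\catT(\triv,Y\otimes X)\isoto\Hom_\catT(Y,Y)$
gives a unique $\eta\cl\triv\to Y\otimes X$ with $\Phi(\eta)=\id_Y$; unwinding the definition, this is exactly the second triangle identity
$(Y\otimes\ep)\circ(\eta\otimes Y)=\id_Y$. For the first triangle identity
$\alpha=(\ep\otimes X)\circ(X\otimes\eta)=\id_X$, I would specialize (iii) to $Z=W=X$ and use the identity
\[
(\ep\otimes X)\circ(X\otimes Y\otimes g)=g\circ (\ep\otimes X)\circ(X\otimes Y\otimes X)
\]
to compute $\Phi(\alpha)$ and $\Phi(\id_X)$ in $\Hom_\catT(X\otimes Y,X)$. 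A short calculation, using that $(Y\otimes\ep)\circ(\eta\otimes Y)=\id_Y$ is already established, yields $\Phi(\alpha)=\Phi(\id_X)=\ep\circ(X\otimes\id_Y)$ (computed via the \lq\lq zig-zag'' $X\otimes Y\to X\otimes Y\otimes X\otimes Y\to X\otimes Y\to \triv$, simplified by the already-proved triangle on the right). Injectivity of $\Phi$ then forces $\alpha=\id_X$. The implication (iv) $\Rightarrow$ (i) is symmetric, swapping the roles of the two triangles. Uniqueness of $\eta$ in (i) is immediate: since (i) $\Rightarrow$ (iii), the element $\eta$ must be the unique preimage of $\id_Y$ under $\Phi\cl\Hom_\catT(\triv,Y\otimes X)\isoto\Hom_\catT(Y,Y)$.

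The main obstacle is purely bookkeeping: I need to suppress the associator and unitor isomorphisms $\triv\otimes X\simeq X\simeq X\otimes \triv$ cleanly while applying the interchange law, and to make sure the various compositions $(\ep\otimes X)(X\otimes\eta)$ and $(Y\otimes\ep)(\eta\otimes Y)$ are interpreted consistently after these identifications. There is no conceptual difficulty --- the whole proof ultimately reduces to the two \lq\lq zig-zag'' computations described above --- but I would lay out the diagram chases explicitly so that the symmetry between (iii) and (iv) can be invoked by interchanging $X\leftrightarrow Y$ without having to redo the interchange-law manipulations.
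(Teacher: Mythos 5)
The paper itself does not prove this lemma; it cites \cite[Lemma A.2]{KKK18}, so there is no in-paper argument to compare against. Assessed on its own terms, your proof is essentially correct: (i)$\Rightarrow$(ii) is trivial; for (ii)$\Rightarrow$(iii) you correctly compute that $\Phi\circ\Psi$ and $\Psi\circ\Phi$ are pre- and post-composition with the isomorphisms $Z\otimes\beta$ and $W\otimes\alpha$, and a two-sided bijection argument forces $\Phi$ to be bijective; (ii)$\Rightarrow$(iv) is symmetric; (iii)$\Rightarrow$(i) correctly extracts $\eta$ as $\Phi^{-1}(\id_Y)$ with $Z=\triv$, $W=Y$, giving the second triangle identity, and then recovers the first triangle identity from injectivity of $\Phi$. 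The uniqueness claim is also handled correctly.

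The one concrete slip is in the final step of (iii)$\Rightarrow$(i). You specialize (iii) to $Z=W=X$, but $\alpha=(\ep\otimes X)\circ(X\otimes\eta)$ and $\id_X$ live in $\Hom(X,X)$, not in $\Hom(X,X\otimes X)$, so $\Phi$ with those parameters does not apply to them. The correct specialization is $Z=X$, $W=\triv$, so that $\Phi\cl\Hom(X,X)\to\Hom(X\otimes Y,\triv)$. Then $\Phi(\alpha)=\ep\circ(\alpha\otimes Y)=\ep\circ(\ep\otimes X\otimes Y)\circ(X\otimes\eta\otimes Y)=\ep\circ(X\otimes Y\otimes\ep)\circ(X\otimes\eta\otimes Y)=\ep\circ(X\otimes\beta)=\ep=\Phi(\id_X)$, using the already-proved $\beta=\id_Y$, and injectivity of $\Phi$ gives $\alpha=\id_X$. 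Relatedly, the displayed interchange identity in your sketch, $(\ep\otimes X)\circ(X\otimes Y\otimes g)=g\circ(\ep\otimes X)\circ(X\otimes Y\otimes X)$, does not parse (the right-hand side is not a composable string of morphisms); the interchange fact actually needed is $\ep\circ(\ep\otimes X\otimes Y)=\ep\circ(X\otimes Y\otimes\ep)$, i.e.\ the two factorizations of $\ep\otimes\ep$. These are bookkeeping corrections rather than strategic ones; with them made, the proof is sound.
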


\begin{lem} [{\cite[Lemma A.3]{KKK18}}] \label{Lem: ep eta ind}
Let $\catT$ be a $\bR$-linear monoidal category such that  $\cdot \otimes \cdot$ is exact.
Let
$$
0 \rightarrow X' \To[f] X \To[f'] X'' \rightarrow 0 \qtq
0 \rightarrow Y'' \To[g'] Y \To[g] Y'\To0
$$
be exact sequences and let
\begin{align*}
&\ep'\cl X' \otimes Y' \rightarrow \triv, \quad \ep\cl X \otimes Y \rightarrow \triv, \quad \ep''\cl X'' \otimes Y'' \rightarrow \triv, \\
&\eta'\cl   \triv \rightarrow Y' \otimes X' , \quad \eta\cl   \triv \rightarrow  Y \otimes X, \quad \eta''\cl   \triv \rightarrow  Y'' \otimes X''
\end{align*}
be morphisms such that the diagrams below commute
up to constant multiples:
$$
\xymatrix{
X' \otimes Y  \ar[r]^{g} \ar[d]_{f}  &  \ar[d]^{\ep'}  X' \otimes Y'  \\
X \otimes Y  \ar[r]^{ \ep }   &   \triv  \\
 X \otimes Y''  \ar[u]^{g'}  \ar[r]_ {f'  } &   \ar[u]_{\ep''} X'' \otimes Y'',
}
\qquad
\xymatrix{
Y' \otimes X'  \ar[r]^{f}   &    Y' \otimes X  \\
\triv  \ar[d]_{\eta''} \ar[u]^{\eta'} \ar[r]^{ \eta }   &   \ar[d]^{f'}   \ar[u]_{g}
   Y \otimes X  \\
 Y'' \otimes X''    \ar[r]_ {g'  } &   Y \otimes X''.
}
$$
If $ (\ep', \eta')$ and $(\ep'', \eta'')$ are quasi-adjunctions, then  $(\ep, \eta)$ is also a quasi-adjunction.
\end{lem}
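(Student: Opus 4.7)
The plan is to verify that the two composites
\begin{align*}
\phi_X &\cl X \simeq X \tens \triv \To[{X\tens\eta}] X\tens Y\tens X \To[{\ep\tens X}] \triv\tens X \simeq X,\\
\psi_Y &\cl Y \simeq \triv\tens Y \To[{\eta\tens Y}] Y\tens X\tens Y \To[{Y\tens \ep}] Y\tens \triv \simeq Y
\end{align*}
are isomorphisms, which is by definition the quasi-adjunction condition for $(\ep,\eta)$. By hypothesis, the corresponding composites $\phi_{X'}, \phi_{X''}, \psi_{Y'}, \psi_{Y''}$ attached to the primed data are already isomorphisms. I would reduce the general case to these by a five-lemma argument on the exact sequences $0\to X'\to X\to X''\to 0$ and $0\to Y''\to Y\to Y'\to 0$, which remain exact after tensoring since $\tens$ is assumed exact.

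The preparatory step is to establish that in the diagram
\[
\xymatrix@C=7ex{
X' \ar[r]^{f}\ar[d]_{\phi_{X'}} & X \ar[d]^{\phi_X}\ar[r]^{f'} & X'' \ar[d]^{\phi_{X''}}\\
X' \ar[r]_{f}& X \ar[r]_{f'} & X''
}
\]
both squares commute up to nonzero scalars in $\corp^\times$. For the left square, I would start from $\phi_X \circ f = (\ep \tens X)\circ (f\tens Y\tens X)\circ (X'\tens \eta)$; the top square of the $\ep$-diagram lets me replace $\ep\circ(f\tens Y)$ by a nonzero scalar multiple of $\ep'\circ(X'\tens g)$, and then the top square of the $\eta$-diagram lets me replace $(g\tens X)\circ\eta$ by a nonzero scalar multiple of $(Y'\tens f)\circ\eta'$. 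Collapsing yields $\phi_X\circ f = c_1\cdot f\circ \phi_{X'}$ for some $c_1\in\corp^\times$. A symmetric calculation with the bottom squares gives $f'\circ\phi_X = c_2\cdot \phi_{X''}\circ f'$ for some $c_2\in\corp^\times$, and an entirely parallel argument with $g,g'$ in place of $f,f'$ produces the analogous up-to-scalar commutative squares for $\psi_Y$ with respect to $0\to Y''\to Y\to Y'\to 0$.

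Given these relations, $\phi_X$ is an isomorphism by the standard five-lemma diagram chase, now tolerating nonzero scalar twists. Explicitly, for $\iota\cl K\hookrightarrow X$ the kernel of $\phi_X$, the right relation forces $\phi_{X''}\circ f'\circ\iota=0$, hence $f'\circ\iota=0$ since $\phi_{X''}$ is invertible, so $\iota$ factors as $\iota=f\circ\iota'$ for some $\iota'\cl K\to X'$; the left relation then gives $f\circ\phi_{X'}\circ\iota'=0$, forcing $\iota'=0$ since $f$ is monic and $\phi_{X'}$ is invertible, whence $K=0$. The dual chase on $\coker\phi_X$ yields surjectivity, and the identical argument applied to $\psi_Y$ completes the proof.

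The main obstacle is purely bookkeeping: one must track the scalars produced by the four hypothesized squares and verify that the composed constants $c_1,c_2$ (and their $\psi$-counterparts) are nonzero so that the five-lemma step is valid. This is automatic once ``commute up to constant multiples'' is interpreted as commuting up to an element of $\corp^\times$, which is forced in any case because the primed data already form quasi-adjunctions and so the relevant morphisms are nonzero.
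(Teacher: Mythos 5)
Your proof is correct and takes the expected route; since the paper only cites this lemma from \cite{KKK18} without reproducing the argument, there is no proof in the text to compare against, but the five-lemma reduction you describe is the standard approach. The interchange-law manipulations yielding $\phi_X\circ f = c_1 c_2\, f\circ\phi_{X'}$ and $f'\circ\phi_X = c_3 c_4\, \phi_{X''}\circ f'$ (and the analogous relations for $\psi_{Y''}, \psi_Y, \psi_{Y'}$ against $0\to Y''\to Y\to Y'\to 0$) are sound, and the mono/epi diagram chase on $\ker\phi_X$ and $\coker\phi_X$ is valid once those scalars are known to lie in $\corp^\times$.

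The one place your justification is slightly off is the last paragraph. Nonvanishing of the primed data does not by itself force each hypothesized scalar to be a unit: the relation $\ep\circ(f\otimes Y)=c\cdot\ep'\circ(X'\otimes g)$ is perfectly consistent with $c=0$ even when $\ep'\circ(X'\otimes g)\ne 0$. What actually forces the scalars into $\corp^\times$ is that the lemma is simply false otherwise: a zero scalar gives $\phi_X\circ f=0$, which with $f$ monic and $X'\ne 0$ is incompatible with $\phi_X$ being an isomorphism. So ``commute up to constant multiples'' has to be read throughout as ``up to a unit''; this is the convention intended by the cited source rather than a consequence of the other hypotheses. With that reading fixed, your argument is complete.
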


\subsubsection*{Commuting families}

We now consider a family $\{ P_i \}_{i\in I}$ of objects and a family of isomorphisms $\{ B_{i,j}\cl  P_i \otimes P_j  \buildrel \sim\over \longrightarrow P_j \otimes P_i  \}_{i,j\in I}$
in a monoidal category $\catT$. We say that $(\{ P_i \}_{i\in I},  \{B_{i,j}\}_{i,j\in I} )$ is a \emph{commuting family} if
it satisfies the following:
\begin{enumerate}
\item[(a)] $B_{i,i} = \id_{P_i \otimes P_i}$ for any $i\in I$,
\item[(b)] $B_{j,i}\circ B_{i,j} = \id_{P_i \otimes P_j}$ for any $i,j \in I$,
\item[(c)] the isomorphisms $ B_{i,j} $ satisfy the \emph{Yang-Baxter equation}, which means that the following
diagram commutes for any $i,j,k\in I$:
\begin{equation} \label{Eq: cf pentagon}
\begin{aligned}
\xymatrix@R=4ex@C=7ex{
& \ar[ld]_{B_{i,j} \otimes P_k}  P_i \otimes P_j \otimes P_k \ar[rd]^{P_i \otimes B_{j,k}}  \\
P_j \otimes P_i \otimes P_k \ar[d]_{P_j \otimes B_{i,k}}  &&   \ar[d]^{B_{i,k}\otimes P_j} P_i \otimes P_k \otimes P_j \\
P_j \otimes P_k \otimes P_i  \ar[rd]_{B_{j,k}\otimes P_i}   &&  \ar[ld]^{P_k \otimes B_{i,j}} P_k \otimes P_i \otimes P_j \\
& P_k \otimes P_j \otimes P_i \,.
}
\end{aligned}
\end{equation}
\end{enumerate}

We set
$$
\lG \seteq  \Z^{\oplus I} \quad \text{ and } \quad \lG_{\ge 0} \seteq  \Z_{\ge 0}^{\oplus I}.
$$
For $i\in I$, let $e_i$ be the image of $1$ under the canonical embedding $\Z \rightarrowtail \lG$ to the $i$-th component so that
$\{ e_i \mid i\in I \} $ is a basis of $\lG$.
Then we have the following lemma.

\begin{lem}[{\cite[Section A.4]{KKK18}}] \label{Lem: cf}
Let $(\{ P_i \}_{i\in I},  \{B_{i,j}\}_{i,j\in I} )$ is a commuting family in a monoidal category $\catT$. Then there exist
\begin{enumerate} [\rm(a)]
\item an object $P^\alpha$ for any $\alpha \in \lG_{\ge0}$,
\item  an isomorphism $\xi_{\alpha, \beta} \cl  P^\alpha \otimes P^\beta \isoto P^{\alpha+\beta}$ for any $\alpha,\beta\in \lG_{\ge0}$
\end{enumerate}
such that
\begin{enumerate} [\rm(i)]
\item  $P^0=\triv$  and $ P^{e_i} = P_i $ for $i\in I$,
\item  the diagram
\begin{equation} \label{Eq: xi sum}
\begin{aligned}
\xymatrix{
P^\alpha \otimes P^\beta \otimes P^\gamma \ar[d]_{ P^\alpha \otimes \xi_{\beta, \gamma}} \ar[rr]^{\xi_{\alpha, \beta} \otimes P^\gamma} && \ar[d]^{\xi_{\alpha+\beta, \gamma}} P^{\alpha+\beta}\otimes P^\gamma \\
P^\alpha \otimes P^{\beta + \gamma} \ar[rr]^{\xi_{\alpha, \beta+\gamma}} && P^{\alpha+\beta +\gamma}
}
\end{aligned}
\end{equation}
commutes for any $\alpha, \beta, \gamma \in \lG_{\ge0}$,
\item the diagrams
\begin{equation} \label{Eq: CF ij}
\begin{aligned}
\ba{ccc}
\xymatrix{
P^0 \otimes P^0  \ar[d]_{ \wr } \ar[rr]^{\xi_{0,0} } && \ar[d]^{ \wr } P^{0}\\
\triv \otimes \triv  \ar[rr]^{ \simeq} && \ \ \triv  \ ,
}
\ba{c}\\[3ex]\qtq\ea\quad
\xymatrix{
 \ar[d]_{ B_{i,j}} \ar[drr]^{\xi_{e_i, e_j}} P^{e_i}\otimes P^{e_j} && \\
P^{e_j} \otimes P^{e_i}  \ar[rr]^{\xi_{e_j, e_i}} && P^{e_i+e_j}
}\ea
\end{aligned}
\end{equation}

commute for any $i,j\in I$.
\end{enumerate}
Moreover,  such a datum $(\{ P^\alpha\}_{\alpha \in \lG_{\ge 0}} ,   \{ \xi_{\alpha, \beta} \}_{\alpha,\beta \in \lG_{\ge0}} )$
is unique up to a unique isomorphism.
\end{lem}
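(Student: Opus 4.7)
The plan is to realize $P^\alpha$ as a tensor product along a fixed ordering of the factors indicated by $\alpha \in \lG_{\ge 0}$, then use the commuting family data to supply the rearrangement isomorphisms. Fix once and for all a total order on the index set $I$. For any finite sequence $\vec{i} = (i_1, \ldots, i_n) \in I^n$ set $P_{\vec{i}} \seteq P_{i_1} \otimes (P_{i_2} \otimes (\cdots \otimes P_{i_n}))$, with $P_{()} \seteq \triv$. For each $\alpha \in \lG_{\ge 0}$ let $\vec{i}^\alpha$ denote the unique weakly increasing sequence in $I$ whose multiset of entries contains $\alpha(i)$ copies of each $i \in I$, and define $P^\alpha \seteq P_{\vec{i}^\alpha}$. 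Property (i) then holds by construction.

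The crux is to produce, for every sequence $\vec{i}$ of length $n$ and every permutation $\pi \in \sg_n$, a canonical isomorphism $\sigma_\pi^{\vec{i}} \cl P_{\vec{i}} \isoto P_{\pi\vec{i}}$. First, for each adjacent transposition $s_k$ I build $\sigma_k^{\vec{i}} \cl P_{\vec{i}} \isoto P_{s_k\vec{i}}$ by applying $B_{i_k, i_{k+1}}$ to the $k$-th and $(k+1)$-th tensor factors, with the reassociation handled coherently via MacLane's theorem for $\otimes$. The three hypotheses on the commuting family translate into: $\sigma_k^{s_k\vec{i}} \circ \sigma_k^{\vec{i}} = \id$ by (b); commutation of $\sigma_k^{\vec{i}}$ and $\sigma_l^{\vec{i}}$ for $|k-l| \ge 2$ by bifunctoriality of $\otimes$; and the three-term braid relation at adjacent indices by the Yang-Baxter equation (c). A Matsumoto-type argument then forces $\sigma_\pi^{\vec{i}}$ to be independent of the reduced expression chosen for $\pi$.

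Define $\xi_{\alpha, \beta}$ as the composite
\[
P^\alpha \otimes P^\beta \;\simeq\; P_{\vec{i}^\alpha * \vec{i}^\beta} \;\xrightarrow{\;\sigma_{\pi_{\alpha, \beta}}\;}\; P_{\vec{i}^{\alpha+\beta}} \;=\; P^{\alpha+\beta},
\]
where $\pi_{\alpha, \beta}$ is the unique permutation sorting the concatenation $\vec{i}^\alpha * \vec{i}^\beta$ into $\vec{i}^{\alpha+\beta}$ (with a fixed tie-breaking rule). The first diagram of (iii) is immediate from $P^0 = \triv$. The second diagram of (iii) reduces, by case analysis according to the relative order of $i$ and $j$, to either $B_{j,i} \circ B_{i,j} = \id$ or a tautology, both of which hold by (b). Property (ii) follows because both composites in the pentagon effect the unique sort of $\vec{i}^\alpha * \vec{i}^\beta * \vec{i}^\gamma$ into $\vec{i}^{\alpha+\beta+\gamma}$ via two reduced expressions; the coherence of the second paragraph guarantees the resulting isomorphisms agree.

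For uniqueness, given another datum $(\tilde P^\alpha, \tilde\xi_{\alpha, \beta})$ satisfying (i)--(iii), I construct a comparison $\Theta_\alpha \cl P^\alpha \isoto \tilde P^\alpha$ by induction on $|\alpha| = \sum_i \alpha(i)$: take $\Theta_\alpha = \id$ for $|\alpha| \le 1$ by (i), and otherwise factor $\alpha = \beta + e_i$ and set $\Theta_\alpha = \tilde\xi_{\beta, e_i} \circ (\Theta_\beta \otimes \id_{P_i}) \circ \xi_{\beta, e_i}^{-1}$. Independence of the chosen decomposition follows from combining (ii) and (iii), which force the relation $\Theta_{\alpha+\beta} \circ \xi_{\alpha, \beta} = \tilde\xi_{\alpha, \beta} \circ (\Theta_\alpha \otimes \Theta_\beta)$. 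The main obstacle throughout is the Matsumoto-type coherence for $\sigma_\pi^{\vec{i}}$: reducing braid-group-level coherence to symmetric-group coherence via the involutivity relation (b), and discharging all parenthesization bookkeeping against the monoidal associator via MacLane's coherence theorem, is the real technical content of the proof.
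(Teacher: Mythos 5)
Your proposal is the standard construction — fix a total order on $I$, realize $P^\alpha$ as a tensor product along the sorted sequence, build rearrangement isomorphisms from the $B_{i,j}$ via adjacent transpositions, and verify well-definedness by a Matsumoto-type coherence argument — and this is almost certainly the route taken in~\cite[Section A.4]{KKK18}, to which the paper defers without giving its own proof. The architecture is sound: stable sorts guarantee additivity of lengths, so the two composites in the pentagon~\eqref{Eq: xi sum} are reduced factorizations of the same total sorting permutation, and Matsumoto plus the Yang-Baxter relation (c) and bifunctoriality close the argument.

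There is, however, one concrete slip in your verification of the second diagram of~\eqref{Eq: CF ij}. You claim the three cases reduce to ``either $B_{j,i}\circ B_{i,j}=\id$ or a tautology, both of which hold by (b).'' That is correct only for $i\neq j$: when $i<j$ the diagram becomes $B_{j,i}\circ B_{i,j}=\id$ (condition (b)), and when $i>j$ it becomes $\id\circ B_{i,j}=B_{i,j}$ (a tautology). But when $i=j$ one has $\xi_{e_i,e_i}=\id$ (the sequence $(i,i)$ is already sorted), so the diagram reads $B_{i,i}=\id$, which is precisely condition (a) — not a tautology, and not a consequence of (b), which only yields $B_{i,i}^{2}=\id$. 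In a general $\corp$-linear category an involution need not be the identity, so (a) is genuinely needed here. This is the only place in your construction where (a) enters (stable sorting never invokes $B_{i,i}$, so the definition of $\xi_{\alpha,\beta}$ and the pentagon verification go through without it), and the fix is a one-line citation of (a) in the $i=j$ case.
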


\section{Localizations} \label{Sec: Localization}

A localization of a monoidal category using a commuting family of central objects was explained in \cite[Appendix A]{KKK18}.
In this section, we shall generalize this localization to a \emph{real commuting family of \ro graded\/\rf\ braiders}.

\subsection{Real commuting family of braiders} \label{Sec: RCB}  \

\begin{df}\ \label{def: braider}
\begin{enumerate}[\rm (i)]
\item
A \emph{left braider} of a monoidal category $\catT$ is a pair $(C, R_C)$ of an object $C$ and a morphism
\begin{align*}
R_C(X)\cl  C \otimes X \longrightarrow X \otimes C
\end{align*}
which is functorial in $X \in \catT$ such that the following diagrams commutes:
\begin{equation} \label{Eq: central obj}
\begin{aligned}
\xymatrix{
C \otimes X \otimes Y   \ar[rr]^{R_C(X)\otimes Y}  \ar[drr]_{R_C(X \otimes Y)\ \ }  &  &   X \otimes C \otimes Y  \ar[d]^{X \otimes R_C(Y)}   \\
& &   X \otimes Y \otimes C  ,
}
\ \
\xymatrix{
C \otimes \triv   \ar[rr]^{R_C(\triv)}  \ar[drr]_{ \simeq }  &  &   \triv \otimes C   \ar[d]^{ \wr}   \\
& &    C
}
\end{aligned}
\end{equation}
\item Assume that $\catT$ is a $\bR$-linear monoidal category. A left braider $(C, R_C)$ is called \emph{real} if $R_C(C) \in \bR^\times \id_{C\otimes C}$.
\end{enumerate}
\end{df}

Similarly we can define the notion of {\em right braiders}
by reversing the order of tensor products.
Since we treat mainly left braiders in this paper,
{\em we simply say braiders for left braiders in the sequel.}

Note that, for $f \in \Hom_\catT (X,Y) $, the diagram
\begin{equation} \label{Eq: natural isomorphism}
\begin{aligned}
\xymatrix{
C \otimes X \ar[rr]^{C \otimes f} \ar[d]_{R_C(X)}  && C \otimes Y \ar[d]^{R_C(Y)}  \\
X \otimes C \ar[rr]^{f \otimes C}  && Y \otimes C
}
\end{aligned}
\end{equation}
commutes since $R_C$ is a natural transformation between left and right tensoring with $C$ by the definition.

A braider $ ( C, R_{C}  )$ is called a \emph{central object} if $ R_{C}(X)$ is an isomorphism for any $X \in \catT$.

Let $\catTc$ be the category of braiders in $\catT$.
A morphism from $(C,R_C)$ to $(C', R_{C'})$ in $\catTc$ is 
a morphism $f \in \Hom_\catT(C,C')$ such that the following diagram commutes for any $X \in \catT$:
$$
\xymatrix{
C \otimes X    \ar[rr]^{f \otimes X}  \ar[d]_{R_C(X )}  &  &  C' \otimes X  \ar[d]^{ R_{C'}(X)}   \\
X \otimes C  \ar[rr]^{X \otimes f} & &   X \otimes C'.
}
$$
For braiders $(C_1, R_{C_1})$ and $(C_2, R_{C_2})$ of $\catT$, let
$ R_{C_1 \otimes C_2}(X)$ be the composition
$$
 C_1 \otimes C_2 \otimes X \To[{ R_{C_2}(X)}] C_1 \otimes X \otimes C_2
\To[{R_{C_1}(X) }]X \otimes C_1 \otimes C_2
$$
for $X \in \catT$. Then, one can show that $(C_1 \otimes C_2, R_{C_1\otimes C_2}) $ is also a braider of $\catT$.
Thus the category $\catTc$ has a structure of a monoidal category, and
there is a canonical faithful monoidal functor $\catTc \rightarrow \catT$.
We regard an object of $\catTc$ as an object of $\catT$ if there is no afraid of confusion.

The morphism $R_{C_1}(C_2)\cl C_1 \otimes C_2 \rightarrow C_2\otimes C_1$ is a morphism of braiders because
the following diagram commutes:
$$
\xymatrix{
C_1 \otimes C_2 \otimes  X    \ar[rr]^{R_{C_2}(X)}  \ar[d]_{R_{C_1}(C_2  )} \ar[drr]|-{R_{C_1}(C_2 \otimes X )}  && C_1 \otimes X \otimes  C_2  \ar[drr]|-{ R_{C_1}(X\otimes C_2)}   \ar[rr]^{R_{C_1}(X)}   && X \otimes C_1 \otimes  C_2  \ar[d]^{ R_{C_1}(C_2)} \\
 C_2\otimes C_1 \otimes X    \ar[rr]_{R_{C_1}(X)}    && C_2 \otimes X \otimes C_1    \ar[rr]_{R_{C_2}(X)}   &&  X \otimes C_2  \otimes  C_1.
}
$$
Let $\bR$ be a commutative ring with unity, and let $\catT$ be a $\bR$-linear monoidal category.

\begin{df}[{cf.\ Example~\ref{Ex: constants}}]\label{Def: real cf}
Let $I$ be an index set and let $ \{(C_i, R_{C_i})\}_{i\in I}$ be a
family of braiders in $\catT$.
We say that  $ (  C_i ,  R_{C_i} )_{i\in I}$ is a \emph{real commuting family of braiders} if
 \bna
\item $R_{C_i} (C_i) \in \bR^\times \id_{C_i \otimes C_i}$,\label{cond:a}
\item $ R_{C_j}(C_i) \circ R_{C_i}(C_j) \in \bR^{\times} \id_{C_i \otimes C_j} $.
\label{cond:b} 
\end{enumerate}
\end{df}
Note that the morphisms $R_{C_j} (C_i)$ satisfy the Yang-Baxter equation $\eqref{Eq: cf pentagon}$.

\begin{lem} \label{Lem: rcf - cf}
Let $(C_i, R_{C_i} )_{i\in I}$ be a real commuting family of braiders in $\catT$.
\bnum
\item
Then there exists a family $\{\eta_{ij}\}_{i,j\in I}$
of elements in $\corp^\times$ such that
\eqn
R_{C_i}(C_i)&&=\eta_{ii}\; \id_{C_i \otimes C_i},\\
R_{C_j}(C_i) \circ R_{C_i}(C_j)&& = \eta_{ij}\eta_{ji}\;\id_{C_i \otimes C_j}
\eneqn
for all $ i ,j \in I$.
\item Let $\{\eta_{ij}\}_{i,j\in I}$ be as in {\rm(i)} and set $B_{ij}=\eta_{ij}^{-1}R_{C_i}(C_j)$.
Then $(\{C_i\}_{i\in I},\{ B_{ij}\}_{i,j\in I})$ is a commuting family in the monoidal category $\catTc$.
\ee
\end{lem}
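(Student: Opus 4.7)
The plan for part (i) is a simple bookkeeping step. Condition (a) of Definition~\ref{Def: real cf} singles out a unique $\eta_{ii}\in\corp^\times$ with $R_{C_i}(C_i)=\eta_{ii}\,\id_{C_i\tens C_i}$; condition (b) singles out, for each $i\ne j$, a unique $\mu_{ij}=\mu_{ji}\in\corp^\times$ with $R_{C_j}(C_i)\circ R_{C_i}(C_j)=\mu_{ij}\,\id_{C_i\tens C_j}$. I then choose an arbitrary factorization $\mu_{ij}=\eta_{ij}\eta_{ji}$ in $\corp^\times$ for each unordered pair (for instance, fix a total order on $I$ and set $\eta_{ij}=1$, $\eta_{ji}=\mu_{ij}$ for $i<j$). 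This produces the required family $\{\eta_{ij}\}$.

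For part (ii), with $B_{ij}\seteq\eta_{ij}^{-1}R_{C_i}(C_j)$, axioms (a) and (b) of a commuting family are immediate: $B_{ii}=\id$ by construction, while $B_{ji}\circ B_{ij}=(\eta_{ij}\eta_{ji})^{-1}\mu_{ij}\,\id=\id$, so in particular each $B_{ij}$ is an isomorphism with inverse $B_{ji}$. The remaining work is to check that every $B_{ij}$ is a morphism in $\catTc$ (i.e.\ intertwines the induced braider structures on $C_i\tens C_j$ and $C_j\tens C_i$) and that the family satisfies the Yang-Baxter identity \eqref{Eq: cf pentagon}. Both claims reduce, after the invertible scalars cancel, to equations among the bare morphisms $R_{C_i}(C_j)$; moreover, since the forgetful functor $\catTc\to\catT$ is faithful, the Yang-Baxter identity need only be checked in $\catT$.

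Both remaining verifications use the same mechanism, namely the tensor-product compatibility \eqref{Eq: central obj} combined with the naturality of the transformation $R_{C_i}(-)$. To illustrate, for the Yang-Baxter identity I start from the left-hand composite
\begin{equation*}
(R_{C_j}(C_k)\tens C_i)\circ(C_j\tens R_{C_i}(C_k))\circ(R_{C_i}(C_j)\tens C_k),
\end{equation*}
use \eqref{Eq: central obj} to identify the last two arrows as $R_{C_i}(C_j\tens C_k)$, apply the naturality square for $R_{C_i}$ with respect to $R_{C_j}(C_k)\cl C_j\tens C_k\to C_k\tens C_j$ to slide $R_{C_j}(C_k)$ past $R_{C_i}$, and then expand the resulting $R_{C_i}(C_k\tens C_j)$ via \eqref{Eq: central obj} once more. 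The output is precisely the right-hand composite of \eqref{Eq: cf pentagon}. The proof that $B_{ij}$ is a morphism in $\catTc$ is obtained by replacing $C_k$ with an arbitrary $X\in\catT$ in the same argument and unfolding the definition of $R_{C_i\tens C_j}(X)$. The only difficulty is the bookkeeping of the three tensor factors at each step; no new idea is needed beyond the interplay of \eqref{Eq: central obj} and naturality.
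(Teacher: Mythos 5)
Your proposal is correct and follows essentially the same route as the paper: part (i) is the same bookkeeping (the paper chooses a total order on $I$ and sets $\eta_{ij}=c_{ij}$ for $i\prec j$, $\eta_{ij}=1$ for $j\prec i$, which is one instance of your arbitrary factorization), and for part (ii) you are simply writing out what the paper has already established in the two paragraphs preceding the lemma — namely that $R_{C_i}(C_j)$ is a morphism of braiders and that the $R_{C_i}(C_j)$ satisfy Yang–Baxter, both via the braider axiom \eqref{Eq: central obj} combined with naturality — and then remarks that (ii) ``is obvious.'' Your expansion of those two checks (collapsing two arrows into $R_{C_i}(C_j\tens C_k)$, sliding past it by naturality, then re-expanding) is exactly the mechanism the paper's displayed diagram encodes, so no new idea is involved.
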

\begin{proof}
(i) \   
Write
\eqn
R_{C_i}(C_i)&&=c_i\; \id_{C_i \otimes C_i},\\
R_{C_j}(C_i) \circ R_{C_i}(C_j)&& = c_{ij}\;\id_{C_i \otimes C_j}
\eneqn
with $c_i,c_{ij}\in\corp^\times$.
Then we can check easily that
$c_{ij}\;\id_{C_i \otimes C_j}=c_{ji}\;\id_{C_i \otimes C_j}$.
Taking a total order $\prec$ on $I$,
we obtain the assertion by setting  
\eqn
\eta_{ij}=
\bc c_i&\text{if $i=j$,}\\
c_{ij}&\text{if $i\prec j$,}\\
1&\text{if $j\prec i$.}
\ec
\eneqn

\smallskip
\noi
(ii) is obvious.
\end{proof}

Let $( C_i ,  R_{C_i} )_{i\in I}$
be a real commuting family of braiders in $\catT$.
Thanks to Lemma \ref{Lem: cf} and Lemma \ref{Lem: rcf - cf},
we can find
\begin{enumerate}[\rm(a)]
\item a braider $(C^\alpha, R_{C^\alpha})$ for each $\alpha \in \lG_{\ge0}$,
\item an isomorphism $\xi_{\alpha, \beta}\cl  C^\alpha \otimes C^\beta \buildrel \sim\over \longrightarrow C^{\alpha+\beta}$ for $\alpha, \beta \in \lG_{\ge0}$
\end{enumerate}
satisfying the conditions (i), (ii) and (iii) of Lemma \ref{Lem: cf}.
Here the right diagram in \eqref{Eq: CF ij}
reads as:
for $i,j\in I$,
 $$
\xymatrix{
C^{e_i} \otimes C^{e_j} \ar[d]_{ \xi_{e_i, e_j}} \ar[rr]^{R_{C^{e_i}}(C^{e_j}) } && \ar[d]^{\xi_{ e_j, e_i}} C^{e_j} \otimes C^{e_i} \\
C^{e_i + e_j} \ar[rr]^{\bce_{i,j} \id_{C^{e_i+e_j}}} && C^{e_i+e_j}
}
$$
commutes for $\bce_{i,j} \in \bR^\times  $ as in  Lemma~\ref{Lem: rcf - cf}~(i).
Note that $\xi_{\alpha, \beta}$ is a morphism of braiders, i.e.
the diagram
\begin{equation} \label{Eq: R alpha}
\begin{aligned}
\xymatrix{
C^{ \alpha} \otimes C^\beta \otimes X \ar[d]_{ \xi_{\alpha, \beta} \otimes X}    \ar[rr]^{ C^\alpha \otimes  R_{C^\beta}(X) } && C^{ \alpha} \otimes X \otimes C^\beta  \ar[rr]^{  R_{C^\alpha}(X) \otimes C^\beta } &&
\ar[d]^{X \otimes \xi_{ \alpha, \beta} } X \otimes C^{ \alpha} \otimes C^\beta \\
C^{\alpha+ \beta }\otimes X \ar[rrrr]^{ R_{C^{\alpha+\beta}}(X)  } &&  && X \otimes C^{\alpha+\beta}.
}
\end{aligned}
\end{equation}
commutes for $X \in \catT$.

We define
\begin{align} \label{Eq: eta}
\bce(\alpha, \beta) \seteq  \prod_{i,j \in I} \bce_{i,j}^{a_ib_j} \in \bR^\times
\end{align}
for $\alpha = \sum_{i\in I} a_i e_i$ and $\beta = \sum_{j\in I} b_j e_j$ in $\lG$.
Note that, by the definition, $\bce(\alpha,0) = \bce(0,\alpha)=1$ and
\begin{align} \label{Eq: comm for bce}
\bce(\alpha, \beta+\gamma) = \bce(\alpha, \beta)\cdot \bce(\alpha, \gamma)\ \  \text{ and } \ \  \bce(\alpha+ \beta, \gamma) = \bce(\alpha, \gamma)\cdot \bce(\beta, \gamma)
\end{align}
for $\alpha, \beta, \gamma \in \lG$.

\begin{lem} \label{Lem: mu scalar}
For $\alpha$, $\beta \in \lG_{\ge0}$, we have a commutative diagram:
$$
\xymatrix{
C^\alpha \otimes C^\beta \ar[d]_{ \xi_{\alpha, \beta}} \ar[rr]^{R_{C^\alpha}(C^\beta) } && \ar[d]^{\xi_{ \beta, \alpha}} C^\beta \otimes C^\alpha \\
C^{\alpha + \beta} \ar[rr]^{\bce(\alpha,\beta)\; \id_{C^{\alpha+\beta}} } && C^{\alpha+\beta}
}
$$
\end{lem}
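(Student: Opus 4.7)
The plan is to establish the identity
\[
\xi_{\beta,\alpha}\circ R_{C^\alpha}(C^\beta)=\bce(\alpha,\beta)\cdot\xi_{\alpha,\beta}
\]
by induction on $n\seteq|\alpha|+|\beta|$, where $|\gamma|\seteq\sum_{i\in I}c_i$ for $\gamma=\sum_i c_ie_i\in\lG_{\ge0}$. The base cases split into two. When $\alpha=0$ (or symmetrically $\beta=0$), the unit axiom of \eqref{Eq: central obj} makes $R_{C^0}(C^\beta)$ the canonical swap with the unit, $\xi_{0,\beta}$ and $\xi_{\beta,0}$ are the canonical unital isomorphisms supplied by Lemma~\ref{Lem: cf}\,(i), and $\bce(0,\beta)=1$ by the definition \eqref{Eq: eta}; hence the square commutes trivially. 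When $\alpha=e_i$ and $\beta=e_j$, the square is exactly the right square in \eqref{Eq: CF ij}, and $\bce(e_i,e_j)=\bce_{i,j}$ by \eqref{Eq: eta}.

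For the inductive step $n\ge3$, by symmetry I may assume $|\beta|\ge 2$ and decompose $\beta=\beta_1+\beta_2$ with $\beta_1,\beta_2\in\lG_{\ge0}\setminus\{0\}$. I will paste the target square onto a rectangle whose horizontal arrows are $C^\alpha\otimes\xi_{\beta_1,\beta_2}$ on top and $\xi_{\beta_1,\beta_2}\otimes C^\alpha$ in the middle. The naturality of $R_{C^\alpha}$ (diagram \eqref{Eq: natural isomorphism}) ensures that the extended rectangle commutes iff the analogous diagram with $R_{C^\alpha}(C^{\beta_1}\otimes C^{\beta_2})$ in place of $R_{C^\alpha}(C^\beta)$ does. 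The braider axiom \eqref{Eq: central obj} then decomposes
\[
R_{C^\alpha}(C^{\beta_1}\otimes C^{\beta_2})=(C^{\beta_1}\otimes R_{C^\alpha}(C^{\beta_2}))\circ(R_{C^\alpha}(C^{\beta_1})\otimes C^{\beta_2}),
\]
and two invocations of the inductive hypothesis for $(\alpha,\beta_1)$ and $(\alpha,\beta_2)$ replace each middle factor $R_{C^\alpha}(C^{\beta_j})$ by $\bce(\alpha,\beta_j)\cdot\xi_{\beta_j,\alpha}^{-1}\circ\xi_{\alpha,\beta_j}$.

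It remains to reassemble the $\xi$'s: the associativity coherence \eqref{Eq: xi sum}, applied to the triples $(\alpha,\beta_1,\beta_2)$ and $(\beta_1,\beta_2,\alpha)$, collapses the partial $\xi$-isomorphisms that appear on the left and right of the composite into the single morphisms $\xi_{\alpha,\beta}$ and $\xi_{\beta,\alpha}$ respectively. The total scalar produced along the way is $\bce(\alpha,\beta_1)\cdot\bce(\alpha,\beta_2)$, which equals $\bce(\alpha,\beta_1+\beta_2)=\bce(\alpha,\beta)$ by the multiplicativity \eqref{Eq: comm for bce} of $\bce$. Thus the outer square commutes with precisely the claimed scalar.

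The main obstacle is bookkeeping rather than mathematical content. No new idea beyond interlacing \eqref{Eq: central obj}, \eqref{Eq: natural isomorphism}, \eqref{Eq: xi sum} and \eqref{Eq: comm for bce} with two applications of the inductive hypothesis is needed, but the planar arrangement of the resulting composite has many arrows: one must repeatedly invoke the interchange law of the bifunctor $\otimes$ to slide the various $\id\otimes f$ and $g\otimes\id$ factors past each other so that the $\xi$-coherence applies on each side, and one must keep track of the bookkeeping of $|\alpha|+|\beta_j|<n$ that licenses each use of the inductive hypothesis.
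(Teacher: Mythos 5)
Your proof is correct and follows essentially the same strategy as the paper's: dispose of the degenerate cases, then split a multi-index, invoke the inductive hypothesis on each piece, and reassemble using the $\xi$-coherence and the multiplicativity of $\bce$. The only caveat is the phrase ``by symmetry I may assume $|\beta|\ge2$'': the case $|\alpha|\ge2$, $|\beta|=1$ is not literally symmetric to the one you treat, because decomposing $\beta$ uses the braider axiom \eqref{Eq: central obj}, while decomposing $\alpha=\alpha'+\alpha''$ instead requires \eqref{Eq: R alpha} (the statement that $\xi_{\alpha',\alpha''}$ is a morphism of braiders, so that $R_{C^{\alpha'+\alpha''}}$ factors through $R_{C^{\alpha'}}$ and $R_{C^{\alpha''}}$). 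The paper in fact runs the $\alpha$-decomposition first (a rectangle built from \eqref{Eq: R alpha}) and then, once $\Ht(\alpha)=1$, an inner induction on $\Ht(\beta)$ of the type you write out; so both ingredients are genuinely needed, and your ``by symmetry'' should be read as ``by an analogous argument using \eqref{Eq: R alpha} in place of \eqref{Eq: central obj}''. With that clarification, the argument is complete.
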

\begin{proof}
For $\al=\sum_{i\in I} a_i e_i$, set $\height{\al}=\sum_{i\in I}a_i$.
We shall show the assertion by induction on $\height{\al}$.
It is trivial when either $\al=0$ or $\beta=0$. 

First we assume $\height{\al}>1$.
Then write $\al=\al'+\al''$ with $\al',\al''\not=0$.
Setting
$C^{\beta_1,\ldots,\beta_n}=C^{\beta_1}\tens\cdots \tens C^{\beta_n}$,
the assertion follows from 
the following commutative diagram:

\scalebox{.9}{\parbox{\textwidth}{ 
\eqn
\xymatrix@C=3ex{
&C^{\al',\al'',\beta}\ar[rrr]^{R_{C^{\alpha''}}(C^\beta)} \ar[dl]\ar[d]&&&
C^{\al',\beta,\al''}\ar[rrr] ^{R_{C^{\alpha'}}(C^\beta)} \ar[dl]\ar[dr]&&&
C^{\beta,\al',\al''}\ar[d]\ar[dr]\\
C^{\al,\beta}\ar[dr]
&C^{\al',\al''+\beta}\ar[d]
\ar[rr]^{\bce(\al'',\beta)}&&
C^{\al',\al''+\beta}\ar[dr]&
&C^{\al'+\beta,\al''}\ar[dl]\ar[rr]^{\bce(\al',\beta)}
&&C^{\al'+\beta,\al''}\ar[d]& C^{\beta,\al}\ar[dl]\\
&C^{\al+\beta}\ar[rrr]^{\bce(\al'',\beta)}&&&C^{\al+\beta}\ar[rrr]^{\bce(\al',\beta)}&&&C^{\al+\beta}}
\eneqn
}
}
\vskip 0.5em 

We next assume $\height{\al}=1$.  If $\height{\beta}=1$, then it's done by the definition of $\eta_{i,j}$.
When $\height{\beta}>1$, the assertion follows by arguing similarly by induction on $\height{\beta}$.
\end{proof}

\subsection{Localizations} \label{Sec: Loc}  \

Let $( C_i , R_{C_i} )_{i\in I}$ be a real commuting family of braiders in $\catT$.
We define an order $\preceq$ on $ \lG$ by
$$
\alpha \preceq \beta  \quad \text{ for } \alpha, \beta \in \lG \text{ with }  \beta - \alpha \in \lG_{\ge0}
$$
Note that $\preceq$ is \emph{directed}, i.e., for $\alpha, \beta \in \lG$, there exists $\gamma\in \lG$ such that $\alpha \preceq \gamma$ and $\beta \preceq \gamma$.
For $\alpha_1, \ldots, \alpha_k \in \lG$,  we set
$$
\Ds_{\alpha_1, \ldots, \alpha_k} \seteq  \{ \delta \in \lG \mid \alpha_i + \delta \in \lG_{\ge0} \text{ for any }i =1, \ldots, k \}.
$$

For $X,Y \in \catT$ and  $\delta, \delta' \in \Ds_{\alpha, \beta}$ with $\delta \preceq \delta'$, we define the map
\begin{align} \label{Eq: zeta}
\zeta_{\delta', \delta}  \cl  \Hom_{\catT}( C^{\delta+\alpha}\otimes X ,  Y  \otimes C^{ \delta+\beta} ) \rightarrow \Hom_{\catT}( C^{ \delta'+\alpha}\otimes X ,  Y \otimes C^{ \delta'+\beta} )
\end{align}
by sending $f$ to $\zeta_{\delta', \delta }(f)$ such that the following diagram commutes:
$$
\xymatrix{
C^{\delta' - \delta} \otimes C^{ \delta+\alpha} \otimes X   \ar[d]^{\wr}_{  \xi_{\delta'-\delta,  \delta+\alpha} }  \ar[rr]^{ C^{\delta'-\delta} \otimes f }   &&      C^{\delta' - \delta} \otimes Y \otimes C^{ \delta+\beta}   \ar[rr]^{  R_{C^{\delta'-\delta}} (Y) }  &&  Y \otimes C^{\delta' - \delta} \otimes C^{ \delta+\beta}
  \ar[d]_{\wr}^{\xi_{ \delta'-\delta,   \delta+\beta}}  \\
C^{  \delta'+\alpha} \otimes X  \ar[rrrr]^{\zeta_{\delta', \delta}(f)} &&&& Y \otimes C^{ \delta'+\beta}
}
$$
For $\delta, \delta', \delta'' \in \Ds_{\alpha, \beta} $ with $\delta \preceq \delta' \preceq \delta''$,
it follows from $\eqref{Eq: xi sum}$ and $\eqref{Eq: R alpha}$ that the following diagram
$$
\xymatrix{
C^{\delta''+\alpha} \otimes X  \ar[ddd]_{\zeta_{\delta'', \delta}(f)}  && \ar[ll]_{\sim}^{\xi_{\delta''-\delta', \delta'+\alpha} \quad } C^{\delta''-\delta'} \otimes C^{\delta'+\alpha} \otimes X \ar[dd]_{\zeta_{\delta',\delta }(f)}
&&  \ar[ll]_{\sim}^{\xi_{\delta' - \delta, \delta + \alpha} \quad } C^{\delta''-\delta'} \otimes C^{\delta'-\delta} \otimes C^{\delta+\alpha} \otimes X  \ar[d]_{f}  \\
&&  && C^{\delta''-\delta'} \otimes C^{\delta'-\delta} \otimes Y \otimes C^{\delta+\beta} \ar[d]_{R_{C^{\delta'-\delta}}(Y)} \\
 &&  C^{\delta''-\delta'} \otimes Y \otimes C^{\delta'+\beta}  \ar[d]_{R_{C^{\delta''-\delta'}}(Y)}  &&  \ar[ll]_{\sim}^{\xi_{\delta' - \delta, \delta + \beta} \quad }  C^{\delta''-\delta'} \otimes Y \otimes C^{\delta'-\delta}  \otimes C^{\delta+\beta} \ar[d]_{R_{C^{\delta''-\delta'}}(Y)}  \\
Y \otimes C^{\delta''+\beta}  && \ar[ll]_{\sim}^{\xi_{\delta'' - \delta', \delta' + \beta} \quad } Y \otimes C^{\delta''-\delta'} \otimes   C^{\delta'+\beta}  &&  \ar[ll]_{\sim}^{\xi_{\delta' - \delta, \delta + \beta} \quad } Y \otimes C^{\delta''-\delta'} \otimes  C^{\delta'-\delta}  \otimes C^{\delta+\beta}
}
$$
commutes, which implies that
$$
\zeta_{\delta'', \delta' } \circ \zeta_{\delta', \delta} = \zeta_{\delta'', \delta}.
$$
Thus, it becomes an inductive system (or direct system) on $ \Ds_{\alpha, \beta}$.

We shall define a localization $\lT$ of $\catT$ by $( C_i ,  R_{C_i} )_{i\in I}$ as follows.
We set
\begin{align*}
\Ob (\lT) &\seteq  \Ob(\catT) \times \lG, \\
\Hom_{\lT}( (X, \alpha), (Y, \beta) ) &\seteq   \lim_{  \substack{\longrightarrow \\ \delta \in \Ds_{\alpha, \beta}}    }  \Hm_\delta( (X, \alpha  ), (Y, \beta  ) ) ,
\end{align*}
where
$$
\Hm_\delta( (X, \alpha  ), (Y, \beta  ) ) \seteq  \Hom_\catT( C^{ \delta+\alpha}\otimes X,  Y \otimes C^{  \delta+\beta} ).
$$
Here, the inductive limit is taken over the inductive system on $\Ds_{\alpha, \beta}$ defined as above.

\bigskip
For  $f \in \Hm_\delta( (X, \alpha  ), (Y, \beta  ) ) $ and  $ g \in  \Hm_\epsilon( (Y, \beta  ), (Z, \gamma  ) )$, we define
$$
\Psi_{\delta, \epsilon} (f,g)  \seteq  \bce(\delta+ \beta, \beta-\gamma) \cdot \widetilde{\Psi}_{\delta, \epsilon}(f,g) \in \Hm_{\delta+\epsilon+\beta}( (X, \alpha  ), (Z, \gamma ) ),
$$
where $\bce$ is given in $\eqref{Eq: eta}$ and  $\widetilde{\Psi}_{\delta, \epsilon}(f,g)$ is the morphism such that the following diagram commutes:
$$
\xymatrix{
C^{\epsilon+\beta}\otimes C^{\delta+\alpha}\otimes X \ar[d]^{\wr}_{\xi_{\epsilon+\beta, \delta+\alpha} }  \ar[rr]^f &&  C^{\epsilon+\beta}\otimes   Y\otimes  C^{\delta+\beta} \ar[rr]^g && Z\otimes C^{\epsilon+\gamma}\otimes C^{\delta+\beta} \ar[d]_{\wr}^{\xi_{ \epsilon+\gamma, \delta+\beta}} \\
C^{\delta+\epsilon+\beta+\alpha} \otimes X  \ar[rrrr]^{\widetilde{\Psi}_{\delta, \epsilon}(f,g)}&& && Z \otimes  C^{\delta+\epsilon+\beta+\gamma} .
}
$$
We set
$$
C^{\alpha_1, \ldots, \alpha_t} \seteq  C^{\alpha_1} \otimes \cdots \otimes C^{\alpha_t}
 $$
 for $\alpha_1, \ldots \alpha_t \in \lG_{\ge0}$.
Then, for $\delta' \succeq \delta$ and $\epsilon' \succeq \epsilon$, one can show that the following diagram commutes:
$$
\xymatrix{
C^{\delta' + \epsilon' +\beta+ \alpha } \otimes X \ar[rr]^{\bce(\delta'-\delta, \epsilon+\beta)}   \ar@/_7pc/[ddd]_{p}
 && C^{\delta' + \epsilon' +\beta+ \alpha} \otimes X    \ar@/^7pc/[ddd]^{q} \\
C^{\epsilon'-\epsilon,\delta'-\delta,\epsilon+\beta,\delta+\alpha} \otimes X \ar[rr]^{R_{C^{\delta'-\delta}} (C^{\epsilon+\beta}) } \ar[d]_{ a} \ar[u]^\wr
 && C^{\epsilon'-\epsilon,\epsilon+\beta,\delta'-\delta,\delta+\alpha} \otimes X \ar[d]^{ b}  \ar[u]_\wr \\
Z \otimes C^{\epsilon'-\epsilon,\delta'-\delta,\epsilon+\gamma,\delta+\beta} \ar[rr]^{R_{C^{\delta'-\delta}} (C^{\epsilon+\gamma}) }  \ar[d]_\wr && Z \otimes  C^{\epsilon'-\epsilon,\epsilon+\gamma,\delta'-\delta,\delta+\beta}  \ar[d]^\wr \\
Z \otimes  C^{\delta' + \epsilon' + \beta+\gamma} \ar[rr]^{\bce(\delta'-\delta, \epsilon+\gamma)} && Z \otimes  C^{\delta' + \epsilon' + \beta+\gamma},
}
$$
where
\begin{align*}
&a = R_{C^{\epsilon'-\epsilon, \delta'-\delta}}(Z)  \circ g \circ f, \qquad \  b = R_{C^{\epsilon' - \epsilon}}(Z) \circ g \circ R_{C^{\delta'-\delta}}(Y)\circ f, \\
&p = \zeta_{  \delta' + \epsilon' + \beta, \delta+\epsilon+\beta}(\widetilde{\Psi}_{\delta, \epsilon}(f,g)), \ \ \
q= \widetilde{\Psi}_{ \delta', \epsilon'}( \zeta_{\delta', \delta}(f), \zeta_{\epsilon', \epsilon}(g) ),
\end{align*}
 and other vertical morphisms are given by $\xi$'s. Thus, by $\eqref{Eq: comm for bce}$, we have
\begin{align*}
\Psi_{ \delta', \epsilon'}( \zeta_{\delta', \delta}(f), \zeta_{\epsilon', \epsilon}(g) ) &= \eta(\delta'+\beta, \beta-\gamma) \cdot
\widetilde{\Psi}_{\delta', \epsilon'}( \zeta_{\delta', \delta}(f), \zeta_{\epsilon', \epsilon}(g) ) \\
&= \eta(\delta+\beta, \beta-\gamma)  \cdot \zeta_{  \delta' + \epsilon' + \beta, \delta+\epsilon+\beta }(\widetilde{\Psi}_{\delta, \epsilon}(f,g)) \\
&=  \zeta_{  \delta' + \epsilon' + \beta, \delta+\epsilon+\beta }(\Psi_{\delta, \epsilon}(f,g)),
\end{align*}
which tells that the following diagram commutes:
$$
\xymatrix{
\Hm_\delta( (X, \alpha  ), (Y, \beta  ) ) \times H_\epsilon( (Y, \beta  ), (Z, \gamma  ) )  \ar[r] \ar[d] & \ar[d]
\Hm_{\delta+\epsilon+\beta}( (X, \alpha  ), (Z, \gamma  ) )\\
\Hm_{\delta'}( (X, \alpha  ), (Y, \beta  ) ) \times H_{\epsilon'}( (Y, \beta  ), (Z, \gamma  ) )  \ar[r] &
\Hm_{\delta'+\epsilon'+\beta}( (X, \alpha  ), (Z, \gamma  ) ),
}
$$
where the horizontal maps are given by $\Psi$'s and the vertical maps are given by $\zeta$'s.
This yields the composition
$$
\Hom_{\lT}( (X, \alpha), (Y, \beta) )  \times \Hom_{\lT}( (Y, \beta), (Z, \gamma) )  \rightarrow
\Hom_{\lT}( (X, \alpha), (Z, \gamma) )
$$
in $\lT$.
We can check also the associativity, and
we conclude that  $\lT$ is a category. 
Note that the identity $\id \in \End_{\lT}\bl(X, \alpha)\br$ is induced from the identity 
$$\id_X \in \Hom_\catT(X,X)=\Hm_{- \alpha} ( (X, \alpha), (X, \alpha) ).$$

\smallskip \smallskip \smallskip
We shall define a bifunctor $\otimes$ on the category $\lT$.
For $\alpha, \beta \in \lG$ and $X,Y \in \catT$, we set
$$
(X, \alpha) \otimes (Y, \beta) \seteq  ( X \otimes Y, \alpha+ \beta ).
$$
To define a tensor of morphisms, for $f \in \Hm_\delta((X, \alpha), (X', \alpha'))$ and $g \in \Hm_\epsilon((Y, \beta), (Y', \beta'))$ we define
$$
T_{\delta, \epsilon}(f,g) \seteq  \eta(\epsilon, \alpha-\alpha') \widetilde{T}_{\delta, \epsilon}(f,g) \in \Hm_{\delta+\epsilon}((X\otimes Y, \alpha+\beta), (X' \otimes Y', \alpha'+\beta')),
$$
where $ \widetilde{T}_{\delta, \epsilon}(f,g)$ is the morphism such that the following diagram commutes:
$$
\xymatrix{
C^{\delta+\alpha}  \otimes X \otimes C^{\epsilon+\beta}\otimes  Y  \ar[rr]^{f \otimes g} &&  X' \otimes C^{\delta+\alpha'}  \otimes  Y' \otimes  C^{\epsilon+\beta'}   \ar[d]^{R_{C^{\delta+\alpha'}} (Y') } \\
C^{\delta+\alpha} \otimes C^{\epsilon+\beta}\otimes X \otimes Y  \ar[u]^{R_{C^{\epsilon+\beta}} (X) }  \ar[d]^\wr_{\xi_{\delta+\alpha, \epsilon+\beta}}
&&  X' \otimes Y' \otimes C^{\delta+\alpha'} \otimes C^{\epsilon+\beta'}  \ar[d]_\wr^{\xi_{\delta+\alpha', \epsilon+\beta'}} \\
C^{\delta+\epsilon+\alpha+\beta}\otimes X \otimes Y \ar[rr]^{\widetilde{T}_{\delta,\epsilon} (f, g)}  &&  X' \otimes Y' \otimes C^{\delta+\epsilon+\alpha'+\beta'}.
}
$$
Then, for $\delta' \succeq \delta$ and $\epsilon' \succeq \epsilon$, one can show that
$$
\zeta_{\delta' + \epsilon, \delta+\epsilon}(  \widetilde{T}_{\delta, \epsilon} (f,g)) = \widetilde{T}_{\delta', \epsilon}( \zeta_{\delta', \delta}(f), g),
$$
and the following diagram commutes:
$$
\xymatrix{
C^{\delta + \epsilon' + \alpha+\beta } \otimes X \otimes Y \ar[rr]^{\bce(\epsilon'-\epsilon, \delta+\alpha)}   \ar@/_7pc/[ddd]_{r}
 && C^{\delta + \epsilon' + \alpha+\beta} \otimes X \otimes Y   \ar@/^7pc/[ddd]^{s} \\
C^{\epsilon'-\epsilon,\delta+\alpha,\epsilon+\beta} \otimes X\otimes Y  \ar[rr]^{R_{C^{\epsilon'-\epsilon}} (C^{\delta+\alpha}) } \ar[d]_{ c} \ar[u]^\wr
 && C^{\delta+\alpha,\epsilon'-\epsilon,\epsilon+\beta} \otimes X \otimes Y \ar[d]^{ d} \ar[u]_\wr \\
X'\otimes Y' \otimes C^{\epsilon'-\epsilon,\delta+\alpha',\epsilon+\beta'} \ar[rr]^{R_{C^{\epsilon'-\epsilon}} (C^{\delta+\alpha'}) }  \ar[d]_\wr && X'\otimes Y' \otimes C^{\delta+\alpha',\epsilon'-\epsilon,\epsilon+\beta'}   \ar[d]^\wr \\
X' \otimes Y' \otimes  C^{\delta + \epsilon' + \alpha' + \beta'} \ar[rr]^{\bce(\epsilon'-\epsilon, \delta+\alpha')} && X'\otimes Y' \otimes  C^{\delta + \epsilon' + \alpha' + \beta'},
}
$$
where
\begin{align*}
&c = R_{C^{\epsilon'-\epsilon, \delta+\alpha'}}(Y') \circ R_{C^{\epsilon'-\epsilon}}(X') \circ (f \otimes g) \circ R_{C^{\epsilon+\beta}}(X), \\
&d = R_{C^{\delta+\alpha', \epsilon'-\epsilon}}(Y')  \circ (f \otimes g) \circ R_{C^{\epsilon'-\epsilon, \epsilon+\beta}}(X), \\
&r = \zeta_{\delta+\epsilon', \delta + \epsilon}(\widetilde{T}_{\delta, \epsilon}(f,g)), \ \ \
s= \widetilde{T}_{ \delta, \epsilon'}( f, \zeta_{\epsilon', \epsilon}(g) ),
\end{align*}
 and other vertical morphisms are given by $\xi$'s. Thus, by $\eqref{Eq: comm for bce}$, we have
\begin{align*}
T_{ \delta', \epsilon'} & ( \zeta_{\delta', \delta}(f), \zeta_{\epsilon', \epsilon}(g) ) \\ 
& = \eta(\epsilon', \alpha-\alpha') \cdot
\widetilde{T}_{\delta', \epsilon'}( \zeta_{\delta', \delta}(f), \zeta_{\epsilon', \epsilon}(g) ) \\
&= 
 \eta(\epsilon', \alpha-\alpha')  \eta(\epsilon'-\epsilon, \delta+\alpha)^{-1}  \eta(\epsilon'-\epsilon, \delta+\alpha')
 \cdot \zeta_{\delta'+\epsilon', \delta  + \epsilon  }(\widetilde{T}_{\delta, \epsilon}(f,g)) \\
&=  \zeta_{\delta'+\epsilon', \delta  + \epsilon  }(T_{\delta, \epsilon}(f,g)).
\end{align*}

Therefore,
for $f \in  \Hom_{\lT}( (X, \alpha)  , (X', \alpha')  )$ and $g \in  \Hom_{\lT}( (Y, \beta)  , (Y', \beta')  )$,
we have the tensor product
\begin{align} \label{Eq: tensor for localization}
f \otimes g \in \Hom_{\lT}( (X, \alpha) \otimes (Y, \beta) , (X', \alpha') \otimes (Y', \beta') )
\end{align}
induced from $T_{\delta, \epsilon}$ over the inductive system.

\begin{prop} \label{Prop: tensor and composition}
Let $f_k \in \Hm_{\delta_k}( (X_k, \alpha_k), (Y_k, \beta_k) )  $ and $g_k \in \Hm_{\epsilon_k}( (Y_k, \beta_k), (Z_k, \gamma_k) ) $ for $k=1,2$.
Then, we have
$$
\Psi_{\delta_1+\delta_2, \epsilon_1 + \epsilon_2} (T_{\delta_1, \delta_2} (f_1, f_2),  T_{\epsilon_1, \epsilon_2} (g_1, g_2) )
= T_{\delta_1+\epsilon_1+\beta_1, \delta_2+\epsilon_2 + \beta_2 } (\Psi_{\delta_1, \epsilon_1} (f_1,g_1), \Psi_{\delta_2, \epsilon_2} (f_2,g_2) ).
$$
\end{prop}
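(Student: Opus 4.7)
The identity expresses the interchange law for $\otimes$ and $\circ$ in the localization $\lT$ at the level of the lifts $T$ and $\Psi$. I would prove it in two steps: first matching the underlying morphisms produced by $\widetilde{T}$ and $\widetilde{\Psi}$ up to a single braider swap, and then matching the accumulated $\bce$-prefactors.

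For the diagrammatic step, both sides can be written as explicit morphisms in $\catT$ of the form
\begin{equation*}
C^{\delta_1+\delta_2+\epsilon_1+\epsilon_2+\beta_1+\beta_2+\alpha_1+\alpha_2}\otimes X_1\otimes X_2 \;\longrightarrow\; Z_1\otimes Z_2\otimes C^{\delta_1+\delta_2+\epsilon_1+\epsilon_2+\beta_1+\beta_2+\gamma_1+\gamma_2},
\end{equation*}
by repeatedly inserting $\xi$-isomorphisms. On the LHS, after expanding $\widetilde{\Psi}_{\delta_1+\delta_2,\epsilon_1+\epsilon_2}(\widetilde{T}(f_1,f_2),\widetilde{T}(g_1,g_2))$, the $C$-strand is braided past $X_1\otimes X_2$, then $f_1\otimes f_2$ is applied, then braided past $Y_1\otimes Y_2$, then $g_1\otimes g_2$ is applied. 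On the RHS, after expanding $\widetilde{T}_{\delta_1+\epsilon_1+\beta_1,\delta_2+\epsilon_2+\beta_2}(\widetilde{\Psi}(f_1,g_1),\widetilde{\Psi}(f_2,g_2))$, the analogous procedure is done index-wise, so that the $k$-th piece of the $C$-strand only interacts with $X_k,Y_k,Z_k$. To pass from one arrangement to the other, I would apply three ingredients: the naturality of $R_{C^\mu}$ applied to $f_k$ and $g_k$ (diagram \eqref{Eq: natural isomorphism}); the compatibility \eqref{Eq: R alpha} of $\xi$ with braiders, which lets me split $R_{C^{\mu+\nu}}$ as $R_{C^\mu}\circ R_{C^\nu}$ or recombine such factors; and Lemma~\ref{Lem: mu scalar}, used to swap the two $C$-strands of type $\epsilon_1+\beta_1$ and $\delta_2+\alpha_2$ in the middle, which introduces the only nontrivial scalar $\bce(\epsilon_1+\beta_1,\,\delta_2+\alpha_2-\beta_2)$-type factor that distinguishes the two diagrams.

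For the constant-matching step, I would compute the $\bce$-prefactors generated by each $T$ and $\Psi$. The LHS produces
\begin{equation*}
\bce(\delta_2,\alpha_1-\beta_1)\cdot\bce(\epsilon_2,\beta_1-\gamma_1)\cdot\bce(\delta_1+\delta_2+\beta_1+\beta_2,\,\beta_1+\beta_2-\gamma_1-\gamma_2),
\end{equation*}
while the RHS produces
\begin{equation*}
\bce(\delta_1+\beta_1,\beta_1-\gamma_1)\cdot\bce(\delta_2+\beta_2,\beta_2-\gamma_2)\cdot\bce(\delta_2+\epsilon_2+\beta_2,\,\alpha_1-\gamma_1),
\end{equation*}
multiplied by the swap scalar obtained in the diagrammatic step. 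Using the biadditivity of $\bce$ recorded in \eqref{Eq: comm for bce} together with $\bce(\lambda,0)=\bce(0,\lambda)=1$, one checks by straightforward expansion that the two products agree. This completes the proof.

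The main obstacle is purely the bookkeeping of $\bce$-scalars: the conceptual content is just the braided interchange law, but since $T$ and $\Psi$ each introduce their own prefactor and one more factor is produced by the essential swap in Step~1, the delicate point is to group the terms so that the bilinearity \eqref{Eq: comm for bce} makes the cancellation visible.
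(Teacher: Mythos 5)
Your overall strategy matches the paper's: isolate the single essential braider swap, extract its scalar via Lemma~\ref{Lem: mu scalar}, use naturality of $R$ together with \eqref{Eq: R alpha} for the remaining diagram chase, and finally cancel the accumulated $\bce$-prefactors by bilinearity \eqref{Eq: comm for bce}. The two lists of $\bce$-prefactors you read off from the definitions of $T$ and $\Psi$ on the left and right sides are exactly the ones appearing in the paper. However, you have identified the essential swap incorrectly. After expanding the $\xi$-isomorphisms, the LHS $\widetilde{\Psi}(\widetilde{T},\widetilde{T})$ begins from $C^{\epsilon_1+\beta_1,\,\epsilon_2+\beta_2,\,\delta_1+\alpha_1,\,\delta_2+\alpha_2}\otimes X_1\otimes X_2$, while the RHS $\widetilde{T}(\widetilde{\Psi},\widetilde{\Psi})$ begins from $C^{\epsilon_1+\beta_1,\,\delta_1+\alpha_1,\,\epsilon_2+\beta_2,\,\delta_2+\alpha_2}\otimes X_1\otimes X_2$; the two arrangements differ by interchanging the adjacent middle factors $C^{\epsilon_2+\beta_2}$ and $C^{\delta_1+\alpha_1}$ (which becomes $C^{\delta_1+\beta_1}$ once $f_1$ has acted). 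This is the source of the paper's scalar $\rho=\bce(\delta_1+\beta_1,\epsilon_2+\beta_2)\bce(\epsilon_2+\beta_2,\delta_1+\beta_1)$ and the auxiliary factors $\bce(\epsilon_2+\beta_2,\delta_1+\alpha_1)$ and $\bce(\delta_1+\beta_1,\epsilon_2+\gamma_2)$. Your claim that the swap is between the outer strands $C^{\epsilon_1+\beta_1}$ and $C^{\delta_2+\alpha_2}$ is wrong: those two factors never need to cross each other in either composition, and using them would not produce a scalar that cancels against your (correct) $T$- and $\Psi$-prefactors. With the swap corrected, your argument follows the paper's.
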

\begin{proof}
By Lemma \ref{Lem: mu scalar}, we have the commutative diagram
$$
\xymatrix{
C^{\epsilon_2 + \beta_2} \otimes C^{\delta_1 + \beta_1} \otimes Y_2 \ar[rrr]^{R_{C^{\epsilon_2 + \beta_2} } ( C^{\delta_1 + \beta_1} ) }  \ar[rrrd]_{ \rho \cdot R_{C^{\delta_1 + \beta_1}}  (Y_2 )\ \  }
&&& C^{\delta_1 + \beta_1} \otimes C^{\epsilon_2 + \beta_2} \otimes Y_2  \ar[d]^{R_{C^{\delta_1 + \beta_1}}  ( C^{\epsilon_2 + \beta_2} \otimes Y_2  )}  \\
&&&  C^{\epsilon_2 + \beta_2} \otimes Y_2 \otimes C^{\delta_1 + \beta_1}   ,
}
$$
where
$
\rho = \bce(\delta_1 + \beta_1, \epsilon_2 + \beta_2)\bce(\epsilon_2 + \beta_2, \delta_1 + \beta_1).
$
Then, one can prove that the following diagram commutes.
$$ \small
\xymatrix{
C^{\epsilon_1 + \beta_1 , \epsilon_2+\beta_2, \delta_1+\alpha_1, \delta_2+\alpha_2}\otimes X_1 \otimes X_2 \ar[r]^{a}  \ar[d]_{p} &
C^{\epsilon_1 + \beta_1 ,  \delta_1+\alpha_1, \epsilon_2+\beta_2, \delta_2+\alpha_2}\otimes X_1 \otimes X_2  \ar[d]^{q} \\
C^{\epsilon_1+\beta_1} \otimes Y_1 \otimes C^{\epsilon_2 + \beta_2, \delta_1 + \beta_1} \otimes Y_2 \otimes C^{\delta_2 + \beta_2}  \ar[r]^{b} \ar[d]_{\rho \cdot R_{C^{\delta_1 + \beta_1}}  (Y_2 )} &
C^{\epsilon_1+\beta_1} \otimes Y_1 \otimes C^{\delta_1 + \beta_1, \epsilon_2 + \beta_2} \otimes Y_2 \otimes C^{\delta_2 + \beta_2}  \ar[dl]^{\qquad \qquad  R_{C^{\delta_1 + \beta_1}}  ( C^{\epsilon_2 + \beta_2} \otimes Y_2  )}  \ar[dd]^{g_1 \otimes g_2} \\
C^{\epsilon_1+\beta_1} \otimes Y_1 \otimes C^{\epsilon_2 + \beta_2} \otimes Y_2 \otimes C^{\delta_1 + \beta_1, \delta_2 + \beta_2}    \ar[d]_{g_1 \otimes g_2} &   \\
Z_1\otimes C^{\epsilon_1+\gamma_1} \otimes Z_2 \otimes C^{\epsilon_2 + \gamma_2, \delta_1 + \beta_1, \delta_2 + \beta_2} \ar[d]_{R_{C^{\epsilon_1 + \gamma_1}} (Z_2)} & \ar[l]_{c}
Z_1\otimes C^{\epsilon_1+\gamma_1, \delta_1+\beta_1} \otimes Z_2 \otimes C^{\epsilon_2 + \gamma_2,  \delta_2 + \beta_2}  \ar[d]^{R_{C^{\epsilon_1 + \gamma_1, \delta_1 + \beta_1}} (Z_2)}   \\
Z_1\otimes Z_2 \otimes C^{\epsilon_1+\gamma_1, \epsilon_2 + \gamma_2, \delta_1 + \beta_1, \delta_2 + \beta_2}  & \ar[l]_{d}
Z_1\otimes Z_2 \otimes C^{\epsilon_1+\gamma_1, \delta_1+\beta_1,\epsilon_2 + \gamma_2,  \delta_2 + \beta_2},
}
$$
where
\begin{align*}
&a = R_{C^{\epsilon_2+\beta_2}}(C^{\delta_1+\alpha_1}) , \qquad \qquad\qquad\qquad\qquad \
b = R_{C^{\epsilon_2+\beta_2}}(C^{\delta_1+\beta_1}) , \\
&c = R_{C^{\delta_1+\beta_1} }(Z_2 \otimes C^{\epsilon_2+\gamma_2}), \qquad\qquad\qquad\qquad
d = R_{C^{\delta_1+\beta_1}}(C^{\epsilon_2+\gamma_2}) , \\
& p = R_{C^{\epsilon_2 + \beta_2}}(Y_1)\circ (f_1 \otimes f_2) \circ R_{C^{\delta_2 + \alpha_2}}(X_1), \quad
q = (f_1\otimes f_2) \circ R_{C^{\epsilon_2 + \beta_2, \delta_2+\alpha_2}}(X_1).
\end{align*}
We simply write
\begin{align*}
\widetilde{\Psi} ( \widetilde{T} ,  \widetilde{T}  ) &\seteq  \widetilde{\Psi}_{\delta_1+\delta_2, \epsilon_1 + \epsilon_2} (\widetilde{T}_{\delta_1, \delta_2} (f_1, f_2),  \widetilde{T}_{\epsilon_1, \epsilon_2} (g_1, g_2) ) ,\\
\widetilde{T}( \widetilde{\Psi} , \widetilde{\Psi}  ) &\seteq  \widetilde{T}_{\delta_1+\epsilon_1+\beta_1, \delta_2+\epsilon_2 + \beta_2 } (\widetilde{\Psi}_{\delta_1, \epsilon_1} (f_1,g_1), \widetilde{\Psi}_{\delta_2, \epsilon_2} (f_2,g_2) ), \\
\widetilde{C} &\seteq   C^{\delta_1 + \delta_2 + \epsilon_1+\epsilon_2+\beta_1+\beta_2+\alpha_1+\alpha_2}.
\end{align*}
Then, it follows from the above diagram that the following diagram commutes:
\begin{equation} \label{Eq: comm for tensor n composition}
\begin{aligned}
\xymatrix{
\widetilde{C} \otimes X_1 \otimes X_2  \ar[rrr]^{\bce(\epsilon_2 + \beta_2, \delta_1 + \alpha_1)}  \ar[d]_{ \rho \cdot \widetilde{\Psi} (  \widetilde{T} ,  \widetilde{T}  ) }   &&&
\widetilde{C}\otimes X_1 \otimes X_2  \ar[d]^{\widetilde{T}( \widetilde{\Psi} , \widetilde{\Psi}  )  }    \\
 Z_1 \otimes Z_2 \otimes \widetilde{C}   &&& \ar[lll]_{\bce( \delta_1 + \beta_1, \epsilon_2 + \gamma_2)}
Z_1 \otimes Z_2 \otimes \widetilde{C}.
}
\end{aligned}
\end{equation}
Since
\begin{align*}
&\Psi_{\delta_1+\delta_2, \epsilon_1 + \epsilon_2}  (T_{\delta_1, \delta_2} (f_1, f_2),  T_{\epsilon_1, \epsilon_2} (g_1, g_2) )  \\
& \ \  =  \bce(\delta_1 + \delta_2 + \beta_1 + \beta_2, \beta_1 + \beta_2 - \gamma_1 -\gamma_2 ) \bce(\delta_2, \alpha_1 - \beta_1) \bce(\epsilon_2, \beta_1 - \gamma_1)
\widetilde{\Psi} ( \widetilde{T} ,  \widetilde{T}  ) , \\
&T_{\delta_1+\epsilon_1+\beta_1, \delta_2+\epsilon_2 + \beta_2 } (\Psi_{\delta_1, \epsilon_1} (f_1,g_1), \Psi_{\delta_2, \epsilon_2} (f_2,g_2) )   \\
& \ \  =  \bce(\delta_2 + \epsilon_2 + \beta_2 , \alpha_1 - \gamma_1 ) \bce(\delta_1 + \beta_1, \beta_1 - \gamma_1) \bce(\delta_2+\beta_2, \beta_2 - \gamma_2)
\widetilde{T}( \widetilde{\Psi} , \widetilde{\Psi}  ),
\end{align*}
the assertion follows from $\eqref{Eq: comm for bce}$ and $\eqref{Eq: comm for tensor n composition}$.
\end{proof}

By Proposition \ref{Prop: tensor and composition}, we conclude that the tensor product $\eqref{Eq: tensor for localization}$ is a bifunctor
\begin{align*} 
 \Hom_{\lT}(   (X, \alpha) , (X', \alpha') ) \times  & \Hom_{\lT}(   (Y, \beta) , (Y', \beta') ) \\ 
& \longrightarrow
 \Hom_{\lT}(   (X, \alpha) \otimes (Y, \beta) , (X', \alpha') \otimes (Y', \beta') ).
\end{align*}
Moreover, one can check that  the composition of tensor products
of morphisms
satisfies the associativity, 
which tells that $\lT$ is a monoidal category.
Note that $(\triv, 0)$ is a unit of $\lT$.

Let us define the functor $\Upsilon\cl\catT\to \lT $  by
$X\mapsto (X,0)$. It is easy to check that $\Upsilon$ is a monoidal functor.

\Prop \label{prop: localization}
Let $( C_i ,   R_{C_i} )_{i\in I}$ be a real commuting family  of braiders in a monoidal category $\catT$.
Then there is a real commuting family  of braiders
$(\widetilde{C}_i    ,  R_{\widetilde{C}_i} )_{i\in I} $ in $\lT$
satisfy the following properties:
\bnum
\item
for $i\in I$, $\Upsilon(C_i) $ is isomorphic to $ \widetilde{C}_i$ and it is invertible in $(\lT)_{\,\mathrm{br}}$,
\item \label{Eq: loc 3}
for $i\in I$ and $X\in\catT$, the diagram
$$
\xymatrix{
\Upsilon(C_i \otimes X)  \ar[r]^\sim  \ar[d]_{\Upsilon( R_{C_i} (X)  )}  & \widetilde{C}_i \otimes \Upsilon(X) \ar[d]^{ R_{\widetilde{C}_i} (\Upsilon(X)  )}  \\
\Upsilon( X \otimes  C_i )  \ar[r]^\sim &  \Upsilon(X)\otimes  \widetilde{C}_i
}
$$
commutes.
\end{enumerate}
\enprop

\Proof

For $\al\in\lG$, set $\tC^\al=(\one,\al)\in  \lT $.
For $Z=(X,\beta)\in \lT$, 
define $R_{\tC^\al}(Z)\cl \tC^\al\tens Z\to Z\tens \tC^\al$ as the image of
$\id_X\in \Hom_{\catT}(X, X)
=\Hm_{-\al-\beta}(\tC^\al\tens Z,Z\tens\tC^\al)$ by the map
$\Hm_{-\al-\beta}(\tC^\al\tens Z,Z\tens\tC^\al)\to 
\Hom_{\lT}(\tC^\al\tens Z,Z\tens\tC^\al)$.
We can easily check that $(\tC^\al,R_{\tC^\al })$ is a 
central braider and invertible in $({\lT})_\braid$.

\smallskip
Let us show that $\Upsilon(C^\al)$ is isomorphic to
$\tC^{\al}$ for $\al\in\lG_{\ge0}$.

Define $f\cl(C^\al,0)\to \tC^{\al}=(\one,\al)$ by
$\id_{C^\al}\in\Hom_{\catT}(C^\al,C^{\al}) = \Hm_{0}\bl(C^\al,0), (\one,\al)\br$, and 
 $g\cl\tC^{\al}=(\one,\al)\to(C^\al,0)$ by
$\id_{C^\al}\in\Hom_{\catT}(C^{\al},C^\al)=\Hm_{0}\bl(\one,\al), (C^\al,0)\br$.
Then one can see easily that $f$ and $g$ are inverse to each other.

Hence (i) and (ii) follow.
\QED

\begin{thm} \label{Thm: localization}
Let $( C_i ,   R_{C_i} )_{i\in I}$ be a real commuting family  of braiders in a monoidal category $\catT$.
There exist a monoidal category $ \lT $ and a monoidal functor
$\Upsilon\cl \catT \rightarrow \lT$
such that
\bnum
\item  \label{Eq: loc 1}
$\Upsilon(C_i) $ is invertible in $\lT$ for any $i\in I$.
\item 
for any $i\in I$ and $X\in\catT$, $\Upsilon(R_{C_i}(X))\cl
\Upsilon(C_i\tens X)\to\Upsilon(X\tens C_i)$ is an isomorphism.
\label{Eq: loc 2}\setcounter{myc}{\value{enumi}}
\ee
Moreover $\Upsilon$ satisfies the following universal property.
\bnum\setcounter{enumi}{\value{myc}}
\item If there are another monoidal category $\catT'$ and a monoidal functor $\Upsilon'\cl  \catT \rightarrow \catT'$ 
such that  the properties \eqref{Eq: loc 1} and \eqref{Eq: loc 3} 
above hold for $\Upsilon'$, then there exists a monoidal functor $F$, which is unique up to a unique isomorphism,  such that
the diagram
$$
\xymatrix{
\catT \ar[r]^{\Upsilon} \ar[dr]_{\Upsilon'}  & \lT \ar@{.>}[d]^F \\
& \catT'
}
$$
commutes.
\end{enumerate}
\end{thm}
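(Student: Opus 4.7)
The construction of $\lT$ together with the functor $\Upsilon\cl\catT\to\lT$ is already carried out in the preceding paragraphs, and Proposition~\ref{prop: localization} verifies the properties \eqref{Eq: loc 1} and \eqref{Eq: loc 2}. The remaining content of the theorem is the universal property (iii), so the plan is to construct the comparison functor $F\cl\lT\to\catT'$ and to establish its uniqueness.

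Suppose $\Upsilon'\cl\catT\to\catT'$ is a monoidal functor satisfying \eqref{Eq: loc 1} and \eqref{Eq: loc 3}. The first step is to extend the assignment $\alpha\mapsto\Upsilon'(C^\alpha)$ from $\lG_{\ge0}$ to all of $\lG$. Since each $\Upsilon'(C_i)$ is invertible in $\catT'$, I would choose a right inverse $\Upsilon'(C_i)^{\tens -1}$ (unique up to unique isomorphism) and, for $\alpha=\sum a_i e_i\in\lG$, define $\Upsilon'(C)^{\tens\alpha}\seteq\bigotimes_{i\in I}\Upsilon'(C_i)^{\tens a_i}$ with a distinguished ordering of the tensor factors, together with the canonical coherent isomorphisms $\Upsilon'(C)^{\tens\alpha}\tens \Upsilon'(C)^{\tens\beta}\isoto \Upsilon'(C)^{\tens(\alpha+\beta)}$ compatible with the $\lG_{\ge0}$-family $\Upsilon'(C^\alpha)$ coming from the monoidal structure.

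Next, I would set $F((X,\alpha))\seteq \Upsilon'(X)\tens\Upsilon'(C)^{\tens\alpha}$, and define $F$ on $\Hm_\delta\bl(X,\alpha),(Y,\beta)\br$ as follows: given $f\cl C^{\delta+\alpha}\tens X\to Y\tens C^{\delta+\beta}$, form $\Upsilon'(f)$ and pre/post-compose with the isomorphisms $\Upsilon'(R_{C^\delta}(\Upsilon'(X)))$ and the invertibility data for $\Upsilon'(C)^{\tens\delta}$ so as to produce a morphism $\Upsilon'(X)\tens\Upsilon'(C)^{\tens\alpha}\to\Upsilon'(Y)\tens\Upsilon'(C)^{\tens\beta}$. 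The crucial check is independence of $\delta$: I would verify that $F(\zeta_{\delta',\delta}(f))=F(f)$ by expanding the definition of $\zeta_{\delta',\delta}$, applying $\Upsilon'$, and using the commutativity of \eqref{Eq: R alpha} in $\catT'$ together with the cancellation of the extra $\Upsilon'(C)^{\tens(\delta'-\delta)}$ factors. This passes to a well-defined map on the colimit $\Hom_{\lT}\bl(X,\alpha),(Y,\beta)\br$.

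After this, I would verify functoriality and monoidality: $F$ preserves composition because the scalar $\bce(\delta+\beta,\beta-\gamma)$ appearing in $\Psi_{\delta,\epsilon}$ is exactly what the braiding morphisms $R_{C^{\delta+\beta}}(C^{\epsilon+\gamma})$ contribute in $\catT'$ by Lemma~\ref{Lem: mu scalar} applied through the monoidal functor $\Upsilon'$; similarly for the tensor product $F$ preserves $T_{\delta,\epsilon}$ for the same reason, so the scalar bookkeeping in Section~\ref{Sec: Loc} is reproduced. Commutativity of the diagram with $\Upsilon$ is immediate on objects and is checked on morphisms using the definition of $\Upsilon$ as $X\mapsto(X,0)$. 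For uniqueness, any monoidal $F'$ fitting into the triangle is forced on objects $(X,\alpha)=\Upsilon(X)\tens\wtd C^\alpha$ by the fact that the invertible objects $\Upsilon(C_i)\simeq\wtd C_i$ must be sent to $\Upsilon'(C_i)$, and on morphisms by property \eqref{Eq: loc 3} for $\Upsilon'$; the isomorphism to $F$ is constructed from the coherence data chosen for $\Upsilon'(C)^{\tens\alpha}$ above and is unique because the tensor factorization is.

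The main obstacle I anticipate is the well-definedness of $F$ on morphisms, i.e.\ the compatibility with $\zeta_{\delta',\delta}$, $\Psi$, and $T$, where the various $\bce$-scalars must be reproduced correctly in $\catT'$; once that careful diagram chase is done, everything else is essentially formal.
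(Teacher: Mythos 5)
Your proposal is correct and follows essentially the same route as the paper: both reduce to the universal property, extend the family $\{\Upsilon'(C^\alpha)\}$ from $\lG_{\ge0}$ to $\lG$ using the invertibility of $\Upsilon'(C_i)$, define $F$ on an object $(X,\alpha)$ by tensoring $\Upsilon'(X)$ with the appropriate $\Upsilon'(C)^{\tens\alpha}$, and define $F$ on a representative $f\in\Hm_\delta$ by applying $\Upsilon'$ and cancelling the $\Upsilon'(C^\delta)$ factor via the isomorphisms $\Upsilon'(R_{C^\delta}(-))$. The only differences are cosmetic: you place the inverted braider object on the right ($\Upsilon'(X)\tens\Upsilon'(C)^{\tens\alpha}$) while the paper places it on the left ($\tC^\al\tens\Upsilon'(X)$), and you spell out the well-definedness, monoidality, and uniqueness checks that the paper dismisses as straightforward.
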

We denote by $ \catT[ C_i^{\otimes -1} \mid i \in I ]$ the localization $\lT$ given in Theorem \ref{Thm: localization}.

\Proof
Let $\lT$ and $\Upsilon\cl \catT\to\lT$ be the monoidal category and the functor 
that we have constructed.
Then it satisfies (i) and (ii) by Proposition~\ref{prop: localization}.

Let us show (iii).
Set $\tC_i=\Upsilon'(C_i)$.
Then, it forms a commuting family of invertible objects in $\catT'$, and there exist 
a family $\{\tC^\al\}_{\al\in\lG}$ and isomorphism
$\txi_{\al,\beta}\cl \tC^\al\tens\tC^\beta\isoto \tC^{\al+\beta}$
such that $\Upsilon'(C^\al)\simeq\tC^\al$ for $\al\in\lG_{\ge0}$.

Now let us define the functor $F\cl\lT\to\lT'$.
Set $F\bl(X, \al)\br=\tC^\al\tens\Upsilon'(X)$ for 
$X\in\catT$ and $\al\in\lG$.
Let $(X,\al)$, $(Y,\beta)\in\lT$ and $\delta\in \Ds_{\alpha, \beta}$.
For $f\in \Hm_\delta( (X, \alpha  ), (Y, \beta  ) )=
\Hom_\catT( C^{ \delta+\alpha}\otimes X,  Y \otimes C^{  \delta+\beta} )$,
we define
$F(f)\in \Hom_{\lT'}\bl F\bl(X, \al)\br, F\bl(Y, \beta)\br\br$ 
by the following commutative diagram

\scalebox{.93}{
$$\hs{-3ex}\xymatrix@C=8ex{\tC^\delta\tens\tC^\al\tens\Upsilon(X)
\ar[r]^{\tC^\delta\tens F(f)}\ar[d]_{ \bwr }^-{\txi_{\delta,\al}}&\tC^\delta\tens\tC^\beta\tens\Upsilon(Y)
\ar[r]^-\sim_-{\txi_{\delta,\beta}}&\tC^{\delta+\beta}\tens\Upsilon(Y)
\ar[r]^-{\sim}&\Upsilon(\tC^{\delta+\al}\tens Y)\ar[d]^\bwr_-{\Upsilon(R_{C^{\delta+\beta}}(Y))}\\
\tC^{\delta+\al}\tens\Upsilon(X)\ar[r]^-\sim&
\Upsilon(C^{\delta+\al}\tens X) \ar[rr]^{\Upsilon(f)}&&
\Upsilon(Y\tens C^{\delta+\beta}).}
$$}

\noi
Then it is easy to see that $F$ is a well defined functor
and satisfies the desired properties.
\QED

\Lemma\label{lem:C}
Let $( C_i ,   R_{C_i}  )_{i\in I}$ be a real commuting family  of braiders in a monoidal category $\catT$, and  assume that $I$ is a finite set.
Assume that $\al\in\lG$ satisfies $\al-e_i\in\lG_{\ge0}$ for any $i\in I$.
Then $\catT[ (C^\al)^{\otimes -1}]$ is equivalent to 
$\catT[ C_i^{\otimes -1} \mid i \in I ]$.
\enlemma
\Proof
It is enough to show that $\catT'\seteq\catT[ (C^\al)^{\otimes -1}]$
satisfies properties \eqref{Eq: loc 1}, 
\eqref{Eq: loc 2} in Theorem~\ref{Thm: localization}.
It is obvious that $(C^\al)^{\otimes -1}\tens C^{\al-e_i}$ is an inverse of $C_i$
in $\catT'$, and hence  \eqref{Eq: loc 1} holds.

For any $X\in\catT$,
the morphism $R_{C^\al}(X)=
\bl R_{C^{\al-e_i}}(X)\tens C_i\br\circ\bl
C^{\al-e_i}\tens R_{C_i}(X) \br$ is invertible in  $\catT'$, and hence
$C^{\al-e_i}\tens R_{C_i}(X)$ has a left inverse.
Since $C^{\al-e_i}$ is invertible, $R_{C_i}(X)$ has a left inverse.
Similarly, $R_{C_i}(X)$ has a right inverse.
\QED

\begin{prop}  [\protect{cf.  \cite[Proposition A.8]{KKK18}}]  \label{Prop: localization}
Let $( C_i ,   R_{C_i}  )_{i\in I}$ be a real commuting family  of braiders in a monoidal category $\catT$, and  set $\lT \seteq  \catT[ C_i^{\otimes -1} \mid i \in I ]$.
Assume that
\begin{enumerate}[\rm (a)]
\item  $\catT$ is an abelian category,
\item  $\otimes$ is exact.
\end{enumerate}
Then $\lT$ is an abelian category in which $\otimes $ is exact, and the functor $\Upsilon: \catT \rightarrow \lT $ is exact.
\end{prop}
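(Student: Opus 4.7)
The plan is to transfer the abelian structure from $\catT$ to $\lT$ via two observations: (a) up to isomorphism, every morphism in $\lT$ has the form $\Upsilon(\tilde f)\tens \tC^{-\delta}$ for some morphism $\tilde f$ in $\catT$ and some $\delta\in\lG_{\ge0}$, where $-\tens \tC^{-\delta}$ is an autoequivalence of $\lT$; and (b) the functor $\Upsilon$ preserves kernels and cokernels.

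For (a), by Proposition \ref{prop: localization} each $(X,\al)\in\lT$ is isomorphic to $\Upsilon(X)\tens \tC^{\al}$ with $\tC^{\al}$ invertible in $\lT$. A morphism $f\cl(X,\al)\to(Y,\beta)$ is represented by some $\tilde f\cl C^{\delta+\al}\tens X\to Y\tens C^{\delta+\beta}$ in $\catT$, and under the isomorphisms $(X,\al)\cong (C^{\delta+\al}\tens X,-\delta)$, $(Y,\beta)\cong (Y\tens C^{\delta+\beta},-\delta)$, $f$ corresponds to $\Upsilon(\tilde f)\tens \tC^{-\delta}$. For (b), let $k\cl K\hookrightarrow X$ be the kernel of $\tilde f\cl X\to Y$ in $\catT$ and let $g\cl(Z,\gamma)\to\Upsilon(X)$ with $\Upsilon(\tilde f)\circ g=0$, represented by $g_0\cl C^{\delta+\gamma}\tens Z\to X\tens C^{\delta}$; the composition formula of Section \ref{Sec: Loc} together with naturality of $R_{C^{\delta'-\delta}}$ rewrites the vanishing relation as $(\tilde f\tens C^{\delta'})\circ \zeta_{\delta',\delta}(g_0)=0$ in $\catT$ for some $\delta'\succeq\delta$. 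Exactness of $\tens$ in $\catT$ gives $\ker(\tilde f\tens C^{\delta'})=K\tens C^{\delta'}$, so $\zeta_{\delta',\delta}(g_0)$ factors uniquely through $k\tens C^{\delta'}$; this factorization represents the unique lift $(Z,\gamma)\to\Upsilon(K)$ of $g$ (uniqueness following because $k\tens C^{\delta'}$ remains monic). A dual argument using the epimorphism $Y\twoheadrightarrow Q=\coker(\tilde f)$ shows $\Upsilon$ preserves cokernels.

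Combining (a) and (b), every morphism of $\lT$ admits a kernel and a cokernel, so $\lT$ is pre-abelian. The coimage-to-image comparison map for $f\cong\Upsilon(\tilde f)\tens \tC^{-\delta}$ reduces, via the autoequivalence $-\tens \tC^{-\delta}$, to the $\Upsilon$-image of the analogous map for $\tilde f$ in $\catT$, which is an isomorphism since $\catT$ is abelian; hence $\lT$ is abelian and $\Upsilon$ is exact. For exactness of $\tens$ in $\lT$: for $V=\Upsilon(Z)\tens \tC^\beta$ and a morphism $f\cong\Upsilon(\tilde f)\tens \tC^{-\delta}$ in $\lT$, the naturality isomorphism $\tC^\beta\tens\Upsilon(\tilde f)\cong\Upsilon(\tilde f)\tens \tC^\beta$ (using that $\Upsilon(R_{C_i}(X))$ is invertible by Theorem \ref{Thm: localization}) together with the monoidality of $\Upsilon$ yield $V\tens \Upsilon(\tilde f)\cong \Upsilon(Z\tens\tilde f)\tens \tC^\beta$; exactness of $\tens$ in $\catT$ combined with the preservation of kernels and cokernels by $\Upsilon$ and by the autoequivalence $-\tens (\tC^\beta\tens\tC^{-\delta})$ then shows that $V\tens-$ preserves both kernels and cokernels, hence is exact. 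The main technical point lies in the second paragraph: correctly unwinding the vanishing condition in the filtered colimit $\lT$ into a concrete identity in $\catT$ at a sufficiently large index, which requires careful bookkeeping of the structural isomorphisms $\xi_{\al,\beta}$ and of the naturality of the braiders $R_{C^\gamma}$.
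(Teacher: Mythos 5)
Your proposal is correct and follows the natural (and essentially the only reasonable) route: reduce any morphism in $\lT$ up to an invertible twist to one of the form $\Upsilon(\tilde f)$, prove that $\Upsilon$ preserves kernels and cokernels by unwinding the vanishing in the filtered colimit via the $\zeta$-maps and the naturality of the braiders, then transfer the abelian structure and the exactness of $\tens$ along the autoequivalences $-\tens\tC^\gamma$. Note that the paper itself offers no proof for this proposition, deferring to \cite[Proposition~A.8]{KKK18}, so there is nothing to contrast against; your argument fills in the details in the expected way, and the two technical points you flag (the identity $\zeta_{\delta',\delta}\bigl(g_0\circ(C^\delta\tens\tilde f)\bigr)=\bigl(\tilde f\tens C^{\delta'}\bigr)\circ\zeta_{\delta',\delta}(g_0)$ via naturality of $R_{C^{\delta'-\delta}}$, and the compatibility of the factorizations across the filtered index set using that $k\tens C^{\delta''}$ is monic) are exactly where the bookkeeping lives and are handled correctly.
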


\begin{prop} \label{prop:subloc} \
Let $( C_i ,   R_{C_i}  )_{i\in I}$ be a real commuting family  of braiders in a monoidal category $\catT$, and  set $\lT \seteq  \catT[ C_i^{\otimes -1} \mid i \in I ]$.
Assume that
\begin{enumerate}[\rm (a)]
\item  $\catT$ is an abelian category,
\item  $\otimes$ is exact.
\end{enumerate}
Let $\catT'$ be a full subcategory of $\catT$ closed by
taking subquotients, extensions, and contains all the $C_i$'s, and 
let $\lT'$ be the localization $\catT'[ C_i^{\otimes -1} \mid i \in I ]$.
Then $\lT'$ is a full subcategory of $\lT$ closed by taking subquotients and
extensions.
\enprop
\Proof
It is obvious that $\lT'$ is a full subcategory of $\lT$ closed by taking subquotients.
Let us show that $\lT'$ is closed by extensions.

Let us denote by $\Phi\cl \catT\to\lT$ the canonical functor.
Let $0\to X'\to X\to X''\to0$ be an exact sequence in $\lT$ such that
$X'$, $X''\in\lT'$.
We may assume that there exist a sequence of morphisms
$Z'\To[f] Z\To[g] Z''$ in $\catT$ with $Z',Z''\in\catT'$ and $\al\in\lG_{\ge0}$ such that
$\Phi(C^{\al})^{\tens -1}\tens \Phi(Z')\to\Phi(C^{\al})^{\tens-1}\tens\Phi(Z)\to\Phi(C^{\al})^{\tens-1}\tens\Phi(Z'')$ is isomorphic to $X'\to X\to X''$.
Since
$0\to \ker(f)\to Z\to \Im(g)\to 0$ is exact,
$Z$ belongs to $\catT'$.
Hence $X\simeq \Phi(C^{\al})^{\tens-1}\tens\Phi(Z)$ belongs to $\lT'$.
\QED

\subsection{Graded cases}\label{sec:graded}

Let $\Lambda$ be a $\Z$-module. A $\bR$-linear monoidal category $\catT$ is \emph{$\Lambda$-graded} if $\catT$ has a decomposition
$ \catT = \bigoplus_{\lambda \in \Lambda} \catT_\lambda $ such that $\triv \in \catT_0$ and $\otimes$ induces a bifunctor $\catT_{\lambda} \times \catT_{\mu} \rightarrow \catT_{\lambda+\mu}$
for any $\lambda, \mu \in \Lambda$.

Let $\catT$ be a $\Lambda$-graded monoidal category, and let $q$ be an invertible central braider in $\catT$, which belongs to $\catT_0$. 
We write $q^n$ ($n\in\Z$) for $q^{\tens n}$ for the sake of simplicity. 
A \emph{graded braider} is a triple $(C, R_C, \dphi)$ of an object $C$, an $\Z$-linear map $\dphi\cl  \Lambda \rightarrow \Z$ and a morphism
$$
R_C(X) \cl  C \otimes X \longrightarrow q^{\dphi(\lambda)} \otimes X \otimes C
$$
which is functorial in $X \in \catT_\lambda$ for $\lambda \in \Lambda$ such that  a graded version of $\eqref{Eq: central obj}$ holds. That means that the diagram
$$
\xymatrix{
C \otimes X \otimes Y   \ar[rr]^{R_C(X)\otimes Y}  \ar[drr]_{R_C(X \otimes Y) \ \ }  &  &   q^{\dphi(\lambda)} \otimes X \otimes C \otimes Y  \ar[d]^{ X \otimes R_C(Y)}   \\
  & &   q^{ \dphi(\lambda+\mu) } \otimes X \otimes Y \otimes C
}
$$
commutes for any $X \in \catT_\lambda$ and $Y \in \catT_{\mu}$.

In a similar manner, we can consider \emph{graded commuting family} in a graded category $\catT$. Let $I$ be an index set, and set $\lG = \Z^{\oplus I}$ and $\lG_{\ge0} = \Z_{\ge0}^{\oplus I}$ .
Let $P_i$ be an object for $i\in I$ and $B_{i,j}\cl  P_i \otimes P_j \buildrel \sim\over \longrightarrow q^{\phi_{i,j}} \otimes P_j\otimes P_i$ an isomorphism for $i,j\in I$.
We call a triple $( \{ P_i \}_{i\in I}, \{  B_{i,j} \}_{i,j\in I}, \{ \phi_{i,j} \}_{i,j \in I} )$ a graded commuting family of $\catT$ if it satisfies
\bna
\item $\{\phi_{i,j}\}_{i,j\in I}$ is a $\Z$-valued skew-symmetric matrix,
\item $B_{i,i} = \id_{P_i \otimes P_i}$ for $i\in I$,
\item  $ B_{j,i} \circ B_{i,j}=\id_{P_i \otimes P_j}$ for $i,j\in I$,
\item a graded version of $\eqref{Eq: cf pentagon}$ holds, i.e., for $i,j,k\in I$,
$$
(B_{j,k}\otimes P_i) \circ (P_j \otimes B_{i,k} ) \circ (B_{i,j}\otimes P_k) = (P_k \otimes  B_{i,j})  \circ (B_{i,k}\otimes P_j) \circ (P_i \otimes B_{j,k} ) .
$$
\end{enumerate}
Then we have a graded version of Lemma \ref{Lem: cf}. 

\begin{lem} \label{Lem: graded cf}
Let $(\{ P_i \}_{i\in I},  \{B_{i,j}\}_{i,j\in I}, \{\phi_{i,j}\}_{i,j\in I} )$ be a graded commuting family in $\catT$,
and let $\gH\cl    \lG \otimes \lG \rightarrow \Z$ be a $\Z$-bilinear map such that  $ \phi_{i,j} =  \gH(e_i, e_j) - \gH(e_j, e_i) $ for $i,j\in I$.
 Then there exist
\begin{enumerate} [\rm (a)]
\item an object $P^\alpha$ for any $\alpha \in \lG_{\ge0}$,
\item  an isomorphism $\xi_{\alpha, \beta} \cl  P^\alpha \otimes P^\beta \buildrel \sim\over \longrightarrow q^{\gH(\alpha, \beta) } \otimes P^{\alpha+\beta}$ for any $\alpha,\beta\in \lG_{\ge 0}$
\end{enumerate}
such that
\begin{enumerate} [ \rm(i) ]
\item  $P^0 = \triv$ and $P^{e_i} = P_i$ for $i\in I$,
\item  the diagram
\begin{align*}
\xymatrix{
P^\alpha \otimes P^\beta \otimes P^\gamma \ar[d]_{ P^\alpha \otimes \xi_{\beta, \gamma}} \ar[rr]^{\xi_{\alpha, \beta} \otimes P^\gamma} && \ar[d]^{\xi_{\alpha+\beta, \gamma}}
q^{\gH(\alpha, \beta)} \otimes P^{\alpha+\beta}\otimes P^\gamma \\
q^{\gH(\beta, \gamma)} \otimes P^\alpha \otimes P^{\beta + \gamma} \ar[rr]^{\xi_{\alpha, \beta+\gamma}\quad } && q^{\gH(\alpha, \beta)+\gH(\alpha, \gamma)+\gH(\beta, \gamma)} \otimes P^{\alpha+\beta +\gamma}
}
\end{align*}
commutes for any $\alpha, \beta, \gamma \in \lG_{\ge0}$,
\item the diagram
\begin{equation*}
\begin{aligned}
\xymatrix{
P^0 \otimes P^0  \ar[d]_{ \wr } \ar[rr]^{\xi_{0,0} } && \ar[d]^{ \wr } P^{0}\\
\triv \otimes \triv  \ar[rr]^{ \simeq} && \ \ \triv  \ ,
}
\qquad
\xymatrix{
 \ar[d]_{ B_{i,j}} \ar[drr]^{\xi_{e_i, e_j}} P^{e_i}\otimes P^{e_j} && \\
q^{\phi_{i,j}} \otimes P^{e_j} \otimes P^{e_i}  \ar[rr]^{\xi_{e_j, e_i}} && q^{ \gH(e_i, e_j) } \otimes P^{e_i+e_j}
}
\end{aligned}
\end{equation*}
commutes for any $i,j\in I$.
\end{enumerate}
Moreover,  such a datum
is unique up to a unique isomorphism.
\end{lem}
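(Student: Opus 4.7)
\smallskip\noindent
\emph{Proof proposal.}
The plan is to mimic the construction from the ungraded Lemma~\ref{Lem: cf}, inserting the $q$-grading shifts dictated by $\gH$ at every step. First I would fix a total order $i_1<i_2<\cdots$ on $I$ and, for $\alpha=\sum_k a_k e_{i_k}\in\lG_{\ge0}$, simply define $P^\alpha\seteq P_{i_1}^{\otimes a_1}\otimes P_{i_2}^{\otimes a_2}\otimes\cdots$ (using the given order). This immediately makes (i) true. Next I would construct $\xi_{\alpha,\beta}\cl P^\alpha\otimes P^\beta\isoto q^{\gH(\alpha,\beta)}\otimes P^{\alpha+\beta}$ by repeatedly bubble-sorting the factors of $P^\alpha\otimes P^\beta$ into the distinguished order prescribed for $P^{\alpha+\beta}$, using one instance of $B_{i,j}$ (with $j<i$) for each inversion. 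Every such swap contributes a factor $q^{\phi_{j,i}}=q^{\gH(e_j,e_i)-\gH(e_i,e_j)}$. Summing these contributions (each $P_j$ in $P^\beta$ commutes past each $P_i$ in $P^\alpha$ for which $i>j$, and each $P_i$ in $P^\alpha$ commutes past each $P_j$ in $P^\beta$ for $i>j$) and using the bilinearity of $\gH$ together with the identity $\phi_{i,j}=\gH(e_i,e_j)-\gH(e_j,e_i)$ yields exactly the total shift $q^{\gH(\alpha,\beta)}$.

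Well-definedness of $\xi_{\alpha,\beta}$ (independence of the specific sequence of bubble-sort moves) follows from three inputs: $B_{i,i}=\id$ (which handles the case when equal factors cross), the hexagon/Yang--Baxter relation (d) applied whenever three different indices are in play, and the centrality of $q$ which lets the accumulating $q$-shifts be collected on the left without obstruction. Once $\xi_{\alpha,\beta}$ is defined, I would verify the pentagon (ii) by reading off both compositions $\xi_{\alpha+\beta,\gamma}\circ(\xi_{\alpha,\beta}\otimes P^\gamma)$ and $\xi_{\alpha,\beta+\gamma}\circ(P^\alpha\otimes\xi_{\beta,\gamma})$ as two bubble-sort realizations of the same reordering of $P^\alpha\otimes P^\beta\otimes P^\gamma$ into $P^{\alpha+\beta+\gamma}$; the underlying isomorphisms agree by the coherence argument just used, while the total $q$-shifts agree by the bilinearity identity $\gH(\alpha,\beta)+\gH(\alpha+\beta,\gamma)=\gH(\alpha,\gamma)+\gH(\beta,\gamma)+\gH(\alpha,\beta+\gamma)$, which is immediate from $\Z$-bilinearity. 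Condition (iii) is then tautological from the definition: $\xi_{0,0}$ reduces to the unit isomorphism $\triv\tens\triv\isoto\triv$, and $\xi_{e_i,e_j}$ coincides with $B_{i,j}$ composed with a $q^{\gH(e_i,e_j)}$-shift by construction.

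For uniqueness, given another family $(\{P'^\alpha\},\{\xi'_{\alpha,\beta}\})$ satisfying (i)--(iii), I would build the comparison isomorphism $P^\alpha\isoto P'^\alpha$ by induction on $|\alpha|=\sum_i a_i$: using (i) in height $\le 1$ as the base, and using (iii) for generating $e_i$'s together with (ii) to propagate compatibly to $\alpha+e_i$ via $\xi_{\alpha,e_i}$ and $\xi'_{\alpha,e_i}$. Coherence of the induction step reduces once more to the pentagon axiom.

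The main obstacle is the coherence argument in the well-definedness step: one has to confirm that all possible bubble-sort factorizations produce the same morphism, with matching $q$-shifts. The ungraded coherence is supplied by the Yang--Baxter relation (d) via a standard Matsumoto-type argument for reduced words in symmetric groups; what is new in the graded setting is that one must track that each elementary Yang--Baxter relation contributes the same total $q$-shift on both sides, and this is precisely controlled by the skew-symmetry $\phi_{i,j}+\phi_{j,i}=0$ together with the bilinearity of $\gH$. Once this coherence is established, the rest of the argument is routine bookkeeping.
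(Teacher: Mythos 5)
There is a genuine gap in the grading bookkeeping. Define $P^\alpha$ as the ordered tensor product as you propose, and compute the $q$-shift produced by the bubble-sort from $P^\alpha\otimes P^\beta$ into the distinguished order of $P^{\alpha+\beta}$. Writing $\alpha=\sum_i a_i e_i$, $\beta=\sum_j b_j e_j$, the only transpositions that occur move a factor $P_j$ coming from $\beta$ leftward past a factor $P_i$ coming from $\alpha$ with $i>j$ in your chosen order, and each such crossing contributes exactly $q^{\phi_{i,j}}$. The total shift is therefore
$q^{\sum_{i>j} a_i b_j\phi_{i,j}}$,
and this is \emph{not} $q^{\gH(\alpha,\beta)}$ for a general $\gH$ satisfying $\phi_{i,j}=\gH(e_i,e_j)-\gH(e_j,e_i)$. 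To see the failure concretely, take $\alpha=e_1$, $\beta=e_2$ with $1<2$: the bubble-sort is the identity (shift $q^0$), yet $\gH(e_1,e_2)$ may be any integer. Your argument produces the desired shift only for the specific choice $\gH(e_i,e_j)=\phi_{i,j}$ if $i>j$ and $\gH(e_i,e_j)=0$ otherwise, but the lemma requires the construction to work for an arbitrary bilinear $\gH$ whose antisymmetrization is $\phi$. Note also that since $q$ is merely invertible central, $q^m\otimes X$ and $q^n\otimes X$ are not isomorphic for $m\ne n$, so the mismatch cannot be waved away.

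The fix is to build a correction into the objects rather than the morphisms: set $P^\alpha\seteq q^{-c(\alpha)}\otimes P_{i_1}^{\otimes a_1}\otimes P_{i_2}^{\otimes a_2}\otimes\cdots$ for a suitable $c\cl\lG_{\ge0}\to\Z$ with $c(0)=c(e_i)=0$, and then let $\xi_{\alpha,\beta}$ be your bubble-sort precomposed/postcomposed with the obvious $q$-rearrangements. The resulting shift is $c(\alpha+\beta)-c(\alpha)-c(\beta)+\sum_{i>j}a_ib_j\phi_{i,j}$, so you need $c$ to satisfy
$c(\alpha+\beta)-c(\alpha)-c(\beta)=h(\alpha,\beta)$,
where $h(\alpha,\beta)\seteq\gH(\alpha,\beta)-\sum_{i>j}a_ib_j\phi_{i,j}$. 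A short computation using $\phi_{i,j}=\gH(e_i,e_j)-\gH(e_j,e_i)$ shows $h$ is a \emph{symmetric} $\Z$-bilinear form, and for any symmetric $\Z$-bilinear $h$ one can construct such a $c$ by recursion on $|\alpha|$ via $c(\alpha+e_i)\seteq c(\alpha)+h(\alpha,e_i)$; the symmetry of $h$ is exactly what makes this well-defined when adding two generators in either order. With this corrected definition, the pentagon (ii) follows from the coherence of the underlying bubble-sort morphisms together with the $\Z$-bilinear identity $\gH(\alpha,\beta)+\gH(\alpha+\beta,\gamma)=\gH(\beta,\gamma)+\gH(\alpha,\beta+\gamma)$, and diagram (iii) follows from $c(e_i)=0$ and the constraint $\phi_{i,j}+\gH(e_j,e_i)=\gH(e_i,e_j)$. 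Your coherence argument (Matsumoto via the Yang--Baxter relation, centrality of $q$) and the uniqueness sketch are otherwise sound.
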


We now consider a graded version of a real commuting family of braiders.
Let $ (C_i, R_{C_i}, \dphi_i ) $ be a graded braider for $i\in I$.
We say that $(  C_i ,  R_{C_i} ,  \dphi_i )_{i\in I}$ is a \emph{real commuting family of graded braiders} in $\catT$ if
\bna
\item  \label{Eq: 1 in grcf} $ C_i \in \catT_{\lambda_i}$  for some $\lambda_i \in \Lambda$, and
$\dphi_i(\lambda_i) = 0$, $\dphi_i( \lambda_j ) + \dphi_j( \lambda_i ) = 0$,
\item  $R_{C_i}(C_i) \in \bR^\times \id_{C_i \otimes C_i}$ for $i\in I$,
\item   \label{Eq: 2 in grcf} $R_{C_j}(C_i) \circ R_{C_i}(C_j) \in \bR^\times \id_{C_i \otimes C_j}$ for $i,j\in I$.
\end{enumerate}

We choose a $\Z$-bilinear map $\gH\cl    \lG \otimes \lG \rightarrow \Z$  such that  $ \dphi_i( \lambda_j ) =  \gH(e_i, e_j) - \gH(e_j, e_i) $ for $i,j\in I$.
Using the same argument of the proof of Lemma \ref{Lem: rcf - cf},  by Lemma \ref{Lem: graded cf}, we have  $C^\alpha$, $\xi_{\alpha, \beta}$ and $\bce(\alpha,\beta)$ as in Section \ref{Sec: RCB}.
We define a $\Z$-linear map
$$
\gL\cl  \lG \rightarrow \Lambda, \qquad  e_i \mapsto  \lambda_i \text{ for } i\in I,
$$
and a $\Z$-bilinear map
\begin{align*}
\dphi &\cl  \lG \times \Lambda \rightarrow \Z, \qquad (e_i, \lambda)  \mapsto \dphi_i(\lambda).
\end{align*}
It is easy to check that
\begin{align} \label{Eq: dphi gL}
 \dphi(\alpha, \gL(\beta)) = \gH(\alpha, \beta) - \gH(\beta , \alpha) \quad \text{for $\alpha, \beta \in \lG $.}
\end{align}
Note that the diagram below commutes:
\begin{align*}
\xymatrix{
C^\alpha \otimes C^\beta \ar[rr]^{R_{C^\alpha}(C^\beta) \qquad } \ar[d]_{\xi_{\alpha, \beta}} &&  q^{ \dphi( \alpha, \gL(\beta)) } \otimes C^\beta \otimes C^\alpha  \ar[d]^{\xi_{  \beta, \alpha}} \\
q^{\gH(\alpha, \beta)} \otimes C^{\alpha + \beta} \ar[rr]^{\bce(\alpha, \beta) \id\qquad } &&  q^{ \dphi( \alpha, \gL(\beta)) + \gH(\beta, \alpha) } \otimes C^{\alpha+\beta}
}
\end{align*}

For $X \in \catT_\lambda$,  $Y \in \catT_\mu$  and $ \delta  \in \Ds_{\alpha, \beta}$,
we set
$$
\gHm_\delta( (X, \alpha  ), (Y, \beta  ) ) \seteq  \Hom_{\catT}(  C^{\delta + \alpha}\otimes X, q^{\gH(\delta, \beta-\alpha) + \dphi(\delta+\beta, \mu)} \otimes  Y \otimes C^{ \delta + \beta} ).
$$
For $ \delta, \delta' \in \Ds_{\alpha, \beta}$ with $\delta \preceq  \delta'$ and $ f \in \gHm_\delta( (X, \alpha  ), (Y, \beta  ) )$,
we define $\gzeta_{\delta', \delta}(f)$ to be the morphism such that the following diagram commutes:
$$
\xymatrix{
C^{\delta' - \delta} \otimes C^{ \delta + \alpha} \otimes X   \ar[dd]^{\wr}_{  \xi_{\delta'-\delta,  \delta+\alpha} }  \ar[rr]^{ C^{\delta'-\delta} \otimes f \qquad \qquad }   &&
q^{\gH(\delta, \beta-\alpha) + \dphi(\delta+\beta, \mu)} \otimes   C^{\delta' - \delta} \otimes Y \otimes C^{ \delta+\beta}   \ar[d]^{  R_{C^{\delta'-\delta}} (Y) } \\
 &&  q^{\gH(\delta, \beta-\alpha) + \dphi(\delta'+\beta, \mu)} \otimes Y \otimes C^{\delta' - \delta} \otimes C^{ \delta + \beta}
  \ar[d]_{\wr}^{\xi_{ \delta'-\delta,   \delta+\beta }}  \\
q^{\gH(\delta'-\delta,  \delta+\alpha)} \otimes C^{  \delta'+\alpha} \otimes X  \ar[rr]^{\gzeta_{\delta', \delta}(f) \qquad \qquad} &&
q^{\gH(\delta, \beta-\alpha) + \dphi(\delta'+\beta, \mu) + \gH(\delta'-\delta,  \delta+\beta) } \otimes Y \otimes C^{ \delta'+\beta}.
}
$$
Since
\begin{align*}
\gH& (\delta,  \beta-\alpha) + \dphi(\delta'+\beta, \mu)  + \gH(\delta'-\delta, \delta+\beta) - \gH(\delta'-\delta,  \delta+\alpha)\\
 &= \gH(\delta',  \beta-\alpha) + \dphi(\delta'+\beta, \mu),
\end{align*}
we have the map
\begin{align*} 
\gzeta_{\delta', \delta }  \cl  \gHm_\delta( (X, \alpha  ), (Y, \beta  ) )  \rightarrow \gHm_{\delta'}( (X, \alpha  ), (Y, \beta  ) ).
\end{align*}
One can show that $\zeta^\gr_{\delta'', \delta' }  \circ \zeta^\gr_{\delta', \delta } = \zeta^\gr_{\delta'', \delta } $
for $\delta \preceq \delta' \preceq \delta''$, which tells that it becomes an inductive system.

As in Section \ref{Sec: Loc}, we now obtain the category $\lT$ defined by
\begin{align*}
\Ob (\lT) &\seteq  \Ob(\catT) \times \lG, \\
\Hom_{\lT}( (X, \alpha), (Y, \beta) ) &\seteq   \lim_{  \substack{\longrightarrow \\ \delta \in \Ds_{\alpha, \beta}, \\  \lambda + \gL(\alpha) = \mu + \gL(\beta)  }    }
\Hm^\gr_{\delta}( (X, \alpha  ), (Y, \beta  ) ) ,
\end{align*}
where $X \in \catT_{\lambda}$ and $Y \in \catT_{\mu}$. By the construction, we have the decomposition
$$
\lT = \bigoplus_{\mu \in \Lambda} \lT_{\mu}, \qquad \text{where }  \lT_{\mu} \seteq  \{ (X, \alpha) \mid X \in \catT_\lambda, \ \lambda + \gL(\alpha)=\mu  \}.
$$
Let $X \in \catT_\lambda$, $Y \in \catT_\mu$ and $Z \in \catT_\nu$. For  $f \in \gHm_\delta( (X, \alpha  ), (Y, \beta  ) ) $ and  $ g \in  \gHm_\epsilon( (Y, \beta  ), (Z, \gamma  ) )$, we define
$$
\gPsi_{\delta, \epsilon} (f,g)  \seteq  \bce(\delta+ \beta, \beta-\gamma) \cdot \gtPsi_{\delta, \epsilon}(f,g) ,
$$
where  $\gtPsi_{\delta, \epsilon}(f,g)$ is the morphism such that the following diagram commutes:
$$
\xymatrix{
C^{\epsilon+\beta}\otimes C^{\delta+\alpha}\otimes X \ar[dd]^{\wr}_{\xi_{\epsilon+\beta, \delta+\alpha} }  \ar[rr]^f &&  q^{a}\otimes C^{\epsilon+\beta}\otimes   Y\otimes  C^{\delta+\beta} \ar[d]^g \\
&& q^{b}\otimes  Z\otimes C^{\epsilon+\gamma}\otimes C^{\delta+\beta} \ar[d]_{\wr}^{\xi_{ \epsilon+\gamma, \delta+\beta}} \\
q^{\gH(\epsilon+\beta, \delta+\alpha)}\otimes  C^{\delta+\epsilon+\beta+\alpha} \otimes X  \ar[rr]^{\qquad \gtPsi_{\delta, \epsilon}(f,g)} && q^{c}\otimes  Z \otimes  C^{\delta+\epsilon+\beta+\gamma} ,
}
$$
where
\begin{align*}
&a = \gH(\delta,\beta-\alpha) + \dphi(\delta+\beta, \mu), \quad b = a+\gH(\epsilon,\gamma-\beta) + \dphi(\epsilon+\gamma, \nu), \\
& c = b + \gH(\epsilon+\gamma, \delta+\beta).
\end{align*}
As $ \gL(\beta) + \mu = \gL(\gamma)+\nu $, by $\eqref{Eq: dphi gL}$, we obtain
$$
\dphi(\delta+\beta, \mu) = \gH(\delta+\beta, \gamma-\beta) - \gH(\gamma-\beta, \delta+\beta ) + \dphi(\delta+\beta, \nu).
$$
This implies that
\begin{align*}
c &= \gH (\delta,\beta-\alpha) + \dphi(\delta+\beta, \mu) + \gH(\epsilon,\gamma-\beta) + \dphi(\epsilon+\gamma, \nu) +  \gH(\epsilon+\gamma, \delta+\beta) \\
&=  \gH(\delta+\epsilon+\beta, \gamma-\alpha) + \dphi(\delta+\epsilon+\beta+\gamma, \nu) + \gH(\epsilon+\beta, \delta+\alpha),
\end{align*}
which tells that
$$
\gPsi_{\delta, \epsilon} (f,g) \in \gHm_{\delta+\epsilon+\beta}( (X, \alpha  ), (Z, \gamma ) ) .
$$
Using the same argument as in Section \ref{Sec: Loc}, $\gPsi_{\delta, \epsilon}$ gives the composition in $\lT$, which means that
$\lT$ is a category.

We shall define a bifunctor $\otimes$ on the category $\lT$ as follows. For $\alpha, \alpha', \beta, \beta' \in \lG $, $X \in \catT_\lambda$, $X' \in \catT_{\lambda'}$, $Y \in \catT_\mu$ and $Y' \in \catT_{\mu'}$,
we define
$$
(X, \alpha) \otimes (Y, \beta) \seteq  ( q^{- \dphi(\beta, \lambda) + \gH(\alpha, \beta)} \otimes X \otimes Y, \alpha+\beta  ),
$$
and, for $f \in \gHm_\delta((X, \alpha), (X', \alpha'))$ and $g \in \gHm_\epsilon((Y, \beta), (Y', \beta'))$, we define
$$
\gT_{\delta, \epsilon}(f,g) \seteq  \eta(\epsilon, \alpha-\alpha') \gtT_{\delta, \epsilon}(f,g) ,
$$
where $ \gtT_{\delta, \epsilon}(f,g)$ is the morphism such that the following diagram commutes:
$$
\xymatrix{
C^{\delta+\alpha}  \otimes X \otimes C^{\epsilon+\beta}\otimes  Y  \ar[rr]^{f \otimes g} &&  q^{b}\otimes  X' \otimes C^{\delta+\alpha'}  \otimes  Y' \otimes  C^{\epsilon+\beta'}   \ar[d]^{R_{C^{\delta+\alpha'}} (Y') } \\
q^{-\dphi(\epsilon+\beta, \lambda) }\otimes C^{\delta+\alpha} \otimes C^{\epsilon+\beta}\otimes X \otimes Y  \ar[u]^{R_{C^{\epsilon+\beta}} (X) }  \ar[d]^\wr_{\xi_{\delta+\alpha, \epsilon+\beta}}
&&  q^{c}\otimes  X' \otimes Y' \otimes C^{\delta+\alpha'} \otimes C^{\epsilon+\beta'}  \ar[d]_\wr^{\xi_{\delta+\alpha', \epsilon+\beta'}} \\
q^{a}\otimes  C^{\delta+\epsilon+\alpha+\beta}\otimes X \otimes Y \ar[rr]^{\gtT_{\delta,\epsilon} (f, g)}  &&  q^{d}\otimes  X' \otimes Y' \otimes C^{\delta+\epsilon+\alpha'+\beta'},
}
$$
where
\begin{align*}
a &= -\dphi(\epsilon+\beta, \lambda) + \gH(\delta+\alpha, \epsilon+\beta), \\
b &=  \gH(\delta, \alpha' - \alpha) + \dphi(\delta+\alpha', \lambda') + \gH(\epsilon, \beta'-\beta) + \dphi(\epsilon+\beta', \mu'), \\
c &= b + \dphi(\delta + \alpha', \mu'), \qquad d = c + \gH(\delta + \alpha', \epsilon+\beta').
\end{align*}
By $\lambda + \gL(\alpha) = \lambda' + \gL(\alpha')$,  we have
\begin{align*}
\dphi(\epsilon+\beta, \lambda) &= \dphi(\epsilon+\beta', \lambda') - \dphi(\beta', \lambda') + \dphi(\beta, \lambda) + \gH(\epsilon, \alpha' - \alpha) - \gH(\alpha' - \alpha, \epsilon),
\end{align*}
which implies that
\begin{align*}
d-a&=
\gH(\delta+ \epsilon, \alpha' + \beta' - \alpha-\beta) + \dphi( \epsilon + \delta+\alpha' + \beta', \lambda' + \mu') \\
& \ \  \  - \dphi(\beta', \lambda') + \gH(\alpha', \beta') +  \dphi(\beta, \lambda) - \gH(\alpha, \beta).
\end{align*}
This tells that
$$
\gT_{\delta, \epsilon}(f,g) \in \gHm_{\delta+\epsilon}((X, \alpha)\otimes (Y,\beta),   (X', \alpha') \otimes (Y', \beta')).
$$
By the same argument as in Section \ref{Sec: Loc}, one can show that $\gT_{\delta, \epsilon}$ yields a bifunctor
\begin{equation*} 
\begin{aligned}
 \Hom_{\lT}(   (X, \alpha) ,  (X', \alpha') ) & \times  \Hom_{\lT}(  (Y, \beta) , (Y', \beta') ) \buildrel \otimes \over  \longrightarrow \\
 & \qquad  \Hom_{\lT}(   (X, \alpha) \otimes  (Y, \beta)  , (X', \alpha') \otimes (Y', \beta') ),
\end{aligned}
\end{equation*}
which gives the following theorem.

\begin{thm} \label{Thm: graded localization}
$\lT$ becomes a monoidal category. Moreover, it satisfies the same conditions as in Theorem \ref{Thm: localization}.
\end{thm}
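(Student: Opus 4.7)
The plan is to follow the structure of the proof of Theorem~\ref{Thm: localization} verbatim, but with constant bookkeeping of the scalar factors $q^{\gH(\cdot,\cdot)}$ and $q^{\dphi(\cdot,\cdot)}$ that appear in the graded setting. First I would check that the composition law $\gPsi_{\delta,\epsilon}$ and the tensor law $\gT_{\delta,\epsilon}$ are well defined on the direct limits, i.e.\ that for $\delta\preceq\delta'$ and $\epsilon\preceq\epsilon'$ one has
\[
\gPsi_{\delta',\epsilon'}\bl \gzeta_{\delta',\delta}(f),\gzeta_{\epsilon',\epsilon}(g)\br
=\gzeta_{\delta'+\epsilon'+\beta,\ \delta+\epsilon+\beta}\bl\gPsi_{\delta,\epsilon}(f,g)\br,
\]
and the analogous identity for $\gT$. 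Each such identity is proved by drawing the same large diagram as in Section~\ref{Sec: Loc} and checking that two paths differ by a factor which, after expanding the definitions of $\dphi(\cdot,\cdot)$, $\gH(\cdot,\cdot)$, and $\bce(\cdot,\cdot)$, cancels thanks to the compatibility $\dphi(\al,\gL(\beta))=\gH(\al,\beta)-\gH(\beta,\al)$ from \eqref{Eq: dphi gL} and the bilinearity $\eqref{Eq: comm for bce}$ of $\bce$. The purely topological content of these diagrams (routing $R_C$'s and $\xi$'s around each other) is identical to the ungraded case treated before the theorem; only the scalar prefactors need verification.

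Next I would verify the associativity of $\gPsi$ and the bifunctoriality of $\gT$. For associativity of $\gPsi$, one expands both ways of composing three morphisms and notices that every scalar factor appearing from the recursion $\bce(\delta+\beta,\beta-\gamma)$ in the definition of $\gPsi$ assembles into $\bce$ of the \emph{outer} indices, using $\eqref{Eq: comm for bce}$; the degrees $\gH(\cdots)+\dphi(\cdots,\cdots)$ add by $\Z$-bilinearity. For the crucial interchange law
\[
\gPsi_{\delta_1+\delta_2,\epsilon_1+\epsilon_2}\bl\gT_{\delta_1,\delta_2}(f_1,f_2),\gT_{\epsilon_1,\epsilon_2}(g_1,g_2)\br
=\gT_{\delta_1+\epsilon_1+\beta_1,\delta_2+\epsilon_2+\beta_2}\bl\gPsi_{\delta_1,\epsilon_1}(f_1,g_1),\gPsi_{\delta_2,\epsilon_2}(f_2,g_2)\br,
\]
I would reproduce the two-large-diagrams argument of Proposition~\ref{Prop: tensor and composition}: the ungraded proof already packages the obstruction as a single scalar $\rho=\bce(\delta_1+\beta_1,\epsilon_2+\beta_2)\bce(\epsilon_2+\beta_2,\delta_1+\beta_1)$ that ultimately cancels. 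In the graded case the same computation produces in addition a $q$-power whose exponent is a $\Z$-bilinear expression in $\delta_i,\epsilon_i,\al_i,\beta_i,\gamma_i,\la_i,\mu_i,\nu_i$; using $\gL(\al_i)+\la_i=\gL(\beta_i)+\mu_i=\gL(\gamma_i)+\nu_i$ and \eqref{Eq: dphi gL} this exponent collapses to zero, exactly as the target and source gradings predict.

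Once this is done, the category $\lT=\bigoplus_{\mu\in\Lambda}\lT_\mu$ is seen to be a $\Lambda$-graded $\bR$-linear monoidal category with unit $(\triv,0)$: the associator and unit constraints are inherited from $\catT$ through $\Upsilon\cl X\mapsto(X,0)$, and one checks that $\Upsilon$ is a monoidal functor by observing that the image of the identity of $C^{\delta+\al}\tens X\tens Y$ represents $\Upsilon(f\tens g)$ and $\Upsilon(f)\tens\Upsilon(g)$ simultaneously. The verification that $\Upsilon(C_i)$ is invertible in $\lT$ and that $\Upsilon(R_{C_i}(X))$ becomes an isomorphism is word-for-word the argument of Proposition~\ref{prop: localization}, with $\tC^\al=(\one,\al)$ and the same pair of mutually inverse morphisms $f,g$ built from $\id_{C^\al}$. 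Finally, the universal property is proved as in part~(iii) of Theorem~\ref{Thm: localization}: given $\Upsilon'\cl\catT\to\catT'$ inverting the data, one sets $F(X,\al)=\tC^\al\tens\Upsilon'(X)$ with $\tC^\al=\Upsilon'(C^\al)$ in a distinguished way, and defines $F$ on a representative $f\in\gHm_\delta$ by the analogous square, where the extra $q$-shift on the target of $f$ is absorbed by the invertible central object $q$ in $\catT'$.

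The main obstacle I expect is not any single step but the cumulative bookkeeping of the $q$-exponents: one has to verify each time that the two scalar prefactors produced by two different ways of composing or tensoring differ only by the degree predicted by the grading on $\lT$, and this requires systematic use of the identity $\dphi(\al,\gL(\beta))=\gH(\al,\beta)-\gH(\beta,\al)$ together with the compatibility of the gradings of source and target. All purely categorical (commutativity of diagrams involving only $R_{C^\delta}$'s and $\xi$'s) reduces to the ungraded arguments already carried out in Section~\ref{Sec: Loc}.
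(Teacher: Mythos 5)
Your plan matches the paper's treatment: Section~\ref{sec:graded} builds $\gzeta$, $\gPsi$, and $\gT$ with explicit degree-bookkeeping using \eqref{Eq: dphi gL} and \eqref{Eq: comm for bce}, and then delegates all the diagrammatic verifications (inductive-limit compatibility, associativity, interchange law, universal property) back to the ungraded arguments of Section~\ref{Sec: Loc}, Proposition~\ref{prop: localization}, and Theorem~\ref{Thm: localization}, exactly as you propose. The only point you gloss over a bit is that the tensor product of \emph{objects} in $\lT$ is itself twisted by $q^{-\dphi(\beta,\lambda)+\gH(\alpha,\beta)}$ (not just the morphism spaces), but since you note at the end that the $q$-shift on the target of $f$ is absorbed by the invertible central object $q$, you have the right mechanism in mind.
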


We denote by $\catT [ C_i^{\otimes -1} \mid i\in I]$ the localization $\lT$ in Theorem \ref{Thm: graded localization}.
Note that
$$
(X, \alpha+\beta) \simeq  q^{-\gH(\beta, \alpha)} \otimes (C^\alpha \otimes X, \beta), \quad
(\triv, \beta) \otimes  (\triv, -\beta) \simeq q^{-\gH(\beta, \beta)} (\triv, 0)
$$
for $\alpha \in \lG_{\ge 0}$ and  $\beta \in \lG$.
We now have a graded version of Proposition \ref{Prop: localization}
\begin{prop}
Let $( C_i ,  R_{C_i}  ,   \dphi_i  )_{i\in I}$ be a real commuting family of graded braiders in a graded monoidal category $\catT$,
and  set $\lT \seteq   \catT  [ C_i^{\otimes -1} \mid i \in I ]$.
Assume that
\bna
\item $\catT$ is an abelian category,
\item $\otimes$ is exact,
\end{enumerate}
Then $\lT$ is an abelian category with exact $\tens$,
and the functor $\Upsilon\cl  \catT \rightarrow \lT $ is exact.
\end{prop}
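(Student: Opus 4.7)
The plan is to mirror the argument for the ungraded case (Proposition~\ref{Prop: localization}, cf.~\cite[Proposition A.8]{KKK18}), carrying along the grading data that has already been built into the definition of $\lT$ in Section~\ref{sec:graded}.

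First I would observe that every object $(X,\alpha)$ of $\lT$ is isomorphic, up to a power of $q$, to an invertible shift of an object in the essential image of $\Upsilon$: for any $\delta\in\lG_{\ge 0}$ with $\delta+\alpha\in\lG_{\ge 0}$ one has $(\triv,-\delta)\tens\Upsilon(C^{\delta+\alpha}\tens X)\simeq (X,\alpha)$ in $\lT$. Consequently, finite direct sums in $\lT$ are constructed by choosing a common $\delta$ for a finite collection of objects, forming the biproduct in $\catT$ at that level, and transporting it back; and the $\bR$-linear structure on each $\Hom$-space is inherited from that on $\gHm_\delta$ together with the additivity of $\gzeta_{\delta',\delta}$.

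Next I would show that $\lT$ admits kernels and cokernels. Given $f\in\Hom_{\lT}\bl(X,\alpha),(Y,\beta)\br$, choose a representative $\tilde f\in\gHm_\delta\bl(X,\alpha),(Y,\beta)\br$, which is an honest morphism in the abelian category $\catT$; let $K$ and $Q$ be its kernel and cokernel in $\catT$. The crucial point is that tensoring with any $C^\gamma$ (for $\gamma\in\lG_{\ge 0}$) and with any invertible grading shift $q^n$ is exact in $\catT$, so $C^\gamma\tens K$ and $C^\gamma\tens Q$ remain the kernel and cokernel of $C^\gamma\tens\tilde f$. This implies that $(K,-\delta)$ and $(Q,-\delta)$, with appropriate $q$-shifts, represent a kernel and a cokernel of $f$ in $\lT$, and the universal property is checked by lifting any competing morphism to a common level $\delta'\succeq\delta$ via $\gzeta$. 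The standard characterization of an abelian category, namely that every monomorphism is the kernel of its cokernel and every epimorphism is the cokernel of its kernel, then follows from the corresponding identities applied to the representative $\tilde f$ in $\catT$.

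Exactness of $\tens$ on $\lT$ is a consequence of the fact that the bifunctor $\tens$ in $\lT$ is built, level by level via $\gtT_{\delta,\epsilon}$, from $\tens$ on $\catT$ together with the isomorphisms $\xi_{\alpha,\beta}$ and the braidings $R_{C^\bullet}$; since $\tens$ on $\catT$ is exact by hypothesis and (co)kernels in $\lT$ are computed at the level of representatives as above, exactness transfers. Finally, $\Upsilon$ is exact because $\Upsilon(X)=(X,0)$ and a short exact sequence in $\catT$ directly provides a short exact sequence of representatives at $\delta=0$, which survives passage to the inductive limit. The main obstacle is the bookkeeping needed to show that replacing a representative $\tilde f$ by $\gzeta_{\delta',\delta}(\tilde f)$ leaves the candidate kernel and cokernel unchanged up to canonical isomorphism; this reduces to exactness of the functors $C^{\delta'-\delta}\tens(-)$ and $q^n\tens(-)$ on $\catT$, which is granted by the exactness of $\tens$ together with the invertibility of $q$.
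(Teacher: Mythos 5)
The sketch runs into a genuine gap at the step where you claim that replacing a representative $\tilde f$ at level $\delta$ by $\gzeta_{\delta',\delta}(\tilde f)$ leaves the candidate kernel (and cokernel) unchanged, and that "this reduces to exactness of the functors $C^{\delta'-\delta}\tens(-)$ and $q^n\tens(-)$". Unpacking the definition of $\gzeta$, one has
$\gzeta_{\delta',\delta}(\tilde f)=\xi^{-1}\circ\bigl(R_{C^{\delta'-\delta}}(Y)\tens C^{\delta+\beta}\bigr)\circ\bigl(C^{\delta'-\delta}\tens\tilde f\bigr)\circ\xi$.
Exactness of $C^{\delta'-\delta}\tens(-)$ only controls the middle factor; it gives $\ker\bigl(C^{\delta'-\delta}\tens\tilde f\bigr)=C^{\delta'-\delta}\tens K$. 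But $\gzeta$ also post-composes with the braider $R_{C^{\delta'-\delta}}(Y)$, which for a mere braider need \emph{not} be a monomorphism in $\catT$ (it is only required to become an isomorphism after applying $\Upsilon$). Hence $\ker\gzeta_{\delta',\delta}(\tilde f)$ can be strictly larger than $C^{\delta'-\delta}\tens K$: concretely it equals $(C^{\delta'-\delta}\tens\tilde f)^{-1}\bigl(\ker R_{C^{\delta'-\delta}}(Y)\tens C^{\delta+\beta}\bigr)$. The same issue poisons the universal property check: when $f\circ g=0$ in $\lT$, the honest vanishing at some level $\rho$ is only of $R_{C^\rho}(Y)\circ(C^\rho\tens(\tilde f\circ\tilde g))$, so the image of the lifted $\tilde g$ is trapped in $(C^\rho\tens\tilde f)^{-1}(\ker R_{C^\rho}(Y)\tens\cdots)$ rather than in $C^\rho\tens K\tens\cdots$, and one cannot conclude a lift through $K$ at the level of $\catT$.

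What is missing is an analysis of these "kernel defects." One should introduce the full subcategory $\mathcal{N}=\{Z\in\catT:\Upsilon(Z)\simeq0\}$, observe (using naturality of the braider and reality of the family) that it is a Serre subcategory containing $\ker R_{C^\gamma}(Y)$ and $\coker R_{C^\gamma}(Y)$ for every $\gamma\in\lG_{\ge0}$, and then prove the key lemma that if $u:A\to B$ is a monomorphism in $\catT$ with $\coker u\in\mathcal N$ (resp.\ an epimorphism with $\ker u\in\mathcal N$), then $\Upsilon(u)$ is an isomorphism in $\lT$ — the inverse is produced explicitly from $R_{C^\gamma}(B)$ factoring through $A\tens C^\gamma$ at a high enough level. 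With this lemma in hand, the discrepancy between $\ker\gzeta_{\delta',\delta}(\tilde f)$ and $C^{\delta'-\delta}\tens K$, being a subquotient of an object of $\mathcal N$, becomes invisible under $\Upsilon$, and both the well-definedness of the kernel and its universal property can be carried out. (Equivalently, one shows that $\Upsilon$ factors through the Serre quotient $\catT/\mathcal{N}$, where the $C_i$ become genuine central objects, and then cites the central-object case.) As written, your argument would only be complete in the situation where each $R_{C_i}(X)$ is already a monomorphism for every $X$, which is not part of the hypotheses.
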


\vskip 2em

\section{Affinizations and R-matrices} \label{Sec: Affinizations and R-matrices}

\subsection{Affinizations}

Let $R$ be a quiver Hecke algebra of \emph{arbitrary type}.
We recall the notions of affinizations and R-matrices given in \cite{KP18}.
For $\beta \in \rlQ_+$ and $i\in I$, let
\begin{align} \label{Eq: def of p}
\mathfrak{p}_{i, \beta}  = \sum_{\nu \in I^\beta} \Bigl(\hs{1ex}  \prod_{a \in \{1, \ldots, \Ht(\beta) \},\ \nu_a=i} x_a \Bigr) e(\nu)\in R(\beta).
\end{align}
Then $\mathfrak{p}_{i, \beta} $ belongs to the center of $R(\beta)$.
When there is no afraid of confusion, we simply write $\mathfrak{p}_{i} $ for $\mathfrak{p}_{i, \beta}$.
\begin{df} \label{Def: aff}
Let $M$ be a simple $R(\beta)$-module. An \emph{affinization} of $M$ 
 with degree $d_{\Ma}$ is an $R(\beta)$-module $\Ma$ with an endomorphism $z_{\Ma}$ of $\Ma$
with degree $d_{\Ma} \in \Z_{>0}$
and an isomorphism $\Ma / z_{\Ma} \Ma \simeq M$ such that
\begin{enumerate}[\rm (i)]
\item $\Ma$ is a finitely generated free module over the polynomial ring $\bR[z_{\Ma}]$,
\item $\mathfrak{p}_{i} \Ma \ne 0$ for all $i\in I$.
\end{enumerate}
\end{df}

Note that every affinization is essentially even, i.e. $d_{\Ma} \in 2 \Z_{>0} $ \cite[Proposition 2.5]{KP18}.
Thus, from now on, we assume that every affinization is even.

\begin{lem}[{\cite[Lemma 2.7]{KP18}}] \label{Lem: aff end}
Let $\Ma$ be an affinization of a simple $R$-module $M$. Then we have
$$
\END_{R_{\bR[\z]} (\beta)} (\Ma) \simeq \bR[\z] \id_\Ma.
$$
In particular,  $\mathfrak{p}_{i}\vert_ \Ma$ is a monomorphism.
\end{lem}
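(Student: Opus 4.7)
The plan is to fix a homogeneous $f\in\END_{R_{\bR[\z]}(\beta)}(\Ma)$ of degree $n$ and to peel off scalar multiples of $\z$-powers using the $\z$-adic filtration on $\Ma$. The starting point would be the graded Schur lemma applied to the reduction $\bar f\cl M\to M$: since $M$ is a finite-dimensional absolutely simple graded $R(\beta)$-module, $\HOM_R(M,M)=\bR\,\id_M$ concentrated in degree $0$ (the finite-dimensionality of $M$ precludes $M\simeq q^n M$ for $n\ne0$), so $\bar f=c_0\,\id_M$ with $c_0\in\bR$ and $c_0=0$ unless $n=0$.

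Next, I would exploit that $\Ma$ is free—hence torsion-free—over $\bR[\z]$: the morphism $f-c_0\,\id_\Ma$ vanishes modulo $\z$, so its image lies in $\z\Ma$, and dividing by $\z$ yields a unique $f_1\in\END(\Ma)$ of degree $n-d_\Ma$ with $f=c_0\,\id_\Ma+\z\,f_1$. Iterating produces, for every $N\ge 1$, an expression
\[
f=\Bigl(\sum_{k=0}^{N-1}c_k\,\z^k\Bigr)\id_\Ma+\z^N f_N,\qquad c_k\in\bR,\ f_N\in\END(\Ma)\text{ of degree }n-Nd_\Ma.
\]
Since $c_k\,\z^k\,\id_\Ma$ has graded degree $k\,d_\Ma$, the matching of degrees forces $c_k=0$ whenever $k\,d_\Ma\ne n$; thus at most one $c_k$ is nonzero, say $c_{k_0}=c$ with $k_0=n/d_\Ma$ (and $c=0$ if $n/d_\Ma\notin\Z_{\ge0}$). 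Consequently $(f-c\,\z^{k_0}\,\id_\Ma)(\Ma)\subseteq \z^N\Ma$ for every $N$. The termination step is then the observation that $\bigcap_{N}\z^N\Ma=0$, which is immediate from $\Ma\simeq\bR[\z]^{\oplus r}$ together with $\bigcap_N \z^N\bR[\z]=0$. Hence $f=c\,\z^{k_0}\,\id_\Ma\in\bR[\z]\,\id_\Ma$, proving the first assertion.

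For the ``in particular'' part I would note that $\mathfrak{p}_i$ is central in $R(\beta)$, so multiplication by $\mathfrak{p}_i$ defines an element of $\END(\Ma)$, which by the first part equals $p(\z)\,\id_\Ma$ for some $p(\z)\in\bR[\z]$. The defining hypothesis $\mathfrak{p}_i\Ma\ne0$ forces $p(\z)\ne0$, and since $\bR[\z]$ is a domain and $\Ma$ is $\bR[\z]$-torsion-free, multiplication by $p(\z)$ is injective on $\Ma$. The main conceptual obstacle to watch for will be the graded Schur step—specifically verifying that a finite-dimensional graded simple $R(\beta)$-module admits no nonzero homogeneous self-maps of nonzero degree—while the $\z$-adic iteration and termination are purely formal once the freeness of $\Ma$ over $\bR[\z]$ is in hand.
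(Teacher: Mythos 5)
The paper does not prove this lemma (it cites \cite[Lemma 2.7]{KP18}), so there is no in-paper argument to compare against; your $\z$-adic peeling-off is correct and is the standard graded Nakayama-type proof. Every step---reduction to the graded Schur lemma for $M$, division by $\z$ using $\bR[\z]$-torsion-freeness of $\Ma$, degree bookkeeping to force at most one nonzero $c_k$, termination via $\bigcap_N\z^N\Ma=0$, and injectivity of $\gp_i\vert_\Ma$ from centrality together with freeness over the domain $\bR[\z]$---goes through as written. The one fact you take for granted, that $\END_{R(\beta)}(M)=\bR\,\id_M$ for a finite-dimensional simple graded $R(\beta)$-module over an arbitrary base field $\bR$ (absolute irreducibility), is a genuine theorem of the KLR theory (see \cite{KL09}) and not a formal consequence of Schur's lemma; it should be cited explicitly, though your parenthetical remark correctly disposes of the possibility of degree-shifted self-isomorphisms once that theorem is granted.
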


\Lemma\label{lem:affei}
Let $i\in I$ and $(\Ma,\z)$ an affinization of a simple $R(\beta)$-module $M$.
We set $m\seteq \ep_i(M)$ and $m' \seteq \ep_i^*(M)$. Then we have
\bnum
 \item $E_i^{m+1}\Ma\simeq 0$ and
$E_i^{(m)}\Ma$ is an affinization of $E_i^{(m)} M$,
 \item $E_i^{\,*\;m'+1}\Ma\simeq 0$ and $E_i^{* \hskip 0.1em (m')}   \Ma$ is an affinization of $E_i^{* \hskip 0.1em (m')} M$.
\ee
\enlemma
\begin{proof}
We shall only prove (i) since the case (ii) is similar.
In the sequel, we ignore grading shifts.
Let $d = \deg \z$.
Applying the exact functor $E_i^{m+1}$ to
$$0 \longrightarrow \Ma \buildrel \z \over \longrightarrow \Ma \longrightarrow M \longrightarrow 0,
$$
we have an exact sequence
$$
0 \longrightarrow  E_i^{m+1} \Ma  \To[z]E_i^{m+1}  \Ma \To 0.
$$
Hence Nakayama's lemma shows $E_i^{m+1} \Ma\simeq0$.
Applying the exact functor $E_i^{ (m)}$ to the same exact sequence,
we have an exact sequence
$$
0 \longrightarrow q^d \Ma_0 \buildrel \mathsf{z} \over \longrightarrow \Ma_0 \longrightarrow M_0 \longrightarrow 0,
$$
where $\Ma_0 = E_i^{(m)}\Ma$,  $ M_0 = E_i^{(m)} M$ and $ \mathsf{z} = E_i^{(m)}(\z)$.

Thus, it remains to show that $\mathfrak{p}_{j,\gamma} \vert_{\Ma_0} $ does not vanish for any $j\in I$, where $\gamma=\beta-m\al_i$.
We can regard $\Ma_0$ as  a  subspace of $\Ma$ by
$$
\Ma_0 \simeq \{  u \in  e(m \alpha_i, \beta - m\alpha_i)\Ma \mid  \tau_k u = 0
\ \text{for $1 \le k < m$}  \}.
$$
If $j \ne i$, then it is clear because
$\gp_{j,\gamma} \vert_{\Ma_0}=\gp_{j,\beta} \vert_{ \Ma_0 }$  is injective. 
Suppose that $j=i$. Then,
$ \mathfrak{p}_{i,\beta}\vert_{\Ma_0} =\bigl((x_1\cdots x_m)\mathfrak{p}_{i,\gamma}\bigr)\vert_{\Ma_0} $ is injective.
Hence $ \mathfrak{p}_{i,\gamma}\vert_{\Ma_0}$ is also injective.
\end{proof}

In the sequel,
\eq
\text{$t_i$ is an indeterminate of degree $(\alpha_i, \alpha_i)$.}
\eneq

\begin{df} \label{Def: lc lep}
Let $i\in I$ and let $M$ be an $R(\beta)$-module. We set $m = \ep_i(M)$, $m' = \ep^*_i(M)$ and $n = \Ht(\beta)$.
We define
\begin{align*} \textstyle
\lc_i(M) (t_i) &\seteq   \Bigl(\sum_{\nu  \in I^\beta} \prod_{\nu_k = i} (t_i - x_k) e(\nu) \Bigr) {\big\vert}_M \ \in \END_R(M)[t_i], \\
\lep_i(M) (t_i) &\seteq  (t_i-x_1)\cdots (t_i-x_m)\mid_{E_i^{(m)} M} \ \in    \END_R(E_i^{(m)}M)[t_i],  \\
\lep^*_i(M) (t_i) &\seteq  (t_i-x_{n-m'+1}) \cdots (t_i-x_n)\mid_{E_i^{* \hskip 0.1em  (m')  } M} \  \in    \END_R(E_i^{*\hskip 0.1em (m')}M)[t_i].
\end{align*}
\end{df}

Note that $\lc_i(M)$, $\lep_i(M)$ and $\lep_i^*(M)$ are homogeneous.
If $M$ is a simple module, then $\lc_i(M)$, $\lep_i(M)$ and $\lep_i^*(M)$
are powers of $t_i$.
For an affinization $\Ma$ of a simple module,
 by Lemma \ref{Lem: aff end} and Lemma~\ref{lem:affei}, we have
$$
\lc_i(\Ma), \lep_i(\Ma), \lep_i^*(\Ma) \in \bR[\z, t_i].
$$

\begin{lem} \label{Lem: minimal}
Let $\gamma\in \rlQ_+$, $m\in\Z_{>0}$ and $\beta=\gamma+m\al_i$.
Let $M$ be a non-zero $R(\beta)$-module such that
$\eps_i(M)=m$.  Set $B\seteq \END_R(E_i^{(m)}M)$.
Let $f(t_i) \in B[t_i]$. 
Then the following conditions are equivalent:
\bna
\item $f(x_k)=0$ in $\END_\bR(E_i^mM)$ for any $k$ such that $1\le k\le m$,
\item $f(x_k)=0$ in $\END_\bR(E_i^mM)$ for some $k$ such that $1\le k\le m$,
\item $\lep_i(M)(t_i)$ divides $ f(t_i)$, i.e., $f(t_i)\in B[t_i]\, \lep_i(M)(t_i)$. 
\ee

If we assume further that $M$ is generated by $e(m\al_i,\gamma)M=E_i^mM$
as an $R(\beta)$-module
and $f(t_i)\in\End_R(M)[t_i]$, 
then
the equivalent conditions above implies

\noi
{\rm(d)} $f(x_1)\vert_{E_iM}=0$.
\end{lem}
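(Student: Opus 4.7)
The plan is to establish the equivalences (a) $\Leftrightarrow$ (b) $\Leftrightarrow$ (c) first, and then derive (d) under the extra hypothesis. The implication (c) $\Rightarrow$ (a) is immediate: if $f(t_i) = g(t_i)\lep_i(M)(t_i)$ in $B[t_i]$ with $B = \END_R(E_i^{(m)}M)$, then for any $k \in \{1, \ldots, m\}$ the product $\lep_i(M)(x_k) = \prod_{l=1}^m (x_k - x_l)$ contains the vanishing factor $(x_k - x_k) = 0$, so $f(x_k) = 0$ on $E_i^m M$. The implication (a) $\Rightarrow$ (b) is trivial.

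For the key direction (b) $\Rightarrow$ (c), I plan to invoke the Morita equivalence for the nil-Hecke algebra $R(m\alpha_i)$. Writing $W = E_i^{(m)}M$, this equivalence identifies $E_i^m M \simeq \bR[x_1, \ldots, x_m] \otimes_{\mathrm{Sym}_m} W$, where $\mathrm{Sym}_m = \bR[x_1, \ldots, x_m]^{\mathfrak{S}_m}$ acts on $W$ through its inclusion in $B$; on this identification $x_k$ acts by multiplication on the first factor and $B$ acts on the second, and the combined action of $B \otimes_{\mathrm{Sym}_m} \bR[x_1, \ldots, x_m]$ on $E_i^m M$ is faithful. Because $\mathrm{Sym}_m[x_k]$ is a free $\mathrm{Sym}_m$-module of rank $m$ (with minimal polynomial of $x_k$ over $\mathrm{Sym}_m$ equal to $\prod_{l=1}^m(t - x_l) = \lep_i(M)(t)$) and is a direct summand of $\bR[x_1, \ldots, x_m]$ as $\mathrm{Sym}_m$-module, the induced map
\[
B[t]/(\lep_i(M)(t)) \;\simeq\; B \otimes_{\mathrm{Sym}_m} \mathrm{Sym}_m[x_k] \;\hookrightarrow\; B \otimes_{\mathrm{Sym}_m} \bR[x_1, \ldots, x_m]
\]
is injective. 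Combined with the above faithfulness, the vanishing $f(x_k) = 0$ on $E_i^m M$ forces $\lep_i(M)(t) \mid f(t)$ in $B[t]$.

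For (d), assume $M$ is generated by $E_i^m M$ as an $R(\beta)$-module and $f(t_i) \in \End_R(M)[t_i]$. Let $R'_1 \subseteq R(\beta)$ be the subalgebra generated by all idempotents $e(\nu)$, all $x_k$, and $\tau_l$ for $l \ge 2$. Each generator commutes with $x_1$: for $\tau_l$ with $l \ge 2$, the KLR relation $(\tau_l x_1 - x_{s_l(1)} \tau_l)e(\nu) = 0$ combined with $s_l(1) = 1$ yields $\tau_l x_1 = x_1 \tau_l$. The next step is to prove $E_i M \subseteq R'_1 \cdot E_i^m M$: any element of $e(\nu')R(\beta)e(i^m, \gamma)$ with $\nu'_1 = i$ is a combination of $\tau_w p(x)$ for $w \in \mathfrak{S}_n$ with $w(i^m, \gamma) = \nu'$ (PBW basis), and right-multiplying $w$ by a suitable $\sigma \in \mathfrak{S}_m \subseteq \mathrm{Stab}_{\mathfrak{S}_n}(i^m, \gamma)$ arranges that $w\sigma(1) = 1$, so $w\sigma$ has a reduced expression in $\{s_l : l \ge 2\}$ and the corresponding $\tau_{w\sigma}$ lies in $R'_1$. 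Writing $u = rv$ with $r \in R'_1$ and $v \in E_i^m M$, the $R$-linearity of each $b_j \in \End_R(M)$ together with $[x_1, r] = 0$ then gives $f(x_1)(u) = \sum_j b_j x_1^j (rv) = \sum_j r b_j(x_1^j v) = r \cdot f(x_1)(v) = 0$ by (a).

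The main technical obstacle will be making the PBW/coset-representative argument rigorous in the last step: replacing $\tau_w$ by $\tau_{w\sigma}$ via the KLR braid relations produces correction terms of strictly lower length (exacerbated by the nilpotency $\tau_k^2 = 0$ on repeated $i$-indices), and one must inductively verify that they still lie in $R'_1 \cdot E_i^m M$. The Morita identification underlying (b) $\Rightarrow$ (c) is essentially standard, but the bookkeeping required to extend the $B$-action from $E_i^{(m)}M$ to $E_i^m M$ and to check faithfulness of the combined action also deserves care.
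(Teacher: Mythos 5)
For the equivalence (a) $\Leftrightarrow$ (b) $\Leftrightarrow$ (c), the paper takes a shorter route than your Morita-theoretic one. It sets $K := \{a(x)\in\END_R(E_i^{(m)}M)[x_1,\ldots,x_m]\mid a(x)\text{ kills }E_i^m M\}$, shows $K$ is stable under $s_k$ (conjugation by the intertwiners $\varphi_k$) and under the Demazure operators $\partial_k$ (via \eqref{eq:taupol}); (a) $\Leftrightarrow$ (b) is then immediate, and for (b) $\Rightarrow$ (c) one reduces $f$ modulo the monic $\lep_i(M)(t_i)$ to degree $d<m$ and applies $\partial_d\cdots\partial_1$ to $f(x_1)\in K$ to exhibit its top coefficient inside $K\cap B=0$. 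Your route is workable, but be aware that the claim that $\bR[x_1,\ldots,x_m]^{\sg_m}[x_k]$ is a \emph{direct summand} of $\bR[x_1,\ldots,x_m]$ over the symmetric polynomials needs its own argument (being free of finite rank does not by itself give a summand), so this is not actually a shortcut.

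The genuine gap is in (d), and it is not quite the correction-term issue you anticipate. You never use the hypothesis $\eps_i(M)=m$ in this step, yet it is indispensable: it gives $E_i^{m+1}M=0$, hence $e(m\alpha_i,\gamma)M=\bigl(e(m\alpha_i)\etens e\bigr)M$ with $e:=\sum_{\nu\in I^\gamma,\,\nu_1\neq i}e(\nu)$. Without this, the coset step fails: $\nu'_1=i$ only forces $w^{-1}(1)$ to be \emph{some} $i$-position of $(i^m,\nu)$, and if $\gamma$ contains $\alpha_i$ (say $\nu_k=i$) then $w^{-1}(1)=m+k>m$ is possible, in which case no $\sigma\in\sg_m$ achieves $\sigma(1)=w^{-1}(1)$; for $k\geq 2$ such $\tau_w$ genuinely act nontrivially on $E_i^m M$ and cannot simply be discarded. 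The paper's proof removes these by (i) inserting $e$, so that $R(m\alpha_i)\etens R(\gamma)$ preserves the idempotent $e(m\alpha_i)\etens e$ on the module, and then (ii) invoking the shuffle lemma $R(\beta)e(m\alpha_i,\gamma)=\sum_{w\in\sg_{m,n}}\tau_w\bigl(R(m\alpha_i)\etens R(\gamma)\bigr)$, with $\sg_{m,n}$ the minimal-length coset representatives. For $w\in\sg_{m,n}$ one has $w^{-1}(1)\in\{1,m+1\}$; $w^{-1}(1)=m+1$ is ruled out by the idempotent $e$, and $w(1)=1$ gives $\tau_w\in R(\alpha_i)\etens R(\beta-\alpha_i)$, which commutes with $x_1$. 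This produces no correction terms at all; your plan to rewrite $\tau_w$ via $\tau_{w\sigma}$ and the braid relations is essentially attempting to reprove this triangular decomposition by hand, and the inductive bookkeeping you foresee does not close without it.
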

\begin{proof}
Set $K=\bigl\{a(x)\in \END_R(E_i^{(m)}M)[x_1,\dots, x_m]\mid
\text{$a(x)$ kills $E_i^{m}M$}\bigr\}$.
Then $K$ is stable by $s_k$
since $\vphi_k\cdot a(x)\cdot\vphi_k=s_k\bl a(x)\br$ (see Lemma~\ref{Lem: intertwiners} below).
It is stable also by $\partial_k$ by \eqref{eq:taupol}.
Hence the equivalence of (a) and (b) is obvious.
Assuming that $f(x_1)\in K$, let us prove
$ f(t_i)\in B[t_i]\,  \lep_i(M) $.

 It is easy to see $ \lep_i(M)(x_1)  \in K$.
By dividing $f(t_i)$ by the monic polynomial $ \lep_i(M)(t_i) $ of degree $m$,
we may assume that $f(t_i)$ is a polynomial in $t_i$
of degree strictly less than $m$.
Write $f(t_i)=\sum_{j=0}^na_jt_i^j$ with $0\le n<m$ and $a_j\in  B $.
In order to see $f(t_i)=0$,
it is enough to show that $a_n=0$.
We have
$\partial_n\cdots\partial_1 f(x_1)=(-1)^na_n\in K$. Hence $a_n=0$.

\medskip\noi
Let us show (d).
Set $$e=\sum_{\nu\in I^\gamma, \nu_1\not=i}e(\nu)\in R(\gamma).$$
Then we have
$e(m\al_i,\gamma)M=\bl e(m\al_i)\etens e\br M$.
By the shuffle lemma, we have
$$R(\beta)e(m\al_i,\gamma)=\sum_{w\in \sg_{m,n}}\tau_w\bl R(m\al_i)\etens R(\gamma)\br.$$
Here $\sg_{m,n}=\set{w\in \sg_{m+n}}{\text{$w(k)<w(k+1)$  if $1\le k<m+n$ and $k\not=m$}}$.
Hence \allowbreak $e(\al_i,  \beta-\al_i) R(\beta)\bl e(m\al_i)\etens e\br
=\sum_{w}\tau_wR(m\al_i)\etens R(\gamma)e$,
where $w$ ranges over $\sg_{m,n}$ and $w(m+1)\not=1$. It implies $w(1)=1$. 
Hence we obtain 
$$e(\al_i,  \beta-\al_i) R(\beta)\bl e(m\al_i)\etens e\br
\subset \bl R(\al_i)\etens R(\beta-\al_i)\br\cdot\bl R(m\alpha_i) \etens R(\gamma)\br.$$ 
Since we have a surjective map
$P(i^m)\conv E_i^{(m)}M\epito M$ by the assumption, 
we have 
\eqn
e(\al_i,\beta-\al_i)\bl P(i^m)\conv E_i^{(m)}M\br&&=
e(\al_i,\beta-\al_i) R(\beta)\bl e(m\al_i)\etens e\br
\bl P(i^m)\etens E_i^{(m)}M\br\\
&&\subset
\bl R(\al_i)\etens R(\beta-\al_i)\br \bl P(i^m)\etens E_i^{(m)}M\br.
\eneqn
Hence we obtain
$$e(\al_i,\beta-\al_i)M\subset \bl R(\al_i)\etens R(\beta-\al_i)\br
e(m\al_i,\gamma)M.$$
Thus $f(x_1)$ kills $e(\al_i,\beta-\al_i)M$.
\end{proof}

Let $z$ be an indeterminate of degree $d\in\Z_{>0}$ and we set
$$
A \seteq  \bR[z].
$$
\begin{prop} \label{Prop: affinization of L(i^m)}
Let $i\in I$ and $m\in \Z_{>0}$.
We take a homogeneous monic polynomial  $f(t_i) \in A[t_i]$ such that
\begin{enumerate} [\rm (i)]
\item  $\deg_{t_i} f(t_i) = m$,
\item  $f(0)\ne 0$.
\end{enumerate}
Then, there is an affinization $\Ma$ of $L(i^m)$ with $ \lc_i( \Ma) = f(t_i)$.
\end{prop}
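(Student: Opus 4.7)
The plan is to construct $\Ma$ explicitly by base-changing the projective $P(i^m)$ of the nilHecke algebra $R(m\al_i)$ along a central homomorphism prescribed by $f$.

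First I will record the standard structural facts I need: the center of $R(m\al_i)$ is the symmetric polynomial ring $S\seteq\bR[x_1,\dots,x_m]^{\sg_m}$; the projective $P(i^m)$ is a free $S$-module of rank $m!$; and modulo the augmentation ideal $S_{>0}=(e_1(x),\dots,e_m(x))$ one has $P(i^m)\tens_S\bR\simeq L(i^m)$ up to an overall grading shift (the coinvariant-algebra realization of the nilHecke simple). Writing
$$f(t_i)=t_i^m+\sum_{j=1}^m c_j\,t_i^{m-j},\qquad c_j\in A\text{ homogeneous of degree }j(\al_i,\al_i),$$
the hypothesis $f(0)\ne0$ is precisely $c_m\ne0$.

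Next I will introduce the graded $\bR$-algebra homomorphism
$$\phi\cl S\longrightarrow A,\qquad e_j(x_1,\dots,x_m)\longmapsto(-1)^j c_j\qt{for }1\le j\le m,$$
(homogeneous since $\deg e_j(x)=\deg c_j=j(\al_i,\al_i)$), and set
$$\Ma\seteq P(i^m)\tens_{S,\,\phi}A,$$
regarded as a graded left $R(m\al_i)$-module with compatible central $A$-action; the endomorphism $\z$ will be multiplication by $z$.

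Then I will verify: (a) freeness of $\Ma$ over $A$ of rank $m!$ is inherited from that of $P(i^m)$ over $S$; (b) since $\deg z>0$ every $c_j$ with $j\ge1$ lies in $zA$, so $\Ma/\z\Ma\simeq P(i^m)\tens_S\bR\simeq L(i^m)$ (with a one-time overall grading shift absorbed into $\Ma$); (c) the nondegeneracy $\gp_{i,m\al_i}\Ma\ne0$ is automatic, because $\gp_{i,m\al_i}=x_1\cdots x_m=e_m(x)$ acts on $\Ma$ as the nonzero scalar $(-1)^m c_m$; (d) since $I^{m\al_i}=\{(i,\dots,i)\}$,
$$\lc_i(\Ma)(t_i)=\prod_{k=1}^m(t_i-x_k)\Big|_{\Ma}=\sum_{j=0}^m(-1)^j e_j(x)\,t_i^{m-j}\Big|_{\Ma}=\sum_{j=0}^m c_j\,t_i^{m-j}=f(t_i)$$
with the convention $c_0=1$. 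The main delicate point will be the graded identification in (b): while the coincidence of $P(i^m)\tens_S\bR$ with $L(i^m)$ as ungraded simple $R(m\al_i)$-modules is classical, matching the internal grading so that $\Ma/\z\Ma$ coincides with $L(i^m)$ on the nose forces a specific overall grading shift of $\Ma$; once settled, the remaining verifications are formal.
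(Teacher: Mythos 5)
Your construction is correct, and it is a genuinely different route from the one the paper takes, though it lands on the same module. The paper proves the proposition by embedding the question into the framework of cyclotomic quiver Hecke algebras: setting $\Lambda = m\La_i$ and $a_\Lambda(t_i) = f(t_i)$, it defines $\Ma \seteq F_i^{a_\Lambda\,(m)}(A)$ and reads off projectivity, $\END_{R_A}(\Ma) \simeq A$, and the exact sequence $0\to q^2\Ma\to\Ma\to L(i^m)\to 0$ from the $\mathfrak{sl}_2$-categorification equivalence in Proposition~\ref{prop:htlt}; it then identifies $\lc_i(\Ma)$ with $f(t_i)$ by invoking Lemma~\ref{Lem: minimal} together with the cyclotomic relation $f(x_m)\vert_\Ma = a_\La(x_m)\vert_\Ma = 0$. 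You instead construct $\Ma$ by hand as $P(i^m)\tens_{S,\phi}A$ using the Morita description $R(m\al_i)\simeq\End_S(P(i^m))$ for the nilHecke algebra, verify freeness and the specialization $\Ma/\z\Ma\simeq L(i^m)$ directly from the coinvariant-algebra picture, and read off both $\gp_i\vert_\Ma$ and $\lc_i(\Ma)$ from the prescribed central action $e_j(x)\mapsto(-1)^jc_j$. Your approach is more elementary and self-contained in the rank-one case, but it is also more specific to the nilHecke situation and requires a separate check that $\END_{R_A}(\Ma)\simeq A$ if one wants to invoke Lemma~\ref{Lem: aff end} later (this follows from the Morita equivalence, so it is not a gap, just unstated). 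The paper's cyclotomic route packages these verifications uniformly and is the one that scales to the determinantial modules $\dM(\la,\mu;a_\La)$ in Theorem~\ref{Thm: aff for dM}. In fact the remark following the proposition in the paper gives exactly your explicit description $\bR[z,x_1,\dots,x_m]/\bl\sum_j(e_j(x)-a_j(z))\br$, so the two constructions produce the same module; your proof can be viewed as promoting that remark to a proof. The one point you flag as delicate, matching the internal grading in $P(i^m)\tens_S\bR\simeq L(i^m)$, is indeed the right thing to worry about: $P(i^m)$ is only isomorphic to $\bR[x_1,\dots,x_m]$ up to a grading shift, and absorbing that one-time shift into $\Ma$ is exactly how the paper's remark handles it too, so this is not a gap.
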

\begin{proof}
Let $R_A$ be a quiver Hecke algebra over the polynomial ring $A$.
We set $\Lambda \seteq m\La_i$ and $a_\Lambda(t_i) : =f(t_i)$.
We now define
$$
\Ma \seteq  F_i^{a_\Lambda \hskip 0.1em (m)} (A) \in R_A^{a_\Lambda}(\Lambda - m \alpha_i)\Mod
$$
and consider the endomorphism $z \in \END_R(\Ma)$ defined by $u \mapsto zu$ for $u\in \Ma$.
We shall prove that $(\Ma, z)$ is an affinization of $L(i^m)$.

Since the functor $F_i^{a_\Lambda \hskip 0.1em (m)}$ gives an equivalence of the
category of modules over
 $R_A^{a_\Lambda}(\Lambda - m \alpha_i)$ and the one over
$R_A^{a_\Lambda}(\Lambda)=A$ by Proposition~\ref{prop:htlt}, we have
\bnum
\item
$\Ma$ is a finitely generated projective module over $R_A^{a_\Lambda}(\Lambda - m \alpha_i)$.
\item
$\END_{R_A^{a_\Lambda}(\Lambda - m \alpha_i)}(\Ma)\simeq A$.
\item
the exact sequence
$
0 \rightarrow q^2A \buildrel{z}\over\rightarrow A \rightarrow \bR \rightarrow  0
$
gives the exact sequence
$$
0 \rightarrow q^2\Ma \buildrel{z}\over\rightarrow \Ma \rightarrow L(i^m) \rightarrow  0.
$$

\ee
By (i), $\Ma$ is a finitely generated free module over $A$.

On the other hand, (ii) implies $\lc_i(\Ma)(t_i)\in A[t_i]$ and
we know that
\begin{enumerate}[\rm(a)]
 \item $  \deg_{t_i} (f(t_i) ) = m = \deg_{t_i} (\lc_i(\Ma)(t_i) )$,
\item $ f(x_m) \vert_\Ma = a_\Lambda(x_m) \vert_\Ma =0$,
\item $f(t_i)$ and $\lc_i(\Ma)(t_i)$ are monic.
\end{enumerate}
Thus,  Lemma \ref{Lem: minimal} implies that
$$
\lc_i(\Ma)(t_i) = f(t_i),
$$
which tells, by the assumption,
$$
 \mathfrak{p}_i \vert_\Ma = (-1)^m \lc_i(\Ma)(0) = (-1)^m f(0) \ne 0.
 $$
Therefore, $(\Ma, z)$ is an affinization of $L(i^m)$ with $ \lc_i( \Ma) = f(t_i)$.
\end{proof}

\begin{df}
For a homogeneous monic polynomial  $f(t_i) \in A[t_i]$ such that
\begin{enumerate}[\rm (i)]
\item  $\deg_{t_i} f(t_i) = m$,
\item  $f(0)\ne 0$,
\end{enumerate}
we denote by $ \aW(i, f(t_i))$ the affinization of $L(i^m)$ defined in Proposition \ref{Prop: affinization of L(i^m)}.
\end{df}

\begin{rem}
Let $\bR[x_1, \ldots, x_m]$ be the faithful polynomial representation of $R(m\alpha_i)$ \cite[Example 2.2]{KL09}.
Since $P(i^{ m })$ is isomorphic to $\bR[x_1, \ldots, x_m]$ up to a grading shift, one can show that
$$
\aW(i , f(t_i)) \simeq  \bR[z, x_1, \ldots, x_m]\big/
\Bigl(\sum_{j=1}^m\bR[z,x_1, \ldots, x_m](e_j(x)-  a_j(z)    )\Bigr)
$$
up to a grading shift. Here $e_j(x)$ ($1\le j\le m$) is the $j$-the elementary symmetric polynomial in $(x_1,\ldots,x_m)$
and $f(t_i)=\sum_{j=0}^m (-1)^ja_j(z)t_i^{m-j}$ ($a_j(z)\in \bR[z]$). 
\end{rem}

\Lemma\label{lem:lepi}
Let $A$ be a commutative $\corp$-algebra,
$i\in I$ and let $\Ma$ be an $R_A(\beta)$-module.
We set $m\seteq \ep_i(\Ma)$. Let $h\cl \END_{R_A}(\Ma)
\to \END_{R_A}(E_i^{(m)}\Ma)$ be the canonical map.
Then we have
$h(\lc_i(\Ma)) = \lep_i(\Ma) \lc_i(E_i^{(m)}\Ma)$
as elements of $\END_{R_A}(E_i^{(m)}\Ma)[t_i]$.

Similarly, setting  $m' \seteq \ep_i^*(M)$
and $h^*\cl \END_{R_A}(\Ma)
\to \END_{R_A}(E_i^{\;*\,(m')}\Ma)$  the canonical map,
we have
$h^*\bl\lc_i(\Ma)\br = \lc_i(E_i^{* \ms{5mu} (m')}\Ma) \lep^*_i(\Ma)  $.
\enlemma
\begin{proof}
Set $\gamma=\beta-m\al_i$ and  $\mathrm{ch}_{i,\beta}(t_i)=\sum_{\nu  \in I^\beta} \prod_{\nu_k = i} (t_i - x_k) e(\nu)
\in R(\beta)[t_i]$.
Then we have $\mathrm{ch}_{i,\beta}(t_i)e(m\al_i,\gamma)=
\mathrm{ch}_{i,m\al_i}(t_i)\etens\mathrm{ch}_{i,\gamma}(t_i)$.
Hence we have
\eqn
 \mathrm{ch}_{i,\beta}(t_i)\big\vert_{E_i^{(m)}\Ma}&&=
\bl\prod_{k=1}^m (t_i - x_k)\br \big|_{E_i^{(m)}\Ma}
\;\cdot\; \mathrm{ch}_{i,\gamma}(t_i)\big|_{E_i^{(m)}\Ma}\\
&&=\lep_i(\Ma) \lc_i(E_i^{(m)}\Ma).
\eneqn

\medskip
The case for $E_i^*$ can be proved similarly.
\end{proof}

\subsection{R-matrices}\label{sec:R-matrices}

Let $\beta \in \rlQ_+$ with $m =  \Ht(\beta)$. For  $k=1, \ldots, m-1$ and $\nu \in I^\beta$, the \emph{intertwiner} $\varphi_k  $ is defined by 
$$
\varphi_k e(\nu) =
\bc
 (\tau_k x_k - x_k \tau_k) e(\nu)
= (x_{k+1}\tau_k - \tau_kx_{k+1}) e(\nu) & \text{ if } \nu_k = \nu_{k+1},  \\
 \tau_k e(\nu) & \text{ otherwise.}
\ec
$$ 

\begin{lem} [{\cite[Lemma 1.5]{KKK18}}] \label{Lem: intertwiners} \
\begin{enumerate}[\rm (i)]
\item $\varphi_k^2 e(\nu) = ( Q_{\nu_k, \nu_{k+1}} (x_k, x_{k+1} )+ \delta_{\nu_k, \nu_{k+1}} )\, e(\nu)$.
\item $\{  \varphi_k \}_{k=1, \ldots, m-1}$ satisfies the braid relation.
\item For a reduced expression $w = s_{i_1} \cdots s_{i_t} \in \sg_m$, we set $\varphi_w \seteq  \varphi_{i_1} \cdots \varphi_{i_t} $. Then
$\varphi_w$ does not depend on the choice of reduced expression of $w$.
\item For $w \in \sg_m$ and $1 \le k \le m$, we have $\varphi_w x_k = x_{w(k)} \varphi_w$.
\item For $w \in \sg_m$ and $1 \le k < m$, if $w(k+1)=w(k)+1$, then $\varphi_w \tau_k = \tau_{w(k)} \varphi_w$.
\end{enumerate}
\end{lem}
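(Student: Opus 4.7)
The plan is to verify each of the five assertions directly from the defining relations of $R(\beta)$, exploiting the fact that the correction term in $\varphi_k$ is designed precisely so that the anomaly produced by the relation $\tau_k x_k e(\nu) = (x_{k+1}\tau_k - 1) e(\nu)$ in the coincidence case $\nu_k = \nu_{k+1}$ is absorbed.

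For (i) I would split into two cases. If $\nu_k \ne \nu_{k+1}$, then $\varphi_k e(\nu) = \tau_k e(\nu)$ and $\delta_{\nu_k,\nu_{k+1}} = 0$, so the identity is just the defining relation $\tau_k^2 e(\nu) = Q_{\nu_k,\nu_{k+1}}(x_k, x_{k+1}) e(\nu)$. If $\nu_k = \nu_{k+1}$, I would first rewrite
\[
\varphi_k e(\nu) = \bigl((x_{k+1}-x_k)\tau_k - 1\bigr) e(\nu)
\]
and then square, using $\tau_k^2 e(\nu) = Q_{\nu_k,\nu_{k+1}}(x_k,x_{k+1}) e(\nu) = 0$ (since $Q_{i,i} = 0$) together with the instance
\[
\tau_k (x_{k+1} - x_k) e(\nu) = -(x_{k+1} - x_k) \tau_k e(\nu) + 2 e(\nu)
\]
of \eqref{eq:taupol}. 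The cross terms cancel and give $\varphi_k^2 e(\nu) = e(\nu)$, matching $\delta_{\nu_k,\nu_{k+1}} e(\nu)$.

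For (ii), I would verify $\varphi_{k+1}\varphi_k\varphi_{k+1} e(\nu) = \varphi_k\varphi_{k+1}\varphi_k e(\nu)$ by case analysis on the equalities among $(\nu_k, \nu_{k+1}, \nu_{k+2})$. In each case I expand the $\varphi_j$'s by the piecewise formula and apply the KLR cubic relation
\[
(\tau_{k+1}\tau_k\tau_{k+1} - \tau_k\tau_{k+1}\tau_k) e(\nu) = \delta(\nu_k = \nu_{k+2})\,\bar Q_{\nu_k, \nu_{k+1}}(x_k, x_{k+1}, x_{k+2}) e(\nu),
\]
tracking the polynomial shifts produced by commuting $\tau_j$ past the prefactors $(x_{j+1} - x_j)$ and the extra $-1$ terms; these are organized precisely so as to cancel $\bar Q_{\nu_k, \nu_{k+1}}$. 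Assertion (iii) is then immediate from Matsumoto's theorem: any two reduced expressions for $w$ are connected by a chain of braid moves and commutations, so (ii) together with $\varphi_j \varphi_k = \varphi_k \varphi_j$ for $|j-k|>1$ (which is routine) guarantees that $\varphi_w$ is independent of the chosen word.

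For (iv) and (v) I would induct on $\ell(w)$. The base case of (iv) reduces to $\varphi_k x_l e(\nu) = x_{s_k(l)} \varphi_k e(\nu)$, a direct consequence of the commutation relations between $\tau_k$ and $x_l$, where the $-1$ correction in $\varphi_k$ in the case $\nu_k = \nu_{k+1}$, $l \in \{k, k+1\}$ exactly cancels the inhomogeneity $\pm e(\nu)$. For the inductive step, factor $w = s_i w'$ with $\ell(w)=1+\ell(w')$ and combine the inductive hypothesis with the base case, using the well-definedness from (iii). For (v), the hypothesis $w(k+1) = w(k)+1$ means no braid move involving $s_k$ is needed in any reduced expression for $w$; one may therefore choose a reduced expression in which $s_k$ commutes past every simple reflection used, so that $\varphi_w \tau_k = \tau_{w(k)} \varphi_w$ follows from iterated use of (iv) and the defining commutation $\tau_j \tau_k = \tau_k \tau_j$ for $|j-k|>1$.

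The main obstacle will be assertion (ii): the case analysis is lengthy, and when $\nu_k = \nu_{k+1}$ or $\nu_{k+1} = \nu_{k+2}$ the cross terms arising from the $-1$ corrections in $\varphi_j$ must be tracked carefully against $\bar Q_{\nu_k,\nu_{k+1}}(x_k,x_{k+1},x_{k+2})$. Conceptually, however, the definition of $\varphi_k$ is calibrated so that the $\bar Q$ contribution appearing in the KLR cubic relation is absorbed by the polynomial prefactors, which is the structural reason the braid relation holds exactly for the intertwiners.
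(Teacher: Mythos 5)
The paper does not prove this lemma; it is quoted verbatim from \cite[Lemma 1.5]{KKK18}, so there is no in-paper argument to compare against.

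Your treatment of (i), (iii), and (iv) is sound, and the outline for (ii) is the standard (if tedious) case analysis. The genuine gap is in (v). The hypothesis $w(k+1)=w(k)+1$ does \emph{not} let you choose a reduced expression for $w$ in which $s_k$ commutes past every letter: if $\tau_k$ could pass every $\varphi_{i_j}$ in the word purely via the relation $\tau_j\tau_k=\tau_k\tau_j$ for $|j-k|>1$, then the subscript of $\tau_k$ would never change, and you would land on $\varphi_w\tau_k=\tau_k\varphi_w$ rather than $\tau_{w(k)}\varphi_w$. As a concrete counterexample, take $w=s_1s_2\in\sg_3$ and $k=1$; then $w(2)=3=w(1)+1$, yet both letters of the (unique) reduced expression fail to commute with $s_1$, so the proposed reduction does not even get started. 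The correct route is to observe that $w(k+1)=w(k)+1$ forces $ws_k=s_{w(k)}w$ with $\ell(ws_k)=\ell(w)+1=\ell(s_{w(k)}w)$, so part (iii) yields $\varphi_w\varphi_k=\varphi_{ws_k}=\varphi_{s_{w(k)}w}=\varphi_{w(k)}\varphi_w$. Applying this to $e(\nu)$ and unwinding the definitions of $\varphi_k$ and $\varphi_{w(k)}$ (cases $\nu_k=\nu_{k+1}$ and $\nu_k\neq\nu_{k+1}$, using that $\varphi_w e(\nu)=e(w\nu)\varphi_w$ and $(w\nu)_{w(k)}=\nu_k$, $(w\nu)_{w(k)+1}=\nu_{k+1}$), then transporting the factor $(x_{k+1}-x_k)$ across $\varphi_w$ via part (iv), isolates $\varphi_w\tau_k e(\nu)=\tau_{w(k)}\varphi_w e(\nu)$ in both cases. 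This is the step your argument is missing.
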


For $m,n \in \Z_{\ge 0}$, we set $w[m,n]$ to be the element of $\sg_{m+n}$ such that
$$
w[m,n](k) \seteq
\left\{
\begin{array}{ll}
 k+n & \text{ if } 1 \le k \le m,  \\
 k-m & \text{ if } m < k \le m+n.
\end{array}
\right.
$$
Let $\beta, \gamma \in \rlQ_+$ and set $m\seteq  \Ht(\beta)$ and $n\seteq \Ht(\gamma)$.
For $M \in R(\beta)\Mod$ and $N \in R(\gamma)\Mod$, the $R(\beta)\otimes R(\gamma)$-linear map $M \otimes N \rightarrow N \conv M$ defined by $u \otimes v \mapsto \varphi_{w[n,m]} (v \otimes u)$
can be extended to an $R(\beta+\gamma)$-module homomorphism (up to a grading shift)
$$
\RR_{M,N}\cl  M\conv N \longrightarrow N \conv M.
$$

Let $\Ma$ be an affinization of a simple $R$-module $M$, and  let $N$ be a non-zero $R$-module. We define a homomorphism (up to a grading shift)
$$
\nR_{\Ma, N} \seteq  \z^{-s} \RR_{\Ma, N}\cl  \Ma \conv N \longrightarrow N \conv \Ma,
$$
where $s$ is the largest integer such that $ \RR_{\Ma, N}(\Ma \conv N) \subset \z^s (N \conv \Ma)$.
We define
$$
\Rr_{M,N} \cl M \conv N \longrightarrow N \conv M
$$
to be the homomorphism (up to a grading shift) induced from $\nR_{\Ma, N}$ by specializing at $\z=0$. By the definition, $\Rr_{M,N}$ never vanishes.
We now define
\begin{align*}
\La(M,N) &\seteq  \deg (\Rr_{M,N}) , \\
\tLa(M,N) &\seteq   \frac{1}{2} \bl \La(M,N) + (\wt(M), \wt(N)) \br , \\
\Dd(M,N) &\seteq  \frac{1}{2} \bl\La(M,N) + \La(N,M)\br.
\end{align*}

\Lemma
Let $M$ be a simple $R(\al)$-module and $N$ a simple $R(\beta)$-module.
Assume that one of them is real simple and admits an affinization of degree $d$.
Then one has 
$$
\tLa(M,N) , \   \Dd(M,N) \in \frac{\;d\;}{2}\; \Z_{\ge0}. 
$$
\enlemma
\Proof
We treat only the case when $M$ is real and admits an affinization
$(\Ma,\z)$.

Set $m=\Ht(\al)$ and $n=\Ht(\beta)$. For $w\in \sym_{r}$,
choosing a reduced expression $s_{i_1}\cdots s_{i_{\ell}}$ of $w$, we write
$\tau_w=\tau_{i_1}\cdots\tau_{i_\ell}$.

Then, by \cite[{(1.17)}]{KKK18}, we have for $u\in \Ma$ and $v\in N$
\eqn
\RR_{\Ma,N}(u\etens v)
\in   \tau_{w[n,m]} \left( \sum_{\mu \in I^\al, \nu \in I^\beta}  \prod_{(i,j) \in B(\nu,\mu)}(x_i-x_j)e(\nu,\mu) \right) v\etens u  
+\sum_{w\prec w[n,m]}\tau_w(N\etens \Ma),
\eneqn
where $B(\nu,\mu) = \{ (i,j)\mid 1 \le i \le n < j \le m+n,\ \nu_i = \mu_j  \}  $. 
We have
\eqn
 \left( \sum_{\mu \in I^\al, \nu \in I^\beta}  \prod_{(i,j) \in B(\nu,\mu)}(x_i-x_j)e(\nu,\mu) \right) v\etens u  
&&= \left( \sum_{\mu \in I^\al, \nu \in I^\beta}  \prod_{(i,j) \in B(\nu,\mu)}(-x_j)e(\nu,\mu) \right) v\etens u  \\
&&=\pm v\etens \bl\prod_{i\in I}\mathfrak{p}_{i, \al}\br^{  \frac{2(\beta, \La_i)}{(\al_i, \al_i)} } u
=c\z^s u
\eneqn
for some $c\in\corp^\times$ and $s\in\Z_{\ge0}$.
Therefore, we obtain
\eqn
\RR_{\Ma,N}(u\etens v)
\in c\z^s\tau_{w[n,m]}e(\beta,\al)(v\etens u)
+\sum_{w\prec w[n,m]}\tau_w(N\etens \Ma).
\eneqn 
Thus we conclude
\eqn
\nR_{\Ma,N}(u\etens v)
\in c\z^a\tau_{w[n,m]}e(\beta,\al)(v\etens u )
+\sum_{w\prec w[n,m]}\tau_w(N\etens \Ma)
\eneqn 
for some $a\in\Z_{\ge0}$.
Then we obtain
\eqn
2\tLa(M,N)&&=\deg(\nR_{\Ma,N})+(\al,\beta)\\
&&=\deg(\z^a\tau_{w[n,m]}e(\beta,\al))+(\al,\beta)\\
&&=\deg(\z^a)\in d\Z_{\ge0}.
\eneqn

Now let us show the case of $\Dd(M,N)$.
By Proposition~\ref{Prop: affinization}~(iii) below, we have
$$\nR_{N,\Ma}\circ\nR_{\Ma,N}=c\z^s\id_{\Ma\circ N}$$
for some $c\in\corp^\times$ and $s\in\Z_{\ge0}$.
Then we have
\eqn
2\Dd(M,N)&&=\deg\bl\nR_{N,\Ma}\circ\nR_{\Ma,N}\br\\
&&=\deg(\z^s)\in {d\Z_{\ge0}}.
\eneqn
\QED

\Cor
Let $M$ be a simple module.
We assume that  $\wt(M)\not=0$ and $M$ is \ra.
Then we have
$$\bl\wt(M),\wt (M)\br>0.$$
\encor
\Proof
Since $\La(M,M)=0$, we have
$$0\le 2\tLa(M,M)=\bl\wt(M),\wt (M)\br.$$
If $\bl\wt(M),\wt (M)\br=0$, then 
$\tLa(M,M)=0$ and one has
$$\rmat{M,M}(u\etens v)\in\tau_{w[n,n]}(v\tens u)+
\sum_{w<w[n,n]}\tau_w(M\etens M).$$
Hence $\rmat{M,M}\not\in \corp\,\id_{M\circ M}$,
which contradicts
the fact that $M\conv M$ is simple.
\QED

\begin{prop}[{\cite[Proposition 2.10, Proposition 2.11]{KP18}}] \label{Prop: affinization}
Let $M$ and $N$ be simple $R$-modules. Suppose that $M$ is real and admits an affinization $\Ma$.
\begin{enumerate}[\rm (i)]
\item $M \conv N$ has a simple head and a simple socle. Moreover, $\Im(\Rr_{M,N})$ is equal to $M \hconv N$ and $N \sconv M$ up to grading shifts.
\item
\begin{align*}
&\HOM_R( M \conv N, M \conv N) = \bR \hskip 0.2em  \id_{M \conv N},   \quad \HOM_R( N \conv M, N \conv M) = \bR \hskip 0.2em  \id_{N \conv M}, \\
 &\HOM_R( M \conv N, N \conv M) = \bR \hskip 0.2em \Rr_{M , N}, \quad \quad  \HOM_R( N \conv M, M \conv N) = \bR \hskip 0.2em \Rr_{N ,M}.
 \end{align*}
\item
\begin{align*}
&\HOM_{R_{\bR[\z]}}( \Ma \conv N, \Ma \conv N) = \bR[\z] \hskip 0.2em  \id_{\Ma \conv N},   \ \HOM_{R_{\bR[\z]}}( N \conv \Ma, N \conv \Ma) = \bR[\z] \hskip 0.2em  \id_{N \conv \Ma}, \\
 &\HOM_{R_{\bR[\z]}}( \Ma \conv N, N \conv \Ma) = \bR[\z] \hskip 0.2em \nR_{\Ma , N},  \quad  \HOM_{R_{\bR[\z]}}( N \conv \Ma, \Ma \conv N) = \bR[\z] \hskip 0.2em \nR_{N ,\Ma}.
 \end{align*}
\end{enumerate}
\end{prop}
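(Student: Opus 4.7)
The plan is to prove (iii) first using the affinization structure, and then deduce (i) and (ii) by specializing at $\z = 0$. Set $A = \bR[\z]$. The key inputs are Lemma~\ref{Lem: aff end}, giving $\END_{R_A}(\Ma) = A \id_{\Ma}$, together with the fact that $\Ma$ is $A$-free so that $\Ma \conv N$ and $N \conv \Ma$ are torsion-free as $A$-modules. This $A$-freeness is the technical backbone that allows one to pass back and forth between the ``generic'' picture over the fraction field $K = \bR(\z)$ and the original picture over $A$.

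For (iii), I would pass to $K$ and prove generic simplicity: the $R_K$-module $K \otimes_A (\Ma \conv N)$ is simple. From this, Schur's lemma over $K$ gives $\END_{R_K}(K \otimes_A (\Ma \conv N)) = K \cdot \id$, and the Hom space $\HOM_{R_K}(K \otimes_A (\Ma \conv N), K \otimes_A (N \conv \Ma))$ is spanned by $K \otimes \nR_{\Ma, N}$ since $\nR_{N, \Ma} \circ \nR_{\Ma, N}$ becomes a nonzero element of $K$ after inverting $\z$ (using that $M$ is real). To descend from $K$ to $A$, I would use that the modules are $A$-torsion-free: any $f \in \HOM_{R_A}(\Ma \conv N, N \conv \Ma)$ satisfies $f = c \cdot \nR_{\Ma, N}$ for some $c \in K$, and the definition of $\nR_{\Ma, N} = \z^{-s} \RR_{\Ma, N}$ with $s$ chosen maximal then forces $c \in A$. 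Parallel arguments handle the remaining three Hom identities.

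For (i) and (ii), I would specialize at $\z = 0$. Since $\Ma \conv N$ and $N \conv \Ma$ are $A$-free, one obtains base-change isomorphisms
\begin{align*}
A/\z A \otimes_A \HOM_{R_A}(\Ma \conv N, X) \isoto \HOM_R(M \conv N, X / \z X)
\end{align*}
for $X \in \{\Ma \conv N,\ N \conv \Ma\}$, and reducing (iii) modulo $\z$ yields $\END_R(M \conv N) = \bR \cdot \id$ and $\HOM_R(M \conv N, N \conv M) = \bR \cdot \Rr_{M, N}$ (and similarly with the roles of $M$ and $N$ swapped). The one-dimensionality of $\END_R(M \conv N)$ then forces the head of $M \conv N$ to be simple, and dually the socle of $N \conv M$ is simple; the image of $\Rr_{M, N}$, which is nonzero by construction, is identified with both the head and the socle up to appropriate grading shifts.

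The main obstacle will be establishing generic simplicity of $K \otimes_A (\Ma \conv N)$. Since $N$ carries no affinization, one cannot directly apply symmetric Schur-type arguments at the $K$-level; instead, the strategy is to exploit the $\z$-action on $\Ma$ to show that any $R_K$-submodule of $K \otimes_A (\Ma \conv N)$ must be stable under the polynomial action of $\z$, and then bootstrap from the known simplicity of the reduction $M \conv N / (\text{proper submodule})$ using the realness of $M$ (which ensures that $\rmat{M,M}$ is a nonzero scalar, preventing degenerate behavior of the intertwiners in the convolution). Making this stability precise is the principal technical hurdle and is where the assumption that $M$ is real and admits an affinization enters decisively.
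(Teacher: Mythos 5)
The paper does not prove this proposition; it is quoted from \cite{KP18}, so there is no in-paper argument to compare against. Evaluated on its own terms, your proposal has a genuine gap at the step you yourself flag as the ``principal technical hurdle.'' The observation that any $R_K$-submodule of $K\otimes_A(\Ma\conv N)$ is stable under the action of $\z$ is vacuous: once you invert $\z$, it lies in the base field $K$, so $\z$-stability of $R_K$-submodules is automatic and carries no information (the same is already true over $A$, since $\z$ is central in $R_A$). The appeal to ``the known simplicity of the reduction'' is also not well-posed: $M\conv N$ is generally \emph{not} simple; only its head and socle are, and proving that is precisely the content of (i). So the centrepiece of your strategy for generic simplicity does not function. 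There is a second, independent gap at the specialization step: the asserted base-change isomorphism $A/\z A\otimes_A \HOM_{R_A}(\Ma\conv N, X)\isoto\HOM_R(M\conv N, X/\z X)$ does not hold for free simply because the modules are $A$-free --- surjectivity amounts to lifting a degree-zero $R$-endomorphism of $M\conv N$ to an $R_A$-endomorphism of $\Ma\conv N$, which is obstructed in general by $\z$-torsion in $\Ext^1_{R_A}$. As written, (iii)$\Rightarrow$(ii) yields only $\HOM_R(M\conv N,M\conv N)\supseteq\corp\cdot\id$, which is trivial.

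It is also worth noting that generic simplicity of $K\otimes_A(\Ma\conv N)$ is strictly stronger than what (iii) requires; one only needs $\END_{R_K}=K$, and in \cite{KP18} even this is not obtained by passing to $K$. Rather, $\END_{R_A}(\Ma\conv N)=A\,\id$ is proved directly by restricting a given endomorphism to the distinguished $\Ma\otimes N$-layer of the Mackey filtration of $e(\beta,\gamma)(\Ma\conv N)$ and applying $\END_{R_A}(\Ma)=A$ (Lemma~\ref{Lem: aff end}) together with simplicity of $N$; the simple-head/socle assertion (i) is then \emph{deduced} from the Hom-space identities and from the fact (established separately, using realness of $M$) that $\nR_{N,\Ma}\conv\nR_{\Ma,N}$ is a nonzero element of $\corp\z^s$, rather than the other way round. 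So beyond the gap, the logical order of your plan (simplicity first, Hom spaces second) is the reverse of the one that actually works.
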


Now assume that $N$ has an affinization $(\Na,z_\Na)$.
Then similarly to the case of $M$,
we can define $\nR_{M,\Na}$.
The proposition above implies that
$\deg \nR_{\Ma,N}=\deg \nR_{M,\Na}$, and
$\nR_{\Ma,N}\vert_{z_{\Ma}=0}=\nR_{M,\Na}\vert_{z_{\Na}=0}$ up to a constant multiple.
Hence $\Lambda(M,N)$ and $\rmat{M,N}$ (up to a constant multiple) are
well defined when either $M$ or $N$ admits an affinization.
Moreover, they do not depend on the choice of the affinization.

\medskip

Lemma \ref{Lem: self-dual} -- \ref{Lem: generalized crystal} 
below were proved when $R$ is symmetric.  However they still hold also for non-symmetric case.
Since the proofs are similar,
we omit the proofs.

\begin{lem} [{\cite[Lemma 3.1.4]{KKKO18}}] \label{Lem: self-dual}
Let $M$ and $N$ be self-dual simple $R$-modules. If one of them is real and also admits an affinization, then
$q^{\tLa(M,N)} M \hconv N$
is a self-dual simple $R$-module.
\end{lem}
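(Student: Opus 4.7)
The plan is to set $L \seteq M \hconv N$, which is a well-defined simple module by Proposition~\ref{Prop: affinization}(i), and to establish the isomorphism $L^\star \simeq q^{2\tLa(M,N)} L$; together with the elementary identity $(q^s X)^\star \simeq q^{-s} X^\star$, this immediately yields that $q^{\tLa(M,N)} L$ is self-dual.

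First I would apply the contravariant duality $\star$ to the canonical degree-zero surjection $M \conv N \twoheadrightarrow L$, producing an injection $L^\star \hookrightarrow (M \conv N)^\star$. By the standard duality formula for convolutions in quiver Hecke algebras, one has $(M \conv N)^\star \simeq q^{(\wt M,\,\wt N)}\, N^\star \conv M^\star$, which by the self-duality of $M$ and $N$ becomes $q^{(\wt M,\,\wt N)}\,(N \conv M)$. Since $L^\star$ is simple, the injection factors through the socle, so we obtain an embedding $L^\star \hookrightarrow q^{(\wt M,\,\wt N)}\,(N \sconv M)$.

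Next I would identify $N \sconv M$ as a shifted copy of $L$ using the R-matrix. Proposition~\ref{Prop: affinization}(i) asserts that the image of $\rmat{M,N}\cl M \conv N \to N \conv M$ coincides with the socle $N \sconv M$; factoring $\rmat{M,N}$ as the canonical degree-zero surjection onto $L$ followed by a map of degree $\La(M,N)$ into $N \conv M$ identifies $N \sconv M$ with $q^{\La(M,N)} L$ as graded submodules. Combining, $L^\star$ embeds in $q^{(\wt M,\,\wt N) + \La(M,N)} L = q^{2\tLa(M,N)} L$, and simplicity on both sides forces the embedding to be an isomorphism, completing the argument.

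The main obstacle is the accurate bookkeeping of the two independent grading shifts: the shift $q^{(\wt M,\,\wt N)}$ arising from $\star$-duality of convolutions (reflecting the degree of the longest shuffle permutation exchanging two blocks), and the shift $q^{\La(M,N)}$ which is by definition the degree of the R-matrix $\rmat{M,N}$. That these two contributions sum precisely to $2\tLa(M,N)$ is built into the very definition of $\tLa$, so once both shifts are verified the self-duality conclusion follows automatically.
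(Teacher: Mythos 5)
Your proposal is correct and is essentially the argument used in the cited reference (\cite[Lemma 3.1.4]{KKKO18}); the present paper omits the proof because it carries over from the symmetric case essentially verbatim, which is exactly what you have reproduced, using only the duality formula $(M\conv N)^\star\simeq q^{(\wt M,\wt N)}N^\star\conv M^\star$ and the identification $N\sconv M=\Im(\rmat{M,N})\simeq q^{\La(M,N)}(M\hconv N)$ furnished by Proposition~\ref{Prop: affinization}~(i), with the two shifts combining to $2\tLa(M,N)$ by definition of $\tLa$.
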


\begin{lem} [\protect{cf.\  \cite[Corollary 3.8]{KKOP18}}] \  \label{Lem: M L(i)}
Let $M$ be a simple $R$-module. For $i\in I$, we have
\begin{align*}
\La(L(i), M) &= (\alpha_i, \alpha_i) \ep_i(M) + (\alpha_i, \wt(M)), \\
\La( M, L(i)) &= (\alpha_i, \alpha_i) \ep^*_i(M) + (\alpha_i, \wt(M)).
\end{align*}
\end{lem}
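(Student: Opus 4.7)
The plan is to prove the first formula by a direct computation with the canonical affinization of $L(i)$; the second formula then follows by applying the antiautomorphism $\psi$ of $R(\beta)$, under which $\varepsilon_i$ and $\varepsilon_i^*$ are interchanged while the weight is preserved, so $\Lambda(M,L(i)) = \Lambda(L(i), M^\psi)$. Take $\widehat{L(i)} := P(i) \simeq \bR[x_1]$ equipped with $\z := x_1$ of degree $(\al_i,\al_i)$; by the defining relations this is an affinization of $L(i)$. Writing $\beta = \sum_j \beta_j\al_j$, $n := \Ht(\beta)$ and $m := \ep_i(M)$, the unnormalized R-matrix is
\begin{equation*}
\RR_{\widehat{L(i)},M}\cl\widehat{L(i)}\conv M \longrightarrow M\conv\widehat{L(i)},\qquad u\etens v \longmapsto \varphi_1\varphi_2\cdots\varphi_n(v\etens u).
\end{equation*}

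\textbf{Step 1 (degree).} By Lemma~\ref{Lem: intertwiners} together with \eqref{eq:taupol}, the intertwiner $\varphi_k$ restricted to $e(i,\nu_k,\ldots)$ is homogeneous of degree $-(\al_i,\al_{\nu_k})$ when $\nu_k\neq i$ and of degree $0$ when $\nu_k = i$. Summing over $k=1,\ldots,n$ and using that $|\{k:\nu_k=i\}|=\beta_i$ for every $\nu\in I^\beta$, one obtains (independently of $\nu$)
\begin{equation*}
\deg \RR_{\widehat{L(i)},M} = -(\al_i,\beta) + \beta_i(\al_i,\al_i) = (\al_i,\wt(M)) + \beta_i(\al_i,\al_i).
\end{equation*}

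\textbf{Step 2 ($\z$-order).} The heart of the argument is to show that the $\z$-order $s$ of $\RR_{\widehat{L(i)},M}$ equals $\beta_i - m$. The upper bound $s \le \beta_i - m$ amounts to exhibiting a lift $v\in M$ of a generator of $E_i^{(m)}M$ on which $\RR_{\widehat{L(i)},M}(1\etens v)$ has $\z$-order exactly $\beta_i - m$; equivalently, it is the non-vanishing of $\rmat{L(i),M}$, guaranteed by Proposition~\ref{Prop: affinization}(i). The lower bound $s \ge \beta_i - m$ is the substantive content: every matrix coefficient of $\RR_{\widehat{L(i)},M}$ between strata $e(\nu)$ and $e(\nu')$ must be divisible by $\z^{\beta_i - m}$. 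One expands $\varphi_{w[n,1]}$ using the shuffle presentation and tracks the $x_1$-contributions produced by each occurrence of $\varphi_k = \tau_k x_k - x_k\tau_k$ as the moving $i$-strand crosses a position labeled $i$. Lemma~\ref{Lem: minimal} applied to $E_i^{(m)}M$, combined with the vanishing $E_i^{m+1}M = 0$, shows that only $m$ such crossings can contribute a nonzero residue modulo $\z$; the remaining $\beta_i - m$ crossings are forced to produce factors of $\z$. Combining Steps~1 and~2 yields
\begin{equation*}
\Lambda(L(i),M) = \deg \RR_{\widehat{L(i)},M} - s(\al_i,\al_i) = (\al_i,\wt(M)) + m(\al_i,\al_i),
\end{equation*}
which is the desired identity.

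\textbf{Main obstacle.} The delicate point is the lower bound $s \ge \beta_i - m$ in Step 2. While the degree calculation and the non-vanishing of $\rmat{L(i),M}$ are standard, demonstrating that \emph{every} matrix coefficient of the unnormalized R-matrix across all $e(\nu)$-strata is divisible by the correct power of $\z$ requires a careful bookkeeping of the shuffle expansion and a clean translation of the bound $E_i^{m+1}M = 0$ into the divisibility statement. The cleanest route is likely induction on $\beta_i - m$ using short exact sequences furnished by the categorified $\mathfrak{sl}_2$-action of the pair $(E_i,F_i)$, reducing ultimately to the small cases $\beta_i - m \in\{0,1\}$ which can be verified by direct calculation.
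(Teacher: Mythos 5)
The paper does not actually give a proof of this lemma; it is cited to \cite[Corollary 3.8]{KKOP18}, with the remark that the proof in the symmetric case works verbatim in general. There is therefore no in-paper argument to compare against, and the proposal must be judged on its own.

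Your Step~1 is correct: $\deg\RR_{\widehat{L(i)},M}=(\al_i,\wt(M))+\beta_i(\al_i,\al_i)$, and with $d_{\widehat{L(i)}}=(\al_i,\al_i)$ the identity is equivalent to $s=\beta_i-\ep_i(M)$. However Step~2 contains the actual content, and it is not established. Two concrete problems. First, your "upper bound $s\le\beta_i-m$ \ldots equivalently, the non-vanishing of $\rmat{L(i),M}$" is not an argument: $\rmat{L(i),M}\ne0$ holds by the very definition of $s$ (as the largest exponent) and carries no information about the value of $s$. Exhibiting an element of $\RR_{\widehat{L(i)},M}(\widehat{L(i)}\conv M)$ whose $\z$-order is exactly $\beta_i-m$ is a genuine separate task. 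Second, the "lower bound" $s\ge\beta_i-m$ --- which you correctly flag as the crux --- is only gestured at. The claim that "the remaining $\beta_i-m$ crossings are forced to produce factors of $\z$" does not follow from the appeal to Lemma~\ref{Lem: minimal} and $E_i^{m+1}M=0$: in the expansion of $\varphi_{w[n,1]}$ the $\z$-divisibility comes from an interplay between the $i$--$i$ crossings $\varphi_k=\tau_kx_k-x_k\tau_k$ \emph{and} the vanishing of $x$-variables on the ambient module $M$ (including at positions labelled $j\ne i$); one can see in rank-two examples that terms of $\z$-order $0$ appearing in an intermediate $\varphi_k$ are killed only after further $\varphi$'s are applied. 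The "cleanest route" you suggest (induction on $\beta_i-m$ via the $\mathfrak{sl}_2$-action) is not carried out, so as written the heart of the proof is missing.

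A smaller but definite error: the second identity does not follow from the first "by applying the antiautomorphism $\psi$". Since $\psi$ fixes $e(\nu)$, $x_k$, $\tau_l$, the induced duality $M\mapsto M^\star$ preserves both $\ep_i$ and $\ep_i^*$; it does not swap them. The symmetry you want is the order-reversal automorphism $e(\nu_1,\dots,\nu_m)\mapsto e(\nu_m,\dots,\nu_1)$, $x_k\mapsto x_{m-k+1}$, $\tau_l\mapsto-\tau_{m-l}$ (the map $\mathfrak{a}$ used in the proof of Theorem~\ref{th:fin}), for which $\mathfrak{a}(M\conv N)\simeq\mathfrak{a}(N)\conv\mathfrak{a}(M)$, $\mathfrak{a}(L(i))\simeq L(i)$, $\ep_i(\mathfrak{a}(M))=\ep_i^*(M)$, and $\La(M,N)=\La(\mathfrak{a}(N),\mathfrak{a}(M))$. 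With that correction the reduction of the second formula to the first is sound, but as stated it invokes the wrong map.
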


\Lemma[{\cite [Proposition 3.2.8 and 3.2.10]{KKKO18}}]\label{lem:additive}
Let $M$, $N$, $N'$ and $L$ be simple modules.
Assume that $M$ is \ra.
Let $N\conv N'\epito L$ be an epimorphism.
Then we have
\bnum
\item
$\La(M,L)\le \La(M,N)+\La(M,N')$,
\item if $\La(M,L)= \La(M,N)+\La(M,N')$,
then the following diagram commutes up to a constant multiple.
$$\xymatrix{
M\conv N\conv N'\ar[r]^-{\rmat{M,N}}\ar[d]&
N\conv M\conv N'\ar[r]^-{\rmat{M,N'}}&N\conv N'\conv M\ar[d]\\
M\conv L\ar[rr]^-{\rmat{M,L}}&&L\conv M.
}
$$
\ee
\enlemma
Note that $L\simeq N\hconv N'$ if either $N$ or $N'$ is \ra.

\begin{lem}[{\cite[Corollary 4.1.2]{KKKO18}}] \label{Lem: La on conv}
Let $M$ and $N$ be simple $R$-modules such that $M$ and $ N$ do not strongly commute. 
Assume that either $M$ or $N$ is real and admits an affinization. 
Then, in the Grothendieck group $K(R\gmod)$,  we can write 
$$
[M \conv N] = [M \hconv N]  + [M \sconv N] + \sum_k [S_k]
$$
for some simple $R$-modules $S_k$.
\begin{enumerate} [\rm (i)]
\item If $M$ is real and admits an affinization, then
\begin{align*}
\La(M, M \sconv N), \ \La(M, S_k)  &< \La(M,N) = \La(M,M \hconv N) , \\
\La( M \hconv N, M), \ \La(S_k, M) & < \La(N,M) = \La(N \hconv M, M)  .
\end{align*}
In particular $\de(M,S)<\de(M,N)$ for any simple subquotient $S$ of $M\conv N$.
\item If $N$ is real and admits an affinization, then
\begin{align*}
\La(N, M \hconv N), \ \La(N, S_k)  &< \La(N,M) = \La(N,N \hconv M) , \\
\La( M \sconv N, N), \ \La(S_k, N) & < \La(M,N) = \La(M \hconv N, N)  .
\end{align*}

In particular $ \de(S,N) <\de(M,N)$ for any simple subquotient $S$ of $M\conv N$.

\end{enumerate}
\end{lem}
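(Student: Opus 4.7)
The plan is to adapt the proof of \cite[Corollary 4.1.2]{KKKO18} from the symmetric case to the present symmetrizable setting, by replacing appeals to the natural affinization (available only in the symmetric case) with appeals to the abstract affinization $(\Ma,\z)$ of $M$ furnished by the hypothesis, together with its normalized R-matrix $\nR_{\Ma,N}$ from Section~\ref{sec:R-matrices}. By the symmetry of the two cases, it suffices to treat case~(i); case~(ii) follows by applying the same argument with the roles of $M$ and $N$ interchanged, using $\nR_{N,\Ma}$, which is well-defined whenever $M$ admits an affinization by the remark following Proposition~\ref{Prop: affinization}.

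First I would establish the Grothendieck-group identity: by Proposition~\ref{Prop: affinization}(i), $M\conv N$ has simple head $M\hconv N$ and simple socle $M\sconv N$, and since $M$ and $N$ do not strongly commute, $M\conv N$ is not simple, so both classes genuinely occur in the Jordan--Hölder series; collecting the remaining factors as $\{S_k\}_k$ gives the stated identity. Next, I would establish the equalities $\La(M,M\hconv N)=\La(M,N)$ and $\La(N\hconv M,M)=\La(N,M)$. The upper bounds $\le$ follow from Lemma~\ref{lem:additive}(i) applied to the quotient maps $M\conv N\epito M\hconv N$ and $N\conv M\epito N\hconv M$ (with the analogous argument on the first argument for the latter), combined with $\La(M,M)=0$: for $M$ real, $\rmat{M,M}$ is a nonzero endomorphism of the simple module $M\conv M$, hence a scalar of the identity by Proposition~\ref{Prop: affinization}(ii), forcing its degree to vanish. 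The reverse inequalities come from the equality clause of Lemma~\ref{lem:additive}(ii) applied to the factorization of $\rmat{M,N}$ through the embedding $M\hconv N\simeq N\sconv M\monoto N\conv M$.

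The main obstacle is the \emph{strict} inequality $\La(M,S)<\La(M,N)$ for each simple subquotient $S$ of $M\conv N$ outside the distinguished pair. My plan is to argue at the affine level: the $\bR[\z]$-linear map $\nR_{\Ma,N}\cl \Ma\conv N\to N\conv\Ma$ has reduction $\rmat{M,N}$ modulo $\z$, and its cokernel is a finitely generated torsion $\bR[\z]$-module whose $\z$-adic filtration records how the total degree $\La(M,N)$ of $\rmat{M,N}$ is distributed across the Jordan--Hölder constituents of $M\conv N$. Since $\mathrm{Im}(\rmat{M,N})$ is precisely the single simple submodule $M\hconv N$ of $N\conv M$ by Proposition~\ref{Prop: affinization}(i), the full degree is absorbed by that one factor; an equality $\La(M,S)=\La(M,N)$ for another composition factor $S$ would then force, via the equality clause of Lemma~\ref{lem:additive}(ii), a second factorization of $\rmat{M,N}$ through a simple module not isomorphic to $M\hconv N$, contradicting the simplicity of its image. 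The companion strict inequality $\La(S,M)<\La(N,M)$ is proved in parallel using $\nR_{N,\Ma}\cl N\conv\Ma\to\Ma\conv N$, and the bound $\de(M,S)<\de(M,N)$ then follows by averaging.
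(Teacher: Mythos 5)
Your outline correctly identifies that the paper itself gives no proof here (it simply remarks that the symmetric-case argument of [KKKO18, Cor.~4.1.2] carries over), so you are right to try to reproduce that argument with $(\Ma,\z)$ in place of the natural affinization. The Grothendieck-group decomposition and the upper bound $\La(M,M\hconv N)\le\La(M,N)$ via Lemma~\ref{lem:additive}(i) together with $\La(M,M)=0$ are fine. But two of your central steps do not actually go through as written.

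First, the reverse inequality $\La(M,M\hconv N)\ge\La(M,N)$. You appeal to ``the equality clause of Lemma~\ref{lem:additive}(ii)'', but that clause is a one-way implication: \emph{assuming} $\La(M,L)=\La(M,N)+\La(M,N')$, it concludes that a diagram commutes. It cannot be run backwards to establish the equality you are trying to prove; factoring $\rmat{M,N}$ through $M\hconv N\simeq N\sconv M\hookrightarrow N\conv M$ is a statement about modules, not about degrees, and does not by itself yield a lower bound on $\La(M,M\hconv N)$. Second, and more seriously, the strict inequality $\La(M,S)<\La(M,N)$ for a non-head composition factor $S$ is the heart of the lemma, and here you offer only a heuristic about the $\z$-adic filtration of $\operatorname{coker}\nR_{\Ma,N}$ followed by another invocation of Lemma~\ref{lem:additive}(ii). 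But Lemma~\ref{lem:additive} is stated for an epimorphism $N\conv N'\epito L$ with $N,N'$ simple; a general Jordan--H\"older constituent $S$ of $M\conv N$ is not presented as the head of a convolution of simples, so the lemma simply does not apply, and ``a second factorization of $\rmat{M,N}$'' is not something it produces. The filtration idea is the right starting point — one does want to compare $\nR_{\Ma,N}$ restricted to (preimages of) submodules of $M\conv N$ with its behaviour on the whole, and the failure of surjectivity onto $N\conv M$ outside the socle is what forces an extra power of $\z$ and hence a drop in degree — but this has to be carried out via an explicit induction on a composition series of $M\conv N$ rather than waved at through the cokernel. As it stands the proposal has a real gap at both the equality and the strict-inequality steps.
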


\Cor\label{cor:abcom}
Let $M$ be a real simple module which admits an affinization.
Let $X$ be an $R$-module in $R\gmod$.
Let $n\in\Z_{>0}$ and assume that
any simple subquotient $S$ of $X$ satisfies
$\de(M,S)\le n$.
Then any simple subquotient $S$ of $M\conv X$ satisfies
$\de(M,S)<n$.
In particular, any simple subquotient of $M^{\circ n}\conv X$
strongly commutes with $M$.
\encor
\Proof
We can reduce to the case where $X$ is  simple.
Then the assertion follows from Lemma \ref{Lem: La on conv}.
\QED

\begin{lem} [{\cite[Theorem 4.1.3]{KKKO18}}] \label{Lem: La comm}
Let $M$ be a simple $R$-module, and  let $x$ be the element of the Grothendieck group $K(R\gmod)$ given by
$$
x = \sum_{b\in B(\infty)} a_b [L_b],
$$
where $L_b$ is the self-dual simple module corresponding to $b$ in the crystal $B(\infty)$ of $U_q^-(\g)$ and $a_b\in \Z[q,q^{-1}]$.
If
\begin{enumerate}[\rm(a)]
\item  $M$ is real and admits an affinization,
\item $x [M] = q^l [M]x$ for some $l \in \Z$,
\end{enumerate}
then we have, for $b\in B(\infty)$ with $a_b \ne 0$,
\begin{enumerate}[\rm(i)]
\item $M$ strongly commutes  with $L_b$,
\item $ \La(M, L_b) = l$.
\end{enumerate}
\end{lem}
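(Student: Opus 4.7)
The plan is to proceed by contradiction: assume the set $\mathcal{B} = \{b : a_b \ne 0 \text{ and either } L_b \text{ does not strongly commute with } M \text{ or } \La(M, L_b) \ne l\}$ is nonempty, and derive a contradiction. Before starting, I would reduce to the case where all $L_b$ with $a_b \ne 0$ share a common weight, exploiting that $\wt(M \conv N) = \wt(M) + \wt(N)$ so the identity $x[M] = q^l[M]x$ decomposes weight by weight in $K(R\gmod)$ and may be analyzed one weight summand at a time.

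Next I would introduce $d_0 = \max\{\de(M, L_b) : a_b \ne 0\}$ and analyze two cases. In the base case $d_0 = 0$, every $L_b$ with $a_b \ne 0$ strongly commutes with $M$; by Proposition~\ref{Prop: affinization}, $\rmat{M, L_b}$ is then a graded isomorphism of degree $\La(M, L_b)$, giving $[L_b \conv M] = q^{\La(M, L_b)}[M \conv L_b]$. The hypothesis becomes
\[
\sum_{b} a_b \bigl(q^{\La(M, L_b)} - q^l\bigr)[M \conv L_b] = 0.
\]
By Lemma~\ref{Lem: self-dual}, each $M \conv L_b$ is self-dual simple up to grading shift, corresponding to a distinct element of $B(\infty)$; since $[M]$ is a non-zero-divisor in $K(R\gmod) \simeq \Aqq[\n]$ (which is a domain), the classes $\{[M \conv L_b]\}$ are $\Z[q,q^{-1}]$-linearly independent, forcing $\La(M, L_b) = l$ for every $b$ with $a_b \ne 0$, so $\mathcal{B} = \emptyset$.

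In the general case $d_0 > 0$, I would filter $K(R\gmod)$ by the value $\de(\cdot, M)$ on simple classes. Lemma~\ref{Lem: La on conv} ensures that every simple subquotient $S$ of $L_b \conv M$ or $M \conv L_b$ other than head and socle satisfies $\de(S, M) < \de(L_b, M)$, while the head $L_b \hconv M$ is isomorphic up to grading shift to $M \sconv L_b$, and $L_b \sconv M \simeq q^{?} M \hconv L_b$. Extracting the coefficient of an extremal self-dual simple in the top filtration layer of $\sum_b a_b([L_b \conv M] - q^l[M \conv L_b]) = 0$, namely a specific $M \hconv L_{b_0}$ with $\de(M, L_{b_0}) = d_0$ chosen by additionally maximizing $\La(M, L_b)$ to break ties, yields a polynomial identity in $q$ supported on a finite set of extremal simples. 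Combined with linear independence of distinct upper global basis classes, this forces $a_{b_0} = 0$ if $L_{b_0}$ does not strongly commute with $M$, contradicting the choice of $b_0$; the remaining case $\La(M, L_{b_0}) \ne l$ is then handled by the same leading-term extraction applied to the tie-breaking refinement.

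The main obstacle is the grading-shift bookkeeping in the top-layer extraction: one must verify that the coefficient of the extremal simple $M \hconv L_{b_0}$, assembled from contributions of both $[L_{b_0} \conv M]$ and $[M \conv L_{b_0}]$, combines into a nonzero scalar multiple of $a_{b_0}$ (times a nonzero polynomial in $q$). This requires careful tracking of the grading shifts relating $L_{b_0} \sconv M$, $L_{b_0} \hconv M$, $M \hconv L_{b_0}$, and $M \sconv L_{b_0}$ through $\La(L_{b_0}, M)$ and $\La(M, L_{b_0})$, and checking that no other $b \ne b_0$ with $a_b \ne 0$ contributes to the same self-dual simple in the top stratum — which is what the tie-breaking choice of $b_0$ is designed to ensure.
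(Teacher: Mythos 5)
Your base case ($d_0 = 0$) is fine, but the inductive/general step rests on a misreading of Lemma~\ref{Lem: La on conv} that makes the whole filtration argument vacuous. You write that every simple subquotient of $L_b\conv M$ or $M\conv L_b$ \emph{other than head and socle} satisfies $\de(S,M)<\de(L_b,M)$; in fact the ``In particular'' clause of Lemma~\ref{Lem: La on conv}(i) asserts $\de(M,S)<\de(M,N)$ for \emph{every} simple subquotient $S$ of $M\conv N$, including $M\hconv N$ and $M\sconv N$. (One sees this directly: $\La(M,M\hconv N)=\La(M,N)$ but $\La(M\hconv N,M)<\La(N,M)$, so $\de(M,M\hconv N)<\de(M,N)$, and symmetrically for the socle.) Consequently, if $d_0=\max\{\de(M,L_b):a_b\ne0\}>0$, then \emph{no} simple subquotient of any $L_b\conv M$ or $M\conv L_b$ lives in the top $\de$-stratum: the level-$d_0$ part of $\sum_b a_b\bigl([L_b\conv M]-q^l[M\conv L_b]\bigr)=0$ is identically zero on both sides, and extracting ``the coefficient of $M\hconv L_{b_0}$ in the top filtration layer'' gives $0=0$. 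The tie-breaking by $\La(M,\cdot)$ does not rescue this, since it is still applied inside an empty stratum. Moreover, if you instead read ``coefficient of $M\hconv L_{b_0}$ globally,'' you lose control: since $\de(M,M\hconv L_{b_0})<d_0$, that simple may well appear among the lower subquotients coming from other $b$'s, and the argument provides no mechanism to exclude such cross-contributions.

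The fix is to filter by $\La(M,\cdot)$ and $\La(\cdot,M)$ \emph{separately} rather than by their average $\de(M,\cdot)$, because these two quantities are exactly what the head and socle preserve: $\La(M,M\hconv L_b)=\La(M,L_b)$ while all other subquotients drop, and $\La(M\sconv L_b,M)=\La(L_b,M)$ while all other subquotients drop. Taking the top $\La(M,\cdot)$-stratum of $\sum_b a_b\bigl([L_b\conv M]-q^l[M\conv L_b]\bigr)=0$ (only the head/socle isomorphic to $M\hconv L_b$, up to the shift $q^{\La(M,L_b)}$, survives there, and injectivity of $L\mapsto M\hconv L$ from Lemma~\ref{Lem: generalized crystal} decouples the $b$'s) forces $\max_b\La(M,L_b)=l$. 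The analogous top $\La(\cdot,M)$-stratum argument, using $L_b\hconv M$ and $M\sconv L_b$, forces $\max_b\La(L_b,M)=-l$. Now for each $b$ one has $\La(M,L_b)\le l$ and $\La(L_b,M)\le -l$, so $2\de(M,L_b)=\La(M,L_b)+\La(L_b,M)\le 0$; since $\de\ge 0$ this gives $\de(M,L_b)=0$ and $\La(M,L_b)=l$ simultaneously, which is both conclusions of the lemma without any induction on $d_0$.
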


\begin{lem} [{\cite[Corollary 4.1.4]{KKKO18}}] \label{Lem: La comm2}
Let $M$ and $N$ be simple $R$-modules. Suppose that
\begin{enumerate}[\rm (i)]
\item  either  $M$ or $N$ is real and admits an affinization,
\item $[M] [N] = q^l [N][M]$  for some $l \in \Z $
in the Grothendieck group $K(R\gmod)$.
\end{enumerate}
Then, $M$ and $N$ strongly commute and $\La(M,N)=-\La(N,M)=-l$.
\end{lem}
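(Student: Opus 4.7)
The plan is to reduce the statement directly to Lemma~\ref{Lem: La comm}. I first assume without loss of generality that $M$ is real and admits an affinization; the case where $N$ is the one admitting an affinization is symmetric and will be dispatched at the end. Rewriting the hypothesis $[M][N]=q^l[N][M]$ as $[N][M]=q^{-l}[M][N]$, I set $x\seteq[N]$ and obtain $x[M]=q^{-l}[M]x$. Because $N$ is simple, its class is of the form $q^a[L_b]$ for a unique $b\in B(\infty)$ and a unique $a\in\Z$, so the expansion of $x$ in the upper global basis has a single nonzero coefficient. Applying Lemma~\ref{Lem: La comm} (with its $l$ replaced by $-l$), I conclude that $M$ strongly commutes with $L_b$, hence with $N$, and that $\La(M,L_b)=-l$; since $\La$ is insensitive to grading shifts of its arguments, this yields $\La(M,N)=-l$.

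To obtain $\La(N,M)=l$, I will exploit the simplicity of $M\conv N$. Since $M$ and $N$ strongly commute, both $M\conv N$ and $N\conv M$ are simple, so the R-matrices $\rmat{M,N}\cl M\conv N\to N\conv M$ and $\rmat{N,M}\cl N\conv M\to M\conv N$, being nonzero homomorphisms between simple modules, are isomorphisms. Their composition is therefore a nonzero homogeneous endomorphism of $M\conv N$ of degree $\La(M,N)+\La(N,M)$. Proposition~\ref{Prop: affinization}(ii) identifies $\HOM_R(M\conv N,M\conv N)$ with $\bR\ms{1mu}\id_{M\conv N}$, whose only nonzero homogeneous elements sit in degree $0$. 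Hence $\La(M,N)+\La(N,M)=0$, giving $\La(N,M)=l$ as required.

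If instead $N$ is the one that is real and admits an affinization, I apply Lemma~\ref{Lem: La comm} with the roles of $M$ and $N$ swapped in the statement of that lemma and with $x\seteq[M]$; the hypothesis $[M][N]=q^l[N][M]$ now reads $x[N]=q^l[N]x$, producing $\La(N,M)=l$ together with strong commutation. The relation $\La(M,N)=-l$ then follows from the endomorphism-degree argument of the previous paragraph, which applies whenever either of the two factors admits an affinization, since that is all that is required for Proposition~\ref{Prop: affinization}(ii). No genuine obstacle is anticipated; the only delicate point is keeping careful track of the sign of $l$ when converting the commutation relation $[M][N]=q^l[N][M]$ into the normal form $x[M]=q^{\pm l}[M]x$ assumed by Lemma~\ref{Lem: La comm}.
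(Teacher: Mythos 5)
Your argument is correct and is the natural one: Lemma~\ref{Lem: La comm} applied with $x=[N]$ (resp.\ $x=[M]$ when $N$ is the one with an affinization) yields strong commutation and one of the two $\La$-values, and then Proposition~\ref{Prop: affinization}~(ii) forces the nonzero composite $\rmat{N,M}\circ\rmat{M,N}\in\END_R(M\conv N)$ to lie in degree $0$, giving $\La(M,N)+\La(N,M)=0$ and hence the other value. This matches the argument of \cite[Corollary 4.1.4]{KKKO18}, which the paper cites in lieu of reproducing a proof.
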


\begin{lem} [{\cite[Proposition 3.6, Corollary 3.7]{KKKO15}}] \label{Lem: generalized crystal}
Let $M$ be a simple $R(\beta)$-module. Suppose that $M$ is real and admits an affinization.
Then we have the following.
\begin{enumerate}[\rm (i)]
\item For a simple $R(\beta+\gamma)$-module $L$, the $R(\gamma)$-module $\HOM_{R(\beta+\gamma)}(M\conv R(\gamma), L)$ is either zero or has a simple socle.
\item The map $N \mapsto M\hconv N$ between the set of
the isomorphism classes of simple $R$-modules is injective.
\end{enumerate}
\end{lem}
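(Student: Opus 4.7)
The plan is to derive (ii) from (i), and to prove (i) by a Frobenius reciprocity argument combined with the simple-head property of Proposition \ref{Prop: affinization}(i).

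For the reduction, suppose (i) holds. An isomorphism $M\hconv N\simeq q^{a}M\hconv N' =: L$ provides surjections $M\conv N\epito L$ and $M\conv N'\epito L$. Via the standard Frobenius reciprocity for the induction functor $M\conv- = (M\conv R(\gamma))\tens_{R(\gamma)}-$,
\[
\HOM_{R(\beta+\gamma)}(M\conv N, L) \simeq \HOM_{R(\gamma)}\bl N, H\br,\qquad H\seteq\HOM_{R(\beta+\gamma)}(M\conv R(\gamma),L),
\]
these surjections correspond to embeddings of $N$ and $N'$ as simple submodules of $\soc(H)$. If $\soc(H)$ is simple, we conclude $N\simeq q^{b}N'$, proving (ii).

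The proof of (i) proceeds in two planks. The first is the Frobenius adjunction above. The second is the following computation: for any simple $R(\gamma)$-module $N$, any nonzero morphism $M\conv N\to L$ is surjective (since $L$ is simple), and factors through the simple head $M\hconv N$ of $M\conv N$ by Proposition \ref{Prop: affinization}(i). Hence such a morphism exists only if $L\simeq q^{a}M\hconv N$ for a unique $a\in\Z$, and in that case $\HOM_{R(\beta+\gamma)}(M\conv N,L) = \HOM(M\hconv N,L)\simeq\bR$ by Schur's lemma. Combined with Frobenius reciprocity, this shows that each simple $N$ appears in $\soc(H)$ with multiplicity at most one, and only simples $N$ satisfying $M\hconv N\simeq L$ (up to a shift) contribute.

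It remains to prove the uniqueness of a simple $N$ with $M\hconv N\simeq L$ up to shift; this is the main obstacle and is equivalent to (ii) itself. The plan is to exploit the dual realization from Proposition \ref{Prop: affinization}(i): $M\hconv N$ is also the simple socle $N\sconv M$ of $N\conv M$, realized concretely as $\Im(\rmat{M,N})$. Given two simples $N,N'$ with $M\hconv N\simeq q^{a}M\hconv N'=L$, we obtain two inclusions $L\monoTo N\conv M$ and $L\monoTo N'\conv M$. Lifting to the affinization $\Ma$ and invoking the Hom-space rigidity of Proposition \ref{Prop: affinization}(iii), namely $\HOM_{R_{\bR[\z]}}(\Ma\conv N,N\conv\Ma)=\bR[\z]\,\nR_{\Ma,N}$ (and similarly for $N'$), the plan is to produce by a diagram chase a canonical nonzero $R(\gamma)$-linear map $N\to q^{b}N'$, which must be an isomorphism since both modules are simple.

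The hardest part will be producing this canonical nonzero homomorphism $N\to q^{b}N'$: the existence of parallel embeddings of $L$ into $N\conv M$ and $N'\conv M$ does not by itself produce any map between $N$ and $N'$, and one has to use the interplay between the normalized R-matrices $\nR_{\Ma,N}$, $\nR_{\Ma,N'}$ and the specialization at $\z=0$ to extract the desired morphism while controlling degrees. The combinatorial skeleton mirrors the symmetric-type proof of \cite[Proposition 3.6, Corollary 3.7]{KKKO15}, but the role of the determinantial structure of R-matrices there is played here by the general affinization machinery developed in Section \ref{Sec: Affinizations and R-matrices}, together with the rigidity statements of Proposition \ref{Prop: affinization}(ii)--(iii).
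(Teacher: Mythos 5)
The paper does not actually give a proof of this lemma: it states (just before Lemma \ref{Lem: self-dual}) that Lemmas \ref{Lem: self-dual}--\ref{Lem: generalized crystal} were proved in the symmetric case in \cite{KKKO15,KKKO18}, that they persist in the non-symmetric case, and that the proofs are omitted because they are similar. So there is no internal proof to compare against. The relevant comparison is to \cite[Proposition 3.6, Corollary 3.7]{KKKO15}, and against that target your proposal has a genuine and acknowledged gap.

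The parts you do carry out are correct. The Frobenius reciprocity $\HOM_{R(\beta+\gamma)}(M\conv N, L)\simeq\HOM_{R(\gamma)}(N,H)$ is right, and it does give that for each simple $N$ the multiplicity of $N$ in $\soc(H)$ equals $\dim\HOM(M\conv N,L)$, which is $1$ or $0$ according to whether $M\hconv N\simeq L$ (because $M\conv N$ has simple head, by Proposition \ref{Prop: affinization}(i)). Your reduction of (ii) to (i) is also correct. But you then correctly observe that what remains for (i) — that only \emph{one} simple $N$ (up to grading shift) satisfies $M\hconv N\simeq L$ — is exactly (ii). So at this point you have shown (i) $\Leftrightarrow$ (ii) given the easy observations, and you still need an independent argument for one of them. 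This is not a gap you failed to notice; you flag it yourself. But it is a real gap: the whole content of the lemma lives there, and the proposal does not close it.

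Your sketch of a direct attack on (ii) does not work as stated, and you say as much: having $L\hookrightarrow N\conv M$ and $L\hookrightarrow N'\conv M$ side by side gives no morphism between $N$ and $N'$, and the Hom-rigidity of Proposition \ref{Prop: affinization}(iii) is only asserted for $\HOM_{R_{\bR[\z]}}(\Ma\conv N, N\conv\Ma)$ with the \emph{same} $N$, so it provides no control over $\HOM(\Ma\conv N, N'\conv\Ma)$ for $N\not\simeq N'$. You cannot, for instance, run the $\La$-comparison argument used in Proposition \ref{prop:simpleinj} of this paper, because that argument invokes Lemma \ref{Lem: generalized crystal}(ii) (precisely the nonisomorphism $M\hconv N\not\simeq M\hconv N'$) and would be circular here. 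To actually prove the lemma you will need to reproduce the argument of \cite{KKKO15}, which proves Proposition 3.6 there by a different mechanism rather than by first reducing to the injectivity statement; the present note in the paper is exactly that this KKKO15 argument transfers to arbitrary symmetrizable type once one has the affinization and R-matrix machinery of Section \ref{Sec: Affinizations and R-matrices}.
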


\begin{prop}\label{prop:simpleinj}
Let $M$, $N$ and $N'$ be simple $R$-modules. Suppose that $M$ is real and admits an affinization.
If there exists non-zero homomorphism $f \in \HOM_R( M\conv N , \allowbreak N' \conv M)$,
then $N\simeq N'$ up to a grading shift.
\end{prop}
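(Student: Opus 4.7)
The plan is to reduce the claim to showing $M\hconv N \simeq M\hconv N'$ up to grading shift, since by Lemma~\ref{Lem: generalized crystal}(ii) this implies $N \simeq N'$ up to grading shift.

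First I would exploit the structure of $\Im(f)$. By Proposition~\ref{Prop: affinization}(i), the module $M\conv N$ has simple head $M\hconv N$ and $N'\conv M$ has simple socle $N'\sconv M$, isomorphic to $M\hconv N'$ up to a grading shift. Hence $\Im(f)$, being a nonzero quotient of $M\conv N$, admits $M\hconv N$ as a quotient (via the head projection), and, being a nonzero submodule of $N'\conv M$, contains $M\hconv N'$ as a submodule. In particular, $M\hconv N'$ appears as a simple subquotient of $M\conv N$ (as a submodule of a quotient). Arguing by contradiction and assuming $M\hconv N \not\simeq M\hconv N'$ up to shift, Lemma~\ref{Lem: La on conv}(i) applied to $M\conv N$ forces $\La(M,M\hconv N') < \La(M,N)$; combined with $\La(M,M\hconv N') = \La(M,N')$ from the same lemma, this yields
\[
\La(M,N') < \La(M,N). \tag{$\dag$}
\]

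To close the argument I need the reverse inequality, and I would obtain it by dualizing $f$ via the $\star$-functor. Using the standard compatibility $(X\conv Y)^\star \simeq q^{-(\wt X,\wt Y)}\, Y^\star \conv X^\star$ for quiver Hecke algebras, we obtain a nonzero morphism (up to grading shifts)
\[
f^\star \cl M^\star \conv (N')^\star \longrightarrow N^\star \conv M^\star.
\]
Since $(M\conv M)^\star \simeq M^\star \conv M^\star$ is simple, $M^\star$ is real, and an affinization of $M$ can be dualized to produce one of $M^\star$. Hence the hypotheses of the proposition are met by the triple $(M^\star, (N')^\star, N^\star)$ with the nonzero morphism $f^\star$. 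Applying the argument leading to $(\dag)$ to $f^\star$, under the standing assumption $N\not\simeq N'$ up to grading shift (equivalently $N^\star \not\simeq (N')^\star$ up to grading shift), we obtain $\La(M^\star, N^\star) < \La(M^\star, (N')^\star)$. By the uniqueness part of Proposition~\ref{Prop: affinization}(ii), $\rmat{M,X}^\star$ is a nonzero scalar multiple of $\rmat{M^\star, X^\star}$ for any simple $X$; comparing degrees yields $\La(M^\star, X^\star) = \La(M, X)$. Substituting gives $\La(M,N) < \La(M,N')$, contradicting $(\dag)$. Hence $M\hconv N \simeq M \hconv N'$ up to grading shift, and $N \simeq N'$ up to grading shift by Lemma~\ref{Lem: generalized crystal}(ii).

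The main obstacle will be verifying that $\star$-duality interacts well with the relevant structures: that $M^\star$ inherits realness and an affinization from $M$, that the compatibility $(X\conv Y)^\star \simeq Y^\star \conv X^\star$ (up to grading shift) holds, and that $\La(M^\star, N^\star) = \La(M, N)$. These are standard facts for quiver Hecke algebras but need to be invoked carefully; the rest of the argument is a direct combination of the one-sided inequality from Lemma~\ref{Lem: La on conv}(i) with its mirror image obtained by duality.
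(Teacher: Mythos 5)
Your proof is correct, and at its core it is the same two-sided contradiction the paper runs: use Lemma~\ref{Lem: La on conv}(i) on $\Im(f)$ to get one strict inequality between $\La(M,N)$ and $\La(M,N')$, then dualize $f$ and repeat to get the opposite strict inequality. The one genuine (if small) variation is which end of $\Im(f)$ you exploit first. You observe that $\Im(f)$, being a nonzero submodule of $N'\conv M$, contains its socle $N'\sconv M\simeq M\hconv N'$, and since $\Im(f)$ is a quotient of $M\conv N$, this exhibits $M\hconv N'$ directly as a simple subquotient of $M\conv N$, giving $\La(M,N')<\La(M,N)$ at once. The paper instead exploits the head: $M\hconv N$ is the head of $\Im(f)$, hence a subquotient of $N'\conv M$, and then it is transferred to a subquotient of $M\conv N'$ (using that $M\conv N'$ and $N'\conv M$ have the same simple constituents up to grading shift) before Lemma~\ref{Lem: La on conv} can be applied. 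Your version sidesteps that transfer step, so it is marginally cleaner. One further remark: in the dual half of the argument you carefully re-establish that $M^\star$ is real, admits an affinization, and that $\La(M^\star, X^\star)=\La(M,X)$; this all works, but the paper simply invokes the standard fact that every simple module over a quiver Hecke algebra is self-dual up to a grading shift, so $f^\star$ can be read directly as a nonzero map $M\conv N'\to N\conv M$ (up to shift), and the ``same argument'' applies verbatim with $N$ and $N'$ exchanged. Either route is fine; the paper's is just shorter.
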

\begin{proof}
Suppose that $N \not \simeq N'$. Since $M$ is real, $M \hconv N$ is simple and appears in $\Im(f) \subset  N' \conv M $ as a subquotient,
which tells that $M\hconv N$ also appears in  $ M \conv N' $  as a subquotient.
As $N \not \simeq N'$, Lemma \ref{Lem: generalized crystal} implies $M \hconv N \not\simeq M \hconv N'$. Thus, by Lemma \ref{Lem: La on conv}, we have
$$
\La(M,N) = \La(M, M\hconv N) <\La(M, N') .
$$

On the other hand, taking the dual of $f$, we have a non-zero homomorphism \allowbreak
$
f^\star\cl M \conv N' \to N\conv M
$
up to a grading shift. Thus, by the same argument as above, we obtain
$$
\La(M,N') < \La(M,N),
$$
which is a contradiction. Therefore, $N$ and $N'$ are isomorphic to each other.
\end{proof}

\subsection{Determinantial modules}\

Let $A=\oplus_{n\ge0}A_n$ be a commutative positively graded algebra as in
Section~\ref{Sec: cyclotomic}.
We assume $A_0=\corp$ for simplicity.
Let $\Lambda \in \wlP_+$.
We take a family $a_\Lambda = \{ a_{\La, i}(t_i) \}$ of parameters in Section \ref{Sec: cyclotomic} such that $a_{\La, i}(t_i)\in A[t_i]$ is monic.
Recall that
$R^{a_\La }_A(\la)$ denotes the associated cyclotomic quiver Hecke algebra
for $\la\in\wtl$.

Let $\lambda, \mu \in \weyl \Lambda$ and we choose $w$, $v\in \weyl$ such that $ \lambda  = w\Lambda$ and $ \mu  = v\Lambda$.
Take reduced expressions $\underline{w} = s_{i_1} \cdots s_{i_l}$ and  $\underline{v} = s_{j_1} \cdots s_{j_t}$, and
set $m_k = \langle h_{i_k},   s_{i_{k+1}} \cdots s_{i_l}\Lambda \rangle$ for $k=1,\ldots l$, and $n_k = \langle h_{j_k},   s_{j_{k+1}} \cdots s_{j_t}\Lambda \rangle$ for $k=1, \ldots, t$.
We define 
\begin{align*}
\dM(\lambda, \Lambda;a_\La) & \seteq F_{i_1}^{ a_\La\,( m_1) }  \cdots F_{i_l}^{ a_\La \,( m_l) } A, \\
\dM(\lambda, \mu;a_\La) & \seteq E_{j_1}^{\,*\hskip 0.1em \,( n_1) }  \cdots E_{j_t}^{\,* \hskip 0.1em  \,( n_t) } \dM(\lambda, \La; a_\La),
\end{align*}
where $A$ is regarded as the module over $R^\La_A(\La)\simeq A$.

When $A=\corp$ and $a_\La=\{ a_{\La, i}(t_i) \}$  is given by
$a_{\La,i}(t_i)=t_i^{\ang{h_i,\La}}$, we write
$\dM(\lambda, \Lambda)$ and $\dM(\lambda, \mu)$
instead of $\dM(\lambda, \Lambda;a_\La)$ and $\dM(\lambda, \mu;a_\La)$.

\begin{prop} [{\cite{KK11} and \cite[Lemma 1.7, Proposition 4.2]{KKOP18}}] \label{Prop: dM properties}
Let $\Lambda \in \wlP_+$, and $\lambda, \mu \in \weyl \Lambda$ with $\lambda \wle \mu$.
\begin{enumerate} [\rm (i)]
\item $\dM(\lambda, \mu;a_\La)$ is a finitely generated projective $A$-module.
\item  $\dM(\lambda, \mu)$ is a real simple module.
\item If $ \langle h_i, \lambda \rangle \le 0$ and $s_i \lambda \preceq \mu $, then
$$
\ep_i( \dM(\lambda, \mu;a_\La)) = - \langle h_i, \lambda \rangle \quad \text{and} \quad  E_i^{(- \langle h_i, \lambda \rangle)} \dM(\lambda, \mu;a_\La) \simeq \dM(s_i\lambda, \mu;a_\La).
$$
\item If $ \langle h_i, \mu \rangle \ge 0$ and $ \lambda \preceq s_i \mu $, then
$$
\ep^*_i( \dM(\lambda, \mu;a_\La)) =  \langle h_i, \mu \rangle \quad \text{and} \quad  E_i^{* (\langle h_i, \mu \rangle)} \dM(\lambda, \mu;a_\La) \simeq \dM(\lambda, s_i\mu;a_\La).
$$
\end{enumerate}
\end{prop}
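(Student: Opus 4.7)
The plan is to establish the four assertions in turn, relying on the $\mathfrak{sl}_2$-categorification equivalences of Proposition \ref{prop:htlt} as the key ingredient for parts (ii)--(iv), and on the projectivity of cyclotomic quiver Hecke algebras over $A$ from \cite[Remark 4.20]{KK11} for part (i). For (i) I would argue by induction on the number of functors $F_i^{a_\La(m)}$ or $E_j^{*\,a_\La(n)}$ applied in the construction of $\dM(\la,\mu;a_\La)$. The base module $R_A^{a_\La}(\La) \simeq A$ is trivially finitely generated projective over $A$. Each functor in the construction has the form $P \otimes_{R_A^{a_\La}(\mu')}(-)$ or $\Hom_{R_A^{a_\La}(\mu')}(P,-)$ for a suitable finitely generated projective bimodule $P$; since each $R_A^{a_\La}(\mu')$ is itself finitely generated projective over $A$, these functors preserve the class of finitely generated projective $A$-modules.

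For part (ii), specializing to $A = \corp$ with $a_{\La,i}(t_i) = t_i^{\langle h_i,\La\rangle}$, Proposition \ref{prop:htlt} shows that whenever $\langle h_i,\la'\rangle \ge 0$, the functors $F_i^{\La(\langle h_i,\la'\rangle)}$ and $E_i^{\La(\langle h_i,\la'\rangle)}$ are mutually inverse equivalences of module categories, so they preserve simplicity. Starting from the simple $R^\La(\La) \simeq \corp$-module $\triv$ and iterating yields that $\dM(\la,\La)$ is simple; the analogous $*$-sided equivalences then give simplicity of $\dM(\la,\mu)$. For realness, I would invoke the identification of $\dM(\la,\mu)$ with the categorification of the unipotent quantum minor $D(\la,\mu)$ (a dual canonical basis element). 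The $q$-commutation $D(\la,\mu) \cdot D(\la,\mu) = q^{s} D(2\la, 2\mu)$ for some $s \in \Z$, together with the fact that $D(2\la, 2\mu)$ is again a dual canonical basis element, forces $[\dM(\la,\mu) \conv \dM(\la,\mu)]$ to equal a single simple class up to grading shift. Combined with the simplicity of its head, this implies $\dM(\la,\mu) \conv \dM(\la,\mu)$ is simple.

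For part (iii), given $\langle h_i,\la\rangle \le 0$ and $s_i\la \preceq \mu$, I would choose a reduced expression $s_{i_1}\cdots s_{i_l}$ of a shortest $w \in \weyl$ with $w\La = \la$ and $i_1 = i$; the sign hypothesis on $\langle h_i,\la\rangle$ is what guarantees the existence of such a reduced expression. Then $m_1 = -\langle h_i,\la\rangle \ge 0$ and the definition yields $\dM(\la,\La;a_\La) = F_i^{a_\La(m_1)} \dM(s_i\la,\La;a_\La)$. Applying the quasi-inverse $E_i^{a_\La(m_1)}$ from Proposition \ref{prop:htlt}, and observing that $E_i^{a_\La}$ coincides with the usual $E_i$ on cyclotomic modules, gives $E_i^{(m_1)}\dM(\la,\La;a_\La) \simeq \dM(s_i\la,\La;a_\La)$. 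Extending this to general $\mu$ uses the commutation of $E_i$ with each $E_j^{*(n)}$ as endofunctors --- both are idempotent truncations at disjoint ends of the module. The equality $\ep_i(\dM(\la,\mu;a_\La)) = m_1$ follows because the $\mathfrak{sl}_2$-categorification forces $E_i^{m_1+1}$ to vanish on every $R_A^{a_\La}(\la)$-module. Part (iv) is strictly parallel after exchanging left and right ends. The main obstacle in the whole plan is the realness claim in (ii): unlike simplicity, which comes directly from the categorical equivalences, realness is not a purely module-theoretic statement and requires invoking $q$-commutation identities for quantum minors in the dual canonical basis, which lies outside the elementary theory of quiver Hecke algebras.
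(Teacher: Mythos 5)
The paper's proof of this proposition is essentially a citation with a one-line base-change observation: (i)--(iv) for $\dM(\la,\mu)$ (the $A=\corp$ case) are quoted from \cite{KK11} and \cite{KKOP18}, and then (iii)--(iv) for $\dM(\la,\mu;a_\La)$ are deduced from the specialization $\corp\tens_A \dM(\la,\mu;a_\La)\simeq\dM(\la,\mu)$. You instead attempt a self-contained argument built on the $\mathfrak{sl}_2$-categorification equivalences (Proposition~\ref{prop:htlt}). That is a genuinely different route, and for (i), the simplicity part of (ii), and the identification $E_i^{(m_1)}\dM(\la,\La;a_\La)\simeq\dM(s_i\la,\La;a_\La)$ the outline is reasonable, though you leave the commutation of $E_i^{(m_1)}$ with the $E_j^{*\,(n)}$'s (needed to pass from $\mu=\La$ to general $\mu$) as an assertion rather than an argument, and your realness argument for (ii) leans on nontrivial properties of quantum minors that the paper avoids re-proving by citing \cite{KKOP18}.

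There is, however, a concrete gap in your justification of $\ep_i(\dM(\la,\mu;a_\La))=-\langle h_i,\la\rangle$. You assert that ``the $\mathfrak{sl}_2$-categorification forces $E_i^{m_1+1}$ to vanish on every $R_A^{a_\La}(\la)$-module.'' This is false. The integrable $\mathfrak{sl}_2$-action on $K(R_A^{a_\La}\proj)\simeq V(\La)$ does \emph{not} bound $\ep_i$ of an arbitrary weight-$\la$ module by $-\langle h_i,\la\rangle$: a weight-$\la$ vector can lie on an $i$-string of length much larger than $-\langle h_i,\la\rangle$ (for instance, in the five-dimensional $\mathfrak{sl}_2$-module the weight-$0$ vector has $e$-string of length $2$, not $0$). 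The equality $\ep_i(\dM(\la,\mu;a_\La))=m_1$ is an \emph{extremal-weight} property special to determinantial modules, equivalently the vanishing $\ep_i(\dM(s_i\la,\mu;a_\La))=0$, and you give no argument for it. The clean fix is precisely what the paper does: get the statements for $\dM(\la,\mu)$ from \cite{KKOP18}, where this extremal property is established, and then pass to $\dM(\la,\mu;a_\La)$ using $A$-flatness together with $\corp\tens_A\dM(\la,\mu;a_\La)\simeq\dM(\la,\mu)$, so that vanishing of $E_i^{m_1+1}\dM(\la,\mu;a_\La)$ can be checked after base change to $\corp$.
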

\Proof
(i) follows from \cite{KK11} and
(ii)--(iv) for $\dM(\lambda, \mu)$ follows from
\cite[Lemma 1.7, Proposition 4.2]{KKOP18}.
(iii) and (iv) for $\dM(\lambda, \mu;a_\La)$ follows from
the case $\dM(\lambda, \mu)$ by
noticing that $\corp\tens_A \dM(\lambda, \mu;a_\La)\simeq \dM(\lambda, \mu)$,
and hence
$\dM(\lambda, \mu;a_\La)\simeq0$ if and only if $\dM(\lambda, \mu)\simeq0$.
\QED

\begin{prop} [{\cite[Corollary 4.7]{KKOP18}}] \label{Prop: cupspidal for dM}
Let $\Lambda \in \wlP_+$.
\begin{enumerate}[\rm (i)]
\item
Let $i\in I$ and $v \in \weyl$ such that $vs_i<v$ and $n \seteq \langle h_i,\Lambda\rangle>0$.
Then
$\dM(v\Lambda, vs_i\Lambda)$ is a $\preceq$-cuspidal $R(-n v \alpha_i)$-module, where $\preceq$ is a convex order corresponding to a reduced expression of $v$ given in
{\rm Proposition \ref{Prop: convex preorder for w}}.

\item
Let $\preceq$ be a convex order corresponding to a reduced expression $\underline{w} = s_{i_1} \cdots s_{i_\ell}$ of $w\in \weyl$ given in
{\rm Proposition \ref{Prop: convex preorder for w}}.
Then the sequence obtained from
 $( \dM(w_\ell \Lambda,w_{\ell-1}\Lambda)$, $\ldots$, $\dM(w_1\Lambda,w_0\Lambda) )$
by removing $\dM(w_k\Lambda,w_{k-1}\Lambda)$ such that $w_k\Lambda=w_{k-1}\Lambda$,
is a $\preceq$-cuspidal decomposition of $\dM(w\Lambda,\Lambda)$, where
$w_k=s_{i_1}\cdots s_{i_k}$ for $k=1,\ldots, \ell$.
\end{enumerate}

\end{prop}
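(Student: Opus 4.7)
The plan is to prove (i) by induction on $\ell(v)$, and to deduce (ii) from (i) together with a factorization identity for determinantial modules. The proposition is essentially a restatement of \cite[Corollary 4.7]{KKOP18}, so I follow those lines.

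For (i), I would proceed by induction on $m=\ell(v)$. In the base case $v=s_i$ the module $\dM(s_i\Lambda,\Lambda)$ is isomorphic, up to a grading shift, to $L(i^n)$ where $n=\langle h_i,\Lambda\rangle$; its support lies in $\Z_{\ge0}\alpha_i$, and since $\alpha_i$ is the unique positive root in $\prD\cap s_i\nrD$ the cuspidal support condition is trivial. For the inductive step, I fix a reduced expression $v=s_{i_1}\cdots s_{i_m}$ with $i_m=i$, set $j=i_1$ and $v'=s_{i_2}\cdots s_{i_m}$, so that $v=s_j v'$ exhibits $j$ as a left descent of $v$ and $v's_i<v'$. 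By the inductive hypothesis $\dM(v'\Lambda,v's_i\Lambda)$ is $\preceq'$-cuspidal for the convex order $\preceq'$ associated with the chosen reduced expression of $v'$. Since $j$ is a left descent of $v$ one has $\langle h_j,v\Lambda\rangle\le0$, and Proposition~\ref{Prop: dM properties}(iii) relates $\dM(v\Lambda,vs_i\Lambda)$ to $\dM(v'\Lambda,v's_i\Lambda)$ through the functor $E_j^{(-\langle h_j,v\Lambda\rangle)}$. The convex order $\preceq$ associated with the reduced expression of $v$ is obtained from $\preceq'$ by prepending $\alpha_j$ as the smallest root and then translating the remaining roots by $s_j$; this precisely absorbs the additional $\alpha_j$-contributions in the support introduced by the $E_j$-operation, and the cuspidal condition for $\dM(v\Lambda,vs_i\Lambda)$ follows.

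For (ii), I would first establish, by induction on $k$, the factorization identity
\[
\dM(w_k\Lambda,\Lambda)\;\simeq\;\hd\bigl(\dM(w_k\Lambda,w_{k-1}\Lambda)\conv\dM(w_{k-1}\Lambda,\Lambda)\bigr),
\]
so that iteration yields $\dM(w\Lambda,\Lambda)\simeq\hd\bigl(\dM(w_\ell\Lambda,w_{\ell-1}\Lambda)\conv\cdots\conv\dM(w_1\Lambda,w_0\Lambda)\bigr)$. A direct computation gives
\[
-\wt\bigl(\dM(w_k\Lambda,w_{k-1}\Lambda)\bigr)\;=\;w_{k-1}\Lambda-w_k\Lambda\;=\;\langle h_{i_k},\Lambda\rangle\,\beta_k,\qquad\beta_k=w_{k-1}\alpha_{i_k},
\]
and the chosen convex order satisfies $\beta_1\prec\cdots\prec\beta_\ell$ by Proposition~\ref{Prop: convex preorder for w}. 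Factors with $\langle h_{i_k},\Lambda\rangle=0$ are trivial (since then $w_k\Lambda=w_{k-1}\Lambda$) and are discarded; the remaining weights then form a strictly $\succ$-decreasing sequence. Combined with part (i), which ensures each surviving factor is $\preceq$-cuspidal, the uniqueness statement in Theorem~\ref{Thm: cuspidal decomposition} forces this product to be the $\preceq$-cuspidal decomposition of $\dM(w\Lambda,\Lambda)$.

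The main obstacle is the factorization identity used in (ii). I would prove it by analyzing $E_{i_k}^*$ on the convolution $\dM(w_k\Lambda,w_{k-1}\Lambda)\conv\dM(w_{k-1}\Lambda,\Lambda)$: using Proposition~\ref{Prop: dM properties}(iv) together with the fact that the right factor has $\ep_{i_k}^*=0$ (because of the way the reduced expression of $w$ is built up), one shows that the head of this convolution has the same formal character as $\dM(w_k\Lambda,\Lambda)$; the realness of the left factor from Proposition~\ref{Prop: dM properties}(ii) combined with Proposition~\ref{Prop: affinization}(i) ensures that this head is simple and hence must coincide with $\dM(w_k\Lambda,\Lambda)$ itself.
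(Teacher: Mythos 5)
This proposition is cited in the paper from \cite[Corollary 4.7]{KKOP18} and not proved there, so there is no internal proof to compare against; I assess your argument on its own terms.

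The inductive step you propose for (i) does not work as stated. Writing $v=s_j v'$ with $j=i_1$, Proposition~\ref{Prop: dM properties}(iii) (when its hypotheses hold) gives
$E_j^{(-\langle h_j,v\Lambda\rangle)}\dM(v\Lambda,vs_i\Lambda)\simeq \dM(s_jv\Lambda,\,vs_i\Lambda)=\dM(v'\Lambda,\,vs_i\Lambda)$: the functor only changes the \emph{first} argument and leaves $\mu=vs_i\Lambda$ untouched. This is \emph{not} the module $\dM(v'\Lambda,v's_i\Lambda)$ to which your inductive hypothesis applies. Since $vs_i\Lambda=s_j(v's_i\Lambda)$ and $(v's_i)^{-1}\alpha_j$ is a positive root, one has $\langle h_j,v's_i\Lambda\rangle\ge 0$, and this pairing is strictly positive for generic $\Lambda$; hence $vs_i\Lambda-v's_i\Lambda=-\langle h_j,v's_i\Lambda\rangle\alpha_j\neq 0$ in general, so $\dM(v'\Lambda,vs_i\Lambda)$ and $\dM(v'\Lambda,v's_i\Lambda)$ even lie in different weight spaces of $\rlQ_+$. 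To replace both arguments you would have to apply $E_j$ on the left and $E_j^*$ on the right, and one then has to check the hypotheses of Proposition~\ref{Prop: dM properties}(iii)--(iv) at each stage, which is not automatic; the single reduction you describe does not exist. Your accompanying remark that the convex order for $v$ is $\preceq'$ ``with $\alpha_j$ prepended and the rest translated by $s_j$'' and that this ``absorbs the $\alpha_j$-contributions introduced by $E_j$'' is also not a proof: $E_j^{(k)}$ does not act on $\gW(\,\cdot\,)$ by $s_j$, and transferring the support condition $\gW\subset\Sp\{\gamma\preceq\beta\}$ across the $E_j$/$E_j^*$-reduction requires an explicit analysis that the write-up does not supply.

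The skeleton for (ii), assuming (i), is the right one: the uniqueness in Theorem~\ref{Thm: cuspidal decomposition} reduces everything to checking cuspidality of each factor, the decreasing ordering of $\langle h_{i_k},\Lambda\rangle\beta_k$, and the head identity $\dM(w\Lambda,\Lambda)\simeq\hd\bigl(\dM(w_\ell\Lambda,w_{\ell-1}\Lambda)\conv\cdots\conv\dM(w_1\Lambda,w_0\Lambda)\bigr)$. Two points need tightening there: (a) iterating the two-term factorization to the full product is not formal --- $\hd(A\conv\hd(B\conv C))\simeq\hd(A\conv B\conv C)$ requires an additivity input such as Lemma~\ref{lem:additive} together with $\La(A,\hd(B\conv C))=\La(A,B)+\La(A,C)$, which you should verify for the determinantial factors; and (b) you should observe explicitly that cuspidality of $\dM(w_k\Lambda,w_{k-1}\Lambda)$ for the convex order attached to the subword $s_{i_1}\cdots s_{i_k}$ transfers to the order attached to the full reduced expression of $w$, which holds because $\{\gamma\in\prD\mid \gamma\preceq\beta_k\}=\{\beta_1,\dots,\beta_k\}$ for both orders. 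These are fillable gaps; the real obstruction is the incorrect $E_j$-reduction in (i).
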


By Proposition~\ref{prop:htlt}, we have
\begin{align} \label{Eq: End 1}
\END_{R_A^{a_\La}} ( \dM(\lambda, \La; a_\La) ) \simeq \END_{R_A^{a_\La}(\La)} ( A )
\simeq A.
\end{align}
Hence  $\lc_i(\dM(\lambda, \La; a_\La))$ and $\lep_i(\dM(\lambda, \La; a_\La))$
are regarded as elements of $A[t_i]$.
Note that, by Lemma~\ref{lem:lepi} and $\eqref{Eq: End 1}$, if $ \langle h_i, \la \rangle \le 0$, we have
\begin{align} \label{Eq: lc lep}
  \lc_i(\dM(\lambda, \La; a_\La)) = \lep_i(\dM(\lambda, \La; a_\La)) \lc_i(  \dM(s_i\lambda, \La; a_\La) ) \in A[t_i].
\end{align}

\bigskip
For $i\in I$ and $\mu\in \weyl \Lambda$, we set
\eq
F_{i,\mu}(t_i)\seteq a_{\La,i}(t_i)\cdot\Bigl(\sum_{\nu\in I^{\La-\mu}}\hs{1ex}\prod_{1\le k\le n,\;\nu_k\not=i}
\qQ_{i,\nu_k}(t_i, x_k)e(\nu)\Bigr)\big\vert_{\dM(\mu, \La; a_\La)}
\in A[t_i].
\eneq
Here $n= \Ht(\La-\mu)$.

For $i,j\in I$ such that $i\not=j$ and
 $\mu\in\weyl \Lambda$ such that $n\seteq-\ang{h_j,  \mu }\ge0$, we set
$$
\lpq_{i,j,\, \mu }(t_i) \seteq  \Bigl(\prod_{k=1}^n \qQ_{i,j}(t_i, x_k)\Bigr)
\big\vert _{\,e(j^n,*)\dM( \mu , \Lambda; a_\La)} \in A[t_i].
$$
Thanks to $\eqref{Eq: End 1}$,  $F_{i,\mu}(t_i)$  and $\lpq_{i,j,\,\mu}(t_i)$
can be viewed as quasi-monic polynomials in $A[t_i]$.

\Prop \label{prop: lep for dM}\hfill
\bnum 
\item
For any $\mu\in\weyl\La$, we have
$$F_{i,\mu}(t_i)=\gamma\,\lc_i\bl\dM(\mu, \Lambda; a_\Lambda)\br(t_i)\cdot
\lc_i\bl\dM(s_i\mu, \Lambda; a_\Lambda)\br(t_i)$$
for some $\gamma\in\corp^\times$.
\item For $i,j\in I$ and $\la\in \weyl\La$ such that
$\la\prec s_i\la\prec s_js_i\la$,
set $\mu=s_i\la$ and $\zeta=s_j\mu$.
Then we have
$$
\lep_i( \dM(\la, \La ; a_\La) ) (t_i) =
\bc
\gamma\;\lpq_{i,j,\mu}(t_i) \lep_i(\dM(s_i\zeta, \La ; a_\La) ) (t_i)
 &  \text{if $\langle h_i, \zeta \rangle \ge 0$,} \\
\dfrac{\gamma\;\lpq_{i,j,\mu}(t_i)}{ \lep_i(\dM(\zeta, \La ; a_\La) ) (t_i) }
&  \text{if $\langle h_i, \zeta \rangle \le 0$}
\ec
$$
for some $\gamma\in\corp^\times$.
\ee
\enprop
\begin{proof}

For simplicity, we set
\begin{align*}
\dM(\xi) \seteq  \dM(\xi, \Lambda; a_{\La}) \qtq
\lc_{i, \xi} (t_i) \seteq  \lc_i( \dM(\xi)) (t_i), \ \
\lep_{i, \xi} (t_i) \seteq  \lep_i( \dM(\xi)) (t_i),
\end{align*}
for $\xi \in \weyl \Lambda$.

Let us show (i) by induction on $\Ht(\La-\mu)$.
If $\mu=\La$, it is trivial since
$\lc_i\bl\dM(s_i\La)\br= \lep_i\bl\dM(s_i\La)\br=a_{\La,i}(t_i)$ by 
Proposition \ref{Prop: affinization of L(i^m)}.
Assume that $\mu\not=\La$.

If $\ang{ h_i,\mu}<0$, then we have
$$F_{i,\mu}(t_i)=F_{i,s_i\mu}(t_i)=  \gamma \lc_i\bl\dM( \mu )\br\cdot
\lc_i\bl\dM(s_i \mu)\br \quad \text{ for some $\gamma \in \bR^\times$,}$$
where the last equality follows from induction hypothesis applied to $s_i\mu$.

Hence we may assume that
$\ang{h_i,\mu}\ge0$.
Set $\la=s_i\mu$.
Take $j\in I$ such that $\ang{h_j,\mu}<0$,
and set $\zeta=s_j\mu$.
For such $\mu$ and $\la$, we shall show (i) and (ii).
We set $l \seteq   | \langle h_i, \zeta \rangle|$, $m := \langle h_i, \mu  \rangle$ and $n := \langle h_j, \zeta  \rangle$. 

Note that 
\begin{align} \label{Eq: chi zeta mu}
\lc_{i,\zeta}(t_i)=\lc_{i,\mu}(t_i).
\end{align}

By the definitions, $\lep_{i, \la}(t_i) $  and $\lpq_{i,j,\mu}(t_i) $ are quasi-monic, and
\begin{align} \label{Eq: Ma deg}
 \deg  \lep_{i, \la}(t_i) = 2(\alpha_i, \mu ), \quad   \deg  \lpq_{i,j,\mu}(t_i) = 2(\alpha_i, \mu - \zeta ).
\end{align}

\medskip
\noi
\textbf{(Case\;1):} 
Assume that $l=\langle h_i, \zeta \rangle \ge 0 $.
Note that $ \deg (\lep_{i, \la} (t_i))  =  \deg \bl \lpq_{i,j,\mu}(t_i) \lep_{i,s_i\zeta} (t_i) \br $ by $\eqref{Eq: Ma deg}$.
Thus, by Lemma \ref{Lem: minimal}, in order to see (ii),
it suffices to show that
$$
\lpq_{i,j,  \mu }(x_m) \lep_{i,s_i\zeta}(x_m) u =0 \qquad
\text{for any $u \in e(i^m,*)\dM(\la)$.}
$$

Let us first show
\eq
 \lep_{i,s_i\zeta}(x_1) e(i, *) R_A^{a_\La}(\zeta-\al_i) e(i,*)  = 0.
\label{eq:pp1}
\eneq
If $l=\ang{h_i,\zeta}=0$, then $R_A^{a_\La}(\zeta-\al_i)\simeq0$
and \eqref{eq:pp1} holds.
Assume $ \ang{h_i,\zeta}>0$.
Lemma~\ref{Lem: minimal} implies that
$\lep_{i,s_i\zeta}(x_l) e(i^l, *) \dM(s_i \zeta) = 0$.
Since $ \dM(s_i \zeta)$ is a unique indecomposable projective $R_A^{a_\La}(s_i \zeta)$-module,
$$
\lep_{i,s_i\zeta}(x_l) e(i^l, *) R_A^{a_\La}(s_i \zeta) = 0.
$$
On the other hand, we have by \cite[Lemma 4.14]{R08}
\begin{align} \label{Eq: EFR}
E_i^{\,a_\La\;(l-1)}F_i^{\,a_\La\;(l)}R_A^{a_\La}(\zeta)\simeq
F_i^{a_\La}R_A^{a_\La}(\zeta)\simeq R_A^{a_\La}(\zeta-\al_i) e(i,*)
\end{align}
up to grading shifts.
Since $e(i, *)R_A^{a_\La}(\zeta-\al_i)$ is a direct summand of
$E_i^{\,a_\La\;l}F_i^{\,a_\La\;l}R_A^{a_\La}(\zeta)\simeq e(i^{l},*)R_A^{a_\La}(s_i \zeta)
e(i^l,*) 
$,
we obtain \eqref{eq:pp1}.

\medskip
Then, we have
$$
\lep_{i,s_i\zeta}(x_{m+n}) e(i^{m-1}, j^n,i,*) \dM(\la) = 0,
$$
because  $e(i^{m-1}, j^n,i,*)\dM(\la)$
is an $e(i^{m-1}, j^n)\etens  e(i,*)R_A^{a_\La}(\zeta-\al_i)e(i,*)$-module.
Hence, for any $u \in e(i^m, j^n ,*)\dM(\la)$, we have
\begin{align*}
0 &= (\tau_m \tau_{m+1}\cdots \tau_{m+n-1}) \lep_{i,s_i\zeta}(x_{m+n}) ( \tau_{m+n-1}  \cdots   \tau_{m+1}\tau_m  )u \\
&= \lep_{i,s_i\zeta}(x_m) \Bigl( \prod_{k=1}^n \qQ_{i,j}(x_m, x_{m+k}) \Bigr) u \\
&= \lep_{i,s_i\zeta}(x_m) \lpq_{i,j,\mu}(x_m) u.
\end{align*}

Thus, we have proved (ii).

Let us show (i).
By the induction hypothesis, we have
$ F_{i,\zeta}(t_i) =  \gamma  \lc_{i,\zeta}(t_i) \lc_{i,s_i\zeta}(t_i)$ for some $\gamma \in \bR^\times$.
Since $ F_{i,\mu}(t_i) = \lpq_{i,j,\mu}(t_i) F_{i,\zeta}(t_i)$, 
we have, up to a constant multiple, 
\begin{align*}
F_{i,\mu}(t_i)
&=\lpq_{i,j,\mu}(t_i)F_{i,\zeta}(t_i)
 = \lpq_{i,j,\mu}(t_i)\lc_{i,\zeta}(t_i)\lc_{i,s_i\zeta}(t_i) 
=   \lpq_{i,j,\mu}(t_i)\lc_{i,\zeta}(t_i) \lep_{i, s_i \zeta} (t_i) \lc_{i, \zeta}(t_i) \\
 &  =    \lep_{i, \lambda} (t_i)  \lc_{i,\mu}(t_i)  \lc_{i, \mu}(t_i)
=\lc_{i,s_i\mu}(t_i)\lc_{i,\mu}(t_i). 
\end{align*}
Here the second equality follows from the induction hypothesis applied to
$\zeta$,
and the fourth equality follows from (ii) and \eqref{Eq: chi zeta mu}.

\medskip
\noi
\textbf{(Case\,2):} We assume that $\langle h_i, \zeta \rangle < 0 $.
Since $e(i, *) \dM(\mu) = 0$, we have
$$
0 = (\tau_n \cdots \tau_2\tau_1) (\tau_1 \tau_2 \cdots \tau_n) v = \Bigl(\prod_{k=1}^n \qQ_{i,j}(x_{n+1}, x_k) \Bigr) v = \lpq_{i,j,\mu}(x_{n+1})v
$$
for any $v \in e(j^n, i, *) \dM(\mu)$.
Hence we have
$\lpq_{i,j,\mu}(x_{1})e(i,*)\dM(\zeta)=0$.

Thus,  Lemma \ref{Lem: minimal} implies that
\begin{align} \label{Eq: divided}
\text{$\lep_{i,\zeta}(t_i)$ divides $\lpq_{i,j,\mu}(t_i)$.}
\end{align}

We shall set
$$
S_{i,\mu} (t_i)\seteq  \bl\lc_{i,\mu}(t_i)\br^2.
$$
Then (i) is equivalent to
\eq
F_{i,\mu} (t_i) = \lep_{i,\la}(t_i) S_{i,\mu}(t_i).
\eneq
Now we have, by $\eqref{Eq: chi zeta mu}$ and the induction hypothesis, 
\eq\ba{rcl}
F_{i,\mu}(t_i)&&=\lpq_{i,j,\mu}(t_i)F_{i,\zeta}(t_i)
=\lpq_{i,j,\mu}(t_i)\lc_{i,\zeta}(t_i)\lc_{i,s_i\zeta}(t_i)\\
&&=\lpq_{i,j,\mu}(t_i)\lc_{i,\zeta}(t_i)^2\lep_{i, \zeta}(t_i)^{-1}
=\lpq_{i,j,\mu}(t_i)\lep_{i, \zeta }(t_i)^{-1}
S_{i,\mu}(t_i).
\ea
\label{eq:FS}
\eneq

Note that $\lpq_{i,j,\mu}(t_i)\lep_{i, \zeta}(t_i)^{-1}$
belongs to $A[t_i]$ by $\eqref{Eq: divided}$.

We now shall use results of \cite{KK11} with $\beta=\La- \mu$.
\footnote{
The convention of $F_i, E_i$ in \cite{KK11} is different from ours.
The notation $E_iM$ is $e(*,i)M$ in \cite{KK11} and
$e(i,*)M$ in our paper, etc.}

In the diagram \cite[(5.6)]{KK11}, $K'_1$, $K_0'$ and $F_i^\La E_i^\La R^\La(\beta)$
in the first row vanish.
Hence the diagram reduces (under our notations) to
\eqn
&&0\To q^{2(\al_i,\mu)}A[t_i]\tens_A R^{a_\La}_A(\mu)
\To[\mathscr{A}] A[t_i]\tens_A R^{a_\La}_A(\mu)\To[p] e(i,*)R^{a_\La}_A(\mu-\al_i)e(i,*)
\To0.
\eneqn 
Here $ \mathscr{A}$ is an $ R_A^{a_\La}(\mu)$-bilinear map\footnote{ The homomorphism $\mathscr{A}$ is denoted by $A$ in \cite{KK11}. } 
and
$p$
is given by $p(f(t_i)\tens a)= f (x_1)e(i)\etens a$.

Under the notation in \cite[(5.10)]{KK11},
$\varphi_0(t_i)\seteq  \mathscr{A} (1)$ commutes with $ R_A^{a_\La} (\mu)$.
Hence $\varphi_0(t_i)\in A[t_i]$.
Since $\vphi_0(t_i)$ is the image of $ \mathscr{A}$,
we have $\vphi_0(x_1)e(i,*) R_A^{a_\La} (\mu-\al_i)e(i,*)=0$.
Setting $m=\ang{h_i,\mu}$, 
we can regard $e(i^m,*) \dM(s_i\mu)$  as an $e(i,*) R_A^{a_\La} (\mu-\al_i)e(i,*)$-module,
and hence we have
$\vphi_0(x_m)e(i^m,*) \dM(s_i\mu)=0$.
Then Lemma~\ref{Lem: minimal} implies
that  $\lep_{i, s_i\mu}(t_i)$ divides $\vphi_0(t_i)$. 
Since $\varphi_0(t_i)$ is a quasi-monic polynomial in $t_i$ of degree $\ang{h_i,\mu}$
by \cite[Proposition 5.3]{KK11},
$\varphi_0(t_i)$ coincides with
$\lep_{i, s_i\mu}(t_i)$ up to constant multiples.

By \cite[Lemma 5.5]{KK11}, we have
$$
F_{i,\mu}(t_i) = \gamma\varphi_0(t_i) S_{i,\mu}(t_i) + (
\text{a polynomial in $t_i$ of degree $<\deg_{t_i} S_{i,\mu}(t_i)$}),
$$for some $\gamma\in \corp^\times$.
Hence \eqref{eq:FS} implies that
\begin{align} \label{Eq: fraction}
\lep_{i, s_i\mu}(t_i)=\varphi_0(t_i) =\lpq_{i,j,\mu}(t_i)\lep_{i,\zeta}(t_i)^{-1}
\qt{up to a constant multiple,}
\end{align}
and $F_{i,\mu}(t_i)=\lep_{i, s_i\mu}(t_i) S_{i,\mu}(t_i)$  up to a constant multiple.
\end{proof}

Let $z$ be an indeterminate of degree $d\in 2\Z_{>0}$ such that $2(\al_i,\La)\in d\,\Z$, and set
$$A=\corp[z].$$
We take a family $a_\La=\{a_{\La,i}(t_i)\}$ of parameters
as in Section~\ref{Sec: cyclotomic} such that
$a_{\La,i}(t_i)\in A[t_i]$ is a monic polynomial and
\eq
a_{\La,i}(0)\in\corp^\times z^{2(\al_i,\La)/d}.
\eneq
Then one can consider the cyclotomic quiver Hecke algebra and
the modules  $\dM(\la,\mu;a_\La)$ for $\la, \mu\in \weyl \La$.

\begin{thm} \label{Thm: aff for dM}
For $\la, \mu \in  \weyl \La  $ with $\la \preceq \mu$,
$\dM(\la, \mu; a_\La)$ is an affinization of $\dM(\la,\mu)$.
\end{thm}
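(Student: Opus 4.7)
The plan is to verify the three defining conditions of Definition \ref{Def: aff} for $\dM(\la,\mu;a_\La)$ equipped with the endomorphism $z\in A$ of degree $d$. Freeness over $A=\corp[z]$ is immediate: Proposition \ref{Prop: dM properties}(i) gives finite generation and projectivity over the PID $A$. The specialization $\dM(\la,\mu;a_\La)/z\dM(\la,\mu;a_\La)\simeq\dM(\la,\mu)$ follows because homogeneity of $a_{\La,i}(t_i)\in A[t_i]$ of degree $2(\al_i,\La)$ together with $a_{\La,i}(0)\in\corp^\times z^{2(\al_i,\La)/d}$ forces $a_{\La,i}(t_i)\vert_{z=0}=c_i\,t_i^{\ang{h_i,\La}}$ for some $c_i\in\corp^\times$; hence $R_A^{a_\La}/zR_A^{a_\La}\simeq R^\La$ up to a harmless rescaling of parameters, and the cyclotomic $F_i$ and $E_j^*$ constructions commute with this specialization.

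The main task is then to establish $\mathfrak{p}_i\dM(\la,\mu;a_\La)\ne 0$ for every $i\in I$. First I would reduce to the case $\mu=\La$: picking a reduced expression $\mu=s_{j_1}\cdots s_{j_t}\La$ compatible with $\la\preceq\mu$, Proposition \ref{Prop: dM properties}(iv) gives the identifications $\dM(\la,s_{j_k}\cdots s_{j_t}\La;a_\La)=E_{j_k}^{*(n_k)}\dM(\la,s_{j_{k+1}}\cdots s_{j_t}\La;a_\La)$, and Lemma \ref{lem:affei}(ii) then propagates the affinization property one $E^*$-step at a time from $\dM(\la,\La;a_\La)$ outward to $\dM(\la,\mu;a_\La)$. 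For the remaining case $\mu=\La$, the isomorphism $\END_{R_A^{a_\La}}(\dM(\la,\La;a_\La))\simeq A$ of \eqref{Eq: End 1} rewrites $\mathfrak{p}_i$ as multiplication by $(-1)^{c_i}\lc_i(\dM(\la,\La;a_\La))(0)\in A$, where $c_i$ is the coefficient of $\al_i$ in $\La-\la$; so the theorem reduces to showing $\lc_i(\dM(\la,\La;a_\La))(0)\ne 0$ in $A$ for every $i$ and every $\la\in\weyl\La$.

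I would prove this by induction on $\Ht(\La-\la)$, with trivial base $\la=\La$ (where $\lc_i=1$). For the inductive step, Proposition \ref{prop: lep for dM}(i) supplies the factorization $\lc_i(\dM(\la,\La;a_\La))(t_i)\cdot\lc_i(\dM(s_i\la,\La;a_\La))(t_i)=\gamma^{-1}F_{i,\la}(t_i)$ in $A[t_i]$ for some $\gamma\in\corp^\times$. Since $A$ is a domain, the non-vanishing of both $\lc$-factors at $t_i=0$ reduces to $F_{i,\la}(0)\ne 0$; and since $a_{\La,i}(0)\ne 0$, it further reduces to showing that the $Q$-sum $S(0)\seteq(\sum_{\nu}\prod_{\nu_k\ne i}Q_{i,\nu_k}(0,x_k)e(\nu))\vert_{\dM(\la,\La;a_\La)}$ is nonzero in $A$. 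Using the explicit form $Q_{i,j}(0,x_k)=c_{ji}\,x_k^{-a_{ji}}$ with $c_{ji}\in\corp^\times$, one can rewrite this $Q$-sum in terms of the action of products of $x_k$'s on $E_j$-isotypic subspaces of $\dM(\la,\La;a_\La)$, and then bootstrap non-vanishing from the inductive formula of Proposition \ref{prop: lep for dM}(ii), which recursively computes $\lep_{i,\la}(t_i)$ from $\lpq_{i,j,\mu}(t_i)$ and smaller-height data $\lep_{i,s_i\zeta}(t_i)$ or $\lep_{i,\zeta}(t_i)^{-1}$.

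The hard part will be the $z$-valuation bookkeeping in this last step. Because $S(0)$ is homogeneous of positive degree in general, its image in $\End(\dM(\la,\La))\simeq\corp$ vanishes automatically, so non-vanishing of $S(0)$ in $A$ cannot be checked by specialization at $z=0$; instead one must verify through the recursive formulas of Proposition \ref{prop: lep for dM}(ii) that each sub-factor contributes the expected power of $z$, and in particular that the Case~2 division $\lpq_{i,j,\mu}(t_i)/\lep_{i,\zeta}(t_i)$ yields an element of $A[t_i]$ whose constant term remains nonzero rather than collapsing into a zero graded piece of $A$. Once this careful accounting is in place, the induction closes and the theorem follows.
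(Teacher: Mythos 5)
Your proposal reproduces the paper's overall strategy quite closely up to its final step: you reduce to the case $\mu=\La$ (the paper uses Lemma~\ref{lem:affei} for this, as you do), recast $\gp_i|_{\dM(\la,\La;a_\La)}$ as $\pm\lc_i(\dM(\la,\La;a_\La))(0)$ via \eqref{Eq: End 1}, run an induction on $\Ht(\La-\la)$, and invoke Proposition~\ref{prop: lep for dM}~(i) to reduce to showing $F_{i,\la}(0)\ne 0$ in $A$. All of this agrees with the paper.

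The gap is in the last reduction. You propose to show directly that the $Q$-sum $S(0)=\bigl(\sum_{\nu}\prod_{\nu_k\ne i}\qQ_{i,\nu_k}(0,x_k)e(\nu)\bigr)\big|_{\dM(\la,\La;a_\La)}$ is nonzero. But since $\qQ_{i,j}(0,v)=c\,v^{-a_{ji}}$ with $c\in\corp^\times$, one has $S(0)=c'\prod_{j\ne i}\gp_j^{-a_{ji}}\big|_{\dM(\la,\La;a_\La)}$ for some $c'\in\corp^\times$. Thus proving $S(0)\ne 0$ amounts to proving $\gp_j|_{\dM(\la,\La;a_\La)}\ne 0$ for the \emph{same} module $\dM(\la,\La;a_\La)$ (for the other colours $j$), which is exactly the statement at the same inductive height that you are trying to establish. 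Your appeal to Proposition~\ref{prop: lep for dM}~(ii) with ``$z$-valuation bookkeeping'' does not break this circularity, because that recursion computes $\lep_{i,\la}$ from quantities attached to the \emph{same} $\dM(\la,\La;a_\La)$ (namely $\lpq_{i,j,\mu}$, which is again a product of $Q$-evaluations on $\dM(\mu,\La;a_\La)=\dM(s_i\la,\La;a_\La)$, not arbitrary $\gp_j$'s on $\dM(\la,\La;a_\La)$), so nothing you wrote actually steps down in height for the relevant quantities.

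The paper's proof resolves this with a case distinction that your proposal is missing. If some $j\ne i$ satisfies $\ang{h_j,\la}<0$, then one observes $\gp_i|_{\dM(\la,\La;a_\La)}=\gp_i|_{\dM(s_j\la,\La;a_\La)}$ (because $F_j^{a_\La\,(n)}$ is an equivalence that does not touch $i$-coloured strands), so the claim follows from the induction hypothesis applied to $s_j\la$. Otherwise $\ang{h_i,\la}<0$ is forced, and one uses the identity $F_{i,\la}(t_i)=F_{i,s_i\la}(t_i)$ coming from Proposition~\ref{prop: lep for dM}~(i) to replace $F_{i,\la}(0)$ by $F_{i,s_i\la}(0)$, which is an explicit product of $a_{\La,i}(0)$, a unit, and powers of $\gp_j|_{\dM(s_i\la,\La;a_\La)}$ for $j\ne i$ — these are all nonzero by the induction hypothesis applied to the strictly smaller-height weight $s_i\la$, since $A$ is a domain. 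Without this ``pass to $s_j\la$ or $s_i\la$'' step, the induction does not close.
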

\begin{proof} We shall first treat the case $\mu = \La$.
We know already that $\dM(\la, \La ; a_\La)$
is a finitely generated projective $A$-module,
and $\corp\tens_A\dM(\la, \La ; a_\La)\simeq\dM(\la, \La)$.
Hence we have an exact sequence
$$
0 \rightarrow q^{d} \dM(\la, \La; a_{\La}) \buildrel{z}\over\rightarrow \dM(\la, \La; a_{\La}) \rightarrow \dM(\la, \La) \rightarrow  0.
$$
The only remaining condition which we have to check is
that $\gp_i\vert_{\dM(\la, \La; a_{\La})}\not=0$.

We shall use induction on $\Ht(\La-\la)$.
Since  the case $\la= \La$ is trivial, we may assume that
$\la\not=\La$.
If there exists $j\in I$ such that $j \ne i$ and  $\ang{h_j,\la}<0$, then
we have
$\gp_{i}\vert_{\dM(\la, \La; a_\La)}=\gp_{i}\vert_{\dM(s_j\la, \La; a_\La)}$
does not vanish by the induction hypothesis applied to $s_j\la$.
Hence, we may assume that $\langle h_i, \la \rangle < 0$.
We have
$$\gp_{i}\vert_{\dM(\la, \La; a_\La)}=\pm\lc_i\bl\dM(\la, \La; a_\La)\br(0).$$
By Proposition~\ref{prop: lep for dM} (i), 
$\lc_i\bl\dM(\la, \La; a_\La)\br(t_i)$ divides $F_{i,\la}(t_i)$.
Hence we have
$$
F_{i,\la}(0)\in \corp\,\lc_i\bl\dM(\la, \La; a_\La)\br(0).
$$
Hence it is enough to show that
$F_{i,\la}(0)$ does not vanish.
We have
$F_{i,\la}(t_i)= F_{i,s_i\la}(t_i)$ and
$ F_{i,s_i\la}(0)$ is a product of
$a_{i,\La}(0)$, a  power of $\lc_j(\dM(s_i\la, \La; a_\La))(0)$ ($j\in I$)
and an element of $\corp^\times$.
The induction hypothesis applied to $s_i\la$ shows that they does not vanish.
Hence $F_{i,\la}(0)$ does not vanish.

\smallskip
\noi
The case $\mu \prec \La$
follows from Lemma \ref{lem:affei} applied to $\dM(\la,\La; a_\La)$.
\end{proof}

\begin{prop} [\protect{cf.\  \cite[Proposition 10.2.3]{KKKO18}}] \label{Prop: La for D}
 Let $\lambda,\mu \in \wlP_+$, and $s,s',t,t' \in \weyl$ such that $\ell(s's) = \ell(s')+\ell(s)$, $\ell(t't) = \ell(t')+\ell(t)$, $s's\lambda \preceq t'\lambda$, and $s'\mu \preceq t't\mu$.
Then we obtain
\begin{enumerate} [\rm (i)]
\item $\dM(s's \lambda, t'\lambda)$ and $\dM(s'\mu, t't\mu)$ strongly commute,
\item $\Lambda(\dM(s's\lambda, t'\lambda), \dM(s'\mu, t't\mu))  = (s's\lambda + t'\lambda, t't\mu - s'\mu)$,
\item $ \tLa (\dM(s's\lambda, t'\lambda), \dM(s'\mu, t't\mu))  = (  t'\lambda, t't\mu - s' \mu )$,
\item $ \tLa ( \dM(s'\mu, t't\mu), \dM(s's\lambda, t'\lambda) )  = ( s'\mu-t't\mu, s's\lambda )$.
\end{enumerate}
\end{prop}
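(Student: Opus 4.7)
The plan is to deduce (i) and (ii) together from Lemma~\ref{Lem: La comm2}, and then to derive (iii) and (iv) from (ii) by elementary manipulation of the symmetric bilinear form, using $\tLa(M,N)=\tfrac12\bl\La(M,N)+(\wt M,\wt N)\br$ and the weight formula $\wt\dM(\la',\mu')=\la'-\mu'$. Set $M\seteq\dM(s's\lambda,t'\lambda)$ and $N\seteq\dM(s'\mu,t't\mu)$. By Proposition~\ref{Prop: dM properties}(ii) both $M$ and $N$ are real simple, and by Theorem~\ref{Thm: aff for dM} both admit affinizations, so the hypotheses of Lemma~\ref{Lem: La comm2} are available.

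It then suffices to establish the $q$-commutation
\[
[M]\,[N]\;=\;q^{-(s's\lambda+t'\lambda,\,t't\mu-s'\mu)}\,[N]\,[M]
\]
in $K(R\gmod)$. Under the identification $K(R\gmod)\simeq \Aqq[\n]$ of Theorem~\ref{Thm: categorification}, the class $[\dM(\lambda',\mu')]$ coincides (up to an explicit grading shift) with the unipotent quantum minor $D(\lambda',\mu')$, so the required relation reduces to a classical $q$-commutation identity for quantum unipotent minors valid in arbitrary symmetrizable type under the given length and preorder conditions. I would verify it by induction on $\ell(s)+\ell(t)$, peeling off one simple reflection at a time and using the recursive relations for $\dM(\lambda',\mu')$ obtained from the crystal-operator actions in Proposition~\ref{Prop: dM properties}(iii)--(iv), mirroring the symmetric-type argument of [KKKO18, Proposition~10.2.3] whose only type-specific input was the automatic existence of affinizations, which is now supplied in general by Theorem~\ref{Thm: aff for dM}. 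Granted this $q$-commutation, Lemma~\ref{Lem: La comm2} simultaneously yields (i) and (ii).

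For (iii), expanding
\[
\tLa(M,N)=\tfrac12\Bigl[(s's\lambda+t'\lambda,\,t't\mu-s'\mu)+(s's\lambda-t'\lambda,\,s'\mu-t't\mu)\Bigr]
\]
the $s's\lambda$-terms cancel and the $t'\lambda$-terms double, leaving $(t'\lambda,\,t't\mu-s'\mu)$. For (iv), the strong commutation of (i) combined with Lemma~\ref{Lem: La comm2} forces $\La(N,M)=-\La(M,N)$, and the analogous expansion of
\[
\tLa(N,M)=\tfrac12\Bigl[-(s's\lambda+t'\lambda,\,t't\mu-s'\mu)+(s'\mu-t't\mu,\,s's\lambda-t'\lambda)\Bigr]
\]
collapses (with $(t'\lambda,\cdot)$-terms cancelling) to $(s'\mu-t't\mu,\,s's\lambda)$.

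The main obstacle is the inductive bookkeeping for the $q$-commutation of quantum minors: although each step is a formal quantum-group manipulation in $\Aq[\n]$, tracking the exponents of $q$ through the induction so that the final exponent matches exactly $-(s's\lambda+t'\lambda,\,t't\mu-s'\mu)$ requires careful weight-chasing. Everything else in the plan is either an invocation of a result already established in the paper (Theorems~\ref{Thm: categorification} and~\ref{Thm: aff for dM}, Proposition~\ref{Prop: dM properties}, Lemma~\ref{Lem: La comm2}) or an elementary calculation with the bilinear form.
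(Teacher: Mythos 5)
Your proof takes essentially the same route as the paper: deduce (i) and (ii) from Lemma~\ref{Lem: La comm2} via a $q$-commutation relation in $K(R\gmod)\simeq\Aqq[\n]$, then get (iii) and (iv) by the bilinear-form calculation you carried out correctly. The one place where your plan is heavier than necessary is the $q$-commutation itself: the relation
$\mD(s's\lambda,t'\lambda)\,\mD(s'\mu,t't\mu)=q^{(s's\lambda+t'\lambda,\,s'\mu-t't\mu)}\,\mD(s'\mu,t't\mu)\,\mD(s's\lambda,t'\lambda)$
is a purely algebraic identity for quantum unipotent minors already valid for arbitrary symmetrizable Kac--Moody type; the paper simply cites it from \cite[Proposition 9.1.6]{KKKO18} (cf.\ \cite[(10.2)]{BZ05}), together with $[\dM(\la,\mu)]=\mD(\la,\mu)$ from \cite[Proposition 4.1]{KKOP18}, rather than re-deriving it by induction on $\ell(s)+\ell(t)$. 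So the ``type-specific'' obstacle in earlier treatments was never the minor commutation relation — it was the applicability of Lemma~\ref{Lem: La comm2}, which requires one of the modules to be real and admit an affinization; you correctly identify that this is what Theorem~\ref{Thm: aff for dM} now supplies. With that substitution — cite the $q$-commutation rather than re-prove it — your argument agrees with the paper's.
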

\begin{proof}
By \cite[Proposition 9.1.6]{KKKO18} (cf.\ \cite[(10.2)]{BZ05}), we have
$$
\mD(s's\lambda, t'\lambda) \mD(s'\mu, t't\mu) = q^{(s's\lambda + t'\lambda,  s'\mu - t't\mu )} \mD(s'\mu, t't\mu) \mD(s's\lambda, t'\lambda).
$$
Thus, the assertions follow from
$[\dM(\la,\mu)]=\mD(\la,\mu)$ (\cite[Proposition 4.1]{KKOP18})
and Lemma \ref{Lem: La comm2}.
\end{proof}

\vskip 2em

\section{Braiders in $R\gmod$}

\subsection{Non-degenerate braiders}

Let $R$ be a quiver Hecke algebra of \emph{arbitrary} type.
Then any module has a structure of braiders in $R\gmod$.
We set $\La = \rlQ_+$.

\begin{prop} \label{Prop: c braider}
Let $\beta \in \rlQ_+$, and let $M$ be a simple $R(\beta)$-module.
We assume that, for every $i\in I$, an $(R(\beta+\alpha_i), R(\alpha_i))$-bilinear homomorphism
$\coR_i\cl  M \conv R(\alpha_i) \rightarrow  q^{\phi_i} R(\alpha_i)  \conv  M$ is given.
Then, there exists a graded braider $(M, \coR_M, \phi)$ in $R\gmod$
such that the morphism
$ \coR_M(R(\alpha_i))\cl  M \conv R(\alpha_i) \rightarrow q^{\phi(\alpha_i)} R(\alpha_i) \conv M$ coincides with $\coR_i$ for every $i\in I$.
Moreover, such a graded braider  is unique up to an isomorphism.
\end{prop}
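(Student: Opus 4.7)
The plan is to define $\phi$ as the unique $\Z$-linear extension of $i\mapsto \phi_i$, i.e.\ $\phi(\sum_i n_i\alpha_i)=\sum_i n_i\phi_i$, and then to reduce producing the natural transformation $\coR_M(X)\cl M\conv X\to q^{\phi(\gamma)}X\conv M$ for $X\in R(\gamma)\gmod$ to producing, for each $\gamma\in\rlQ_+$, a single $(R(\gamma+\beta),R(\gamma))$-bimodule homomorphism $\coR_M(R(\gamma))\cl M\conv R(\gamma)\to q^{\phi(\gamma)}R(\gamma)\conv M$. By the Eilenberg--Watts philosophy, once such a bimodule map is at hand, the formula $\coR_M(X)\seteq \coR_M(R(\gamma))\otimes_{R(\gamma)}X$ yields a natural transformation in $X$, functorial by construction.

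To build these bimodule maps I will exploit the decomposition $R(\gamma)=\bigoplus_{\nu\in I^\gamma}R(\gamma)e(\nu)$ together with the canonical isomorphism $R(\gamma)e(\nu)\simeq R(\alpha_{\nu_1})\conv\cdots\conv R(\alpha_{\nu_n})$ of left $R(\gamma)$-modules, where $n=\Ht(\gamma)$. On each $\nu$-summand, I set $\coR_M^\nu$ to be the iterated composition applying the given $\coR_{\nu_1},\coR_{\nu_2},\ldots,\coR_{\nu_n}$ one simple-root factor at a time, pushing $M$ past each $R(\alpha_{\nu_k})$ in turn and accumulating the grading shift $\sum_k\phi_{\nu_k}=\phi(\gamma)$. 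Taking the direct sum $\coR_M(R(\gamma))\seteq \bigoplus_\nu\coR_M^\nu$ then gives a morphism of left $R(\gamma+\beta)$-modules automatically.

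The main obstacle will be to verify that $\coR_M(R(\gamma))$ is also right $R(\gamma)$-linear. Compatibility with the right action of the idempotents $e(\nu)$ is built into the definition, and compatibility with the $x_k$ follows from the hypothesis that each $\coR_i$ is an $R(\alpha_i)$-bimodule map. The essential case is right multiplication by $\tau_k$, which for $\nu_k\ne\nu_{k+1}$ shuffles the $\nu$- and $s_k\nu$-summands via the intertwiner $\tau_1\cl R(\alpha_i)\conv R(\alpha_j)\to R(\alpha_j)\conv R(\alpha_i)$ acting in the two middle tensor slots. Isolating those two slots, the compatibility reduces to checking that the two compositions $(R(\alpha_j)\conv\coR_i)\circ(\coR_j\conv R(\alpha_i))\circ(M\conv\tau_1)$ and $(\tau_1\conv M)\circ(R(\alpha_i)\conv\coR_j)\circ(\coR_i\conv R(\alpha_j))$ agree as maps $M\conv R(\alpha_i)\conv R(\alpha_j)\to R(\alpha_j)\conv R(\alpha_i)\conv M$, with a matching $\delta$-correction on both sides when $i=j$. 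I expect this to fall out from a direct diagram chase that uses only the bimodule compatibilities of $\coR_i$ and $\coR_j$, since $\tau_1$ is a specific element of $R(\alpha_i+\alpha_j)$ whose left and right actions can be transferred through the two given $\coR$'s.

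Once $\coR_M(R(\gamma))$ has been promoted to a bimodule map, extension to arbitrary $X\in R(\gamma)\gmod$ is by tensoring with $X$ over $R(\gamma)$, and the graded braider axiom \eqref{Eq: central obj} translates at the bimodule level into the identity $\coR_M(R(\gamma_1)\conv R(\gamma_2))=(R(\gamma_1)\conv\coR_M(R(\gamma_2)))\circ(\coR_M(R(\gamma_1))\conv R(\gamma_2))$, which holds by construction since both sides arise from iterating the same $\coR_i$'s on the $e(\nu)$-summands. Uniqueness is immediate: any graded braider $(M,\coR_M,\phi)$ with $\coR_M(R(\alpha_i))=\coR_i$ for all $i$ must, by \eqref{Eq: central obj} and naturality, agree on each summand $R(\alpha_{\nu_1})\conv\cdots\conv R(\alpha_{\nu_n})\simeq R(\gamma)e(\nu)$ --- and hence on all of $R(\gamma)$ --- with the construction above.
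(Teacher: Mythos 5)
Your plan to build the bimodule map $\coR_M(R(\gamma))$ summand-by-summand by iterating the given $\coR_{\nu_k}$'s, and then to extend by $-\otimes_{R(\gamma)}X$, is the right scaffolding and indeed produces the same map as the paper's construction; the uniqueness argument at the end is also fine. The problem is precisely the step you single out as ``the main obstacle'': you claim that right $R(\gamma)$-linearity of the assembled map---in particular compatibility with right multiplication by $\tau_k$ when $\nu_k\ne\nu_{k+1}$---``falls out from a direct diagram chase that uses only the bimodule compatibilities of $\coR_i$ and $\coR_j$.'' It does not. The element $\tau_1 e(i,j)$ lives in $R(\alpha_i+\alpha_j)$ but not in the subalgebra $R(\alpha_i)\tens R(\alpha_j)$, so right $R(\alpha_i)\tens R(\alpha_j)$-linearity of the $\coR$'s gives no handle on it, and left $R(\beta+\alpha_i+\alpha_j)$-linearity concerns a different algebra entirely. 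Tellingly, your chase nowhere uses that $M$ is \emph{simple}; if it worked, it would prove the proposition for arbitrary $M$ and arbitrary bilinear $\coR_i$'s, and that stronger statement is false.

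The paper closes exactly this gap by invoking simplicity. It localizes, setting $\widetilde{R}(\alpha)=\bR[x_1^{\pm1},\ldots,x_n^{\pm1}]\otimes_{\bR[x_1,\ldots,x_n]}R(\alpha)$, and uses Proposition~\ref{Prop: affinization}~(iii) to write $\coR_i\otimes_{R(\alpha_i)}\widetilde{R}(\alpha_i)=c_i z_i^{m_i}\,\RR_{M,\widetilde{R}(\alpha_i)}$ for some $c_i\in\bR$, $m_i\in\Z$; this is the only place simplicity is used, and it is indispensable. Since $\RR_{M,\widetilde{R}(\gamma)}$ is built from the intertwiners $\vphi_w$ of Lemma~\ref{Lem: intertwiners}, its $(R(\beta+\gamma),\widetilde{R}(\gamma))$-bilinearity is automatic. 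The twist by $a_\gamma=\sum_{\nu}\bigl(\prod_k c_{\nu_k}x_k^{m_{\nu_k}}\bigr)e(\nu)$ preserves bilinearity because $a_\gamma$ is \emph{central} in $\widetilde{R}(\gamma)$: on each $\nu$-summand it is a product of single-variable factors whose scalar $c_{\nu_k}$ and exponent $m_{\nu_k}$ depend only on the colour $\nu_k$, so it commutes with $\tau_l$ by place permutation when $\nu_l\ne\nu_{l+1}$, and when $\nu_l=\nu_{l+1}$ because it is symmetric in $x_l,x_{l+1}$ and $\partial_l$ kills symmetric polynomials. Restricting from $\widetilde{R}(\gamma)$ back to $R(\gamma)$ gives $\coR_M(R(\gamma))$. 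To repair your argument you would need to reproduce this reduction of the $\coR_i$'s to the canonical R-matrix form (or prove an equivalent rigidity statement about the bimodule $\HOM$-space); there is no purely formal diagram chase that avoids it.
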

\begin{proof}
For $\alpha \in \rlQ_+$ with $n=\Ht(\alpha)$, we set
$$
\widetilde{R}(\alpha) \seteq  \bR[x_1^{\pm1}, \ldots, x_n^{\pm1}] \otimes_{\bR[x_1, \ldots, x_n]} R(\alpha).
$$
The algebra structure of $\widetilde{R}(\alpha)$ is given
similarly to $R(\alpha)$ with the use of \eqref{eq:taupol}.

For $i\in I$, we set $\widetilde{R}_i  \seteq  R_i \otimes_{R(\alpha_i)} \widetilde{R}(\alpha_i) \cl  M \conv \widetilde{R}(\alpha_i) \rightarrow \widetilde{R}(\alpha_i)  \conv M $.
Then one has $ \widetilde{R}_i = c_i z_i^{m_i} \RR_{M,  \widetilde{R}(\alpha_i)}$ for some $c_i \in \bR$ and $m_i\in \Z$ by Proposition~\ref{Prop: affinization}~(iii).
Here, $z_i \in \End_{\widetilde{R}(\alpha_i)}(\widetilde{R}(\alpha_i)) $ is the right multiplication by $x_1$.
For any $\gamma \in \rlQ_+$ with $\Ht(\gamma)=m$, we define $R_M(\widetilde{R}(\gamma) )$
by $a_\gamma \RR_{M, \widetilde{R}(\gamma)}$, where
$$
a_\gamma = \sum_{\nu \in I^\gamma} \left( \prod_{k=1}^n c_{\nu_k} x_k^{m_{\nu_k}} e(\nu) \right) \in \widetilde{R}(\gamma).
$$
Since $a_\gamma$ belongs to the center of $\widetilde{R}(\gamma)$,  $R_M(\widetilde{R}(\gamma)) $ is an
$( R(\beta+\gamma), R(\gamma) )$-bilinear homomorphism.
Since it sends $M \conv R(\gamma)$ to $R(\gamma)\conv M$, we obtain an
$( R(\beta+\gamma), R(\gamma) )$-bilinear homomorphism $R_M(R(\gamma)) \cl  M \conv R(\gamma) \rightarrow R(\gamma)\conv M$.
Thus, it is enough to define $R_M(X) = R_M(R(\gamma)) \otimes_{R(\gamma)}X \cl  M \conv X \rightarrow X \conv M $.

The uniqueness is obvious by the construction.
\end{proof}

\begin{df}
Let $M$ be a simple $R$-module. A graded braider $(M, R_M,\phi)$  in $R\gmod$ is \emph{non-degenerate}  if
$R_M(L(i)) \cl  M \conv L(i) \rightarrow q^{\phi(\al_i)}L(i)\conv M$ is a non-zero homomorphism.
\end{df}

By Proposition~\ref{Prop: c braider} and the construction of a graded braider there,
we have the following uniqueness statement.

\Lemma\label{lem: c braider}
Let $M$ be a simple $R$-module. Then we have
\bnum
\item there exists a non-degenerate graded braider
structure $(M,R,\phi)$,
\item if $(M,R',\phi')$ is another non-degenerate graded braider structure,
then $\phi=\phi'$ and there exists a group homomorphism
$c\cl  \rlQ \rightarrow \bR^\times$ such that
$ R'(X) = c(\beta) R(X)$ for any $X \in R(\beta)\gmod$.
\ee
\enlemma
For a non-degenerate graded braider $(M,R,\phi)$, one has
\eq
R_{M}(R(\al_i))=\nR_{M,R(\al_i)}\qt{up to a constant multiple.}
\eneq
Note that we have
\eq
\phi(\al_i)=-\La(M,L(i))\qt{for a non-degenerate graded braider $(M,R,\phi)$.}
\eneq

\Prop\label{prop:effB}
Let $M$ be a simple module which is \ra.
Let $(M, R_M,\phi)$ be a non-degenerate graded braider in $R\gmod$.
Let $\beta\in\rlQ_+$ and let $N$ be a simple $R(\beta)$-module.
Then we have
\bnum
\item
$\La(M,N)\le -\phi(\beta)$.
\item
The morphism $R_M(N)\cl M\circ N\to q^{\phi(\beta)}N\circ M$
vanishes if $\La(M,N)< -\phi(\beta)$,
and does not vanish if $\La(M,N)=-\phi(\beta)$.
\ee
\enprop
\Proof
We argue by induction on $\Ht(\beta)$.
If $\beta=0$, then the assertion is trivial.
If $\beta\not=0$ take $i\in I$ such that $\eps_i(N)\not=0$.
Then set $N_0=\tE_i(N)$. Then we have
$N\simeq L(i)\hconv N_0$.
By Lemma~\ref{lem:additive},
we have
\eq
\La(M,N)\le \La(M,L(i))+\La(M,N_0)\le - \phi(\al_i) -\phi(\beta-\al_i)
= - \phi(\beta).
\label{eq:La(M,N)}
\eneq
Hence we obtain (i).

Let us show (ii).
If $\La(M,N)< - \phi(\beta)$, then
$R_M(N)=0$ since
$\HOM(M\circ N,N\circ M)=\corp\rmat{M,N}$ is concentrated in degree $\La(M,N)$.

Assume that $\La(M,N)= -\phi(\beta)$. Then $\La(M,N_0)= -\phi(\beta-\al_i)$
and $\La(M,N)=\La(M,L(i))+\La(M,N_0)$ by \eqref{eq:La(M,N)}.
Hence
$R_M(N_0)$ is equal to
$\rmat{M,N_0}$ up to a constant multiple,
and $R_M(N)=R_M(N_0)\circ R_M(L(i))$ is equal to
$\rmat{M,N}$ up to a constant multiple by  Lemma~\ref{lem:additive}.
\QED

\Cor\label{cor:phiadd}
Let $M$ be a simple module which is \ra,
and let $(M, R_M,\phi)$ be a non-degenerate graded braider in $R\gmod$.
Let $N$, $N'$ and $L$ be simple modules,
and assume that there is an epimorphism $N\conv N'\epito L$.
If $R_M(L)$ is non-zero,
then so are $R_M(N)$ and $R_{M}(N')$.
Equivalently, if
we have $\La\bl M,   L \br=
\phi\bl\wt(L)\br$, then
$\phi\bl\wt(N)\br=\La(M,N)$
and $\phi\bl\wt(N')\br=\La(M,N')$.
\encor
Note that $L\simeq N\hconv N'$ when either $N$ or $N'$ is \ra.
\Proof
By Lemma~\ref{lem:additive}, we have
$$\La(M,N)+\La(M,N')\ge\La\bl M,  L \br=
\phi\bl\wt(N)\br+\phi\bl \wt(N')\br
\ge \La(M,N)+\La(M,N').$$
Hence we obtain $\phi\bl\wt(N)\br=\La(M,N)$
and $\phi\bl\wt(N')\br=\La(M,N')$.
\QED

As seen below,
the real commutativity of a family of non-degenerate graded braiders can be checked easily.

\Lemma\label{lem:realbrai}
Let $\{S_a\}_{a\in A}$ be a family of simple modules, and
assume that every $S_a$ is \ra.
Let $(S_a,R_{S_a},\phi_a)$ be
the associated non-degenerate graded braiders.
The family $\{(S_a,R_{S_a},\phi_a)\}_{a\in A}$
is a real commuting family of graded braiders
if and only if $\phi_a(\wt(S_b))+\phi_b(\wt(S_a))=0$ for any $a,b\in A$.
\enlemma
\Proof \ 
Assume that $\phi_a(\wt(S_b))+\phi_b(\wt(S_a))=0$ for any $a,b\in A$. 
Since
$0\le \allowbreak 2\de(S_a,S_b)=\La(S_a,S_b)+\La(S_b,S_a)
\le\phi_a(\wt(S_b))+\phi_b(\wt(S_a))=0$,
we have $\phi_a(\wt(S_b))=\La(S_a,S_b)$ and
$\de(S_a,S_b)=0$. Hence $R_{S_a}(S_b)$ is an isomorphism
for any $a,b$.
Thus $\{(S_a,R_{S_a},\phi_a)\}_{a\in A}$
is a real commuting family of graded braiders. 

The converse can be proved similarly. 
\QED

\Exam\label{Ex: constants}
We shall provide an example of real simple modules $C_1$, $C_2$
where the associated non-degenerate braiders form
a real commuting family of braiders,
but any associated non-degenerate braiders
$(C_k, R_{C_k},\phi_k)$ ($k=1,2$)
cannot satisfy
$R_{C_k}(C_k)=\id_{C_k\circ C_k}$ ($k=1,2)$ and
$R_{C_2}(C_1)\circ R_{C_1}(C_2)=\id_{C_1\circ C_2}$ at once.
This is the reason why we should employ \eqref{cond:a}, \eqref{cond:b} 
in Definition~\ref{Def: real cf}
for the definition of real commuting family.

Take $\g=A_2$, $I=\{1,2\}$, $\qQ_{12}(u,v)=c_1u+c_2v$
for $c_1$, $c_2\in\corp^\times$.
Then $\bQ_{12}=c_1$ and $\bQ_{21}=c_2$.
Let $C_1=\ang{12}$, $C_2=\ang{21}$.
Here, the $R(\al_1+\al_2)$-module $C_1$ is the one-dimensional vector space with a basis $\ang{12}$
on which $R(\al_1+\al_2)$ acts by
$e(1,2)\ang{12}=\ang{12}$, $x_k\ang{12}=0$ ($k=1,2$) and
$\tau_1\ang{12}=0$. The $R(\al_1+\al_2)$-module $C_2$ is similarly defined.

Then $R_{C_1}$ is defined as follows for some constants $a_1,a_2\in\corp^\times$.
\eqn
\Bigl(R_{C_1}(L(1)_z)\Bigr)(\ang{12}\etens 1_z)&=&z^{-1}a_1\vphi_2\vphi_1(1_z\etens \ang{12})
=z^{-1}a_1\tau_2(\tau_1z+1)(1_z\etens \ang{12})\\
&=&a_1\tau_2\tau_1(1_z\etens \ang{12}),\\
\Bigl(R_{C_1}(L(2)_z)\Bigr)(\ang{12}\etens 2_z)&=&a_2\vphi_2\vphi_1(2_z\etens \ang{12})
=a_2(\tau_2(x_2-x_3)+1)\tau_1(2_z\etens \ang{12})\\
&=&a_2(z\tau_2+1)\tau_1(2_z\etens \ang{12}).
\eneqn

Similarly $R_{C_2}$ is defined for some constants $b_1,b_2\in\corp^\times$:
\eqn
\Bigl(R_{C_2}(L(1)_z)\Bigr)(\ang{21}\etens 1_z)
&=&b_1(z\tau_2+1)\tau_1(1_z\etens \ang{21}),\\
\Bigl(R_{C_2}(L(2)_z)\Bigr)(\ang{21}\etens 2_z)
&=&b_2\tau_2\tau_1(2_z\etens \ang{21}).
\eneqn

By a calculation, we can see that
\eqn
R_{C_1}(C_1)&&=a_1a_2c_1\id_{C_1\circ C_1},\\
R_{C_2}(C_2)&&=b_1b_2c_2\,\id_{C_2\circ C_2},\\
R_{C_2}(C_1)R_{C_1}(C_2)
&&=-a_1a_2b_1b_2c_1c_2\id_{C_1\circ C_2}.
\eneqn
\enexam

\subsection{Localization}
Let $\catC$ be a full subcategory of $R\gmod$ satisfying
\setlength{\mylength}{\textwidth}
\addtolength{\mylength}{-10ex}
\eq
&&\hs{5ex}\parbox{\mylength}{$\catC$ contains $\one$ and
is stable by taking subquotients, convolutions, extensions and
grading shifts.}
\eneq
Let $\{C_a\}_{a\in A}$ be a family of simple objects of $\catC$.
Let $(C_a,R_{C_a},\phi_a)$ be non-degenerate graded braiders in $R\gmod$.
Assume that every $C_a$ admits  affinization.

Assume that $\{(C_a,R_{C_a},\phi_a)\}_{a\in A}$ is a real commuting family of
graded braiders.

Set $\lG=\Z^{\oplus A}$ and $\lG_{\ge 0}=\Z_{\ge0}^{\oplus A}$,
and for $\al\in\lG_{\ge0}$, let $C^\al\in\catC$ be the object constructed in
Subsection~\ref{sec:graded}.
Let $\tcatC\seteq\catC[ C_a^{\circ-1}\mid a\in A]$ be the localization of $\catC$.
We still denote by $\conv$  the tensor functor of $\tcatC$.
Let $\Phi\cl \catC\to \tcatC$ be the canonical functor. They enjoy the following properties.
\bna
\item $\tcatC$ is an abelian category and
$\Phi$ is exact,
\item $\tC_a\seteq\Phi(C_a)$ is
an invertible central graded braider in $\tcatC$.
\ee
We define
$\tC^\al$ in $\tcatC$ for $\al\in\lG$ such that
$\tC^{\al+\beta}\simeq\tC^\al\conv\tC^{\beta}$ (up to a grading shift)
for $\al,\beta\in \lG$ and
$\tC^\al=\Phi(C^\al)$ for $\al\in\lG_{\ge0}$.
Then
\bna
\item[(c)] any object of $\tcatC$ is isomorphic to
$\tC^\al\conv\Phi(X)$ for some $X\in\catC$ and $\al\in \lG$.
\ee

\medskip

\Prop\label{prop:str}
Let $\Phi\cl \catC\to\tcatC$ be the canonical functor.
Then we have the following properties.
\bnum
\item Let $S$ be a simple object of $\catC$.
The object $\Phi(S)$ is a simple object of $\tcatC$ if
$\phi_a(\wt(S))=\La(C_a,S)$ for any $a\in A$.
Otherwise, $\Phi(S)$ vanishes.
\item Any object of $\tcatC$ has finite length.
\item For simple modules $S$, $S'\in\catC$ such that $\Phi(S)\simeq\Phi(S')\not\simeq0$,\label{item:3}
we have $S\simeq S'$. \label{item:5}
\ee
\enprop
\Proof In the course of the proof, we ignore grading shifts.

\noi
(i)\ Let $S$ be a simple object of $\catC$ such that
$\phi_a(\wt(S))\not=\La(C_a,S)$ for some $a\in A$.
Then $R_{C_a}(S)\cl C_a\conv S\to S\conv C_a$ vanishes.
Hence $\Phi(\id_S)=0$ and $\Phi(S)\simeq0$.

Now assume that $\phi_a(\wt(S))=\La(C_a,S)$ for any $a\in A$.
Then $R_{C^\al}(S)$ does not vanish for any $\al\in\lG_{\ge0}$
by Proposition~\ref{prop:effB}.
It means that $\Phi(\id_S)$ does not vanish.
Hence $\Phi(S)$ does not vanish.
Let us show that any subobject of $\Phi(S)$ is either zero or $\Phi(S)$.
In order to see this,
it is enough to show that for any $\al\in\lG_{\ge0}$ and
a non-zero morphism $f\cl X\to S\conv C^\al$ in $\catC$,
the morphism $\Phi(f)\cl\Phi(X)\to \Phi( S\conv C^\al)$ is an epimorphism.
The image $f(X)$ contains $S\sconv C^\al$.
Hence $X'\to S\sconv C^\al$ is an epimorphism where
$X'=f^{-1}(S\sconv C^\al)\subset X$.
Hence $\Phi(X')\to \Phi(S\sconv C^\al)$ is an epimorphism.
On the other hand, we have  $\phi_{C_\al}(\wt(S))=\La(C^\al,S)$,
and hence $S\sconv C^\al$ coincides with the image of $ R_{C_\al}(S)$.
Since $\Phi\bl R_{C_\al}(S)\br$ is an isomorphism, the morphism
$\Phi(S\sconv C^\al)\to\Phi(  S\conv C^\al )$ is an isomorphism. 
Hence $\Phi(X')\to \Phi(S\conv C^\al)$ is an epimorphism, which implies that
$\Phi(X)\to \Phi( S\conv C^\al)$ is an epimorphism.

\medskip\noi
(ii)\ It is enough to show
that $\Phi(X)$ has a finite length for any $X\in\catC$.
Let $0=X_{  -1  }\subset X_{  0 }\subset\cdots\subset X_n=X$ be a composition series
of $X$. Then (i) implies that
$\Phi(X_k/X_{k-1})$ is simple or vanishes. Hence $\Phi(X)$ has a finite length.

\medskip\noi (iii)\ By $\Phi(S')\not\simeq0$, (i) implies that
 $\phi_{C^\al}(\wt(S'))=\La(C_\al,S')$.
If $\Phi(S)\simeq\Phi(S')$, then there exist $\al\in\lG_{\ge0}$ and
 a non-zero morphism $C^\al\conv S\to q^{-\La(C_\al,S')}S'\conv C^\al$.
Hence Proposition~\ref{prop:simpleinj} implies that $S$ and $S'$ are isomorphic.
\QED

\Prop\label{prop:Z}
Assume that $A$ is a finite set.
Let $\lZ_\catC(C_a\mid a\in A)$ be the full subcategory of $\catC$
consisting of objects $X\in\catC$ such that
$R_{C_a}(X)$ is an isomorphism for any $a\in A$.
Assume that $\al\in\lG$ satisfies $\al-e_a\in\lG_{\ge0}$ for every $a\in A$,
and set $C=C^\al$.
\bnum
\item\label{item:1}
$\lZ_{R\gmod}(C_a\mid a\in A)$ contains $\one$ and
is stable by taking subquotients, convolutions, extensions,
grading shifts.
\item For $X\in\shc$, $X\in\lZ_\catC( C_a \mid a\in A)$ if and only if $R_{C}(X)$ is an isomorphism.
\item Setting $\lZ=\lZ_\catC(C_a \mid a\in A)$,
 the functor $\lZ[C_a^{\circ-1}\mid a\in A]\To\tcatC=\catC[ C_a^{\circ-1}\mid a\in A]$
is an equivalence of categories.
\ee
\enprop
\Proof 
\noi
(i) Let us show that if $X\in\lZ_{R\gmod}(C_a\mid a\in A)$, then any subobject $Y$ of $X$ belongs to
$\lZ_{R\gmod}(C_a\mid a\in A)$. For any $a\in A$, $R_{C_a}(X)$ is a monomorphism.
Hence $R_{C_a}(Y)\cl C_a\conv Y\to Y\conv C_a$ is a monomorphism.
Since $C_a\conv Y$ and $Y\conv C_a$ have same dimension,
$R_{C_a}(Y)$ is an isomorphism.
The other properties of $\lZ_{R\gmod}(C_a\mid a\in A)$ can be proved similarly.

\smallskip
\noi
(ii) It is obvious that $R_C(X)$ is an isomorphism for any $X\in\lZ$.
Conversely, assume that $R_C(X)$ is an isomorphism.
 Then $R_C(X)=\bl R_{C^{\al-e_a}}(X)\tens C_a\br\circ\bl
C^{\al-e_a}\tens R_{C_a}(X)\br$ is injective, and hence
$R_{C_a}(X)$ is injective.
Then we conclude that it is an isomorphism.

\smallskip
\noi
(iii)
We shall show that $\lZ[C^{\tens-1}]\to \catC[C^{\tens-1}]$ is 
essentially surjective (see Lemma~\ref{lem:C}).
It is enough to show that
for any $X\in\catC$, $\Phi(X)\in\tcatC$ is contained in the image of $\lZ[C^{\tens-1}]$.
By Corollary~\ref{cor:abcom}, 
for $n\gg0$ , every simple subquotient of $C^{\circ n}\conv X$
commutes with $C$.
Hence, by replacing $X$ with $C^{\circ n}\conv X$,
we may assume from the beginning that every simple subquotient of $X$
commutes with $C$. 
Then we have $\ell(X)=\ell(C^{\circ n}\conv X)=\ell(X\conv C^{\circ n})$
for any $n\in\Z_{\ge0}$.
Here $\ell$ denote the length function.
Since $\Im\bl R_{C^{\circ (m+n)}}(X)\br=\Im\bl R_{C^{\circ m}}(\Im R_{C^{\circ n}}(X))\br$,
$\ell\bl\Im R_{C^{\circ n}}(X)\br$ is a decreasing sequence in $n$.
Hence, by replacing $X$ with $\Im\bl R_{C^{\circ n}}(X)\br$ for $n\gg0$, we may assume from the beginning that
$\ell\bl\Im R_{C}(X)\br=\ell(X)=\ell(C\conv X)$.
Then we conclude that $R_C(X)\cl C\conv X\to X\conv C$ is an isomorphism.
\QED

\Exam  
Take  $\g=A_2$, $I=\{1,2\}$ and $C=L(1)$.
Set $ \tcatC =  R\gmod[C^{\circ-1}]$ and $\Phi\cl R\gmod \longrightarrow \tcatC$ the canonical functor.
Then $\Phi(L(2))\simeq   C^{\circ{-1}}    \conv   \Phi(L(1,2))$,
$\Phi(L(2,1))\simeq 0$ because 
$-1=\La(C,L(2,1))< \phi_C(\wt(L(2,1))= \allowbreak \La(C,L(2))  +\La(C,L(1)) \allowbreak =1$.
Hence any simple object in $\tcatC$ is isomorphic to
 $  L(1,2)^{\circ m} \conv C^{\circ\; n}$ for some $m\in\Z_{\ge0}$ and $n\in\Z$.
\enexam
\subsection{Left duals}

In this subsection, we take
$\beta\in\prtl$, a simple $R(\beta)$-module $C$
and a real non-degenerate graded braider $(C,R,\phi)$.
We assume that $C$ has an affinization.
We shall consider the localization $\tRm \seteq  R\gmod[C^{\circ-1}]$.

\Prop\label{prop:simpledual}
Let $X$ and $Y$ be simple $R$-modules.
Let
$\eps\cl X\conv Y\epito C$ be an epimorphism.
Then we have
\bnum
\item
$\phi(\wt(X))=\La(C,X)$,  $\phi(\wt(Y))=\La(C,Y)$, and
$\La(C,X)+\La(X,C)=0$,
\item $R_C(X)$ is an isomorphism,
\item
$(C^{\circ-1}\conv X,Y)$ is a dual pair in $\tRm$.
\ee
\enprop
\Proof
Since
$\phi\bl\wt(C)\br=\La(C,C)$,
Corollary~\ref{cor:phiadd} implies that
$\phi\bl\wt(X)\br=\La(C,X)$ and $\phi\bl\wt(Y)\br=\La(C,Y)$.
Since $\phi(\wt(C))=0$, one has
$\La(C,X)+\La(C,Y)=0$.

Taking the dual of $\eps$ we have a monomorphism
$$\eta\cl C\monoto q^n Y\conv X$$
for some $n\in\Z$.
Then by  \cite[Lemma 3.1.5 (i)]{KKKO18},
the compositions
\eq
&X\conv C\To[\eta] q^n X\conv Y\conv X\To[\eps]q^n C\conv X,\\
&C\conv Y\To[\eta] q^n Y\conv X\conv Y\To[\eps]q^n Y\conv C
\eneq
are non-zero.
Hence, we have
$\La(X,C)=-n$ and $\La(C,Y)=-n$.
Hence $\La(C,X)+\La(X,C)=0$.
Thus we obtain (i).
Note that we have
$$n=\phi(\wt(X))=-\phi(\wt(Y)).$$

\noi (ii) follows from (i).

\medskip
\noi
(iii)
Set $\tX=C^{\circ-1}\conv X \in \tRm $.
The isomorphism
$R_C(X)\cl C\conv X\to q^{-n}X\conv C$  in $\tRm$ 
induces $\tX\simeq q^{n}X\conv C^{\circ-1}$.
Let
\eqn
&&\teps\seteq C^{\circ-1}\circ \eps\cl \tX\conv Y
\to \one
\qtq
\teta\seteq\eta\conv  C^{\circ-1}
\cl \one\to  q^{n}Y\conv X\conv C^{-1}\simeq
Y\conv\tX
\eneqn
be morphisms in $\tRm$.

Note that
$$\xymatrix@C=8ex{C\conv\one\ar[r]^-{C\circ \tilde\eta}\ar[d]_{\eta}&C\conv Y\conv \tX\ar[d]_{R_C(Y)\circ\tX}\\
q^n Y\conv X\ar[r]^-{\sim}&q^{n}Y\conv C\conv \tX
}$$
commutes up to a constant multiple.
Indeed, after convoluting $C$ from the right,
the diagram commutes up to a constant multiple,
which follows from
$R_C(C)\in\corp^\times\id_{C\circ C}$ and
the commutativity of the diagram below:
$$\xymatrix@C=13ex{
C\conv \one\conv C\ar[r]_-{C\circ\teta\circ C}\ar[dd]^-{R_C(\one\circ C)}
\ar@/^2pc/[rr]|-{\akew[.5ex] C\circ\eta\akew[.5ex]}
&C\conv Y\conv \tX\conv C
\ar[r]\ar[d]^-{R_C(Y)\circ \tX\circ C}&C\conv Y\conv X\ar[d]^-{R_C(Y)\circ X}\\
& Y\conv C\conv \tX\conv C\ar[dr]^-{\sim}&Y\conv C\conv X\ar[d]^-{R_C(X)}\\
\one\conv C\conv C\ar[rr]^-{\eta\circ C}&&Y\conv X\conv C.
}
$$
Let us show that $(\tilde\eps,\tilde\eta)$ is a quasi-adjunction.

The composition
$$X\conv C\To[X\circ\eta] X\conv Y\conv X\To[\eps\circ X]C\conv X$$
is non-zero by \cite[Lemma 3.1.5 (i)]{KKKO18},
and hence it is an isomorphism since $C\conv X$ and $X\conv C$
are simple by (ii). 
It implies that the composition
$$\tX\conv \one\To[\tX\circ\teta] \tX\conv Y\conv \tX\To[\teps\circ\tX] \one\conv \tX$$
is an isomorphism.

Similarly, since
$C\conv Y\To[\eta\circ Y] Y\conv X\conv Y\To[Y\circ\eps]Y\conv C$
does not vanish,
Proposition~\ref{Prop: affinization} (ii) implies the composition is equal to
$R_C(Y)$ up to a constant multiple.
Hence
it is an isomorphism in $\tRm$.
Hence, the composition
 $$\one\conv Y\To[\teta\circ Y] Y\conv\tX\conv Y\To[Y\circ\teps] Y\conv\one$$
is also an isomorphism.
Indeed, it is enough to show that it is an isomorphism
after operating $C\conv*$,
which follows from the following commutative diagram (up to constant multiples).
$$\xymatrix@C=12ex@R=5ex{
C\conv \one\conv Y\ar[r]^-{C\circ\teta\circ Y}
\ar[dd]^{\bwr}&C\conv Y\conv \tX\conv Y\ar[r]^-{C\circ Y\circ\teps}
\ar[d]^-{R_C(Y)\circ\tX\circ Y} &C\conv Y\conv \one\ar[d]^-{R_C(Y)\circ\one}\\
&q^nY\conv C\conv\tX\conv Y\ar[r]^-{Y\circ C\circ\teps}\ar[d]^{\wr}&q^nY\conv C\conv\one\ar[d]^{\wr}\\
C\conv Y\ar[r]^{\eta\circ Y}&q^nY\conv X\conv Y\ar[r]^{Y\circ\eps}&q^nY\conv C.
}
$$
\QED

Let $i\in I$.
Let $z$ be an indeterminate of degree $(\al_i,\al_i)$
and set $\tL(i)=\corps[z]\ang{i}_z$.
Let $\tL(i)$ endow with the $R(\al_i)$-module structure by
$x_1\ang{i}_z=z\ang{i}_z$.
Then $(\tL(i),z)$ is an affinization of $L(i)$.

For $\ell\in\Z_{>0}$, we set
$L_\ell(i)=\tL(i)/z^\ell\tL(i)$.

\Th\label{th:gendual}
Let $i\in I$ and assume that
$\eps^*_i(C)=1$.
For $\ell\in\Z_{>0}$, we set
$K_\ell=E_i^*(C^{\circ\ell})$.
Then $C^{\circ-\ell}\conv K_\ell$ is a left dual of
$L_\ell(i)$ in $\tRm$.
\enth

The rest of this subsection is devoted to the proof of this theorem.
Set $m=\Ht(\beta)$.

For simplicity, we write
$$L_\ell\seteq  L_\ell(i)\qtq C_\ell\seteq C^{\circ\ell}.$$
Note that $L_1(i) = L(i)$.
We write $\ang{i}_\ell\in L_\ell$ for the image of
$\ang{i}_z\in\tL(i)$. Hence
$L_\ell=R(\al_i)\ang{i}_\ell$.

\medskip
We have an epimorphism
$K_1\conv L_1\epito C$.
Hence, Proposition~\ref{prop:simpledual}
implies that  $C^{\circ-1}\circ K_1$ is a left dual of $L_1$.
The same proposition and Lemma \ref{Lem: M L(i)} imply  
$\phi\bl\wt(L_1)\br=\La(C,L_1)=-(\wt(K_1),\wt(L_1))=-(\beta-\al_i,\al_i)$.
Hence we obtain
\eq
\phi(\al_i)=(\beta-\al_i,\al_i).\label{Eq: computation1}
\eneq

\begin{lem}   \label{Lem: sur and inj}
There exist a surjective $R(\ell \beta)$-module homomorphism
$$
\ep_\ell\cl K_\ell\conv  L_\ell\epito C_\ell
$$
 and an injective $R(\ell \beta)$-module homomorphism
$$
\eta_{\,\ell}\cl C_\ell\monoto q^{\ell \phi(\alpha_i) }   L_\ell \conv K_\ell.
$$
\end{lem}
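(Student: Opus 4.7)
My plan is to reduce the lemma to a nilpotency statement. By Frobenius reciprocity for $F_i^*\dashv E_i^*$, the identity $\id\in\END_R(K_\ell)$ corresponds to a canonical homomorphism $\Psi_\ell\cl K_\ell\conv R(\al_i)\to C_\ell$, which is surjective because $C_\ell$ is simple (as $C$ is real) and the image is the $R(\ell\beta)$-submodule generated by $K_\ell\ne 0$. Since $L_\ell=R(\al_i)/x_1^\ell R(\al_i)$, the map $\Psi_\ell$ factors through $K_\ell\conv L_\ell$ precisely when $x_n^\ell|_{K_\ell}=0$, where $x_n$ is the last generator of $R(\ell\beta)$ acting on $K_\ell$ via the embedding $R(\al_i)\hookrightarrow R(\ell\beta)$. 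This produces $\ep_\ell$, and $\eta_\ell$ is then constructed dually.

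For the nilpotency, fix an affinization $(\tC,z)$ of $C$. In the base case $\ell=1$, Lemma~\ref{lem:affei} gives that $\tK_1\seteq E_i^*(\tC)$ is an affinization of $K_1$, so $\END_R(\tK_1)=\corp[z]$; consequently $x_n\in\END_R(\tK_1)$ equals $f(z)$ for some polynomial $f\in\corp[z]$ with $f(0)=0$ (by degree considerations, since $x_n$ has positive degree $(\al_i,\al_i)$ so $f$ has no constant term), and specialising at $z=0$ yields $x_n|_{K_1}=0$. For general $\ell$, form $\tC^{\conv\ell}$, which carries $\ell$ commuting endomorphisms $z_1,\dots,z_\ell$ coming from the $\ell$ tensorands, and analyse $E_i^*(\tC^{\conv\ell})$ through a Mackey-type filtration whose subquotients (as $R(\ell\beta-\al_i)\otimes R(\al_i)$-modules) are indexed by which tensorand the final $\al_i$ is extracted from: the $(j+1)$-th subquotient is isomorphic up to grading to $\tC^{\conv j}\conv\tK_1\conv\tC^{\conv(\ell-1-j)}$, and on it $x_n$ acts as $f(z_{j+1})$ modulo lower filtration. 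Consequently the product $\prod_{j=1}^{\ell}(x_n-f(z_j))$ annihilates $E_i^*(\tC^{\conv\ell})$; specialising at $z_1=\cdots=z_\ell=0$ (which recovers $K_\ell=E_i^*(C_\ell)$ by the exactness of $E_i^*$ together with the right-exactness of convolutions) converts this product into $x_n^\ell$ since $f(0)=0$, proving $x_n^\ell|_{K_\ell}=0$.

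For $\eta_\ell$, I apply the $\star$-duality of $R\gmod$ to $\ep_\ell$: this yields an injection $C_\ell^\star\monoto L_\ell^\star\conv K_\ell^\star$ (with the standard grading shift for dualising convolutions). Identifying $C_\ell^\star,L_\ell^\star,K_\ell^\star$ with the original modules up to explicit grading shifts (using self-duality of $C_\ell$ as a real simple with affinization, the transparent self-duality of $L_\ell=\corp[x_1]/(x_1^\ell)$, and the self-duality of $K_\ell$ inherited from that of $C_\ell$) produces $\eta_\ell$. Injectivity is automatic from the simplicity of $C_\ell$, and the total grading shift reduces via \eqref{Eq: computation1} to the required $q^{\ell\phi(\al_i)}$.

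The main obstacle is verifying that $x_n$ acts as $f(z_{j+1})$ on each subquotient of the Mackey filtration. While this identification is natural for the piece in which no shuffle of the extracted $\al_i$ is needed, on the remaining pieces one must carefully trace how the shuffle of the $\al_i$-letter from the interior of $\tC^{\conv\ell}$ past later tensorands to the last position modifies the $R(\al_i)$-action. The computation reduces to manipulating the intertwiners $\vphi_k$ of Lemma~\ref{Lem: intertwiners}; the point is that the shuffle introduces corrections lying in the ideal $(z_1,\dots,z_\ell)$, so that modulo lower filtration the $x_n$-action on the subquotient reduces cleanly to $f(z_{j+1})$.
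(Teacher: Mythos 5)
Your proposal reaches the same reduction as the paper's proof: the existence of $\ep_\ell$ hinges on the nilpotency $x_{\ell m}^{\,\ell}\big|_{K_\ell}=0$ (where $x_{\ell m}$ is the last polynomial generator, acting on $K_\ell = E_i^*(C_\ell)$ via the extra $R(\al_i)$-tensorand), and $\eta_\ell$ is obtained by $\star$-dualizing $\ep_\ell$ after tracking grading shifts. However, your route to the nilpotency is considerably heavier than the one in the paper. The paper observes that $C_\ell=C^{\conv\ell}$ is simple (since $C$ is real) with $\eps^*_i(C_\ell)=\ell$, so that $\lep^*_i(C_\ell)(t_i)=t_i^{\ell}$ (for simple modules $\lep^*_i$ is a pure power of $t_i$, of degree $\eps^*_i$), and then applies the $E_i^*$-analogue of Lemma~\ref{Lem: minimal} (condition (d), using that $C_\ell$ is generated by $E_i^{*\,\ell}C_\ell$) to conclude immediately that $x_{\ell m}^{\,\ell}\,e(\ell\beta-\al_i,\al_i)\,C_\ell=0$. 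You instead re-derive the nilpotency by passing to the affinization $\tC^{\conv\ell}$, running the Mackey filtration on $E_i^*(\tC^{\conv\ell})$ to show $\prod_{j=1}^{\ell}(x_n-f(z_j))=0$, and specializing at $z_1=\cdots=z_\ell=0$ using $f(0)=0$. This can be made to work (and gives an alternate, more hands-on proof of the special case of Lemma~\ref{Lem: minimal} used here), but it costs far more effort to verify; the machinery of $\lep^*_i$ already built in Section~\ref{Sec: Affinizations and R-matrices} is designed to render exactly this kind of computation unnecessary.

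Two imprecisions should be flagged. First, in your Mackey-filtration step, the justification that ``the shuffle introduces corrections lying in the ideal $(z_1,\dots,z_\ell)$'' is not correct: the commutation relation $\tau_k x_{k+1} - x_k\tau_k = e(\nu)$ (when $\nu_k=\nu_{k+1}=i$) produces constant corrections that are not in the ideal. What actually saves the argument is that these correction terms land in the \emph{lower filtration pieces}, so they do not contribute to the induced action on the subquotient; this is the content of your parenthetical ``modulo lower filtration,'' and the appeal to the ideal should simply be removed. (Your own rank-one example, if you work it out, shows the off-diagonal entry is $1$, not an element of $(z_1,z_2)$.) Second, in the dualization step, $C_\ell$ need not be self-dual under the hypotheses of the lemma; the paper handles this by writing $(C_\ell)^\star\simeq q^a C_\ell$ for some $a$, deducing $(K_\ell)^\star\simeq q^a K_\ell$ because $E_i^*$ commutes with $\star$, and observing that the two $q^a$'s cancel. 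Your phrasing ``self-duality of $C_\ell$'' should be replaced by this cancellation argument (or by the weaker ``self-dual up to a grading shift'').
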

\Proof
Since $C_\ell$ is a simple module which satisfies
$\eps^*_i(C_\ell)=\ell$, we have
$$(x_{\ell m})^\ell e(\ell\beta-\al_i,i)C_\ell=0$$
by Lemma~\ref{Lem: minimal} and $\lep^*_i(C_\ell)(t)=t^\ell$. 
Hence we obtain a non-zero morphism
$$\eps_\ell\cl K_\ell\conv L_\ell\to C_\ell$$
 by
$u\etens (x_1^k\ang{i}_z)\mapsto (x_{\ell m})^k u$ for $u\in e(*,i)C_\ell$.
It is an epimorphism.

Recall that  $(M \conv N)^\star \simeq q^{ (\wt(M), \wt(N) )} N^\star \conv M^\star$ for
$R$-modules $M$ and $N$.
Taking the dual of $K_\ell\conv L_\ell\epito C_\ell$, we have
\begin{align*}
(C_\ell)^\star  \monoto q^{(\alpha_i, \ell \beta - \alpha_i ) }
L_\ell^\star \conv (K_\ell)^\star
\simeq  q^{(\alpha_i, \ell \beta - \alpha_i ) - (\ell-1)(\alpha_i, \alpha_i) }    L_\ell\conv  (K_\ell)^\star.
\end{align*}
Since $E_i^*$ commutes with the duality $\star$,
setting $(C_\ell)^\star\simeq q^a C_\ell$, we obtain
$$(K_\ell)^\star\simeq \bl E_i^*C_\ell\br^\star\simeq  E_i^*\bl(C_\ell)^\star\br\simeq q^a E_i^*(C_\ell)\simeq q^a K_\ell.$$
Hence, we obtain
\begin{align*}
C_\ell\monoto q^{ \ell ( \beta - \alpha_i, \alpha_i ) }    L_\ell\conv K_\ell=
q^{\ell\phi(\al_i)} L_\ell\conv K_\ell.
\end{align*}
Here the last equality follows from $\eqref{Eq: computation1}$.
\QED

We now define
\begin{align*}
\tK_\ell  \seteq   (C_\ell)^{\circ(-1)} \conv K_\ell\in \tRm.
\end{align*}

Note that
$$\tK_\ell \simeq q^{ - \ell\phi(\ell\beta-\al_i)}
K_\ell\conv(C_\ell)^{\circ(-1)}\simeq
q^{\ell\phi(\al_i)}K_\ell\conv(C_\ell)^{\circ(-1)}.$$
Hence, Lemma \ref{Lem: sur and inj} gives morphisms
\begin{equation} \label{Eq: ep eta}
\begin{aligned}
\tep_{\ell}\cl  \tK_\ell \conv  L_\ell \twoheadrightarrow \triv\qtq\teta_{\ell}\cl   \triv \rightarrowtail  L_\ell \conv  \tK_\ell.
\end{aligned}
\end{equation}

We shall show that
$(\teps_\ell,\teta_\ell)$ is a quasi-adjunction
by induction on $\ell$.

We know already that
the assertion holds when $\ell=1$.
Suppose that $\ell>1$
and $(\tep_{k}, \teta_{k})$ is a quasi-adjunction for $k<\ell$.

\bigskip
Let us take $a,b\in\Z_{>0}$ such that $\ell=a+b$.
By the definition, we have the following short exact sequences:
\begin{equation} \label{Eq: seq for L}
\begin{aligned}
&0 \longrightarrow q_i^{2b} L_{a} \To[\;z^b\;] L_\ell \longrightarrow L_b \longrightarrow 0.
\end{aligned}
\end{equation}
Here, $q_i=q^{(\al_,\al_i)/2}$, and $q_i^{2b} L_{a} \To[z^b] L_\ell$ is given by
$\ang{i}_{a}\mapsto z^b\ang{i}_{\ell}$.

For $M \in R(\beta)\Mod$ and $N \in R(\gamma)\Mod$,
there is a short exact sequence (see \cite[Proposition 2.18]{KL09}) 
\begin{align*}
 0 \longrightarrow  M \conv (E_i^*N) \longrightarrow E_i^*(M \conv N) \longrightarrow q^{-( \alpha_i, \gamma)}( E_i^*M) \conv  N  \longrightarrow 0.
\end{align*}
Applying this to $M=C_{a}$ and $N=C_b$, we obtain the exact sequence
\begin{align}
0 \longrightarrow C_{a} \conv K_b \To[\;f\;] K_\ell
 \To[\;g\;] q^{-b(\beta,\alpha_i)} K_{a}\conv C_b  \longrightarrow 0.
\label{eq:extabl}
\end{align}

By $\eqref{Eq: computation1}$, we have
\begin{align*}
K_{a}\conv C_b \simeq q^{-b\phi(a\beta-\alpha_i  ) } C_b\conv K_{a}
= q^{ b(\beta-\al_i, \alpha_i ) }  C_b\conv K_{a}
\end{align*}
in $\tRm$,
which yields an exact sequence in $\tRm$:
\begin{equation} \label{Eq: seq1}
\begin{aligned}
0\To C_{a} \conv K_b \To K_\ell
 \To  q^{-b(\al_i\al_i)}  C_b\conv K_{a}\longrightarrow 0.
\end{aligned}
\end{equation}

{}From $\eqref{Eq: seq1}$, we obtain an exact sequence in $\tRm$
\begin{equation} \label{Eq: seq for tK}
\begin{aligned}
0 \longrightarrow\tK_{b}  \longrightarrow \tK_{\ell} \longrightarrow q_i^{-2b} \tK_{a} \longrightarrow 0.
\end{aligned}
\end{equation}

In order to see that $(\tep_{\ell}, \teta_{\ell})$ is a quasi-adjunction,
we shall apply Lemma \ref{Lem: ep eta ind} to
the short exact sequences \eqref{Eq: seq for L} and \eqref{Eq: seq for tK}.
Hence it is enough to show that the following two diagrams in $\tRm$ commute up to constant
multiples:
\eq
&&\ba{c}\xymatrix{
 \tK_b \conv L_\ell  \ar[rr] \ar[d] &&  \tK_b \conv L_{b}   \ar[d]^{\tep_{b}} \\
 \tK_\ell \conv L_\ell  \ar[rr]^{\tep_\ell} && \triv \\
 q_i^{2b} \tK_\ell \conv L_{a}  \  \ar[rr] \ar[u]
&&\tK_{a} \conv L_{a}  \ar[u]_{\tep_{a}},
 }\ea
\label{diagram:eps}
\eneq
and
\eq
&&\ba{c}\xymatrix{
 q_i^{-2b} L_\ell\conv\tK_a   && L_a\conv \tK_a \ar[ll] \\
L_\ell\conv\tK_\ell \ar[u]\ar[d] && \triv\ar[ll]_{\teta_{\ell}}\ar[u]_{\teta_a} \ar[d]^{\teta_{b}}\\
  L_{b}\conv\tK_\ell
&&L_{b}\conv \tK_{b}\;.\ar[ll]
 }\ea\label{diagram:eta}
\eneq
Since the diagram \eqref{diagram:eta} is the dual of
\eqref{diagram:eps} after the exchange of $a$ and $b$,
it is enough to show the commutativity (up to constant
multiples) of \eqref{diagram:eps}.
Then it is reduced to the commutativity (up to constant multiples)
of the following two diagrams in $R\gmod$:
\eq
&&\ba{c}\xymatrix@C=7ex@R=6ex{
C_{a}\conv K_b\conv L_\ell\ar[r]\ar[d]^{f}&C_{a}\conv K_b\conv L_b\ar[d]^{\eps_b}\\
K_\ell\conv L_\ell\ar[r]^{\eps_\ell}&C_\ell
}\ea\label{dia:A}
\eneq
and
\eq
&&\ba{c}
\xymatrix@C=7ex{
q_i^{2b}K_\ell\conv L_{a}\ar[r]^-{g}\ar[dd]^{z^b}
&q^{-b(\beta-\al_i,\al_i)}K_{a}\conv C_b\conv L_{a}\ar[d]^{R_{C_b}(L_{a})}\\
&K_{a}\conv L_{a}\conv C_b\ar[d]^{\eps_{a}}\\
K_\ell\conv L_\ell\ar[r]^{\eps_\ell}&C_\ell.
}\ea\label{dia:B}
\eneq
Let us first show the commutativity of \eqref{dia:A}.

The module $C_{a}\conv K_b\conv L_\ell$ is generated
by $s\seteq u\etens v\etens\ang{i}_\ell$ as an $R(\ell\beta)$-module.
Here, $u\in C_{a}$ and $v\in e(*,i)C_b$.
Then the element $s$ is sent by
$C_{a}\conv K_b\conv L_\ell\to C_{a}\conv K_b\conv L_b\to C_\ell$
as follows:
\eqn
&&u\etens v\etens\ang{i}_\ell\longmapsto u\etens (v\etens\ang{i}_{b})
\longmapsto u\etens v.
\eneqn
On the other hand, by
$C_{a}\conv K_b\conv L_\ell\to K_\ell\conv L_\ell\to C_\ell$,
the element $s$ is sent as follows:
\eqn
&&u\etens v\etens\ang{i}_\ell\longmapsto (u\etens v)\etens\ang{i}_{\ell}
\longmapsto u\etens v.
\eneqn
Since these two images coincide,  the diagram \eqref{dia:A} commutes.

\medskip
Now, let us show the commutativity (up to constant multiples)
of \eqref{dia:B}.

To do it, we need an explicit construction of the morphism $g$.
We regard $E^{ * (b)}_iC_b$ as the subspace
$$T\seteq\set{ v\in e\bl b(\beta-\al_i), i^b\br C_b}%
{\text{$\bl e\bl b(\beta-\al_i)\br)\etens x_k\br v=0$ for $1\le k\le
b$}}\subset C_b,$$
because $e(*,i^b)C_b\simeq E^{ * (b)}_iC_b\etens L(i^b)$
as an $R(b(\beta-\al_i))\tens R(b\al_i)$-module.
Then,
we have an epimorphism
$$T\conv L(i)^{\circ b}\epito C_b$$
given by $v\etens \ang{i}^{\circ b}\mapsto v$.

Note that $K_\ell$ is generated (as an $R(\ell\beta-\al_i)$-module) by
$C_{a}\conv K_b\subset K_\ell$ and
the elements $\bl e(a\beta-\al_i)\etens H\br(u\etens v)$
with $u\in e(*,i)C_{a}$ and $v\in T$.
Here $H=\tau_{bm}\cdots\tau_1e(i,b\beta) \allowbreak \in R(b\beta+\al_i)$.
Note that $e(a\beta-\al_i)\etens H\in R(\ell\beta)$.
Then $g\cl K_\ell\to q^{-b(\beta,\alpha_i)} K_{a}\conv C_b$ is given by
$$\text{$g(C_{a}\conv K_b)=0$ and
$g\bl\bl e(a\beta-\al_i)\etens H\br(u\etens v)\br=u\etens v$.}$$

Let $F\seteq \eps_\ell \circ  z^b $ and $G\seteq\eps_{a}\circ R_{C_b}(L_{a})\circ g$
be the two compositions $  q_i^{2b}K_\ell\conv L_a\to C_\ell$ appearing in \eqref{dia:B}.
Then it is easy to see that
$C_{a}\conv K_b\conv L_a\subset K_\ell\conv L_a$ is sent to
zero by $G$,
and $F(C_{a}\conv K_b\conv L_a)=0$
follows from
$x_{bm}^be(*,i)\,C_b=0$.

Hence in order to see the commutativity of \eqref{dia:B},
it is enough to show that
$F$ and $G$ send
$w\seteq\bl e(a\beta-\al_i)\etens H\br(u\etens v) \etens \lan  i \ran_a $
to the same element in $C_\ell$ up to a constant multiple.

Let us first calculate $F(w)$.
We have
\eq
&&\ba{rl}
F(w)&=\eps_\ell\Bigl(\bl e(a\beta-\al_i)\etens H\br(u\etens v)
\etens z^b\ang{i}_{\ell}\Bigr)\\[1ex]
&=x_{\ell m}^b\bl e(a\beta-\al_i)\etens H\br \eps_\ell
\bl(u\etens v)\etens \ang{i}_{\ell}\br\\[1ex]
&=\bl e(a\beta-\al_i)\etens x_{bm+1}^b\br
\bl e(a\beta-\al_i)\etens H\br(u\etens v)\\
&=\bl e(a\beta-\al_i)\etens (x_{bm+1}^b\tau_{bm}\cdots\tau_1)\br(u\etens v).
\ea
\label{eq:F(w)}
\eneq

\smallskip
Now, we shall calculate $G(w)$, which is given by
\eq
G(w)=\eps_{a}\Bigl(u\etens  R_{C_b}(L_{a})
(v\etens \ang{i}_{a})\Bigr).\eneq

\Lemma
For $v\in T$, we have \ro up to a constant multiple\rf
$$R_{C_b}(\tL(i))(v\etens\ang{i}_z)
=x_{bm+1}^b\tau_{bm}\cdots\tau_1\bl \ang{i}_z\etens v\br.$$
\enlemma
\Proof Since $(C, R_C,\phi)$ is non-degenerate,
we have $R_C(\tL(i))=\nR_{C,\tL(i)}$ up to a constant multiple.
Since $\La(C_b,L(i))=\La(T,L(i))+\La(L(i)^{\circ b},L(i))$,
the  epimorphism $T\conv L(i)^{\circ b}\epito C_b$ implies
that the diagram
\eq&&\hs{3ex}\xymatrix@C=12ex{
T\conv L(i)^{\circ b}\conv \tL(i)\ar[r]^-{\nR_{L(i)^{\circ b},\tL(i)}}\ar[d]
&T\conv \tL(i)\conv L(i)^{\circ b}\ar[r]^-{\nR_{T,\tL(i)}}
&\tL(i)\conv T\conv L(i)^{\circ b}\ar[d]\\
C_b\conv \tL(i)\ar[rr]^{\nR_{C_b,\tL(i)}}&&\tL(i)\conv C_b
}\label{di:rcl}
\eneq
commutes. We have
\eqn
\nR_{L(i)^{\circ b},\tL(i)}(\ang{i}^{\circ b}\etens\ang{i}_z)
&&=\RR_{L(i)^{\circ b},\tL(i)}(\ang{i}^{\circ b}\etens\ang{i}_z)
=\vphi_b\cdots\vphi_1(\ang{i}_z\etens\ang{i}^{\circ b}).
\eneqn
Now we shall show
\eq
\vphi_b\cdots\vphi_1(\ang{i}_z\etens\ang{i}^{\circ b})
=x_{b+1}^b\tau_b\cdots\tau_1(\ang{i}_z\etens\ang{i}^{\circ b})
\label{eq:phib}
\eneq
by induction on $b$.
Assuming for $b-1$, we have
\eqn
\vphi_b\cdots\vphi_1(\ang{i}_z\etens\ang{i}^{\circ b})
&&=\vphi_bx_{b}^{b-1}\tau_{b-1}\cdots\tau_1(\ang{i}_z\etens\ang{i}^{\circ b})\\
&&=x_{b+1}^{b-1}\vphi_b\tau_{b-1}\cdots\tau_1(\ang{i}_z\etens\ang{i}^{\circ b})\\
&&=x_{b+1}^{b-1}(x_{b+1}\tau_b-\tau_bx_{b+1})
\tau_{b-1}\cdots\tau_1(\ang{i}_z\etens\ang{i}^{\circ b})\\
&&=x_{b+1}^{b}\tau_b\tau_{b-1}\cdots\tau_1(\ang{i}_z\etens\ang{i}^{\circ b}).
\eneqn
Thus the induction proceeds, and we have obtained \eqref{eq:phib}.

\medskip
Hence we obtained
$$\nR_{L(i)^{\circ b},\tL(i)}(\ang{i}^{\circ b}\etens\ang{i}_z)
=x_{b+1}^{b}\tau_b\tau_{b-1}\cdots\tau_1(\ang{i}_z\etens\ang{i}^{\circ b}).$$

On the other hand, since $\al_i\not\in W^*(T)$, we have
by \cite[Proposition 2.12]{KKOP18}
$$\nR_{T,\tL(i)}(y\etens \ang{i}_z)
=\tau_{b(m-1)}\cdots\tau_1(\ang{i}_z\etens y)$$
for $y\in T$.
Hence we have
\eqn
&&\hs{-1ex}\bl\nR_{T,\tL(i)}\conv L(i)^{\circ b}\br\circ \bl T\conv\nR_{L(i)^{\circ b},\tL(i)}\br(y\etens
\ang{i}^{\circ b}\etens\ang{i}_z)\\
&&\hs{2ex}=\bl\nR_{T,\tL(i)}\conv L(i)^{\circ b}\br\Bigl( y\etens
\bl x_{b+1}^b\tau_b \cdots\tau_1(\ang{i}_z\etens\ang{i}^{\circ b})\br\Bigr)\\
&&\hs{2ex}=x_{bm+1}^b\tau_{bm}\cdots\tau_{bm-b+1}\Bigl(\bl\nR_{T,\tL(i)}
( y\etens\ang{i}_z)\br\etens\ang{i}^{\circ b}\Bigr)\\
&&\hs{2ex}
=x_{bm+1}^b\tau_{bm}\cdots\tau_1(\ang{i}_z\etens y\etens\ang{i}^{\circ b}).
\eneqn
Together with the commutativity of
diagram \eqref{di:rcl},  we obtain the desired result.
\QED

Now, we resume the calculation of $G(w)$.
By the lemma above, we obtain (up to a constant multiple)
\eqn
G(w)&&=\eps_{a}\Bigl(u\etens
(x_{bm+1}^b\tau_{bm}\cdots\tau_1)(\ang{i}_{a}\etens v)\Bigr)\\
&&=\Bigl( e(a\beta-\al_i)\etens(x_{bm+1}^b\tau_{bm}\cdots\tau_1)\Bigr)
\Bigr(\bl\eps_{a}(u\etens \ang{i}_{a})\br\etens v\Bigr)\\
&&=\Bigl( e(a\beta-\al_i)\etens(x_{bm+1}^b\tau_{bm}\cdots\tau_1)\Bigr)
(u\etens  v).
\eneqn

Comparing it with \eqref{eq:F(w)},
we obtain $F(w)=G(w)$ up to a constant multiple.
Thus the induction proceeds, and we conclude that
$(\teps_\ell,\teta_\ell)$ is a quasi-adjunction for any $\ell$.
It completes the proof of Theorem~\ref{th:gendual}.

\vskip 2em

\section{Localization of $\catC_w$}
We fix an element $w\in \weyl$ throughout this section.
\subsection{Graded braiders associated with a Weyl group element}

For $\La \in \wlP_+$, we define
\begin{align*}
\dC_{w, \La} \seteq  \dM(w \Lambda, \Lambda ).\qquad
\end{align*}
If no confusion arises, we simply write $\dC_\La$ 
for $\dC_{w, \La}$. 
For $i\in I$, we simply set
$$
\dC_i \seteq  \dC_{\La_i}. 
$$
Note that, for $\La = \sum_{k=1}^n \Lambda_{i_k}$, $ \dC_\La$ is a self-dual real simple $R$-module which is isomorphic to  $\dC_{\La_{i_1}} \conv \dC_{\La_{i_2}} \conv \cdots \conv \dC_{\La_{i_n}}  $ up to a grading shift \cite[Proposition 4.2]{KKOP18}.
We shall show that $ \dC_i$ is a real graded braider in the category $R\gmod$
with the non-degenerate graded braider structure.

For $i\in I$, we set
$$
\la_i \seteq
\left\{
\begin{array}{ll}
w \La_i + \La_i& \text{ if } w \La_i \ne \La_i,  \\
0& \text{ if } w \La_i  =  \La_i.
\end{array}
\right.
$$
Note that $\dC_i \simeq \triv$ if and only if $ w\La_i = \La_i$. By Lemma \ref{Lem: M L(i)}, we have
\begin{align}
\La(\dC_i, L(j)) = (\alpha_j, \alpha_j) \ep_j^*(\dC_i) + (\alpha_j, w\La_i - \La_i) = (\la_i, \alpha_j).
\end{align}

Applying Lemma~\ref{lem: c braider}, we have the non-degenerate
graded braider
$(\dC_i, \coR_{\dC_i}, \dphi_i)$ for $i\in I$.

\begin{prop} \label{Prop: canonical braiders}
The family $(\dC_i, \coR_{\dC_i}, \dphi_i)_{ i\in I}$
is a real commuting family of graded braiders in $R\gmod$ with the non-degenerate graded braider structure  and
$$
\dphi_i(\beta)  = - ( \la_i , \beta) \qquad \text{for any $\beta\in \rtl$.}
$$
\end{prop}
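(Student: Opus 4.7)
The two claims --- the real commuting property and the explicit formula $\dphi_i(\beta) = -(\la_i, \beta)$ --- will both follow from already-established tools once the correct normalisations are extracted. The main inputs are: the existence and essential uniqueness of a non-degenerate graded braider structure on any simple module (Lemma \ref{lem: c braider}), together with the identity $\dphi_i(\al_j) = -\La(\dC_i, L(j))$ recorded there; the fact that $\dC_i$ is real and admits an affinization (Proposition \ref{Prop: dM properties}(ii) and Theorem \ref{Thm: aff for dM}); and the criterion of Lemma \ref{lem:realbrai}, which reduces real commutativity of non-degenerate braiders for which each factor admits an affinization to a single skew-symmetry identity on the $\dphi$'s.

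First I would derive the formula for $\dphi_i$. By the normalisation in Lemma \ref{lem: c braider}, $\dphi_i(\al_j) = -\La(\dC_i, L(j))$; the computation $\La(\dC_i, L(j)) = (\la_i, \al_j)$, carried out in the paragraph preceding the statement using Lemma \ref{Lem: M L(i)} together with $\ep^*_j(\dC_i) = \delta_{ij}$ (itself an instance of Proposition \ref{Prop: dM properties}(iv) when $w\La_i \ne \La_i$, and trivial otherwise since then $\dC_i \simeq \one$), then gives $\dphi_i(\al_j) = -(\la_i, \al_j)$. Extending by $\Z$-linearity in $\beta$ yields $\dphi_i(\beta) = -(\la_i, \beta)$ for every $\beta \in \rtl$.

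With the formula in hand, Lemma \ref{lem:realbrai} reduces the real commuting condition to verifying
\[
\dphi_i(\wt(\dC_j)) + \dphi_j(\wt(\dC_i)) = 0
\]
for all $i, j \in I$. Using $\wt(\dC_j) = w\La_j - \La_j$, this becomes
\[
(\la_i, w\La_j - \La_j) + (\la_j, w\La_i - \La_i) = 0.
\]
In the principal case $w\La_i \ne \La_i$ and $w\La_j \ne \La_j$, I would expand $\la_i = w\La_i + \La_i$, $\la_j = w\La_j + \La_j$, and invoke the $\weyl$-invariance $(w\La_i, w\La_j) = (\La_i, \La_j)$ together with the symmetry of $(\cdot,\cdot)$; the eight resulting inner products cancel in pairs. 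The degenerate cases $w\La_i = \La_i$ or $w\La_j = \La_j$ are immediate: in the former $\la_i = 0$ and $\dC_i \simeq \one$ has zero weight, so both terms vanish (symmetric for the latter). I do not anticipate any serious obstacle; the only mildly technical point is the equality $\ep^*_j(\dC_i) = \delta_{ij}$, but this is a direct application of Proposition \ref{Prop: dM properties}(iv) once one observes that $w\La_i \wle s_i\La_i$ whenever $w\La_i \ne \La_i$.
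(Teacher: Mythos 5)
Your proof is correct and follows essentially the same route as the paper's: the paper's entire argument is the citation of Lemma~\ref{lem:realbrai} together with the identity $(\la_i,\wt(\dC_j))+(\la_j,\wt(\dC_i))=0$, which you verify explicitly by expansion and $\weyl$-invariance, and the formula $\dphi_i(\beta)=-(\la_i,\beta)$ is obtained exactly as in the paragraph preceding the statement, via $\dphi_i(\al_j)=-\La(\dC_i,L(j))$, Lemma~\ref{Lem: M L(i)}, and $\ep^*_j(\dC_i)=\delta_{ij}$ (justified by Proposition~\ref{Prop: dM properties}(iv)).
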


\Proof
The reality follows from Lemma~\ref{lem:realbrai} and
$(\la_i,\wt(\dC_j))+(\la_j,\wt(\dC_i))=0$.
\QED

\begin{thm} \label{Thm: R Ci iso}
For $i\in I$ and any $R(\beta)$-module $N$ in $\catC_w$,
$ \coR_{\dC_i}(N)$ is an isomorphism.
\end{thm}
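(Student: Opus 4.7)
The plan is to prove that for each $i \in I$, every object of $\catC_w$ lies in the full subcategory $\lZ_i \seteq \lZ_{R\gmod}(\dC_i)$ consisting of modules $X$ for which the braider morphism $\coR_{\dC_i}(X)$ is an isomorphism. Here $\coR_{\dC_i}$ is the non-degenerate graded braider morphism constructed in Proposition~\ref{Prop: canonical braiders}. By Proposition~\ref{prop:Z}\,(i), $\lZ_i$ is stable under subquotients, convolutions, extensions, and grading shifts, and it contains $\one$. Combining the description of $\catC_w$ following Proposition~\ref{Prop: membership} with the identification of $\preceq$-cuspidal modules in Proposition~\ref{Prop: cupspidal for dM}\,(ii) shows that $\catC_w$ is the smallest such subcategory containing the cuspidal determinantial modules $L_k \seteq \dM(w_k\La_{i_k}, w_{k-1}\La_{i_k})$, where $w_k = s_{i_1}\cdots s_{i_k}$ for a fixed reduced expression $\underline{w} = s_{i_1}\cdots s_{i_\ell}$. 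Hence it suffices to show $L_k \in \lZ_i$ for every $k$.

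The key computation applies Proposition~\ref{Prop: La for D} to the pair $(\dC_i, L_k)$, with dominant weights $\lambda = \La_i$, $\mu = \La_{i_k}$, and Weyl group data $s = e$, $s' = w$, $t' = e$ on the $\lambda$-side (so that $\dM(s's\lambda, t'\lambda) = \dM(w\La_i, \La_i) = \dC_i$), and $s' = w_k$, $t' = w_{k-1}$, $t = e$ on the $\mu$-side (so that $\dM(s'\mu, t't\mu) = L_k$). The length-additivity conditions are trivial, and the $\preceq$-conditions $w\La_i \preceq \La_i$ and $w_k\La_{i_k} \preceq w_{k-1}\La_{i_k}$ follow from $w \geq e$ and $w_k > w_{k-1}$ in the Bruhat order. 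The proposition then delivers both the strong commutativity of $\dC_i$ and $L_k$, and the identity
\begin{equation*}
\Lambda(\dC_i, L_k) = (w\La_i + \La_i,\; w_{k-1}\La_{i_k} - w_k\La_{i_k}) = (\la_i, \beta_k),
\end{equation*}
where $\beta_k = w_{k-1}\La_{i_k} - w_k\La_{i_k}$ and $L_k \in R(\beta_k)\gmod$.

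To finish, recall that $\dC_i$ is real simple by Proposition~\ref{Prop: dM properties}\,(ii) and admits an affinization by Theorem~\ref{Thm: aff for dM}, and the formula $\dphi_i(\beta) = -(\la_i, \beta)$ from Proposition~\ref{Prop: canonical braiders} gives $-\dphi_i(\beta_k) = (\la_i, \beta_k) = \Lambda(\dC_i, L_k)$. This is exactly the non-vanishing criterion of Proposition~\ref{prop:effB}\,(ii), so $\coR_{\dC_i}(L_k)\colon \dC_i \conv L_k \to q^{\dphi_i(\beta_k)} L_k \conv \dC_i$ is a non-zero morphism and therefore coincides with the R-matrix $\rmat{\dC_i, L_k}$ up to a constant multiple by Proposition~\ref{Prop: affinization}\,(ii). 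Strong commutativity forces both $\dC_i \conv L_k$ and $L_k \conv \dC_i$ to be simple of equal dimension, so this non-zero morphism is automatically an isomorphism, placing $L_k$ in $\lZ_i$.

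The principal subtlety is reconciling the abstract non-degenerate braider $\coR_{\dC_i}$ of Proposition~\ref{Prop: canonical braiders} with the concrete R-matrix framework of Section~\ref{sec:R-matrices}: the grading shift $\dphi_i(\beta_k)$ in the target of $\coR_{\dC_i}(L_k)$ must coincide with the degree $\Lambda(\dC_i, L_k)$ of the R-matrix, and this is precisely the identity $(\la_i, \beta_k) = -\dphi_i(\beta_k)$ extracted above. Once this match is secured, the uniqueness clause in Proposition~\ref{Prop: affinization}\,(ii) identifies $\coR_{\dC_i}(L_k)$ with $\rmat{\dC_i, L_k}$ up to scalar, and strong commutativity upgrades non-vanishing to invertibility.
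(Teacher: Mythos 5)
Your proposal follows essentially the same route as the paper: reduce to the cuspidal determinantial modules $\dM(w_k\La_{i_k},w_{k-1}\La_{i_k})$ via the minimality characterization of $\catC_w$, compute $\Lambda(\dC_i,\dS_k)=(\la_i,\beta_k)=-\dphi_i(\beta_k)$ from Proposition~\ref{Prop: La for D}, and invoke Proposition~\ref{prop:effB} together with strong commutativity to upgrade non-vanishing to an isomorphism. The one slip is in how you instantiate Proposition~\ref{Prop: La for D}: it takes a \emph{single} quadruple $(s,s',t,t')$ applied to both $\lambda$ and $\mu$, so you cannot use one choice ``on the $\lambda$-side'' and a different one ``on the $\mu$-side''; the correct consistent choice is $s'=w_k$, $s=w_k^{-1}w$, $t'=e$, $t=w_{k-1}$, which produces exactly the two modules $\dC_i$ and $L_k$ and the same formula $(w\La_i+\La_i,\,w_{k-1}\La_{i_k}-w_k\La_{i_k})$, so the argument is repaired without further change.
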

\begin{proof}
We take a reduced expression $ \underline{w} = s_{i_1} \cdots s_{i_\ell} $ of $w$, and set
$w_0 = \id$, $w_k = s_{i_1}\cdots s_{i_k}$, $ \beta_k \seteq  w_{k-1} \alpha_{i_k} $ and
$\dS_k \seteq  \dM(w_k \La_{i_k}, w_{k-1} \La_{i_k} )$
for $k=1, \ldots, \ell$. Then $\dS_k$ is the cuspidal $R(\beta_k)$-module corresponding to the positive root $\beta_k$
by Proposition \ref{Prop: cupspidal for dM}.
Setting $\la = \La_i$, $\mu = \La_{i_k}$,  $s' = w_k$, $s = w_k w$, $ t' = \id$ and $ t = w_{k-1}$ in Proposition \ref{Prop: La for D},
we have that $\dC_i$ and $ \dS_k$ strongly commute and
\begin{align*} \label{Eq: deg for R Rnorm}
\La( \dC_i, \dS_k ) = (  \la_i, \beta_{k}) = -\dphi_i(\beta_{k}).
\end{align*}
Hence Proposition~\ref{prop:effB} implies that
$R_{\dC_i}(\dS_k)$ is an isomorphism.
Let $\catC$ be the full subcategory of
$\catC_w$ consisting of objects $M$ such that $R_{\dC_i}(M)$ is an isomorphism.
Then $\catC$ has the following proprieties:
\bna \item
$\catC$ is stable by taking extensions, kernels, cokernels, and convolutions,
\item $\catC$ contains all the $\dS_k$'s.
\ee
Since $\catC_w$ is the smallest subcategory of $\catC_w$
which has these properties, $\catC=\catC_w$.
\end{proof}

We set $\lG \seteq  \Z^{\oplus I}$ and define a $\Z$-bilinear map $\gH\cl \lG \times \lG \rightarrow \Z$ by
$$
\gH( e_i, e_j ) \seteq  - \tLa(\dC_i, \dC_j) = (\La_i, w \La_j - \La_j).
$$
Then, for $i,j\in I$, we have
\begin{align*}
\dphi_i( -  \wt( \dC_j )) &= ( w\La_i + \La_i, w\La_j - \La_j)
 = (   \La_i, w\La_j) - (  \La_j, w\La_i)\\
&= \gH(e_i, e_j) - \gH(e_j, e_i).
\end{align*}
Thanks to Theorem \ref{Thm: graded localization} and
Proposition~\ref{Prop: canonical braiders}, we have the localization of $\catC_w$
by the non-degenerate graded braiders $\dC_i$, which we denote by
$$
\lRg \seteq  \catC_w [ \dC_i^{\conv -1 } \mid i \in I ].
$$
By the choice above of $ \gH$, for $ \alpha = \sum_{i\in I} m_i e_i \in  \lG_{\ge0}$, we have
$$
 \dC_\La  \simeq (\triv, \alpha) , \qquad \dC_\La^{-1}  \simeq q^{\gH(\al,\al)}(\triv, -\alpha)
$$
where $ \La = \sum_{i\in I} m_i \La_i$ by Lemma \ref{Lem: self-dual}.
Thus, for $\La = \sum_{i\in I} a_i \La_i \in \wlP$, 
we simply write $\dC_\La := ( \triv,  \sum_{i\in I} a_i e_i )\in \lRg$ .

Therefore, there exists  a monoidal functor $\Phi\cl \catC_w \to \lRg$
such that
\bnum
\item the objects $\Phi(\dC_i)$ are invertible in $\lRg$,
\item  $\Phi\bl R_{\dC_\La}(X) \br$ is an isomorphism for any $\La\in\wlP_+$ and $X\in \Cw$, 
\item for any simple object $S$ of $\Cw$,
the object $\Phi(S)$ is simple in $\tcatC_w$,\label{item: iii}
 \item
every simple object of $\lRg$ is isomorphic to $\dC_{ \La } \circ \Phi(S) $ for some simple object $S$ of $\catC_w$ and $ \La \in  \wlP $,
\item
 for two simple objects $S$ and $S'$ in $\catC_w$ and $\La,\La'\in \wlP$,
$\dC_{\La} \conv  \Phi(S) \simeq \dC_{\La'} \conv  \Phi(S') $ in $\lRg$
if and only if $q^{\gH(\La,\mu)} \dC_{\La+\mu} \conv S  \simeq  q^{\gH(\La',\mu)} \dC_{\La'+\mu} \conv  S'  $ in
$\catC_w$
for some $\mu\in \wlP$
such that $\La+\mu, \La'+\mu \in \wlP_+$, 
\item the category $\lRg$ is abelian and every objects has
 finite length. \label{item: vi}
\end{enumerate}
Here, \eqref{item: iii}--\eqref{item: vi} follow from Proposition~\ref{prop:str}.
 The grading shift functor $q$ and the contravariant functor $M \mapsto M^\star$ on $\catC_w$   are extended to $\lRg$.
Moreover we have
$$(M\conv N)^\star \simeq q^{(\wt(M),\wt(N))}N^\star\conv M^\star
\qt{for $M$, $N\in\tCw$.}$$
Since every simple object of $\lRg$ is isomorphic to $\dC^{\al} \circ \Phi(S) $ for some simple module $S$ of $\catC_w$,
one can prove the following.
\Lemma
For any simple module $M\in\lRg$, there exists a unique $n\in\Z$ such that
$q^nM$ is self-dual.
\enlemma

 \begin{cor}
The Grothendieck ring $K(\lRg)$ of $\lRg$ is isomorphic to
 the left ring of quotients of the ring $K(\catC_w)$ with respect to the  multiplicative subset
$$S\seteq \Bigl\{q^k\prod_{i\in I}[\dC_i]^{a_i}\;\big|\;
k\in \Z, \ (a_i)_{i \in I} \in \Z_{\ge 0}^{I} \Bigr\}.$$
\end{cor}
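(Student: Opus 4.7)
The argument has three steps: verify that $S$ is an Ore set in $K(\catC_w)$, factor $K(\Phi)$ through the Ore localization, and check that the resulting map is an isomorphism.

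\medskip
\noindent\emph{Step 1 (Ore conditions).} By Theorem~\ref{Thm: R Ci iso}, for every simple $N\in\catC_w$ the morphism $R_{\dC_i}(N)\cl \dC_i\conv N\isoto q^{\phi_i(\wt N)}\,N\conv\dC_i$ is an isomorphism in $R\gmod$; hence in $K(\catC_w)$,
$$[\dC_i]\,[N]=q^{\phi_i(\wt N)}\,[N]\,[\dC_i]=q^{-(\la_i,\wt N)}\,[N]\,[\dC_i].$$
Because $\catC_w$ has finite length and $K(\catC_w)$ is generated over $\Zq$ by classes of simples (which have pure weights), every element of $S$ quasi-commutes up to a $\Zq$-unit with every element of $K(\catC_w)$. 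Thus $S$ satisfies both the left and right Ore conditions, and the ring of left quotients $K(\catC_w)[S^{-1}]$ is well defined in the usual sense.

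\medskip
\noindent\emph{Step 2 (Factorization through the Ore localization).} Since $\Phi\cl \catC_w\to\lRg$ is exact (Proposition~\ref{Prop: localization}), it induces a $\Zq$-algebra homomorphism $K(\Phi)\cl K(\catC_w)\to K(\lRg)$. The objects $\Phi(\dC_i)$ are invertible in $\lRg$ by construction, so their classes are units in $K(\lRg)$; together with the invertibility of $q$, this gives $K(\Phi)(S)\subset K(\lRg)^\times$. By the universal property of Ore localization, $K(\Phi)$ factors uniquely as
$$K(\catC_w)\longrightarrow K(\catC_w)[S^{-1}]\xrightarrow{\;\widetilde\Phi\;}K(\lRg).$$

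\medskip
\noindent\emph{Step 3 (Bijectivity).} For surjectivity, every simple object of $\lRg$ is isomorphic to $\dC_\La\conv\Phi(M)$ for some $\La\in\wlP$ and some simple $M\in\catC_w$; writing $\La=\sum_i n_i\La_i$ with $n_i\in\Z$, the class $[\dC_\La\conv\Phi(M)]$ equals $(q\text{-power})\cdot\prod_i[\Phi(\dC_i)]^{n_i}\cdot K(\Phi)([M])$, which manifestly lies in the image of $\widetilde\Phi$. For injectivity it suffices to show $K(\Phi)$ is injective, since elements of $K(\Phi)(S)$ are units in $K(\lRg)$. Both $K(\catC_w)$ and $K(\lRg)$ are free $\Z$-modules on the iso classes of self-dual simples (with $q^\Z$ acting by grading shift), because the two categories are abelian with finite-length objects. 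By Theorem~\ref{Thm: R Ci iso} combined with Proposition~\ref{prop:effB}, every simple $M\in\catC_w$ satisfies $\phi_i(\wt M)=\La(\dC_i,M)$ for every $i\in I$, so Proposition~\ref{prop:str}~(i) gives that $\Phi(M)$ is a nonzero simple of $\lRg$; Proposition~\ref{prop:str}~(iii) then says distinct self-dual simples of $\catC_w$ map to distinct simples of $\lRg$. Hence $K(\Phi)$ sends a $\Zq$-basis injectively to part of a $\Zq$-basis, so $K(\Phi)$ is injective and $\widetilde\Phi$ is an isomorphism.

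\medskip
\noindent The main delicate point is the verification that every simple of $\catC_w$ has nonzero image under $\Phi$; this is not true for arbitrary real commuting families of braiders (Proposition~\ref{prop:str}~(i) allows vanishing), and is specific to the present $\catC_w$ situation, relying crucially on the strong conclusion of Theorem~\ref{Thm: R Ci iso} that $R_{\dC_i}(N)$ is an isomorphism for \emph{every} object of $\catC_w$, not merely nonzero.
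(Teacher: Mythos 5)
Your proposal follows the same overall plan as the paper's proof: $S$ is a left denominator set because $[\dC_i]$ $q$-commutes with classes of simples, the universal property of Ore localization gives the map $\widetilde\Phi$, surjectivity comes from the description of simples in $\lRg$ as $\dC_\La\conv\Phi(M)$, and then one shows injectivity.

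Where you diverge is the injectivity step, and your version is actually the more complete one. The paper disposes of injectivity in one line — ``it is enough to show that the left multiplication on $K(\catC_w)$ by $[\dC_i]$ is injective; it is well known'' — which on its own gives only that the canonical map $K(\catC_w)\to S^{-1}K(\catC_w)$ is injective, not that $\widetilde\Phi$ is. One also needs $K(\Phi)$ injective, which you supply correctly: by Theorem~\ref{Thm: R Ci iso} every simple of $\catC_w$ has nonvanishing $R_{\dC_i}(-)$, so by Proposition~\ref{prop:str}~(i) its image under $\Phi$ is a nonzero simple, and Proposition~\ref{prop:str}~(iii) plus self-duality tells you $\Phi$ embeds the $\Zq$-basis of self-dual simples of $K(\catC_w)$ into that of $K(\lRg)$. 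Once $K(\Phi)$ is injective, your reduction $\widetilde\Phi(s^{-1}a)=0\Rightarrow K(\Phi)(a)=0\Rightarrow a=0$ is clean and uses only that $\widetilde\Phi(s)$ is a unit. The paper most likely has this in mind implicitly via the listed properties (iii)--(v) above the corollary, but you make it explicit, which is the delicate point you rightly flag at the end.

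One small imprecision in Step~1: an element of $S$ quasi-commutes up to a power of $q$ with each \emph{simple} class (the power depending on the weight), but not with an arbitrary element of $K(\catC_w)$; a sum of simples of different weights is not conjugated to a $\Zq$-multiple of itself. The Ore condition nonetheless holds, since for $r=\sum c_M[M]$ and $s=[\dC_i]$ one can take $s'=s$ and $r'=\sum c_M q^{\phi_i(\wt M)}[M]$; you should phrase it this way rather than claiming quasi-commutation with every element. You also silently use that $[\dC_i]$ is not a zero-divisor (so that the left reversibility half of the denominator-set condition is vacuous); this is what the paper's ``well known'' refers to, coming from $K(\catC_w)\simeq\Aqq[\n(w)]$ being a domain, and is worth stating.
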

\begin{proof}
Note that the set
$S$ is a left denominator set in $K(\catC_w)$ since $[\dC_i]$ is $q$-commuting with any class of simple object.
Since $S$ is invertible in $K(\lRg)$, we have
an algebra map
$S^{-1}K(\catC_w)\to K(\lRg)$.
It is evidently surjective.
In order to see injectivity, it is enough to show that
the left multiplication on $K(\catC_w)$ by $[\dC_i]$ is injective.
It is well known.
\end{proof}

\begin{rem}
 The ring $\Q(q^{1/2}) \otimes_{\Z[q^{\pm1}]}K(\lRg)$, which is a localization of the quantum cluster algebra $\Q(q^{1/2}) \otimes_{\Z[q^{\pm1}]}K(\catC_w)$,
is the subalgebra of the skew field of fractions $K$ of an initial quantum torus $\Q(q^{1/2})[X_i^{\pm1} \ |   \ i \in \K]$ generated by all the 
cluster variables 
and the inverses of the frozen variables. Thus it is the quantum cluster algebra in the sense of \cite{BZ05}.
\end{rem}

\subsection{Rank-one case}
Let us briefly describe the localization $\tcatC_w$ without proof
when $\g$ is of rank one.
Set $I=\{i\}$. Let $\tR$ be the localization of $R\gmod$ by $L(i)$.
Note that $R\gmod$ is a symmetric monoidal category,
since $\RR_{M,N}$ (see Section~\ref{sec:R-matrices}) is always an isomorphism.
We have the decomposition
$$\tR=\soplus_{n\in \Z}\tR_0\conv L(i)^{\circ n}.$$
Here $\tR_0$ is the full subcategory of $\tR$
consisting of objects of weight $0$. In another words,
it is the full subcategory consisting of the objects of the form
$M\conv L(i)^{\circ - n}$ for $n\in \Z_{>0}$ and $M\in R(n\al_i)\gmod$.
The monoidal category $\tR_0$ is described as follows.

\smallskip
Let $\lS$ be the ring of symmetric polynomials.
Hence, $\lS=\soplus_{d\in\Z_{\ge0}}\lS_d$ is positively graded and
$$\lS_d=\prolim_{n\in\Z_{>0}}\corp[x_1,\ldots,x_n]^{\sym_n}_d,$$
where $\corp[x_1,\ldots,x_n]^{\sym_n}_d$ is the space of symmetric polynomials
of $n$ variables with degree $d$.
Then $$\lS=\corps[e_k\mid k\in\Z_{>0}],$$
where $e_k$ is the elementary symmetric polynomial of degree $k$.
Let us take the generating function $$E(t)=\sum_{k=0}^\infty e_k t^k,$$
where $e_0=1$.

The ring $\lS$ has a well known structure of (co-commutative) Hopf algebra by
the coproduct defined by
$$\Delta(E(t))=E(t)\tens E(t).$$
Namely,
$$\Delta(e_n)=\sum_{k=0}^ne_k\tens e_{n-k}.$$
Then its antipode $S$ is given by
$$S\bl E(t)\br=E(t)^{-1}$$
and its counit is given by
$$\eps(E(t))=1.$$
Let us denote by  $\lS\gmod$  the category of graded $\lS$-modules which are
finite-dimensional over $\corp$. Then,  $\lS\gmod$
has a structure of monoidal category.
Then, $\tR_0$ is equivalent to the category
$\lS\gmod$ as a monoidal category.

By this correspondence, the full subcategory
$R(n\al_i)\gmod\tens L(i)^{\circ-n}\subset\tR_0$
is precisely equivalent to the full subcategory
of $\lS\gmod$ consisting of modules $M$ such that
$e_k\vert_M=0$ for $k>n$, namely
$\corp[x_1,\ldots,x_n]^{\sym_n}\gmod$.
This equivalence is obtained by the Morita equivalence
through
$\corp[x_1,\ldots,x_n]^{\sym_n}\simeq\END_{R(n\al_i)}\bl P(i^n)\br$.
Here $P(i^n)$ is a unique indecomposable projective $R(n\al_i)$-module.

\subsection{Left rigidity of $\tcatC_w$}\

First note the following lemma which immediately follows from
 Proposition~\ref{prop:simpledual}. 

\Lemma \label{lem:dualpairs} Let $M$ be a simple module in $\catC_w$.
Then $M$ has a right dual  {\rm(}respectively, left dual{\rm)} in $\lRg$ if and only if there exists a surjective homomorphism $M\conv X \epito \dC_\La$
 {\rm(}respectively, $X \conv M \epito \dC_\La${\rm)} for some module $X$ in $\catC_w$ and for some $\La \in \wlP_+$.
\enlemma

We now consider duals of objects in the localization $\lRg$.
For an object $M$ in $\lRg$,
we denote by $\rD(M)$  the right dual of $M$ and
 by $\lD(M)$  the left dual of  $M$, if they exist.

\begin{thm} \label{thm:rightdual}
Every simple object $M$ in $\lRg$ has a right dual.
\end{thm}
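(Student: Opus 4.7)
The plan is to reduce the theorem to a concrete existence statement about determinantial modules in $\catC_w$. Every simple object of $\lRg$ is isomorphic to $\dC_\La\conv\Phi(S)$ for some simple $S\in\catC_w$ and some $\La\in\wlP$. Since $\dC_\La$ is invertible in $\lRg$, its right dual is a grading shift of $\dC_{-\La}$, and a right dual of a tensor product involving an invertible factor is built from the right duals of the two factors. Hence it suffices to construct a right dual for $\Phi(S)$ whenever $S$ is simple in $\catC_w$. By Lemma~\ref{lem:dualpairs}, this is equivalent to producing, for each simple $S\in\catC_w$, a module $X\in\catC_w$ together with $\La\in\wlP_+$ fitting into an epimorphism $S\conv X\epito \dC_\La$; equivalently, since $\dC_\La$ is simple, a simple $X\in\catC_w$ with $S\hconv X\simeq\dC_\La$.

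I would then prove this existence statement by induction on $\Ht(-\wt(S))$. The base case $S\simeq\one$ is trivial. For the inductive step, fix a reduced expression $\underline{w}=s_{i_1}\cdots s_{i_\ell}$ of $w$ with associated convex order $\preceq$ and cuspidal modules $\dS_k=\dM(w_k\La_{i_k},w_{k-1}\La_{i_k})$, where $w_k=s_{i_1}\cdots s_{i_k}$. The crucial tool is the $T$-system of \cite[Proposition 3.2]{GLS13}, which furnishes short exact sequences among determinantial modules realizing the cluster-mutation exchange relations in the Grothendieck ring. Using these $T$-system sequences iteratively, one trades a product of the form $S\conv \dC_\mu$ for a carefully chosen $\mu\in\wlP_+$ against a product whose head is a determinantial module $\dC_\La$ with $\La$ strictly larger than $\mu$, and combining such trades across the cuspidal factors of $S$ ultimately produces the desired pair $(X,\La)$.

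The principal obstacle in executing this plan is ensuring that the $T$-system exchanges can be composed in the correct order, so that the head of the resulting product is indeed a determinantial module $\dC_\La$ and not some other simple object. This relies on a careful interplay between the $\preceq$-cuspidal structure of $S$ (Proposition~\ref{Prop: cupspidal for dM}) and the commutation relations and degree computations of Proposition~\ref{Prop: La for D}, together with the availability of affinizations of determinantial modules (Theorem~\ref{Thm: aff for dM}) so that the $R$-matrix machinery of Section~3 is applicable. A further subtlety is that \cite{GLS13} formulates the $T$-system in the symmetric case, so the argument must be generalized to arbitrary symmetrizable types using the framework of graded braiders and affinizations developed in Sections~2--4.
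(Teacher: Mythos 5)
Your reduction via Lemma~\ref{lem:dualpairs} is correct, and you have identified the $T$-system as the key tool, but your inductive scheme diverges from the paper's and, as sketched, has a genuine gap. You propose induction on $\Ht(-\wt(S))$ and then an iterative process of ``$T$-system trades'' whose termination and correctness you admit you cannot pin down. The paper avoids this difficulty by making a structural reduction first: the full subcategory of $\catC_w$ consisting of objects whose images in $\lRg$ admit right duals is closed under kernels, cokernels and convolution products, and every simple module lies in the smallest such subcategory containing the cuspidal modules $\dS_k=\dM(w_{\le k}\La_{i_k},w_{<k}\La_{i_k})$ (since a simple is the head of the convolution of its $\preceq$-cuspidal factors). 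This reduces the theorem to showing each $\dS_k$ has a right dual, and the induction is then on $\ell(w)$, not on $\Ht(-\wt(S))$.

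With that reduction in hand the argument is clean. For $k=\ell$ there is a direct epimorphism $\dS_\ell\conv\dM(w_{<\ell}\La_{i_\ell},\La_{i_\ell})\epito\dC_{i_\ell}$ from \cite[Proposition 4.6]{KKOP18}, so $\dS_\ell$ is done. For $k<\ell$, one notes $\dS_k\in\catC_{w'}$ with $w'=w_{<\ell}$, applies the induction hypothesis to $w'$ to obtain an epimorphism $\dS_k\conv X\epito\conv_i\dM(w_{<\ell}\La_i,\La_i)^{\conv a_i}$, and then must show each $\dM(w_{<\ell}\La_i,\La_i)$ is right-dualizable in $\lRg$. For $i\ne i_\ell$ this is $\dC_i$, already invertible. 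For $i=i_\ell$, this is precisely where the single application of the $T$-system is used: it produces an epimorphism from $\dM(w_{<\ell}\La_{i_\ell},\La_{i_\ell})\conv\dM(w\La_{i_\ell},s_{i_\ell}\La_{i_\ell})$ onto a convolution of $\dC_j$'s. In contrast to your proposal, the $T$-system is not iterated across cuspidal factors; it appears exactly once, at the terminal step of a well-founded induction. (Your remark about symmetrizable type is correct; the paper addresses it by noting the $T$-system holds with $q$ replaced by $q_i$.)
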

\begin{proof}
For simplicity, we ignore the grading in the course of the proof.
We will proceed by induction on $l = \ell(w)$.
We take a reduced expression $ \underline{w} = s_{i_1} \cdots s_{i_l} $ of $w$, and set
$w_0 = \id$ and $w_{< k+1} = w_{\le k} = s_{i_1}\cdots s_{i_k}$.
Note that
the full subcategory consisting of objects in $\catC_w$ which have right duals
in $\lRg$  is closed by taking the kernels,
the cokernels and the convolution products.
Note also that any simple object is contained in the smallest
full subcategory of $\catC_w$ which is
closed by taking the kernels,
the cokernels and the convolution products and contains
all the cuspidal modules $\dS_k\seteq \dM(w_{\le k} \La_{i_k}, w_{< k}\La_{i_k})$ $(1 \le k\le l)$.
Hence it is enough to show that  every $\dS_k$
has a right dual.

For this,
we shall show that, for any $k$, we can find an epimorphism
$\dS_k\conv X\to \dC_\La$ for some $X\in\catC_w$ and some $\La\in\wtl_+$.
We argue by induction on $l$.

Since there exists a surjective homomorphism
$$
\dS_l \conv \dM( w_{< l} \La_{i_l}, \La_{i_l}  ) \epito \dM(w\La_{i_l}, \La_{i_l})= \dC_{i_l}
$$
by \cite[Proposition 4.6]{KKOP18}, the object $\dS_l$ has a right dual in $\lRg$.

Now assume that $k<l$, and set $w'\seteq w_{< l}$.
Note that   $\dS_k \in \catC_{w'}$.
By the induction hypothesis, there exist  a simple module $X \in  \catC_{w'}$,
$a_i \in \Z_{\ge 0}$ ($i\in I$) and a surjective homomorphism
$$
\dS_k \circ X \epito \conv_{i\in I} \dM(w_{<l} \La_i, \La_i )^{\conv a_i}.
$$
Hence it is enough to show that
$\dM(w_{<l} \La_i, \La_i )$ has a right dual in $\lRg$ for all $i\in I$.

If $i \neq i_l$, then $\dM(w_{<l} \La_i, \La_i )=\dM( w \La_i, \La_i )= \dC_i$.

Suppose that $i = i_l$. By the T-system (\cite[Proposition 10.2.5~(i)]{KKKO18}),
we have
\eqn
\dM(w_{<l} \La_{i_l}, \La_{i_l} ) \conv \dM(w\La_{i_l}, s_{i_l}\La_{i_l})
\epito \conv_{j\not=i_l} { \dM(w \La_j, \La_j)   }^{\conv -a_{j,i_l}}.
\eneqn
Note that the T-system (\cite[Proposition 3.2]{GLS13}, \cite[Proposition 10.2.5~(i)]{KKKO18}) also holds for non-symmetric case if $q$ is replaced by $q_i$.
Hence $\dM(w_{<l} \Lambda_{i_l}, \Lambda_{i_l})$ has a right dual.
\end{proof}

We shall show that any object in $\lRg$ has a left dual.
By Proposition \ref{Prop: canonical braiders}, there exists a localization of $R\gmod$ by the non-degenerate graded braiders $\dC_i$,
which is denoted by $\widetilde{\mathscr{R}}$; i.e.,
$$
\widetilde{\mathscr{R}} \seteq  R\gmod[\dC_i^{\conv -1} \mid i\in I].
$$
Since $\catC_w$ is a full subcategory of $R\gmod$, the canonical embedding $\catC_w \rightarrowtail R\gmod$ induces the fully faithful monoidal functor
 $$
 \imath\cl  \lRg \rightarrowtail \widetilde{\mathscr{R}}
$$
by the construction of localization.
Moreover, the essential image is stable under taking subquotients, extensions and grading shifts by (a graded version of) Proposition~\ref{prop:subloc}.

Set
$
I_w \seteq  \{ i\in I \mid w\La_i \ne \La_i \}$.
Note that $I_w = \{ i_1, \ldots, i_l\}$ for any reduced expression $\underline{w} = s_{i_1}\cdots s_{i_l}$ of $w$.

\Th \label{th: left dual} Assume that $I=I_w$.
\begin{enumerate}[\rm(i)]
\item Every object in $\widetilde{\mathscr{R}}$ has a left dual.
\item  Moreover, the left dual of any object in $\widetilde{\mathscr{R}}$ belongs to $\lRg$.
\end{enumerate}
\enth
\begin{proof}
By Theorem~\ref{th:gendual} and $\eps^*_i(\dC_i)=1$,
$L_\ell(i)$ has a left dual
$(\dC_i^{\circ \ell})^{\circ-1}\conv E_i^*(\dC_i^{\circ \ell})$ in $\widetilde{\mathscr{R}}$
for any $i\in I$ and $\ell \in \Z_{>0}$.
Since $E_i^*(\dC_i^{\circ \ell})$ belongs to $\Cw$,
$\lD(L_\ell(i))$  belongs to $\tCw$.

 It suffices to show that any finite-dimensional $R$-module has a left dual which is contained in $\lRg$.
Let $Q$ be a nonzero finite-dimensional $R$-module.
Since the generators $x_i$ act on any finite-dimensional $R$-module nilpotently,
there exist $R$-modules $P_1$ and $P_2$ which are direct sums of modules of the form $L_{\ell_1}(\nu_1) \conv L_{\ell_2}(\nu_2) \conv \cdots \conv L_{\ell_t}(\nu_t)$
such that
$$
P_1 \To[\phi] P_2 \To Q \To 0
$$
is exact.
Since objects of the form $L_{\ell_1}(\nu_1) \conv L_{\ell_2}(\nu_2) \conv \cdots \conv L_{\ell_t}(\nu_t)$ have left duals in $\tCw$,
$P_1$ and $P_2$ have left duals $\lD(P_1)$ and $\lD(P_2)$ respectively in $\tCw$.
Thus, the cokernel $Q$ also has a left dual in $\tCw$, which is isomorphic to
the kernel of the morphism $\lD(\phi)\cl\lD(P_2)\to\lD(P_1)$.
\end{proof}

\begin{thm} \label{Thm: eqiv R C} 
Assume that $I=I_w$. 
The functor $ \imath\cl  \lRg \To\widetilde{\mathscr{R}}$ is an equivalence of categories.
\end{thm}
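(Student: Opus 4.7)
The plan is to verify that $\imath$ is fully faithful and essentially surjective. Fully faithfulness follows from (the graded refinement of) Proposition~\ref{prop:subloc}, which realizes $\lRg$ as a full subcategory of $\widetilde{\mathscr{R}}$ closed under taking subquotients, extensions, and grading shifts. The same proposition tells us the essential image of $\imath$ is closed under subquotients and extensions; since every object of $\widetilde{\mathscr{R}}$ has finite length (by the analog of Proposition~\ref{prop:str} applied to the real commuting family $\{\dC_i\}_{i\in I}$ inside $R\gmod$), essential surjectivity will reduce to showing that every simple object $S$ of $\widetilde{\mathscr{R}}$ lies in the essential image of $\imath$.

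Fix such a simple $S$. By Theorem~\ref{th: left dual}, whose hypothesis $I = I_w$ is in force here, the left dual $\lD(S)$ exists in $\widetilde{\mathscr{R}}$ and already belongs to $\lRg$; note that $\lD(S) \neq 0$, since the triangle identities for the dual pair $(\lD(S), S)$ would force $\id_S = 0$ otherwise. Because $\lRg$ is abelian with objects of finite length, $\lD(S)$ admits a simple quotient $\alpha \colon \lD(S) \twoheadrightarrow T$ inside $\lRg$. By Theorem~\ref{thm:rightdual}, the simple object $T$ has a right dual $\rD(T) \in \lRg$. The composition
\[
\lD(S) \conv \rD(T) \xrightarrow{\alpha \conv \rD(T)} T \conv \rD(T) \xrightarrow{\eps_T} \one
\]
is nonzero: the first arrow is an epimorphism because $\conv$ is exact, while the counit $\eps_T$ is nonzero by the triangle identities for the dual pair $(T, \rD(T))$. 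Under the adjunction bijection $\Hom_{\widetilde{\mathscr{R}}}(\rD(T), S) \simeq \Hom_{\widetilde{\mathscr{R}}}(\lD(S) \conv \rD(T), \one)$ coming from the dual pair $(\lD(S), S)$ in $\widetilde{\mathscr{R}}$, this composition corresponds to a nonzero morphism $\beta \colon \rD(T) \to S$. As $S$ is simple, $\beta$ must be an epimorphism, so $S$ is a quotient of $\rD(T)$ in $\widetilde{\mathscr{R}}$. Since $\rD(T) \in \lRg$ and the essential image of $\imath$ is closed under subquotients, $S$ therefore lies in the essential image of $\imath$.

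The main delicate point is ensuring the duality formalism transfers cleanly between $\lRg$ and $\widetilde{\mathscr{R}}$: specifically, that the right dual $\rD(T)$ computed in $\lRg$ still serves as a right dual for $T$ when viewed inside $\widetilde{\mathscr{R}}$, so that the adjunction bijection invoked above is actually available in the larger category. This follows from the uniqueness of right duals up to unique isomorphism, combined with the fact that $\imath$ is a fully faithful monoidal functor, but requires some care. A secondary bookkeeping issue is to invoke the graded analogs of Propositions~\ref{prop:subloc} and~\ref{prop:str} alluded to above; these carry over verbatim from the ungraded setting to the graded localization of Section~\ref{sec:graded}.
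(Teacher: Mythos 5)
Your argument is correct and follows essentially the same route as the paper: reduce to showing every simple $S$ of $\widetilde{\mathscr{R}}$ lies in the essential image, take the left dual $\lD(S)\in\lRg$ (Theorem~\ref{th: left dual}), pass to a simple quotient $T$, take its right dual $\rD(T)\in\lRg$ (Theorem~\ref{thm:rightdual}), and produce a nonzero morphism $\rD(T)\to S$. The only cosmetic difference is the final step: you observe that $\beta\colon\rD(T)\to S$ is nonzero, hence an epimorphism because $S$ is simple, so $S$ is a quotient of an object of $\lRg$; the paper instead shows that the dual map $\rD(f)\colon\rD(T)\to S$ is a monomorphism (via a kernel argument and faithfulness of $\lD$) and hence an isomorphism onto $S$. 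Both conclude the same way, and your $\beta$ is in fact identified with the paper's $\rD(f)$ under the standard formula for the right dual of a morphism. Your flagged "delicate point" — that the adjunction data $(\eps_T,\eta_T)$ for $T$ transfers from $\lRg$ to $\widetilde{\mathscr{R}}$ — is indeed needed and is correctly handled: $\imath$ is a fully faithful monoidal functor, so it carries the triangle identities verbatim.
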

\begin{proof}
Let $S$ be a simple object in $ \widetilde{\mathscr{R}} $.
By Theorem~\ref{th: left dual}, the left dual $\lD(S)$ belongs to $\lRg$.
Then $\lD(S)$ does not vanish.
Let us take a simple quotient $f\cl \lD(S)\epito M$ of $\lD(S)$.
Then $M$ has a right dual $\rD(M)$ in $\tCw$
and we have
a morphism $\rD(f)\cl \rD(M)\to S$.
Let us show that  $\rD(f)$ is a monomorphism.
Let $N=\ker(\rD(f))$.
Then we have an exact sequence
$$0\to N\To[\psi] \rD(M)\to S.$$
Taking its left dual, we obtain
$$\lD(S)\epito M\To[{\;\lD(\psi)\;}]\lD(N).$$
Since the composition vanishes, we obtain $\lD(\psi)=0$. Hence $\psi=0$.
Thus we conclude that $\rD(f)$ is a monomorphism.

\smallskip
Since $S$ is simple, $\rD(f)$ is an isomorphism.
Hence $\rD(M)\simeq S$, and hence
$ S \in \lRg$.

Thus, we have proved that any simple  object of $\tR$ belongs to $\tCw$.
Since $\lRg$ is stable by extension, we conclude that $ \lRg \simeq \widetilde{\mathscr{R}}$.
\end{proof}

\Cor
Assume that $I=I_w$.
Then $X\in R\gmod$ belongs to $\Cw$ if and only if
$R_{\dC_i}(X)$ is an isomorphism for any $i\in I$.
\encor
\Proof
Assume that $R_{\dC_i}(X)$ is an isomorphism for any $i\in I$.
Let us show that $X\in\Cw$. 

\smallskip
Since it suffices that any simple subquotient of $X$ belongs to $\Cw$, 
we may assume that $X$ is simple from the beginning (see
Proposition~\ref{prop:Z}~\eqref{item:1}).
There exists $Z\in\Cw$ and  $\La\in\wlP_+$ such that
$X\simeq \dC_{-\La}\conv Z$ in $\tRm$. We may assume that $Z$ is simple.
Then $Z\simeq \dC_\La\conv X$ by Proposition~\ref{prop:str}~\eqref{item:3}. 
Hence we have
$\gW(X)\subset\gW(Z)\subset \rl_+\cap w\rl_-$ 
by \cite[Lemma 2.2]{KKOP18}. 
\QED

Since $-\wt(M) \in \sum_{i\in I_w} \Z_{\ge0} \alpha_i$ for any module $M \in  \catC_{w}$, letting $R'$ be the quiver Hecke algebra associated with the Cartan matrix $\cmA' =( \langle h_i, \alpha_j \rangle )_{i,j\in I_w}$, we regard $\catC_w$ as a subcategory of $R'\gmod$. Then
Theorem~\ref{th: left dual} and Theorem \ref{Thm: eqiv R C} give the following.

\begin{cor} \label{Cor: left rigid}
The category $\lRg$ is left rigid, i.e., every object of $\lRg$ has a left dual in $\lRg$.
\end{cor}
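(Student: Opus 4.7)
The plan is to reduce to the already-treated case $I=I_w$ by passing to the quiver Hecke algebra $R'$ attached to the restricted Cartan datum indexed by $I_w$, as suggested in the paragraph preceding the corollary. Write $\weyl'$ for the Weyl group of $\cmA'$. Since every simple reflection appearing in a reduced expression of $w$ has index in $I_w$, the element $w$ lies in $\weyl'$, and clearly $I_w=I'_{w}$ when computed for $(R',\weyl')$; thus the hypothesis $I=I_w$ is met after passing to $R'$.

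Next, I would identify $\catC_w$ with the analogous category $\catC'_w\subset R'\gmod$. The inclusion $\catC_w\hookrightarrow R'\gmod$ of the remark has image contained in $\catC'_w$, and the converse inclusion holds because both $\catC_w$ and $\catC'_w$ are the smallest monoidal subcategories (closed under subquotients, extensions and grading shifts) containing the cuspidal modules $\dM(w_{\le k}\La_{i_k},w_{<k}\La_{i_k})$ for a fixed reduced expression $\underline w=s_{i_1}\cdots s_{i_l}$ of $w$ by Proposition~\ref{Prop: cupspidal for dM}, and these modules are the same on both sides. Moreover the determinantial modules $\dC_i=\dM(w\La_i,\La_i)$ for $i\in I\setminus I_w$ are trivial ($w\La_i=\La_i$), so they are invertible a priori and play no role in the localization; consequently the real commuting family of graded braiders used to define $\lRg$ inside $\catC_w$ matches the one used to define $\tcatC'_w$ inside $\catC'_w$. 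By the universal property (Theorem~\ref{Thm: localization}) we obtain an equivalence of monoidal categories $\lRg\simeq \widetilde{\catC'_w}$.

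Having made this identification, I can apply Theorem~\ref{th: left dual} and Theorem~\ref{Thm: eqiv R C} to $R'$, since for $(R',w)$ we have $I'=I'_w$. Theorem~\ref{Thm: eqiv R C} gives $\widetilde{\catC'_w}\simeq \widetilde{\mathscr{R}'}$, and Theorem~\ref{th: left dual}(i)--(ii) asserts that every object of $\widetilde{\mathscr{R}'}$ admits a left dual and that this left dual lies in $\widetilde{\catC'_w}$. Transporting through the equivalence $\lRg\simeq\widetilde{\catC'_w}$ yields the desired conclusion: every object of $\lRg$ has a left dual in $\lRg$.

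The one point that needs care, and is the only mild obstacle, is verifying the identification $\catC_w=\catC'_w$ together with the compatibility of the two localization procedures; this is routine but relies on the fact that the cuspidal generators and the determinantial braiders $\dC_i$ ($i\in I_w$) are intrinsic to the restricted Cartan datum, whereas $\dC_i\simeq\triv$ for $i\notin I_w$. Once this bookkeeping is done, the corollary is a direct consequence of the $I=I_w$ case already established.
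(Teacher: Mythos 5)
Your proposal follows exactly the paper's approach: the paper passes to the quiver Hecke algebra $R'$ attached to $\cmA'=(\langle h_i,\alpha_j\rangle)_{i,j\in I_w}$, regards $\catC_w$ as a subcategory of $R'\gmod$, and then invokes Theorem~\ref{th: left dual} and Theorem~\ref{Thm: eqiv R C} for $R'$, which is precisely your reduction to the $I=I_w$ case. Your write-up merely spells out more explicitly the identifications (that $\catC_w$ coincides with $\catC'_w$ via the common cuspidal generators, and that the braiders $\dC_i\simeq\triv$ for $i\notin I_w$ do not affect the localization) that the paper leaves implicit.
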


The following conjecture arises naturally from Corollary \ref{Cor: left rigid}.

\begin{conj} \label{Conj: rigid tCw}
The category $\lRg$ is rigid, i.e., every object of $\lRg$ has left and right duals.
\end{conj}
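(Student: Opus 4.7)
The plan is to combine the two pieces already in hand---left rigidity of $\lRg$ established in Corollary \ref{Cor: left rigid}, and right duals of simple objects provided by Theorem \ref{thm:rightdual}---and promote them to full rigidity. The overall strategy is to show that the left dual functor $\lD\colon \lRg\to\lRg$ restricts to a bijection on the set of isomorphism classes of simple objects (up to grading shift), and then to extend right duals from simples to arbitrary objects by induction on composition length, using the abelian structure of $\lRg$ (which by Proposition \ref{prop:str} is a finite-length abelian category with exact tensor product).

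First I would establish that $\lD$ sends simple objects to simple objects. Given a simple $S\in\lRg$, the object $\lD S$ exists by Corollary~\ref{Cor: left rigid}. For any subobject $N\hookrightarrow \lD S$ with cokernel $Q$, applying $\lD$ once more (permissible by left rigidity) produces a short exact sequence $0\to\lD Q\to\lD^2 S\to\lD N\to0$. The simplicity of $\lD S$ would then follow from an identification $\lD^2 S\simeq S$ up to a grading shift, which itself is a biduality statement driven by the existence of a right dual $\rD S$ (Theorem \ref{thm:rightdual}): the adjunction data for $(\lD S,S)$ and for $(S,\rD S)$ can be composed to exhibit the canonical map $S\to\lD(\rD S)\simeq\lD^2 S$ as an isomorphism. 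Once $\lD$ is known to preserve simplicity, essential surjectivity on simples is automatic, since every simple $T$ satisfies $\lD(\rD T)\simeq T$. Consequently $\lD$ yields a bijection on iso-classes of simples, and $\rD$ is defined on every simple object.

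Next I would extend the right dual from simples to arbitrary objects of $\lRg$ by induction on length. The inductive step is as follows: given a short exact sequence $0\to M'\to M\to M''\to0$ in $\lRg$ with $M',M''$ admitting right duals, the candidate right dual $\rD M$ is constructed as the extension of $\rD M'$ by $\rD M''$ classified by the image of $[M]\in\Ext^1_{\lRg}(M'',M')$ under the natural map induced by the (already-established) left-dual anti-equivalence followed by bi-duality. One verifies that the resulting $\rD M$ carries compatible morphisms $\eps\cl M\conv \rD M\to\one$ and $\eta\cl \one\to\rD M\conv M$, and then invokes Lemma \ref{Lem: ep eta ind} applied to the short exact sequences $0\to M'\to M\to M''\to0$ and $0\to\rD M''\to\rD M\to\rD M'\to0$ to deduce that $(\eps,\eta)$ is a quasi-adjunction, which by Lemma \ref{Lem: quasi-adj} upgrades to an adjunction.

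The main obstacle, and the reason the statement is phrased only as a conjecture, is the biduality identification $\lD^2 S\simeq S$ for general $S$ in general symmetrizable type. In the case $\g$ of finite type with $w=w_0$ this is the crux of Theorem \ref{th:fin}, established through the six-periodicity of the Auslander--Reiten--Sykes functor (Proposition \ref{prop:periodic}), which is not available for arbitrary $\g$ and $w$. A reasonable route would be to produce a categorical counterpart of the Kimura--Oya quantum twist automorphism \cite{KO17}, match the induced action on the Grothendieck ring $K(\lRg)$ with the map defined by $[M]\mapsto[\lD^2 M]$, and argue that since the twist is a bijection of $A_q[N_-^w]$ preserving the upper global basis, the functor $\lD^2$ must permute (shifted) classes of self-dual simples, yielding the needed biduality. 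Making this matching rigorous at the categorical level, rather than merely at the Grothendieck ring level, is the essential difficulty.
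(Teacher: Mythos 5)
This statement is a \emph{conjecture} in the paper; no proof is given in general, and the only established case (finite type with $w=w_0$, Theorem~\ref{th:fin}) goes by a route quite different from the one you sketch. You correctly isolate the crux as some form of biduality, equivalently essential surjectivity of $\lD$, but let me point to two places where the surrounding reasoning is shakier than you suggest. First, you slide between $\lD(\rD S)\simeq S$ and $\lD^2 S\simeq S$: the former is automatic from uniqueness of left duals once $\rD S$ exists, whereas the latter is the biduality $\lD S\simeq\rD S$ you later identify as the open point; these are not interchangeable. Second, the induction on composition length does not close. From left rigidity and the adjunction formulas one gets that $\lD$ is fully faithful (via $\Hom(\lD B,\lD A)\simeq\Hom(\lD B\conv A,\one)\simeq\Hom(A,B)$), but fully faithful plus exact plus ``essential image contains all simples'' does \emph{not} yield essential surjectivity. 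The Ext-comparison map you invoke requires $\lD$ to be an anti-equivalence, and being an anti-equivalence is precisely equivalent to every object having a right dual, i.e.\ to what you are trying to prove; the induction silently presupposes its target. Moreover, Lemma~\ref{Lem: ep eta ind} only verifies that given morphisms $\eps,\eta$ form a quasi-adjunction once they fit into the relevant commutative squares; it does not produce the unit and counit $X\conv\lD X\to\one$ and $\one\to\lD X\conv X$ (which point the ``wrong way'' relative to the left-dual data), so the inductive step still lacks its raw material. Conversely, once one does have a uniform biduality $\lD^2 X\simeq X\conv I$ for a fixed invertible $I$, the conclusion is immediate ($\lD X$ is then a right dual of $X\conv I$, hence $X$ has a right dual) and no induction is needed.

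For the special case, the paper does not pass through periodicity of $\lD$. Rigidity of $\lRg[w_0]$ (Theorem~\ref{th:fin}) is obtained from the anti-monoidal involution $\tilde{\mathfrak{a}}$ induced by reversing the index sequence in $R(\beta)$, which swaps the roles of $\ep_i$ and $\ep_i^*$ and sends $\dC_i$ to $\dC_{i^*}$; the right dual of $X$ is then produced directly as $\tilde{\mathfrak{a}}(\lD(\tilde{\mathfrak{a}}(X)))$. The six-periodicity statement (Proposition~\ref{prop:periodic}) is proved afterwards, and its proof already uses $\lD^{-1}$ and hence rigidity as input, so the logical order in the paper is the reverse of the one you propose. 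The obstruction to extending the $\tilde{\mathfrak{a}}$ argument beyond $w_0$ is that $\mathfrak{a}$ does not preserve $\catC_w$ unless $\catC_w=R\gmod$, which forces $w=w_0$ in finite type. Your last suggestion---matching $\lD^2$ against the Kimura--Oya quantum twist at the level of $K$-theory and then lifting---is indeed the direction the paper gestures at in the introduction, and, as you say, making the lift categorical is exactly the open difficulty.
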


In the case of the \emph{longest} element $w_0 \in \weyl$, Conjecture \ref{Conj: rigid tCw} is true.

\begin{thm}\label{th:fin}
We assume that the quiver Hecke algebra $R$ is of finite type. Let $w_0$ be the longest element of $\weyl$. Then
the category $ \lRg[w_0]$ is rigid.
\end{thm}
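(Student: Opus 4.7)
The plan is to mirror the proof of Theorem~\ref{th: left dual}, constructing right duals for all objects of $\lRg[w_0]$. Since $w = w_0$ is the longest Weyl element in finite type, $\prD \cap w_0 \nrD = \prD$, hence $I_{w_0} = I$ and $\catC_{w_0} = R\gmod$; by Theorem~\ref{Thm: eqiv R C} we have an equivalence $\lRg[w_0] \simeq \widetilde{\mathscr{R}}$. Since Corollary~\ref{Cor: left rigid} furnishes left duals for every object of $\lRg[w_0]$, it suffices to produce right duals for all objects of $\widetilde{\mathscr{R}}$.

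Let $i \mapsto i^*$ be the involution on $I$ defined by $w_0 \al_i = -\al_{i^*}$, so that $w_0 \La_{i^*} = -\La_i$. The simple object $\dC_{i^*} = \dM(-\La_i, \La_{i^*})$ of $\catC_{w_0}$ then satisfies $\eps_i(\dC_{i^*}) = -\ang{h_i, -\La_i} = 1$ by Proposition~\ref{Prop: dM properties}(iii). I would first prove the right-dual analogue of Theorem~\ref{th:gendual}: for every $i \in I$ and $\ell \in \Z_{>0}$, the module $L_\ell(i)$ admits a right dual of the form $K'_\ell \conv \dC_{i^*}^{\circ -\ell}$ in $\widetilde{\mathscr{R}}$, lying in $\lRg[w_0]$, where $K'_\ell$ is built analogously to $K_\ell$ in Theorem~\ref{th:gendual} but using $E_i$ in place of $E_i^*$. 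The proof runs in parallel to that of Theorem~\ref{th:gendual}, systematically exchanging the roles of $E_i^*$, $e(*,i)$, $x_n$, and the left-tensor position with $E_i$, $e(i,*)$, $x_1$, and the right-tensor position, respectively. The inductive construction of the unit and counit morphisms (mirroring Lemma~\ref{Lem: sur and inj}), the analogue of the T-system-type exact sequence \eqref{eq:extabl}, and the verification of the resulting quasi-adjunction via Lemma~\ref{Lem: ep eta ind} all go through by symmetry.

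Given this mirror result, the argument of Theorem~\ref{th: left dual}(ii) adapts verbatim to right duals: any $Q \in R\gmod$ fits into an exact sequence $P_1 \to P_2 \to Q \to 0$ where $P_1$ and $P_2$ are direct sums of convolutions of the form $L_{\ell_1}(\nu_1) \conv \cdots \conv L_{\ell_t}(\nu_t)$. The class of right-dualizable objects in $\widetilde{\mathscr{R}}$ is closed under convolutions (via $\rD(X \conv Y) \simeq \rD Y \conv \rD X$) and under cokernels of morphisms between such objects (via $\rD(\on{coker}\,\phi) \simeq \ker(\rD \phi)$, using exactness of $\conv$). Hence $Q$ has a right dual in $\widetilde{\mathscr{R}}$, and therefore so does every object of $\lRg[w_0]$, completing the proof of rigidity.

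The main obstacle is verifying the mirror of Theorem~\ref{th:gendual}: while the argument is structurally symmetric to the one given there, one must carefully rederive the intertwiner calculations (based on Lemma~\ref{Lem: intertwiners}), produce the analogue of the T-system-type exact sequence \eqref{eq:extabl} with the roles of $E_i$ and $E_i^*$ exchanged, and keep track of the grading shifts and scalar factors arising on the opposite side of the tensor product. Once this mirror statement is in hand, the remainder is a routine adaptation of the proof of Theorem~\ref{th: left dual}.
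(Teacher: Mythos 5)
Your plan reaches the right conclusion and your instinct that right duals should come from a "mirror" of Theorem~\ref{th:gendual} is sound, but you leave the decisive step---verifying that the whole multi-page proof of Theorem~\ref{th:gendual} really goes through with $E_i^*$ replaced by $E_i$, $x_n$ by $x_1$, etc.---as an asserted ``by symmetry,'' and you acknowledge this obstacle yourself. The paper closes exactly this gap in a much cleaner way: the ``symmetry'' you appeal to is realized by the covariant functor $\mathfrak{a}\colon R\gmod\to R\gmod$ induced by the anti-automorphism of $R(\beta)$ sending $e(\nu_1,\dots,\nu_m)\mapsto e(\nu_m,\dots,\nu_1)$, $x_i\mapsto x_{m-i+1}$, $\tau_j\mapsto-\tau_{m-j}$ (note the sign, which absorbs precisely the asymmetry in the quiver Hecke relations that you would otherwise have to track by hand). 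This functor satisfies $\mathfrak{a}^2\simeq\id$ and $\mathfrak{a}(M\conv N)\simeq\mathfrak{a}(N)\conv\mathfrak{a}(M)$, and one checks $\mathfrak{a}(\dC_i)\simeq\dC_{i^*}$ since both sides are characterized by $\beta_i=\beta_{i^*}$ together with $\ep_j^*(\dC_{i^*})=\ep_j(\dC_i)=\delta_{j,i^*}$. Hence $\mathfrak{a}$ descends to an involutive anti-monoidal functor $\tilde{\mathfrak{a}}$ on $\lRg[w_0]$, and then $\tilde{\mathfrak{a}}(\lD(\tilde{\mathfrak{a}}(X)))$ is a right dual of $X$ outright---no re-derivation of Theorem~\ref{th:gendual}, Lemma~\ref{Lem: sur and inj}, or the intertwiner computations is needed. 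In short, your approach would work if carried out, but it effectively unrolls the anti-automorphism argument; invoking $\mathfrak{a}$ directly both makes the symmetry rigorous and collapses the proof to a few lines.
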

\begin{proof} Note that $\catC_{w_0}=R\gmod$ in this case.
Thanks to Corollary \ref{Cor: left rigid}, it suffices to show that every object has a right dual.

For  $\beta \in \rlQ_+$ with $m=\Ht(\beta)$, we have the automorphism of $R(\beta)$ defined by
\begin{align*}
e(\nu_1, \ldots, \nu_m) & \mapsto e(\nu_m, \ldots, \nu_1), \\
x_i e(\nu_1, \ldots, \nu_m) & \mapsto x_{m-i+1} e(\nu_m, \ldots, \nu_1), \\
\tau_j e(\nu_1, \ldots, \nu_m) & \mapsto- \tau_{m-j} e(\nu_m, \ldots, \nu_1),
\end{align*}
which induces a covariant functor $ \mathfrak{a}\cl  R\gmod \rightarrow R\gmod $ such that
$\mathfrak{a} ^2 \simeq \id$ and $ \mathfrak{a}(M\conv N) \simeq \mathfrak{a}(N)\conv \mathfrak{a}(M)$.
For $i\in I$, let $i^*$ be an index such that $\alpha_{i^*} = -w_0 \alpha_i$.
Set $\beta_i=\La_i-w_0\La_i$.
Then module $\dC_i$ is a unique (up to an isomorphism) simple
$R(\beta_i)$-module
satisfying the condition
$$\eps_j^*(\dC_i)=\delta_{j,i}\qt{for any $j\in I$.}$$
Since $\eps_j^*(\ga(\dC_i))=\eps_j(\dC_{i})=\delta_{j,i^*}$,
and $\beta_{i^*}=\beta_i$, we obtain
$$\mathfrak{a}(\dC_i) \simeq \dC_{i^*}\qt{for $i\in I$. }$$

Hence the functor $\mathfrak{a}$ induces
the functor $ \tilde{\mathfrak{a}} \cl   \lRg[w_0] \rightarrow \lRg[w_0]$
which satisfies
$$
\tilde{\mathfrak{a}} ^2 \simeq\id \quad \text{ and } \quad \tilde{\mathfrak{a}}(X\conv Y) \simeq \tilde{\mathfrak{a}}(Y)\conv \tilde{\mathfrak{a}}(X) \quad \text{ for } X,Y\in \lRg[w_0].
$$

It is now straightforward to check that $ \tilde{\mathfrak{a}}( \lD(\tilde{\mathfrak{a}}(X)))$ is a right dual of $X$.
\end{proof}

\Prop \label{prop:periodic}
Assume the conditions and notations in Theorem~\ref{th:fin}.
Then, we have isomorphisms \ro ignoring the grading shifts\rf
$$(\lD)^3(L(i))\simeq L(i^*)\conv \dC_{-\al_i}\qt{for any $i\in I$,}$$ 
and
\eq
(\lD)^6(M)&&\simeq M\conv \dC_{\beta+w_0\beta}
\qt{for any simple $R(\beta)$-module $M$.}\label{eq:period6}
\eneq
\enprop

\Proof
In the course of the proof, we ignore grading shifts.
Since we have an epimorphism
$$\dM(w_0\La_i,s_i\La_i)\conv L(i)\epito \dC_{i},$$
we have
$$\lD(L(i))\simeq \dM(w_0\La_i,s_i\La_i)\circ \dC_{i}^{\circ-1}.$$
Now by T-system (\cite[Proposition 10.2.5]{KKKO18}), we have an epimorphism
$$\dM(w_0s_i\La_i,\La_i)\conv \dM(w_0\La_i, s_i\La_i)
\epito \dM(w_0\la,\la).$$
Here $\la =s_i\La_i+\La_i$.
Hence we have
$$\lD\bl\dM(w_0\La_i, s_i\La_i)\br\simeq\dM(w_0s_i\La_i,\La_i)\conv\dC_{-\la}.$$
Finally, the epimorphism
$$\dM(w_0\La_i,w_0s_i\La_i)\conv\dM(w_0s_i\La_i,\La_i)\to
\dM(w_0\La_i,\La_i)$$
gives
$$\lD\bl\dM(w_0s_i\La_i,\La_i)\br\simeq L(i^*)\conv \dC_{-\La_i}.$$
Indeed, $\dM(w_0\La_i,w_0s_i\La_i)\simeq L(i^*)$ because
$w_0s_i\La_i-w_0\La_i=\al_{i^*}$.

Thus we obtain
\eqn
(\lD)^3(L(i))&&\simeq
(\lD)^2\bl\dM(w_0\La_i,s_i\La_i)\conv\dC_{-\La_i}\br\\
&&\simeq (\lD)\bl\dM(w_0s_i\La_i,\La_i)\conv\dC_{-\la}\br\conv\dC_{-\La_i}\\
&&\simeq L(i^*)\conv\dC_{-\La_i}\conv\dC_\la\conv\dC_{-\La_i}\\
&&\simeq L(i^*)\conv\dC_{-\al_i}.
\eneqn

Then one has
\eqn
(\lD)^6(L(i))
&&\simeq (\lD)^{-3}\bl L(i^*)\conv\dC_{-\al_i}\br\\
&&\simeq L(i) \conv\dC_{-\al_{i^*}}\conv\dC_{\al_i}\\
&&\simeq L(i)\conv\dC_{w_0\al_{i}+\al_i}.
\eneqn
Hence we have obtained \eqref{eq:period6} for $\beta\in\prtl$ such that $\Ht(\beta)=1$.
Let us show \eqref{eq:period6} by induction on $\Ht(\beta)$.
Assume that $\Ht(\beta)>1$.
Let $M$ be a simple $R(\beta)$-module.
Take $i\in I$ such that $\eps_i(M)>0$,
and set $M_0=\tE_i(M)$. Then we have
$M\simeq L(i)\hconv M_0$.
Hence $M$ is the image of a non-zero homomorphism
$f\cl L(i)\conv M_{0}  \to M_{0} \conv L(i)$.
Note that such an $f$ is unique up to a constant multiple.
Applying the exact functor $(\lD)^6$, we conclude that
$(\lD)^6(M)$ is the image of a non-zero morphism
$$(\lD)^6(f)\cl (\lD)^6(M_0)\conv(\lD)^6( L(i))\to(\lD)^6( L(i))\conv (\lD)^{6}(M_0).$$
By the induction hypothesis,
$(\lD)^6(f)$ is isomorphic to
$$\Bigl(M_0\conv L(i)\to L(i)\conv M_0\Bigr)\conv \dC_{\beta+w_0\beta}.$$
Since it is non-zero, the uniqueness property of $f$ implies that
it is equal to $f\conv \dC_{\beta+w_0\beta}$ up to a constant multiple.
Hence its image is isomorphic to
$M\conv\dC_{\beta+w_0\beta}$.
\QED

\begin{conj} \label{Conj: functoriality}
We can choose isomorphism \eqref{eq:period6}
functorially in $M\in \lRg[w_0]$.
\end{conj}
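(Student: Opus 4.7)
The plan is to produce a natural isomorphism $\eta\cl F\isoto T$ of endofunctors of $\lRg[w_0]$, where $F\seteq (\lD)^6$ and $T$ is the functor defined on the weight--graded summand $(\lRg[w_0])_{-\beta}$ by $T(M)\seteq M\conv \dC_{\beta+w_0\beta}$. Both $F$ and $T$ are exact and weight--preserving (the latter because $\dC_{\beta+w_0\beta}$ is invertible in $\lRg[w_0]$); moreover, $F$ is monoidal, since $\lD$ is anti--monoidal and so $(\lD)^{2k}$ is monoidal for every $k$, while $T$ carries a canonical coherence $T(M\conv N)\simeq T(M)\conv T(N)$ coming from the real commuting family structure of $\{(\dC_i,R_{\dC_i},\phi_i)\}_{i\in I}$ together with the additivity of $\sigma(\beta)\seteq\beta+w_0\beta$. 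Proposition~\ref{prop:periodic} already supplies an isomorphism $F(M)\simeq T(M)$ on objects.

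First, I would fix an isomorphism $\eta_{L(i)}\cl F(L(i))\isoto T(L(i))$ for each $i\in I$; such a choice exists by Proposition~\ref{prop:periodic} and is unique up to an element of $\bR^\times$. Using the monoidal coherence isomorphisms of $F$ and $T$, I would extend $\eta$ tautologically to convolutions $L(i_1)\conv\cdots\conv L(i_n)$. Since every simple $R$-module is a subquotient of such a convolution, the exactness of $F$ and $T$ allows one to transport $\eta$ to each simple object of $\lRg[w_0]$; the extension to arbitrary objects of $\lRg[w_0]$ is then automatic, because this category has finite length.

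The main obstacle will be verifying naturality. For a morphism $f\cl M\to N$ between simples, Schur's lemma together with the uniqueness statement in Proposition~\ref{Prop: affinization}(ii) forces the naturality square for $\eta$ to commute up to a scalar, but pinning that scalar to $1$ requires a coherence calculation. The critical case is the R-matrix morphism $\rmat{L(i),M_0}\cl L(i)\conv M_0\to q^{\La(L(i),M_0)}M_0\conv L(i)$, which is precisely the map used in the inductive step of Proposition~\ref{prop:periodic} to build the object--level isomorphism. I expect that the scalar matching reduces to a coherence diagram relating $(\lD)^6(\rmat{L(i),M_0})$ with the structural braiders of $(\dC_j,R_{\dC_j},\phi_j)_{j\in I}$ and with the T-system epimorphisms of~\cite{GLS13} used to identify $\lD(L(i))$. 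At the level of Grothendieck rings this corresponds to the compatibility of the Kimura--Oya quantum twist automorphism with the multiplicative structure; consequently, a fully structural proof of the conjecture may require equipping $\lRg[w_0]$ with a twisted pivotal structure lifting the $6$-periodicity of the twist automorphism on $\mathbb{C}[N^{w_0}]$ described in the Introduction.
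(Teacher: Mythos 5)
The statement you are addressing is Conjecture~\ref{Conj: functoriality}: the paper states it as an \emph{open conjecture} and gives no proof, so there is no argument of the authors to compare against. The only question, then, is whether your plan constitutes a proof, and it does not. You say so yourself: you write that pinning the naturality scalar to $1$ ``requires a coherence calculation'' and that a ``fully structural proof \dots\ may require equipping $\lRg[w_0]$ with a twisted pivotal structure.'' That unverified coherence is precisely the content of the conjecture, so what you have is a research plan, not a proof.

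Beyond that, two of the steps you present as routine actually contain the difficulty you defer. First, you claim that fixing isomorphisms on the objects $L(i_1)\conv\cdots\conv L(i_n)$ and invoking exactness of $(\lD)^6$ and of the tensoring-by-$\dC_{\beta+w_0\beta}$ functor lets you ``transport $\eta$ to each simple object.'' But if a simple $M$ is exhibited as a subquotient $P'/P''$ of such a convolution $P$, the transport requires that $\eta_P\cl (\lD)^6(P)\isoto P\conv\dC_{\beta+w_0\beta}$ carry $(\lD)^6(P')$ onto $P'\conv\dC_{\beta+w_0\beta}$ and likewise for $P''$ inside the ambient object --- that is, naturality with respect to the inclusion and projection maps --- which is exactly the unproved naturality, so the construction is circular; moreover, two different presentations of $M$ as such a subquotient could a priori yield isomorphisms $\eta_M$ differing by a nontrivial scalar. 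Second, a system of isomorphisms defined only on simple objects does not ``automatically'' extend to a natural isomorphism of exact functors on a finite-length abelian category that is not semisimple: one needs compatibility on morphisms between non-simple objects, which is again the missing coherence. The direction you sketch --- reducing to a twisted pivotal structure lifting the six-periodicity of the Kimura--Oya quantum twist --- is a reasonable line of attack, but as written the argument has a genuine gap exactly where the conjecture has its content.
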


\end{document}